\numberwithin{equation}{section}
\theoremstyle{plain}
\DeclareMathAlphabet{\mathpzc}{OT1}{pzc}{m}{it}
\def\mbU{\mathbb U}
\def\mcU{\mathcal U}
\def\mcV{\mathcal V}
\def\cO{\mathcal O}
\def\msA{\mathscr A}
\def\msB{\mathscr B}
\def\bB{\mathbf B}
\def\mbD{\mathbb D}
\def\mcC{\mathcal C}
\def\cD{\mathcal D}
\def\mbn{\mathbf n}
\def\mbI{\mathbb I}
\def\mrF{\mathrm F}
\def\bE{ {\bf E}}
\def\mcE{\mathcal E}
\def\mbE{\mathbb E}
\def\mbR{{\mathbb R}}
\def\mbQ{\mathbb Q}
\def\mcQ{\mathcal Q}
\def\oB{\overline{B}}
\def\mbM{\mathbb M}
\def\mcM{\mcM}
\def\mbT{\mathbb T}
\def\mrp{{\mathrm p}}
\def\mrq{{\mathrm q}}
\def\mrL{{\mathrm L}}
\def\mbp{{\mathbf p}}
\def\mbq{{\mathbf q}}
\def\msI{{\mathscr I}}
\def\mcI{{\mathcal I}}
\def\mbV{{\mathbb  V}}
\def\cM{\mathcal M}
\def\mbL{{\mathbb L}}
\def\cG{\mathcal G}
\def\mrd{{\mathrm d }}
\def\mcZ{{\mathcal Z}}
\def\msR{\mathscr R}
\def\Hin{H^2_{\mathrm{in}}}
\def\Rem{{\mathscr R}}
\def\muckymuck{distribution }
\def\muckymucks{distributions }
\def\kpp{}
\newcommand{\beq}{\begin{equation}}
\newcommand{\eeq}{\end{equation}}
\newcommand{\beqs}{\begin{equation*}}
\newcommand{\eeqs}{\end{equation*}}
\newcommand{\h}{\hspace}
\newcommand{\normmm}[1]{{\left\vert\kern-0.15ex\left\vert\kern-0.15ex\left\vert #1 
    \right\vert\kern-0.15ex\right\vert\kern-0.15ex\right\vert}}
\newcommand{\dd}{\, \mathrm{d}}
\newcommand{\varep}{\varepsilon}
\newcommand{\p}{\partial}
\newcommand{\la}{\lambda} 
\newcommand{\La}{\Lambda}
\newcommand{\diag}{\mathrm{diag}}
\newcommand{\hook}{\mathrel{\rotatebox[origin=c]{45}{$\hookrightarrow$}}}
\newtheorem{thm}{Theorem}[section]
\newtheorem{lemma}[thm]{Lemma}
\newtheorem{cor}[thm]{Corollary}
\newtheorem{remark}[thm]{Remark}
\newtheorem{prop}[thm]{Proposition}
\newtheorem{defn}[thm]{Definition}
\newtheorem{notation}{Notation}[section]
\begin{document}
\title{\textbf{Regularized Curve Lengthening from the Strong FCH Flow} }
\author{Yuan Chen}
\address{Department of Mathematics, Michigan State University, East Lansing, MI 48824, U.S.}
\curraddr{}
\email{chenyu60@msu.edu}

\author{Keith Promislow}
\address{Department of Mathematics, Michigan State University, East Lansing, MI 48824, U.S.}
\email{promislo@msu.edu}
\thanks{K.P. acknowledges support from NSF grants DMS-1409940 and DMS-1813203}

\subjclass[2010]{Primary 35K25, 35K55; 
Secondary 35Q92}

\date{\today}

\dedicatory{}

\keywords{ Functionalized Cahn-Hilliard, Interfacial dynamics, Curve lengthening}

\begin{abstract} 
We present a rigorous analysis of the transient evolution of nearly circular bilayer interfaces evolving under the thin interface limit, $\varep\ll1$, of the mass preserving $L^2$-gradient flow of the strong scaling of the functionalized Cahn-Hilliard equation.  For a domain $\Omega\subset{\mbR}^2$ we construct a bilayer manifold with boundary comprised of quasi-equilibrium of the flow and a projection onto the manifold that associates functions $u$ in an $H^2$ tubular neighborhood of the manifold with an interface $\Gamma$ embedded in $\Omega$. These interfaces, and hence the bilayer manifold, are parameterized by a finite but asymptotically large number of degrees of freedom. The manifold contains a unique, up to translation and mass constraint, equilibrium of the gradient flow whose projected interface is circular up to exponentially small corrections. The thin tubular neighborhood is forward invariant under the flow with orbits that ultimately converge to the equilibrium. Projections of these orbits yield an interfacial evolution equivalent at leading order to the regularized curve-lengthening motion characterized by normal motion {\sl against} mean curvature, regularized by a higher order Willmore expression.  The curve lengthening is driven by absorption of excess mass from the regions of $\Omega$ away from the interface, generically leading to nontrivial dynamics that are ill-posed in the $\varep\to0$ limit. 
\end{abstract}
\maketitle


\section{Introduction}
Amphiphilic molecules are surfactants that form thin, bilayer interfaces comprised of two single-molecule layers. The functionalized Cahn-Hilliard(FCH) free energy was introduced in \cite{GHPY-11} to model the free energy of mixtures of amphiphilic molecules and solvent. It generalizes the model proposed by Gompper and Goos, \cite{GG-94}, that was motivated by earlier studies of scattering data. 
The FCH free energy is given in terms of the volume fraction $u-b_-$ of the amphiphilic molecule over a domain $\Omega$ as
\begin{equation}\label{FCH}
\mathcal F(u):=\int_\Omega \frac{\varep}{2} \left(\Delta u-\frac{1}{\varep^2}W'(u)\right)^2-\varep^{p-1}\left(\frac{\eta_1}{2}|\nabla u|^2+\frac{\eta_2}{\varep^2} W(u)\right) \mrd x,
\end{equation}
where $W:\mathbb R\mapsto\mathbb R$ is a smooth tilted double well potential with local minima at $u=b_{\pm}$ with $b_-<b_+$, $W(b_-)=0>W(b_+),$ and $W''(b_-)>0$.  The state $u\equiv b_-$ corresponds
to pure solvent, while $u\equiv b_+$ denotes a maximum packing of amphiphilic molecules. The system parameters $\eta_1>0$ and $\eta_2$ 
characterize key structural properties of the amphiphilic molecules. The small positive parameter $\varep\ll 1$ characterizes the ratio of the length of the molecule to the domain size. The term $\varep^{p-1}$ is a distinguished limit of a second small parameter with the weak scaling $p=2$ balancing the Willmore-type residual of the dominant squared term and the amphiphilic structure terms, while the strong scaling $p=1$ places these latter terms in a position of dominance.  We consider the strong scaling $p=1$, and refer the  interested reader to \cite{GJXCP-12} for a detailed discussion of physical meaning of the parameters. The FCH is known to be bounded from below over subsets of $H^2(\Omega)$ that incorporate a wide range of boundary conditions, \cite{PZ-13}. 

The goal of this work is to characterize the evolution of bilayer \muckymucks under a mass-preserving gradient flow of the FCH energy.  More specifically, to any smooth, embedded curve $\Gamma\subset\Omega$ we may associate a bilayer \muckymuck $\Phi=\Phi_\Gamma\in H^2(\Omega)$ which is an approximate local minimizer of the FCH energy. In a neighborhood of $\Gamma$ the bilayer \muckymuck can be expressed as
\beq\label{form-u}
\Phi_\Gamma (x;\sigma)=\phi_0\left(z \right)+ \frac{\varep \sigma}{(W''(b_-))^2}, 
\eeq
where $z=z(x)$ is $\varep$-scaled signed distance to $\Gamma$,   $\phi_0$ is  the leading order bilayer profile defined as the unique non-trivial solution of the ODE
\begin{equation}
\label{phi0-def}
\p_z^2\phi_0=W'(\phi_0), 
\end{equation} 
that is homoclinic to $b_-$ as $z\to\pm\infty,$
and $\sigma$ is a spatial constant that determines the value of $\Phi_\Gamma$ away from $\Gamma.$ The gradient flow preserves the system mass which is set by the initial data, $u_0$. We scale the system mass with $\varep$ 
\beq
\label{def-Mass}
\int_\Omega (u-b_-)\dd x =\varep M_0,
\eeq
and constrain the length of the interface $\Gamma$ to be $O(1)$ with respect to $\varep,$ so that mass of surfactant within an $\varep$-distance of $\Gamma$ has the same scaling as the system mass. For a bilayer \muckymuck $\Phi_\Gamma$ the mass constraint relates $\sigma$ and the length of $\Gamma$  through the leading order relation
\beq\label{|gamma|-sigma}
|\Gamma| =\frac{M_0}{m_0}-\frac{|\Omega|}{(W''(b_-))^2}\cdot \frac{\sigma}{m_0}.
\eeq
where $m_0$ denotes the bilayer mass-per-unit-length,
\beq
\label{def-m0}
m_0:=\int_{\mathbb R} (\phi_0-b_-)\dd z.
\eeq

It is instructive to examine the leading order reduction of the FCH energy at $\Phi_\Gamma (\cdot; \sigma)$. Accounting for the mass-dependent slaving \eqref{|gamma|-sigma} the strong scaling of the FCH reduces to a Canham-Helfrich type energy  \cite{Ca-70, H-73} 
\beq\label{energy-W-M}
\mathcal E(\Gamma,\sigma):=
\mathcal F(\Phi_\Gamma(\cdot;\sigma))=
\frac{\nu_s}{2}\int_{\Gamma} |\kappa|^2 \dd s+\frac{\nu_b}{2\varep}\left(\sigma- \sigma_1^*\right)^2\!\!\!,
\eeq
where $\sigma_1^*$ is the leading order equilibrium value of the bulk parameter $\sigma$. The second term, scaled by $\varep^{-1}$ can be viewed as enforcing an equilibrium length on $\Gamma$ through \eqref{|gamma|-sigma}. The surface and bulk coefficients $\nu_s,\nu_b>0$ only depend upon the system parameters, the well convexity at $b_-$, and the domain size $|\Omega|,$
\begin{equation}
\label{def-hatla1*}
\sigma_1^*:=-\frac{\eta_1+\eta_2}{2m_0}m_1^2, \quad \nu_s= m_1^2, \quad \nu_b= \frac{|\Omega|}{(W''(b_-))^2},\quad
m_1=\|\phi_0'\|_{L^2(\mathbb R)}.
\end{equation}
The equilibrium value $\sigma_1^*$ can be interpreted as the value of $\sigma$ at which absorption of surfactant into a bilayer balances with ejection of surfactant out of the bilayer.  The reduced energy reflects a strong preference for a bulk density near the equilibrium value $\sigma_1^*$. This preference corresponds to a high energy of dispersed surfactant and a correspondingly high rate of absorption. Bilayer \muckymucks for which $|\sigma-\sigma_1^*|=O(\varep)$ stike a balance between interfacial curvature, curve length, and bulk density that drives the complex interfacial dynamics.


The nature of the interfacial evolution arising from gradient flows of the reduced energy \eqref{energy-W-M} is better understood through the normal velocity it induces on the interface $\Gamma.$ Accounting for the slaving $\sigma=\sigma(|\Gamma|)$ from \eqref{|gamma|-sigma}, formal arguments, \cite{CKP-18}, show that the full energy \eqref{FCH} and the reduced energy \eqref{energy-W-M} both drive a geometric gradient flow that dissipates the energy \eqref{energy-W-M}, at leading order through the regularized curve lengthening normal velocity 
\beq\label{V-normal}
V_{\textrm{RCL}}= -\varep^{-1}  m_0(\sigma -\sigma_1^*) \kappa - \nu_s \left(\Delta_s +\frac{\kappa^2}{2}\right) \kappa.  
\eeq  
If the bulk density is lower than the equilibrium value, 
$\sigma-\sigma_1^*<0$, then the surface term induces a mean curvature flow (MCF), which shortens the curve.  If the bulk density exceeds the equilibrium value, $\sigma-\sigma_1^*>0$, then the system  dissipates total energy in a curve-lengthening motion \emph{against} curvature, absorbing amphiphilic material from the bulk. We call this regime regularized curve lengthening, with the higher order Willmore term serves as a singular perturbation that regularizes the ill-posed motion against curvature. 

We address the mass-preserving $L^2(\Omega)$ gradient flow of the FCH energy \eqref{FCH}, written in terms of the chemical potential  $\mathrm F=\mrF(u)$, associated to $\mathcal F$ through the rescaled variational derivative
\beq
\label{eq-FCH-L2-p}
\mathrm F(u):= \varep^3\frac{\delta \mathcal F}{\delta u}=(\varep^2\Delta -W''(u))(\varep^2\Delta u-W'(u))+\varep^p (\eta_1\varep^2\Delta u-\eta_2 W'(u)).
\eeq
The mass-preserving FCH $L^2$-gradient flow takes the form
 \begin{equation}\label{eq-FCH-L2}
\p_t u=-\Pi_0  \mathrm F(u),
\end{equation}
subject to periodic boundary conditions on $\Omega\subset \mathbb R^2$. Here $\Pi_0$ is the zero-mass projection given by
\begin{equation}
 \Pi_0 f:=f-\left<f\right>_{L^2}, 
\end{equation}
in terms of the averaging operator
\beq
\label{def-massfunc}
\left<f\right>_{L^2}:=\frac{1}{|\Omega|}\int_\Omega f\dd x.
\eeq 
We provide a rigorous justification of this regularized curve lengthening flow via an asymptotically large dimensional center-stable manifold reduction in a vicinity of the equilibrium arising from the bilayer distribution with a circular interface $\Gamma_0.$

\begin{center}
\begin{tabular}{cccc}
\includegraphics[height=1.in]{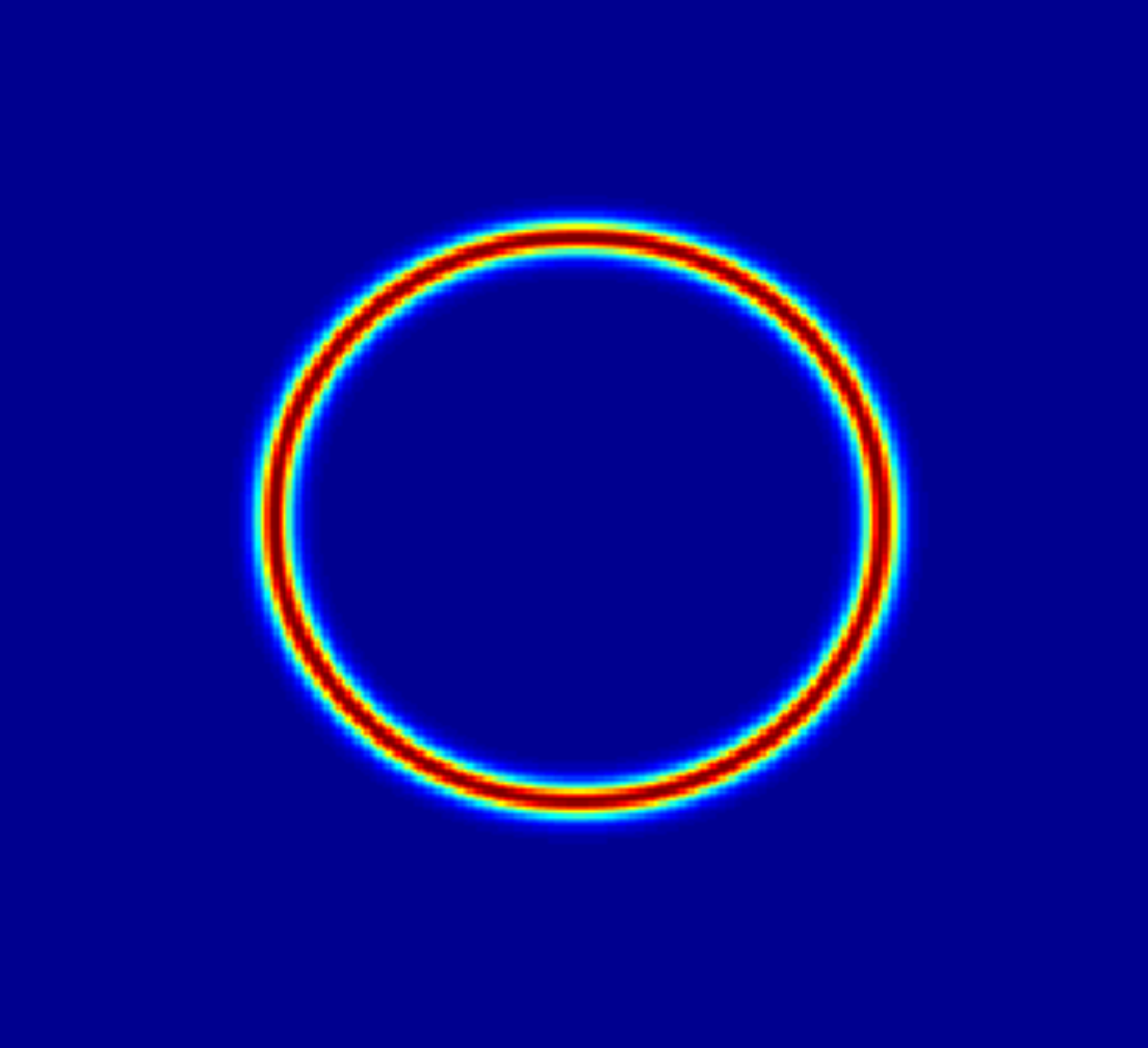} &
\includegraphics[height=1.in]{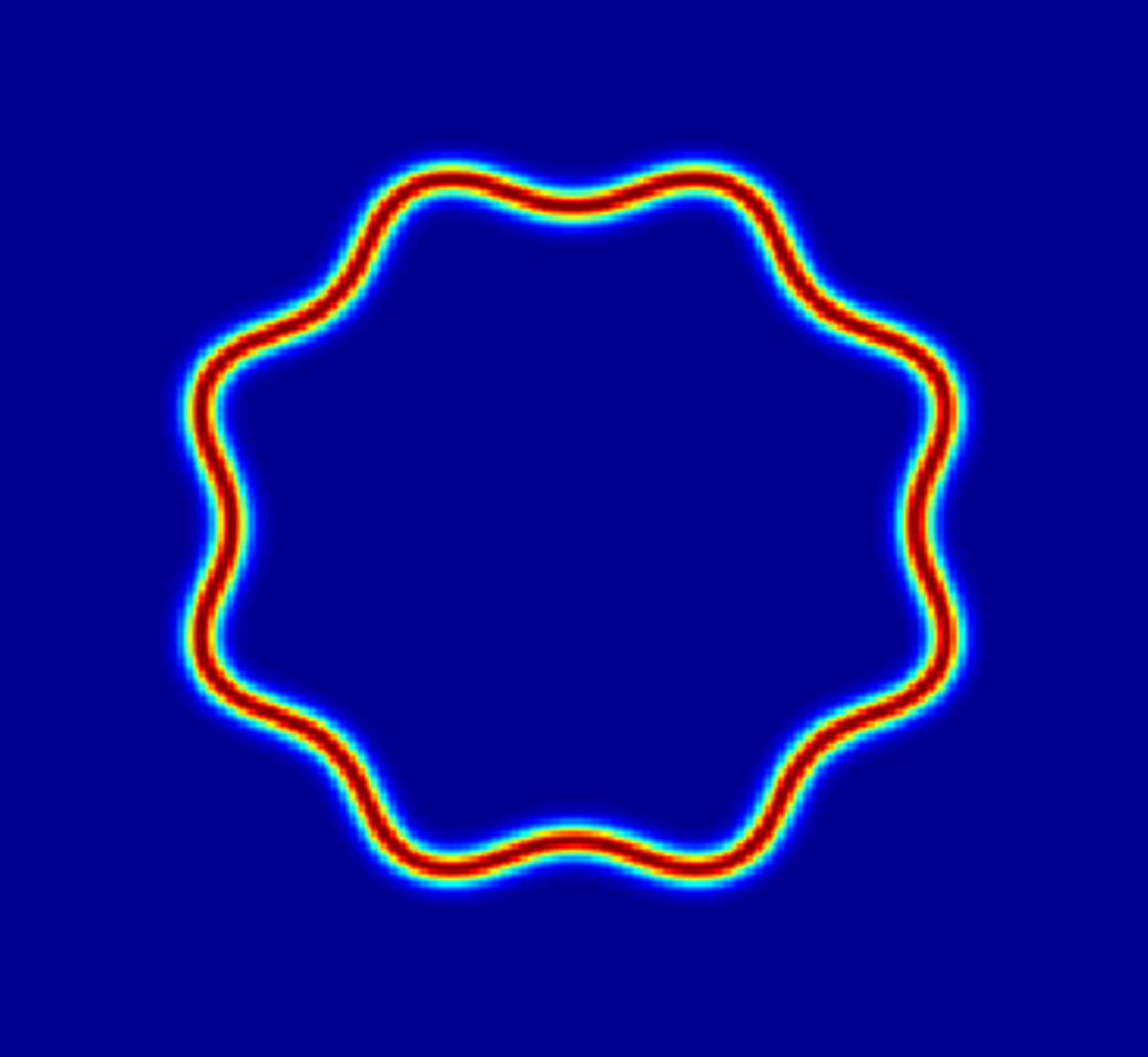}&
\includegraphics[height=1.in]{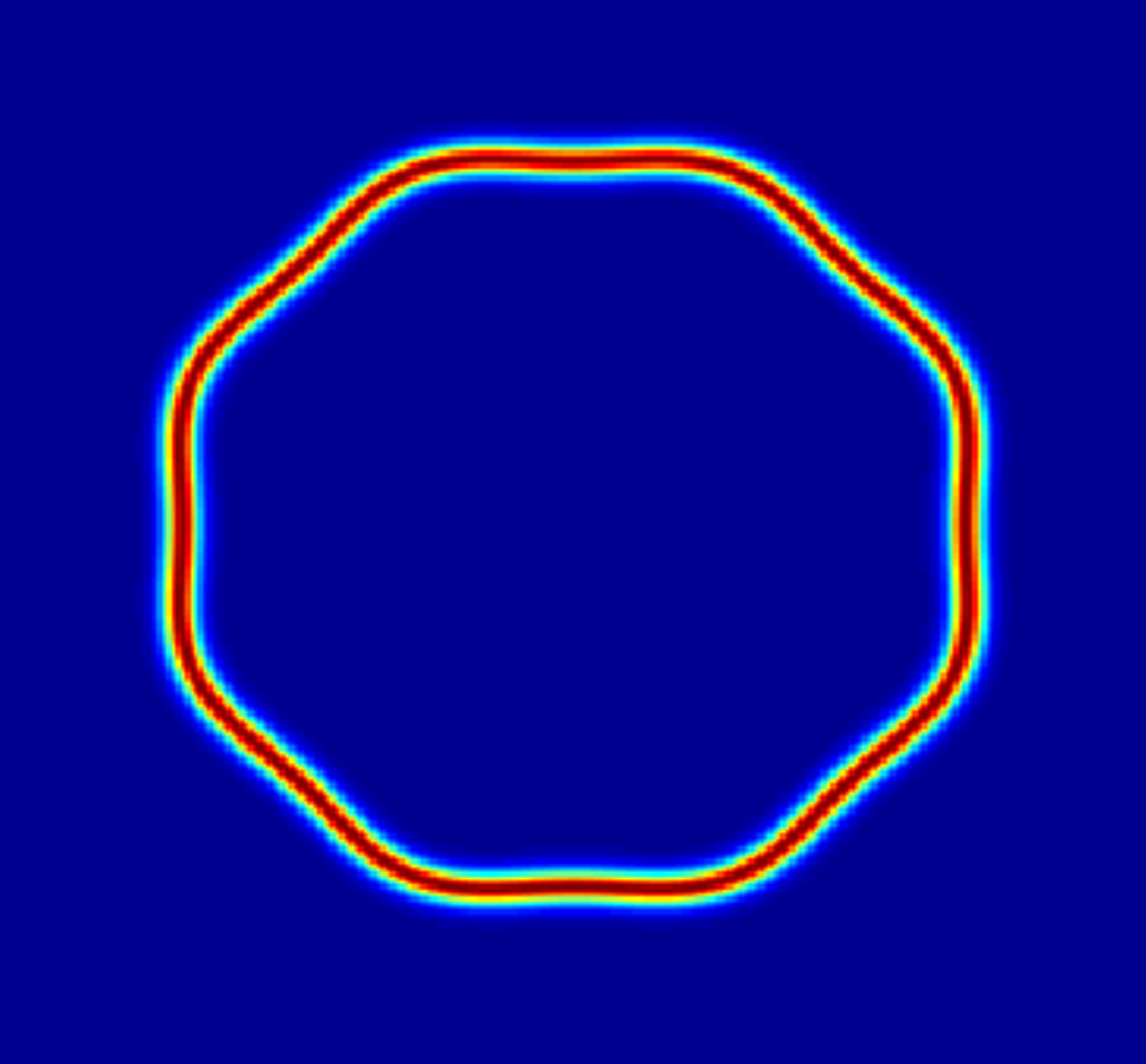}&
\includegraphics[height=1.in]{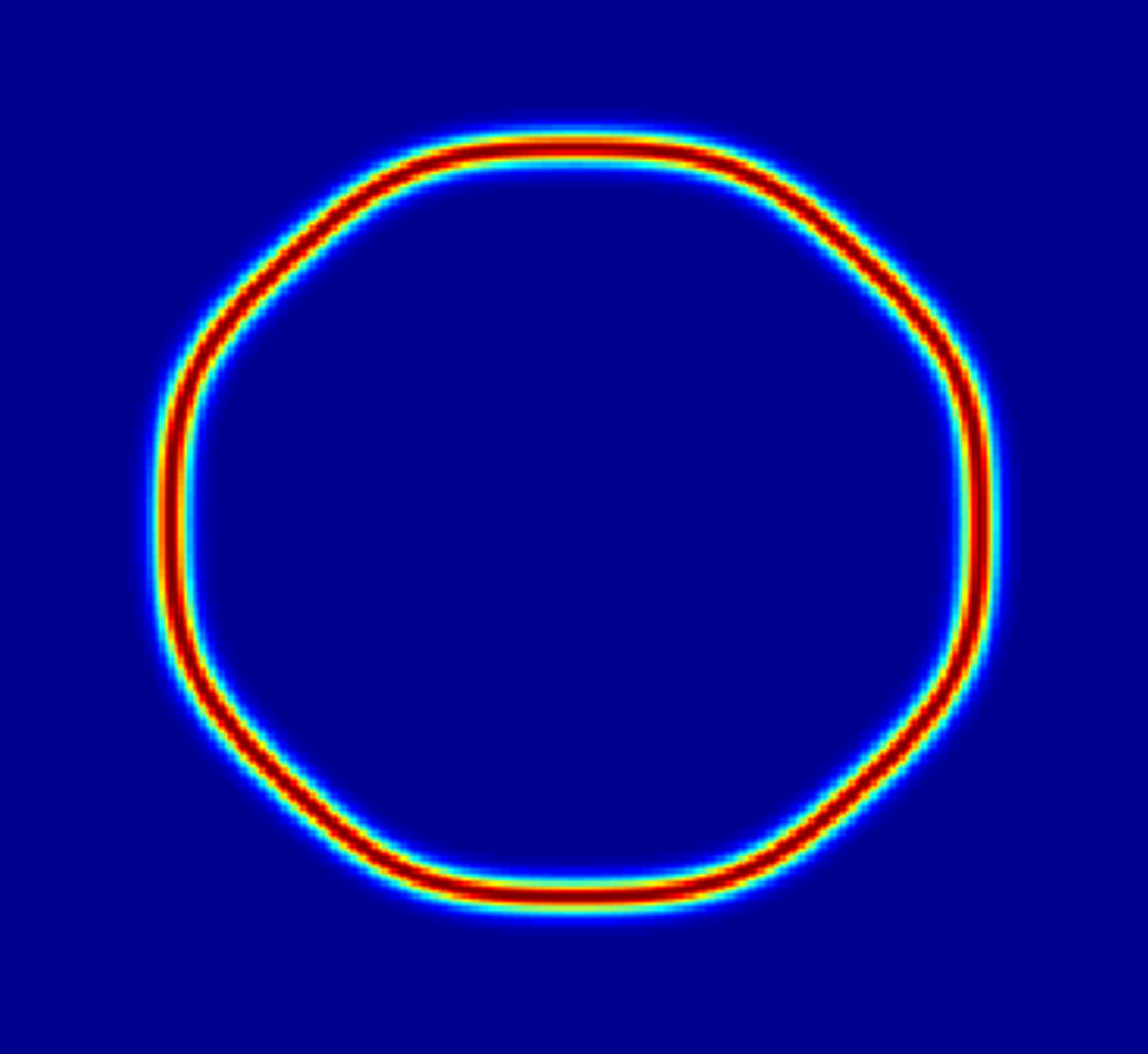}\\
(a) $t=0$ & (b) $t=367$ & (c) $t=930$ & (d) $t=1175$
\end{tabular}
\vskip 0.05in
\begin{tabular}{p{4.8in}}
Figure 1) Numerical simulation of the strong FCH mass preserving $L^2$ gradient flow on $\Omega=[-2\pi,2\pi]^2$ from initial data $u_0=\Phi_\Gamma(x;\sigma)$ with $\Gamma$ a circle and $\sigma= 2\sigma_1^*$, double the equilibrium value. Left to right, color coded contours of the evolving solution $u=u(t)$ at indicated times, show a meandering transient followed by relaxation to a circular equilibrium with larger radius. System parameters are $\varep=0.20$, $\eta_1=1.45$ and $\eta_2=2.0.$
\end{tabular}
\vskip 0.15in
\end{center}


We construct a manifold with boundary contained in $H^2(\Omega)$ whose constituent points are refinements of the  bilayer \muckymucks that are
quasi-equilibrium of the system \eqref{eq-FCH-L2}. This is the bilayer manifold, $\cM_b$, and it possesses a nonlinear projection that uniquely decomposes functions $u$ from an open neighborhood of $\cM_b$ into a point on the manifold (a bilayer \muckymuck\!\!) and a perturbation that is orthogonal to the tangent plane. The interfaces $\Gamma=\Gamma_\mbp$ that define the bilayer \muckymucks\!\!, and hence the bilayer manifold, are parameterized by an asymptotically large but finite set of meander parameters $\mbp=(\mrp_0, \cdots, \mrp_{N_1-1})$,  which  characterize the shape of $\Gamma_\mbp$ as a perturbation of a fixed base interface $\Gamma_0$. The bilayer manifold contains an equilibrium of the flow whose associated interface is, up to exponentially small terms, circular.  Initial data from a thin tubular neighborhood of the manifold remain inside the tubular neighborhood for all time, and converge to the nearly circular equilibrium on an $O(\varep^{-4})$ time scale. During this time the interfaces $\Gamma_{\mbp(t)}$ are defined from the projection of $u(t)$ onto the manifold and they evolution has a normal velocity that, at leading order, reduces to  the regularized curve lengthening flow \eqref{V-normal}.

The refined bilayer \muckymuck\!\!, denoted $\Phi_\mbp$, is introduced in Lemma\,\ref{lem-def-Phi-p}. We identify two frequency separated slow spaces, the pearling and meandering spaces, as small eigenvalue spaces of the second variation of the FCH energy at $\Phi_\mbp$. The meandering slow space approximates the tangent space of a bilayer manifold at $\Phi_\mbp$ and is connected to the parameterization of the associated interface $\Gamma_\mbp$.
 In particular, $\mrp_0$ scales the length of $\Gamma_\mbp$,  $\mrp_1, \mrp_2$ are translation parameters, and $\hat\mbp=(\mrp_3,\cdots, \mrp_{N_1-1})$ characterizes the shape deviation from the base interface $\Gamma_0$.
The pearling slow space is associated to weakly damped modes that encode spatially periodic modulations of the width of the bilayer distribution near the interface $\Gamma.$ 

The construction of the bilayer manifold requires that the base interface $\Gamma_0$ and the scaled system mass $M_0$ be mass-compatible, in the sense that
\beq
  |\sigma_0(\Gamma_0, M_0) - \sigma_1^*| \leq  \delta,
\eeq 
where $\sigma_0$ is slaved to $\Gamma_0$ and $M_0$ through \eqref{|gamma|-sigma}, and $\delta>0$ is a small parameter independent of $\varep.$ 
We define $\cM_b$  as the graph of $\Phi_\mbp$ for $\mbp$ in the domain $\cO_\delta$, defined in \eqref{def-cO-p}, viz.
\beq
\cM_b(\Gamma_0,M_0) 
:= \left\{\Phi_\mbp(\sigma)\,\Bigl | \,\,  \mbp \in \cO_\delta,\, \left<\Phi_\mbp-b_-\right>_{L^2}=\frac{\varep M_0}{|\Omega|}\right\}.
\eeq 
Here $\sigma=\sigma(\mbp)$ the bulk density parameter is slaved to $\mbp$ to guarantee that $\Phi_\mbp$ has the system mass prescribed in \eqref{def-Mass}. 
In a companion paper, \cite{CP-linear}, we establish the stability of the bilayer manifold up to its boundary.
This requires a pearling stability condition
\begin{equation}
\label{PSC*}
(\mathbf{PSC}_*) \qquad \sigma_1^* S_1+(\eta_1-\eta_2) \la_0>0,
\end{equation}
on the system parameters $\eta_1>0$ and $\eta_2\in\mathbb R$, that renders the pearling modes linearly stable. Here $\la_0<0$ is the ground state eigenvalue of the  linearization of (\ref{phi0-def}) about $\phi_0$, and $S_1\in\mathbb R$ depends only upon the form of the double well, $W$.  A detailed investigation of the onset of the pearling instability was conducted in \cite{DHPW-14, NP-17}. 
Working in the $H^2$ inner norm,
\beq\label{def-Hin} \|u\|_{\Hin} := \sqrt{\|u\|_{L^2(\Omega)}^2+\varep^4 \|u\|^2_{H^2(\Omega)}},
\eeq 
the companion paper considers a bilayer manifold constructed from a smooth, embedded base interface $\Gamma_0$ and shows that solutions of the gradient flow arising from initial data within $O(\varep^{5/2})$ of $\cM_b$ remain within twice that distance of the manifold so long as their projected parameters $\mbp(t)$ reside in $\cO_\delta.$ That is, the only way to leave the tubular neighborhood of $\cM_b$ is through its boundary. This exit corresponds to generation of large curvatures in the associated interface $\Gamma_\mbp$ or in its proximity to self-intersection. A key obstacle is to control the coupling between the  evolution of the meander modes and the pearling modes, as the pearling modes are only weakly damped by the gradient flow.

In the current work we consider a bilayer manifold, $\cM_b$ build around a \emph{circular} base interface $\Gamma_0$. We assume the pearling stability condition holds and restrict the bilayer manifold to a smaller base domain $\cO_\delta^\circ\subset\cO_\delta$, see \eqref{def:cO-circ}, 
and establish that solutions $u(t)$ arising from initial data from an $O(\varep^{5/2})$ tubular neighborhood of the bilayer manifold, remain within twice this distance to the manifold for all time and ultimately converge to the unique, up to translation, equilibrium corresponding to the circular interface with prescribed system mass. Moreover we show that at leading order the normal velocity of the  interfaces $\Gamma_{\mbp(t)}$ arising as projections of $u(t)$ is given by the regularized-curve lengthening flow \eqref{V-normal}.  The most significant impact of the restriction to $\cO_\delta^\circ$ is the constraint on the bulk density of the initial data. This must satisfy 
\beq
|\sigma_0-\sigma_1^*|\leq C\varep^{1/2}\delta
\eeq 
for some positive constant $C$. This restricts the interface length, controlled by $\mrp_0$ to lie within $O(\varep^{1/2}\delta)$ of the equilibrium interface length, $\mrp_0^*$ which is set by the system mass. This condition also restricts the initial value of the reduced energy $\mcE(\Gamma,\sigma)$ to be $O(\delta^2)$ and leads the requirement that the system mass $M_0$ and base interface $\Gamma_0$ form an admissible pair, see \eqref{adm-pair}.

The analysis uses three small parameters: $\varep\in (0, \varep_0), \rho, \delta$. The parameter $\varep_0$ establishes the upper bound for the values of $\varep$ for which the principle results hold. The spectral parameter $\rho>0$ controls the size of the meander and pearling subspaces scaling their dimension $N\sim \varep^{-1}\rho$. The parameter $\delta>0$ predominantly controls the size of the domain $\cO_{\delta}^\circ$ of the meander parameters. We restrict $\delta>0$ sufficiently small to prevent self-intersection of the curves, to define the nonlinear projection, and to close certain nonlinear estimates, as in Lemma\,\ref{lem-est-p-2}. We choose $\rho$ and $\varep_0$ sufficiently small to separate the wave numbers of the pearling and meander modes that reduces the coupling between the meander and pearling modes in Lemma\,\ref{thm-coupling est}. Finally in Step 1 and 2 of Theorem\,\ref{thm-main-2} we select $\varep_0$ sufficiently small in terms of $\delta$ and $\rho$ to close the nonlinear estimates for all $\varep\in(0,\varep_0).$ In Proposition\,\ref{cor-Vp} we show that the formal normal velocity $V_\mbp$, \eqref{normal-velocity}, which slightly refines $V_{\mathrm{RCL}}$, captures the leading order normal motion as described by the evolution of the meander parameters. The meander evolution is driven at leading order by the projection of the formal normal velocity onto the Galerkin space of Laplace-Beltrami modes that characterize the interfacial shapes, \eqref{def-PiG}. However the Galerkin space only has dimension $O(\varep^{-1}\rho)$, and consequently the projection induces additional truncation errors. Indeed the largest error in the discrepancy between the normal velocity and $V_\mbp$ comes from the truncation of the surface diffusion term, $\Delta_s \kappa$, under the Galerkin projection.

The reduced energy \eqref{energy-W-M} and the regularized curve lengthening flow \eqref{V-normal} have a connection to the Euler-Bernoulli energy for an elastica and its gradient flow 
subject to a constant length constraint. The reduced energy can be viewed as a penalty method that enforces the length constraint through \eqref{|gamma|-sigma}, with the flow modeling the over-damped relaxation of a slightly compressible elastica whose initial configuration can admit some pre-compression or pre-extension of its relaxed length.  There is a substantial literature,  \cite{LS-85, Wen-95}, on the Euler-Bernoulli energy that includes analysis of equilibrium and asymptotic behaviour. In particular, circles, self-intersected figure eights, and their multiple folds are the only equilibrium solutions, and any initial smooth curve without self-intersection relaxes to a circle.

It is natural to compare the result for bilayer interface dynamics with those derived for the front solutions of the Cahn-Hilliard equation.  For the Cahn-Hilliard, much of the initial work, notably \cite{Pego-89} and \cite{ABC-94}, 
focused on formal and rigorous derivations of the Mullins-Sekerka flow in the $\varep\to 0$ limit.  Quasi-stationary dynamics based upon a radial scaling and translation parameters were 
derived in \cite{AF-03, AFK-04} in 2D and 3D.
 The FCH gradient flows differ from Cahn-Hilliard flows in that its sharp interface limit, $\varep\to0$, is ill posed.
 The $\Gamma$-limits constructed in \cite{RS-06} consider the case $\eta_2=\eta_1<0$, for which the Willmore and functionalization terms act in concert. This is fundamentally different than the competition between these terms expressed in the the strong FCH. This competition leads to a wide variety of minimizing sequences from $H^2(\Omega)$ whose energy are bounded as $\varep\to 0^+$ but are not readily associated to bilayers. These include the pearled interfaces constructed in \cite{PWu15}, as well the cylindrical filaments studied in \cite{DP15, CKP-18}. Pearling can lower the free energy of a bilayer distribution by modulating the width of the level sets of $u$ near an interface. The pearling modes manifest themselves in the analysis herein as they are at best weakly damped under the flow. As outlined in \cite{CP-linear}, this requires a separation of the frequencies of the the meandering and pearling modes so as to more tightly bound the projection of the interface motion residual onto the pearling modes. This key step in the analysis of the FCH bilayers has no analogy within the study of the fronts solutions in the Cahn-Hilliard model.  

 Nevertheless the Cahn-Hilliard equation and the FCH gradient flow both enjoy structure-structure interactions compatible with the Lifschitz-Slyosov-Wagner (LSW) theory of coarsening introduced in \cite{LS-61} and \cite{W-61} which reduces the interactions of finite numbers of near-circular interfaces to systems of ODEs for the radii and center positions. Formal reductions similar to LSW theory for the FCH, including interactions of constant curvature codimension one and two structures were derived in \cite{DP15}. 

The remainder  of this article is organized as follows. In Section \ref{sec-pre}, we present the local coordinates and finite dimensional  variation of the interface. In Section \ref{sec-profile} we construct
the bilayer \muckymucks $\Phi_\mbp$ whose image under the map
$\mbp\mapsto\Phi_\mbp$ defines the bilayer manifold $\cM_b$. We  establish the coercivity of the linearization  $\Pi_0\mbL_{\mathbf p}$ of the gradient flow about $\Phi_\mbp$ when the operator is restricted to act on the orthogonal compliment to the slow space. In section \ref{sec-p} we derive the dynamics of meandering parameters $\mbp$ by projecting the residual $\mrF(\Phi_\mbp)$ onto the tangent plane of the manifold. In section \ref{sec-main-thm} close the nonlinear estimates which reduce the nonlinear flow to an ODE on the tangent plane of $\cM_b$, this concludes with the statement of asymptotic stability of the circular equilibrium and the rigorous reduction of the finite dimensional meander evolution to the curvature driven evolution of the regularized curve-lengthening flow.

\subsection{Notation}\label{ssec-Notation} We present some general notation. 
\begin{enumerate}
\item  The symbol $C$ generically denotes a positive constant whose value depends only on the system parameters $\eta_1, \eta_2$, mass parameter $M_0$, the domain $\Omega$, and 
radius of initial circle $R_0$. In particular its value is independent of $\varep$ and $\rho$, so long as they are sufficiently small. 
The value of $C$  may vary line to line without remark.  In addition, $A\lesssim B$ indicates that quantity $A$ is less than quantity $B$ up 
to a multiplicative constant $C$ as above, and $A\sim B$ if $A\lesssim B$ and $B\lesssim A$.  
The expression $f=O(a)$  indicates the existence of a constant $C$, as above,
and a norm $|\cdot|$ for which
\begin{equation*}
| f| \leq C |a|.
\end{equation*}
\item The quantity $\nu$ is a positive number, independent of $\varep$, that denote an exponential decay rate. It may vary from line to line.
\item If a function space $X(\Omega)$ is comprised of functions defined on the whole spatial domain $\Omega$, we will drop the symbol $\Omega$. 
\item We use $\bm 1_{E}$ as the characteristic function of an index set $E\subset \mathbb N$,  i.e. $\bm 1_{E}(x)=1$ if $x\in E$;  $\bm 1_{E}(x)=0$ if $x\notin E$.
We denote the usual Kronecker delta by 
\beqs
\delta_{ij}=\left\{\begin{array}{ll} 1,& i=j; \\
                                                       0, & i\neq j.
                                                       \end{array} \right.
                                                       \eeqs
\item For a finite vector $\mathbf q=(\mathrm q_j)_j$,  we denote the norms
\begin{equation*}
\|\mathbf q\|_{l^k}=\left(\sum_j  |\mathrm q_j|^k\right)^{1/k}, \qquad 
 \hbox{for $k\in \mathbb N^+$}, 
 \end{equation*}
 and  $\|\mathbf q\|_{l^\infty}=\max_j |\mathrm q_j|.$ For a matrix $\mbQ=(\mbQ_{ij})_{ij}$ as a map from $l^2$ to $l^2$ has operator norm $l^2_*$ defined by
 \begin{equation*}
 \|\mbQ\|_{l^2_*}=\sup_{\{\|\mbq\|_{l^2=1}\}} \|\mathbb Q \mbq\|_{l^2}.
 \end{equation*}
We write
\beqs \mrq_j = O(a)e_j,\quad \mbQ_{ij}=O(a) \mbE_{ij},\eeqs
where ${\bm e}=(e_j)_j$ is a vector with $\|{\bm e}\|_{l^2}=1$ or ${\mbE}$ is a matrix with operator norm $\|\mbE\|_{l^2_*}=1$ 
to imply that $\|\mbq\|_{l^2} = O(a)$ or $\|\mbQ\|_{l^2_*}=O(a)$ respectively. See \eqref{def-e-i}-\eqref{est-e-i,j} of Notation\,\ref{Notation-e_i,j} for usage.
\item  The matrix $e^{\theta\mathcal R}$ denotes rotation through the  angle $\theta$ with the generator $\mathcal R$.  More explicitly, 
\begin{equation*}\mathcal R=\left(\begin{array}{cc}0& -1\\1&0\end{array}\right), \quad e^{\theta\mathcal R}= \left(\begin{array}{cc}\cos \theta& -\sin \theta\\\sin \theta &\cos \theta \end{array}\right).
\end{equation*}
\end{enumerate}

\section{Perturbed interfaces and local coordinates}\label{sec-pre}
We consider smooth closed interfaces $\Gamma$ immersed in $\Omega$ and parameterized by $\bm\gamma:\mathscr I\mapsto\Omega$, where $\mathscr I$ is the periodic interval of length $2\pi R_0.$ The following class of curves parameterized are smooth and do not self-intersect.

\begin{defn}
\label{def-GKl}
Given $K, \ell>0$ and an integer $k>0$ the class $\cG_{K,\ell}^k$ consists of closed curves $\Gamma$ embedded in $\Omega$ whose parameterization $\bm\gamma$ has the properties (a) $\|\bm\gamma\|_{W^{k,\infty}(\mathscr I)}\leq K$ and (b) for any points two points on $\mathscr I$ that satisfy $|s_1-s_2|_{\mathscr I}>1/(2K)$ then $|\bm \gamma(s_1)-\bm\gamma(s_2)|>\ell$. Here $|\cdot|_{\mathscr I}$ denotes the periodic distance 
$|s|_\msI=\min\bigl\{\big|s-2\pi R_0 k\big|\, : k\in \mathbb Z\bigr\}.$
\end{defn}

For each $\Gamma\in \cG_{K,\ell}^2$ with $\ell\leq \sqrt{2\pi}/K$ there exists a tubular neighborhood, $\Gamma^\ell$ of $\Gamma$ with thickness $\ell$, such that  the change of coordinates $x\mapsto (s,r)$ through
\beq\label{coord-wh}
x=\bm \gamma(s)+r\mbn(s), 
\eeq 
is well posed. Here $\mbn=e^{-\pi\mathcal R/2}\bm \gamma'/|\bm \gamma'|$ the outer normal of $\Gamma$ and $r=r(x)$ the signed distance of $x$ to the curve $\Gamma$. Introducing the scaled distance $z=r/\varep\in[-\ell/\varep, \ell/\varep]$, we refer to $(z,s)$ as the local coordinates near $\Gamma$. 

In the sequel we fix a base interface $\Gamma_0$  which is a circle with radius $R_0>0$ and constant curvature
\begin{equation}
\kappa_0=-1/R_0,
\end{equation}
and the local coordinates are defined on all of $\Omega$ minus the center of the circle. Let $\bm\gamma_0=\bm\gamma_0(s)$ be the parameterizetion by arc-length of $\Gamma_0$, that is, $|\bm \gamma_0'(s)|=1$  for all $s$ lies on the periodic domain $\msI$.  
The Laplace-Beltrami operator is denoted by $-\Delta_s: H^2(\msI)\rightarrow L^2(\msI)$ and its scaled eignevalues $\{\beta_k^2\}_{k=0}^\infty$ and normalized eigenfunctions $\{\Theta_k\}_{k=0}^\infty$ satisfy,
\begin{equation}\label{def-beta}
-\Delta_s  \Theta_k =\beta_k^2 \Theta_k\big/R_0^2.
\end{equation}
The ground state eigenmode is spatially constant: 
\beq
\label{def-Theta0}
\Theta_0=1/\sqrt{2\pi R_0}, \qquad \beta_0=0;
\eeq 
and for $k= 1,2\cdots$, 
\begin{equation}\label{def-Theta-beta}
\Theta_{2k-1}=\frac{1}{\sqrt{2\pi R_0}}\cos\left(\frac{ks}{R_0}\right),\quad \Theta_{2k}=\frac{1}{\sqrt{2\pi R_0}}\sin \left(\frac{ks}{R_0}\right); \quad \hbox{and}\;\;  \beta_{2k-1}=\beta_{2k}=k.
\end{equation}

We construct interfaces that are perturbations of $\Gamma_0$ parameterized by an $N_1$-vector $\mbp$. The construction cuts off high-frequency modes.  We split the parameter vector
\beq\label{def-hatp}
\mbp = (\mrp_0, \mrp_1, \mrp_2, \hat\mbp), \qquad \hat\mbp =  (\mrp_3, \mrp_4, \cdots, \mrp_{N_1-1}), 
\eeq
in which $\mrp_0$ scales the length of the perturbed interface, $\mrp_1, \mrp_2$ translate it, and $\hat \mbp$ deviates the perturbed interface from circularity. The following weighted spaces control $\hat\mbp$.

\begin{defn}[Weighted perturbation space]
 Let $\mathbb D$ be the $(N_1-3) \times (N_1-3)$ diagonal matrix 
\beq\label{def-mbD}
\mathbb D=\diag\{\beta_3^2, \beta_4^2, \cdots \beta_{N_1-1}^2 \}.
\eeq
We say $\hat \mbp$ lies in $\mathbb V_r^k$ if  $\|\hat\mbp\|_{\mbV_r^k}:= \| \mathbb D^{r/2} \hat\mbp\|_{l^k}<\infty$, or more precisely,
\beq
\label{def-mbV}
\|\hat\mbp\|_{\mbV_r^k}= \left(\sum_{j=0}^{{N_1}-1} \beta_j^{kr} |\mathrm p_j|^k\right)^{1/k}<\infty.
\eeq
When $k=1$, we omit the superscript $k$ and denote the space by $\mbV_r$.
\end{defn}
The $\mbp$-variation of $\Gamma_0$ is given below.
\begin{defn}[Perturbed interfaces]\label{def-P-interface} Fix $C\lesssim 1$ and let $\delta>0$ be a small parameter, independent of $\varep.$ For each $\mbp$ from the set
\beq\label{A-00}
\cD_\delta :=\left\{\mbp\in \mbR^{N_1} \; \Big|\; \mrp_0 > -1/2,  \quad |\mrp_1|+|\mrp_2|+ \|\hat\mbp\|_{\mbV_2} \leq C, \quad \|\hat \mbp\|_{\mbV_1}\leq C\delta  \right\},
\eeq  
we define the $\mathbf p$-variation of $\Gamma_0$, denoted by $\Gamma_{\mathbf p}$, via the parameterization 
\begin{equation}\label{def-gamma-p}
\bm\gamma_{\mathbf p}(s):=(1+\mrp_0)\bm\gamma_{\bar p}(s)/A(\mbp) +\mrp_1\Theta_0 \bE_1 +\mrp_2\Theta_0 \bE_2,  \quad \hbox{for} \quad s\in \msI.
\end{equation}
Here $A(\mbp)>0$ normalizes the length of the perturbed parameterization $\bm \gamma_{\bar p}$, 
\beq
A(\mbp)=|\Gamma_0|^{-1} \int_{\msI} |\bm\gamma_{\bar p}'(s)|\dd s, \quad |\Gamma_0| = 2\pi R_0,
\eeq
given by
\beq
\bm\gamma_{\bar p}(s)= \bm \gamma_0(s) + \bar p(\tilde s) \mbn_0(s)
\eeq
where $\mathbf n_0(s)$ is the outer normal vector of the circle $\Gamma_0$ parameterized by its arc-length parameter $s$  while
\begin{equation}\label{def-barp}
 \bar p(\tilde s):= \sum_{i=3}^{{N_1}-1} \mathrm p_i \tilde \Theta_i(\tilde s), \qquad \tilde \Theta_i(\tilde s):=\Theta_i\left(\frac{2\pi R_0 \tilde s}{|\Gamma_\mbp|}\right),\end{equation}   
are parameterized by arc length of $\bm \gamma_\mbp$, denoted $\tilde s=\tilde s(s)\in \msI_\mbp=[0,|\Gamma_\mbp|]$, obtained as the solution of
\beq \label{def-ts}
\frac{\mrd \tilde s}{\mrd s}=|\bm \gamma_\mbp'|,\qquad \tilde s(0)=0.
\eeq
\end{defn}

\begin{remark}
\begin{enumerate}
\item The definition of $\Gamma_\mbp$ is implicit in $\mbp$ through \eqref{def-gamma-p} and \eqref{def-barp}-\eqref{def-ts}. This implicit definition is well posed for $\mbp$ satisfying \eqref{A-00}, see Lemma 2.10 of \cite{CP-linear}. The small assumption on $\|\hat \mbp\|_{\mbV_1}$ prevents self intersection of the perturbed curve, see Lemma 2.11 of \cite{CP-linear}. 
The implicit definition insures the orthogonality \eqref{ortho-tTheta} of $\tilde \Theta_i$ in $L^2(\msI_\mbp)$,
\beq\label{ortho-tTheta}
\int_{\msI_{\mbp}}\tilde \Theta_j\tilde\Theta_k\dd \tilde s=(1+\mrp_0)\delta_{jk}
, \quad j,k=0, 1, \ldots, N_1-1.
\eeq
\item  The $\mathrm p_0$-term scales the length of the curve from $2\pi R_0$ to $2\pi(1+\mrp_0)R_0$. The parameters $\mathrm p_1$ and $\mathrm p_2$ serve to  translate the interface. The perturbation they induce is not normal to the original interface, however its projection onto $\mathbf n_0$ satisfies
\begin{equation}\label{proj-n0-E-basis}
\Theta_0 \bE_1\cdot \mathbf n_0=\frac{1}{\sqrt{2\pi R_0}}\cos{\frac{s}{R_0}}=\Theta_1, \quad \Theta_0\bE_2\cdot \mathbf n_0=\frac{1}{\sqrt{2\pi R_0}}\sin\frac{s}{R_0}=\Theta_2.
\end{equation}
\end{enumerate}
\end{remark}

From the definition, we have $|\Gamma_\mbp|=2\pi R_0 (1+\mrp_0)$.  It follows from \eqref{def-beta} and \eqref{def-barp} that the scaled $\tilde\Theta_j=\tilde\Theta_j(\tilde s)$ satisfy
\beq \label{tTheta''}
-\tilde \Theta_j''(\tilde s)=\beta_{\mbp,j}^2 \tilde \Theta_j(\tilde s), \qquad \beta_{\mbp,j}= \frac{\beta_j}{R_0(1+\mrp_0)}. 
\eeq 
Here and below, primes of $\tilde \Theta_j$ denote derivatives with respect to $\tilde s$. 
We remark that  $\beta_{\mbp,j}$ equals $\beta_j/R_0$ when $\mbp=\bm 0$, rather than $\beta_j$. The orthogonality \eqref{ortho-tTheta} implies  
\beq\label{re-barp-hatp}
\|\hat{\mathbf p}\|_{\mathbb V_k^2} \sim \|\bar p\|_{H^k(\mathscr I_\mbp)}, \quad \|\bar p^{(k)}\|_{L^\infty(\mathscr I_\mbp)}\lesssim \|\hat{\mathbf p}\|_{\mathbb V_k},
\eeq
for all $\mbp\in\cD_\delta$ where the implied constants depend only upon $R_0$.

The following embeddings are direct results of H\"older's inequality and the asymptotic form of  $\beta_j$ introduced in \eqref{def-Theta-beta}, details are omitted. 
\begin{lemma}\label{lem-est-V}
Suppose that $\hat{\mathbf p}\in l^\infty(\mathbb R^{N_1})$, 
then  $\|\hat\mbp\|_{\mbV^2_0}=\|\hat\mbp\|_{l^2}$ and
\begin{equation*}
\begin{aligned}
\|\hat{\mathbf p}\|_{\mbV_k}\lesssim  \|\hat{\mathbf p}\|_{\mbV_{k+1}^2},\quad \|\hat{\mathbf p}\|_{\mbV_{k+1}^r}\lesssim N_1\|\hat{\mathbf p}\|_{\mbV_k^r},  \quad \|\hat\mbp\|_{\mbV_k}\lesssim  N_1^{k+1/2}\|\hat\mbp\|_{l^2}.
\end{aligned}
\end{equation*}
In addition, for any vector $\mathbf a \in l^2(\mathbb R^{m})$ we have the dimension dependent bound
\begin{equation}\label{est-l1-l2}
\|\mathbf a\|_{l^1}\leq {m}^{1/2}\|\mathbf a\|_{l^2}.
\end{equation}
\end{lemma}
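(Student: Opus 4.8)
The plan is to obtain every estimate from H\"older's inequality together with two elementary facts about the scaled eigenvalues $\beta_j$: since $\hat\mbp$ collects only the modes with index $j\geq 3$, \eqref{def-Theta-beta} gives $\beta_j\geq 1$ and $\beta_j\sim j$, hence $\sum_{j\geq 3}\beta_j^{-2}<\infty$ is an absolute constant, while for $j\leq N_1-1$ we have $\beta_j\lesssim N_1$ and $\sum_{j=3}^{N_1-1}\beta_j^{2k}\lesssim N_1^{2k+1}$. The first assertion $\|\hat\mbp\|_{\mbV_0^2}=\|\hat\mbp\|_{l^2}$ is immediate from \eqref{def-mbV}, because $\beta_j^{0}=1$ on this index range.

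For the bound $\|\hat\mbp\|_{\mbV_k}\lesssim\|\hat\mbp\|_{\mbV_{k+1}^2}$ I would split $\beta_j^{k}|\mrp_j|=\beta_j^{-1}\cdot\beta_j^{k+1}|\mrp_j|$ and apply Cauchy--Schwarz in $j$, absorbing the finite factor $\bigl(\sum_j\beta_j^{-2}\bigr)^{1/2}\lesssim 1$ into the implied constant. For the dimension-dependent step $\|\hat\mbp\|_{\mbV_{k+1}^r}\lesssim N_1\|\hat\mbp\|_{\mbV_k^r}$ I would estimate termwise, using $\beta_j^{r(k+1)}=\beta_j^{rk}\,\beta_j^{r}\lesssim N_1^{r}\beta_j^{rk}$, then sum in $j$ and take $r$-th roots. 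The estimate $\|\hat\mbp\|_{\mbV_k}\lesssim N_1^{k+1/2}\|\hat\mbp\|_{l^2}$ follows from Cauchy--Schwarz applied directly, $\sum_j\beta_j^{k}|\mrp_j|\le\bigl(\sum_j\beta_j^{2k}\bigr)^{1/2}\|\hat\mbp\|_{l^2}$, together with $\sum_j\beta_j^{2k}\lesssim N_1^{2k+1}$; alternatively one could iterate the first two bounds, but that route loses a power of $N_1$ and is wasteful. Finally \eqref{est-l1-l2} is Cauchy--Schwarz against the constant vector $(1,\dots,1)\in\mathbb R^{m}$.

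None of these steps presents a genuine obstacle; the only point demanding attention is the bookkeeping of the index range --- it is precisely because $\hat\mbp$ involves only $j\geq 3$ that the inverse powers $\beta_j^{-1}$ are bounded --- together with the verification that the implied constants depend only on $R_0$ through the asymptotics $\beta_j\sim j$. The hypothesis $\hat\mbp\in l^\infty(\mathbb R^{N_1})$ merely ensures the finitely many quantities above are well defined.
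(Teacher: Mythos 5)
Your proof is correct and follows exactly the route the paper indicates (the paper states these embeddings are "direct results of H\"older's inequality and the asymptotic form of $\beta_j$" and omits the details, which you have supplied accurately). Your remark that the third bound must be obtained by a direct Cauchy--Schwarz rather than by iterating the first two --- which would cost an extra factor $N_1^{1/2}$ --- is the one genuinely non-automatic point, and you have it right.
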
 
When developing expansions of the interface $\Gamma_\mbp$ it is convenient to build in the uniform rescaling and translations associated to $(\mrp_0,\mrp_1, \mrp_2)$ so that the expansions are controlled by $\hat{\mbp}.$ To this end we define the leading order perturbed interfacial map
\beq\label{def-gamma-p0}
\bm \gamma_{\mbp,0}=(1+\mrp_0)\bm \gamma_0(s) +\mrp_1\Theta_0 \bE_1 +\mrp_2 \Theta_0\bE_2,
\eeq 
whose interface $\Gamma_{\mbp,0}$ is a translated, scaled circle with constant  curvature $\kappa_{\mbp,0}$. 
 We assume $\mbp\in \cD_\delta$, \eqref{A-00}, holds  in the sequel. 
\begin{lemma}[Geometric quantities of $\Gamma_\mbp$]\label{lem-Gamma-p} 
The length normalization $A(\mbp)$ depends quadradically upon $\hat\mbp$, and the length of $\Gamma_\mbp$ depends  only on $\mrp_0$,
\beq\label{est-A-|Gamma-p|}
A(\mbp)=1+O(\|\hat\mbp\|_{\mbV_1}^2);\qquad |\Gamma_\mbp|=(1+\mrp_0) |\Gamma_0|.
\eeq
The gradient of $A$ with respect to $\mbp$ satisfies
\beqs
\|\nabla_\mbp A\|_{l^2}\lesssim \|\hat\mbp\|_{\mbV_2^2}.
\eeqs
The curvature of  $\Gamma_{\mathbf p}$, defined by 
\begin{equation}\label{def-kappa}
\kappa_{\mathbf p}:=\bm \gamma_{\mathbf p}''\cdot \mathbf n_{\mathbf p}/|\bm\gamma'_{\mathbf p}|^2, \quad\hbox{with}\quad \mathbf n_{\mbp}=e^{-\pi \mathcal R/2}\bm \gamma'_{\mathbf p}\big/|\bm \gamma'_{\mathbf p}|
\end{equation}
admits the expansion
\begin{equation}\label{def-kappa-p,0}
\kappa_{\mathbf p}(s)
=\kappa_{\mathbf p,0}+\mathcal Q_1 +\mathcal Q_2, \qquad \kappa_{\mathbf p,0}=-\frac{1}{R_0(1+\mathrm p_0)}.
\end{equation}
where the linear approximation is given by
\beqs
\mathcal Q_1=\frac{1}{(1+\mrp_0)R_0^2}\sum_{j= 3}^{{N_1}-1}(1-\beta_j^2)\mathrm p_j\tilde \Theta_j,
\eeqs
and the quadratic remainder $\mathcal Q_2$ satisfies
\beqs
\begin{aligned}
&\|\mathcal Q_2\|_{L^2(\msI_\mbp)}\lesssim  \|\hat\mbp\|_{\mbV_2^2}^2, \qquad  \|\mathcal Q_2\|_{H^1(\msI_\mbp)} \lesssim \|\hat\mbp\|_{\mbV_3^2}\|\hat\mbp\|_{\mbV_2^2}, \\
& \| \mathcal Q_2\|_{H^2(\msI_\mbp)} \lesssim \|\hat\mbp\|_{\mbV_4^2}\|\hat\mbp\|_{\mbV_2^2}; \qquad
 \| \mathcal Q_2\|_{H^3(\msI_\mbp)} \lesssim \|\hat\mbp\|_{\mbV_5^2}\|\hat\mbp\|_{\mbV_2^2}+\|\hat\mbp\|_{\mbV_3^2}\|\hat\mbp\|_{\mbV_4^2}.
\end{aligned}
\eeqs 
The curvature  $\kappa_\mbp$ and normal $ \mbn_\mbp$ depend only on $\mathrm p_0$ and $\hat{\mathbf p}$,  and satisfy  the following bounds
\begin{equation}\label{bound-kappa-p}
\|\kappa_{\mathbf p}\|_{L^\infty}   +  \|\varep^2\Delta_{s_{\mathbf p}}\kappa_{\mathbf p}\|_{L^\infty} \lesssim 1+\|\hat{\mathbf p}\|_{\mbV_{2}};
\quad |\mathbf n_{\mathbf p}-\mathbf n_0|\lesssim \|\hat{\mathbf p}\|_{\mbV_1}, 
\end{equation} 
Moreover, the perturbed and original normal satisfy the relation
\beq\label{est-bn-p0}
\mbn_\mbp\cdot \mbn_0= 1+O(\|\hat\mbp\|_{\mbV_1}^2).
\eeq
\end{lemma}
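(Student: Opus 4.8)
The plan is to reduce every assertion to explicit Frenet computations for the \emph{normal graph} $\bm\gamma_{\bar p}=\bm\gamma_0+f\,\mathbf n_0$ over the base circle, where $f=f(s):=\bar p(\tilde s(s))$. The structural point is that $\bm\gamma_\mbp$ is $\bm\gamma_{\bar p}$ subjected to the uniform dilation $(1+\mrp_0)/A(\mbp)$ followed by a translation, so that
\[
|\bm\gamma_\mbp'|=\tfrac{1+\mrp_0}{A}\,|\bm\gamma_{\bar p}'|,\qquad
\mathbf n_\mbp=\mathbf n_{\bar p}:=\frac{(1-\kappa_0 f)\,\mathbf n_0-f'\,\mathbf T_0}{|\bm\gamma_{\bar p}'|},\qquad
\kappa_\mbp=\tfrac{A}{1+\mrp_0}\,\kappa_{\bar p},
\]
curvature scaling inversely under dilation and the normal being invariant under dilation and translation; here $\mathbf T_0=\bm\gamma_0'$. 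Using the Frenet relations $\mathbf T_0'=\kappa_0\mathbf n_0$, $\mathbf n_0'=-\kappa_0\mathbf T_0$ with $\kappa_0=-1/R_0$, one computes in closed form
\[
\bm\gamma_{\bar p}'=(1-\kappa_0 f)\,\mathbf T_0+f'\,\mathbf n_0,\qquad |\bm\gamma_{\bar p}'|^2=(1-\kappa_0 f)^2+(f')^2,
\]
\[
\kappa_{\bar p}=\frac{2\kappa_0(f')^2+\kappa_0(1-\kappa_0 f)^2+f''(1-\kappa_0 f)}{\bigl[(1-\kappa_0 f)^2+(f')^2\bigr]^{3/2}}.
\]
By the well-posedness of the implicit definition (Lemma 2.10 of \cite{CP-linear}) the map $s\mapsto\tilde s$ is a smooth diffeomorphism $\msI\to\msI_\mbp$ with $\tilde s'=\tfrac{1+\mrp_0}{A}\,|\bm\gamma_{\bar p}'|$; together with $\|\bar p\|_{L^\infty}+\|\bar p'\|_{L^\infty}\lesssim\|\hat\mbp\|_{\mbV_1}\le C\delta$ from \eqref{re-barp-hatp} and $\mrp_0>-1/2$, this yields the a priori bounds $|\bm\gamma_{\bar p}'|\ge\tfrac12$, $A=1+O(\|\hat\mbp\|_{\mbV_1})$, $\tilde s'=(1+\mrp_0)\bigl(1+O(\|\hat\mbp\|_{\mbV_1})\bigr)$ and $\tilde s''=O(\|\hat\mbp\|_{\mbV_1})$, which remove all circularity from what follows.

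The length statement is immediate: $|\Gamma_\mbp|=\int_\msI|\bm\gamma_\mbp'|\dd s=\tfrac{1+\mrp_0}{A}\int_\msI|\bm\gamma_{\bar p}'|\dd s=(1+\mrp_0)|\Gamma_0|$ by the definition of $A$. For the normalization, expand $A|\Gamma_0|=\int_\msI\sqrt{(1-\kappa_0 f)^2+(f')^2}\,\dd s=|\Gamma_0|-\kappa_0\!\int_\msI f\,\dd s+O(\|\hat\mbp\|_{\mbV_1}^2)$, the $O(\|\hat\mbp\|_{\mbV_1}^2)$ remainder estimated through \eqref{re-barp-hatp}; changing variables $s\mapsto\tilde s$ and expanding the Jacobian gives $\int_\msI f\,\dd s=(1+\mrp_0)^{-1}\!\int_{\msI_\mbp}\bar p\,\dd\tilde s+O(\|\hat\mbp\|_{\mbV_1}^2)=O(\|\hat\mbp\|_{\mbV_1}^2)$ because $\bar p\perp 1$ in $L^2(\msI_\mbp)$ by \eqref{ortho-tTheta}, whence $A=1+O(\|\hat\mbp\|_{\mbV_1}^2)$. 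Since $A$ depends only on $(\mrp_0,\hat\mbp)$ and equals $1$ identically on $\{\hat\mbp=0\}$, $\partial_{\mrp_0}A=O(\|\hat\mbp\|_{\mbV_1}^2)$; differentiating the integral in $\mrp_j$, $j\ge 3$, and changing variables, the $\kappa_0$-term reduces to $-\kappa_0\langle w,\tilde\Theta_j\rangle_{L^2(\msI_\mbp)}$ with $w=(1+\mrp_0)^{-1}+O(\|\hat\mbp\|_{\mbV_1})$ whose order-one part vanishes by \eqref{ortho-tTheta} (leaving an $O(\mrp_j)$ contribution), while the dominant term comes from $\partial_{\mrp_j}\!\int(f')^2$ and equals $\tfrac1{A|\Gamma_0|R_0^2}\beta_j^2\mrp_j$ up to corrections of lower order in $\beta_j$, using \eqref{tTheta''}, \eqref{ortho-tTheta} and $(1+\mrp_0)^2\beta_{\mbp,j}^2=\beta_j^2/R_0^2$; summing in $l^2$ (and bounding the quadratic remainder by $\|\mathrm{proj}\,g\|_{L^2}\le\|g\|_{L^2}$) yields $\|\nabla_\mbp A\|_{l^2}\lesssim\|(\beta_j^2\mrp_j)_j\|_{l^2}=\|\hat\mbp\|_{\mbV_2^2}$.

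The curvature expansion follows by Taylor-expanding $\kappa_{\bar p}$ about $(f,f',f'')=(0,0,0)$: the constant term is $\kappa_0$, the linear term is $\kappa_0^2 f+f''$. Converting $s$-derivatives into $\tilde s$-derivatives via $f''(s)=(\tilde s')^2\bar p''(\tilde s)+\tilde s''\,\bar p'(\tilde s)$, and using $-\bar p''=\sum_j\beta_{\mbp,j}^2\mrp_j\tilde\Theta_j$, $(1+\mrp_0)^2\beta_{\mbp,j}^2=\beta_j^2/R_0^2$, $\kappa_0^2=1/R_0^2$, the linear term becomes $R_0^{-2}\sum_j(1-\beta_j^2)\mrp_j\tilde\Theta_j+(\text{quadratic})$; multiplying by $\tfrac{A}{1+\mrp_0}=(1+\mrp_0)^{-1}+O(\|\hat\mbp\|_{\mbV_1}^2)$ isolates $\kappa_{\mbp,0}=-1/(R_0(1+\mrp_0))$ and the stated $\mcQ_1$, and defines $\mcQ_2$ as the remainder. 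That remainder is a finite sum of terms, each a product of at least two small factors — each one of $\bar p^{(m)}\!\circ\tilde s$ with $m\le2$, $\tilde s'-(1+\mrp_0)$, $\tilde s''$, or $A-1$, and each $O(\|\hat\mbp\|_{\mbV_1})$ — divided by a power of $|\bm\gamma_{\bar p}'|\ge\tfrac12$. To estimate $\|\mcQ_2\|_{H^k(\msI_\mbp)}$ one distributes the $k$ arc-length derivatives among the factors; each derivative raises the effective weight index of the factor it hits by one through \eqref{tTheta''} and the chain rule, and one applies H\"older with one factor in $L^2(\msI_\mbp)$ and the rest in $L^\infty(\msI_\mbp)$, invoking \eqref{re-barp-hatp} and the embeddings of Lemma \ref{lem-est-V} to pass to $l^2$-weighted norms, intermediate distributions being absorbed by Cauchy--Schwarz interpolation. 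All contributions are dominated by $\|\hat\mbp\|_{\mbV_{k+2}^2}\|\hat\mbp\|_{\mbV_2^2}$ when $k\le2$; when $k=3$ one also keeps the balanced term $\|\hat\mbp\|_{\mbV_3^2}\|\hat\mbp\|_{\mbV_4^2}$, which is not dominated by it, giving exactly the stated two-term bound. The $L^\infty$ bounds need no expansion: since $|\bm\gamma_{\bar p}'|\ge\tfrac12$ there are no small denominators, so $\|\kappa_\mbp\|_{L^\infty}\lesssim 1+\|f''\|_{L^\infty}\lesssim 1+\|\hat\mbp\|_{\mbV_2}$ by \eqref{re-barp-hatp} and $\|\hat\mbp\|_{\mbV_1}\le C$, while the top-order term of $\varep^2\Delta_{s_\mbp}\kappa_\mbp$ is $\sim\varep^2\bar p^{(4)}$, controlled by $\varep^2\|\hat\mbp\|_{\mbV_4}\lesssim(\varep N_1)^2\|\hat\mbp\|_{\mbV_2}\lesssim\rho^2\|\hat\mbp\|_{\mbV_2}$ thanks to the frequency cutoff $\beta_j\lesssim N_1\sim\varep^{-1}\rho$, the remaining products being lower order. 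Finally $\mathbf n_\mbp=\mathbf n_{\bar p}$ gives $\mathbf n_\mbp\cdot\mathbf n_0=(1-\kappa_0 f)/|\bm\gamma_{\bar p}'|=\bigl(1+(f'/(1-\kappa_0 f))^2\bigr)^{-1/2}=1+O((f')^2)=1+O(\|\hat\mbp\|_{\mbV_1}^2)$, hence $|\mathbf n_\mbp-\mathbf n_0|^2=2\bigl(1-\mathbf n_\mbp\cdot\mathbf n_0\bigr)=O(\|\hat\mbp\|_{\mbV_1}^2)$.

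I expect the main obstacle to be the construction and estimation of $\mcQ_2$ with precisely the claimed weighted-Sobolev structure and constants independent of the mode number $N_1$. The delicate points are that the graph function $f$ lives on $\msI$ while $\bar p$ and the eigenmodes $\tilde\Theta_j$ live on $\msI_\mbp$, so each derivative count must be transported carefully through the implicitly defined change of variables $s\leftrightarrow\tilde s$ (and through $A$, $\tilde s'$, $\tilde s''$), and the smallness of the low-order factors must be spent crudely — in $L^\infty$ against a single $L^2$ factor that carries all the index growth — so that the exponents land on $k+2$ rather than something larger. Everything else reduces to a bounded number of applications of H\"older's inequality, the orthogonality \eqref{ortho-tTheta}, and Lemma \ref{lem-est-V}.
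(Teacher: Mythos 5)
Your proof is correct and follows essentially the same route as the paper's: both compute the Frenet data of the normal perturbation $\bm\gamma_0+\bar p\,\mathbf n_0$ explicitly, identify the linear curvature contribution $\kappa_0^2\bar p+\bar p''$ after transporting derivatives through $s\leftrightarrow\tilde s$, and bound the quadratic remainder in weighted norms via H\"older, \eqref{ortho-tTheta} and Lemma \ref{lem-est-V}; your only organizational differences are that you factor out the dilation $(1+\mrp_0)/A(\mbp)$ at the outset and Taylor-expand the closed-form expression for $\kappa_{\bar p}$ rather than expanding $\bm\gamma_\mbp''\cdot\mathbf n_\mbp$ and $|\bm\gamma_\mbp'|^{-3}$ separately, and that you prove the $A(\mbp)$ and $\nabla_\mbp A$ estimates directly where the paper cites Lemma 2.11 of \cite{CP-linear}. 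One harmless slip: by \eqref{est-N01} the cutoff satisfies $N_1\sim\varep^{-1}\rho^{1/4}$, so your bound on the top-order term of $\varep^2\Delta_{s_\mbp}\kappa_\mbp$ is $\varep^2\|\hat\mbp\|_{\mbV_4}\lesssim\rho^{1/2}\|\hat\mbp\|_{\mbV_2}$ rather than $\rho^2\|\hat\mbp\|_{\mbV_2}$, which still suffices for \eqref{bound-kappa-p}.
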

\begin{proof}
The length of $\Gamma_\mbp$ follows from its definition, and the approximation of $A(\mbp)$ and its gradient estimate are given in Lemma 2.11 of \cite{CP-linear}. 
Taking the derivative of $\bm \gamma_\mbp$ in \eqref{def-gamma-p} and using $\mbn_0'=-\kappa_0\bm \gamma_0'$ we find
\beq \label{est-gamma'}
\bm \gamma_\mbp'=\frac{1+\mrp_0}{A(\mbp)} \Big[\big(1-\kappa_0\bar p(\tilde s)\big)\bm\gamma_0'+\bar p'(\tilde s) |\bm\gamma_\mbp'| \mbn_0(s) \Big],
\eeq
and hence for $\hat \mbp \in \mbV_2$ we have the approximations
\beq\label{est-|gamma'|}
 |\bm \gamma'_\mbp|= \frac{1+\mrp_0}{A(\mbp)} \left(1-\kappa_0 \bar p\right) +O(\|\hat\mbp\|_{\mbV_1}^2), \qquad |\bm \gamma'_\mbp|'= O(\|\hat\mbp\|_{\mbV_1}).
 \eeq
 To obtain an approximation of the curvature $\kappa_\mbp$ we take an additional $s$ derivative of \eqref{est-gamma'}. Using  the relation between the tangent and normal  we find the equality
 \beqs
 \begin{aligned}
 \bm \gamma_{\mathbf p}'' 
 &=\frac{1+\mrp_0}{A(\mbp)}\left[\Big(\kappa_0 +\frac{(1+\mrp_0)^2}{A^2(\mbp)}\bar p'' (\tilde s)-\kappa_0^2 \bar p +\mathcal Q_{2,0}\Big)\mbn_0- 2\kappa_0 \bar p'(\tilde s) |\bm \gamma_\mbp'| \bm \gamma_0' \right].
 \end{aligned}
 \eeqs
 Here $\mathcal Q_{2,0}(\bm \gamma_\mbp'')$  takes the explicit form
 \beqs
 \mathcal Q_{2,0}= \left(|\bm \gamma_\mbp'|^2-\frac{(1+\mrp_0)^2}{A(\mbp)} \right) \bar p'' + \bar p'|\bm \gamma_\mbp'|'
 \eeqs
 and is an intermediate quadratic remainder from \eqref{est-|gamma'|}.
  From the tangent and normal relation: $\mbn=e^{-\pi \mathcal R/2}\bm \gamma'$, we deduce from \eqref{est-gamma'} that
 \beq\label{gamma'-bot}
 e^{-\pi\mathcal R/2} \bm\gamma_\mbp'= \frac{1+\mrp_0}{A(\mbp)} \Big[(1-\kappa_0\bar p(\tilde s))\mbn_0-\bar p'(\tilde s) |\bm \gamma_\mbp'|\bm \gamma_0'\Big], 
 \eeq
 which when dotted with the approximation for $\bm \gamma_{\mbp}^{\prime\prime}$ implies
 \beqs
 e^{-\pi\mathcal R/2} \bm\gamma_\mbp'\cdot  \bm \gamma_\mbp'' =\left( \frac{1+\mrp_0}{A(\mbp)}\right)^2  \left[ \kappa_0 +\frac{(1+\mrp_0)^2}{A^2(\mbp)} \bar p'' -2\kappa_0^2 \bar p +\mathcal Q_{2,1}\right] , 
 \eeqs
 in which $\mathcal Q_{2,1}$ is a quadratic term given by
 \beqs
 \mathcal Q_{2,1}=  \mathcal Q_{2,0}(1-\kappa_0\bar p) + \left[ -\kappa_0\bar p\left(\frac{(1+\mrp_0)^2}{A^2(\mbp)} \bar p'' -\kappa_0 \bar p\right) +\left(2\kappa_0 \bar p' |\bm \gamma_\mbp'| +\kappa_0' \bar p \right)\bar p'|\bm \gamma_\mbp'|\right].
 \eeqs
 Finally, in light of \eqref{est-|gamma'|} we rewrite
 \beqs
 \frac{1}{|\bm\gamma_\mbp'|^3}=\left(\frac{A(\mbp)}{1+\mrp_0}\right)^3 \left(1+3\kappa_0 \bar p +\mathcal Q_{2,2} \right),\qquad \mathcal Q_{2,2
}:= \frac{1}{|\bm \gamma_\mbp'|^3}-1-3\kappa_0\bar p, 
\eeqs
and substituting these expressions in \eqref{def-kappa} we obtain the curvature expansion,
 \beqs
 \kappa_\mbp= \frac{A(\mbp)}{1+\mrp_0} \left[\kappa_0 +\kappa_0^2 \bar p +\frac{(1+\mrp_0)^2}{A^2(\mbp)} \bar p''(\tilde s) +\mathcal Q_{2,3} \right]
 \eeqs 
 where $A(\mbp)=1+O(\|\hat\mbp\|_{\mbV_1}^2)$ and the final quadratic remainder takes the form 
 \beqs
 \mathcal Q_{2,3}:=\frac{A(\mbp)}{(1+\mrp_0)^2}e^{-\pi \mathcal R/2}\bm \gamma_\mbp'\cdot \bm \gamma_\mbp''\;\mathcal Q_{2,2}   + \frac{(1+\mrp_0)^3}{A^3(\mbp)}\frac{1}{|\bm \gamma_\mbp'|^3} \; \mathcal Q_{2,1} +  3\kappa_0 \bar p\left[\frac{(1+\mrp_0)^2}{A^2(\mbp)}\bar p'' -2\kappa_0\bar p \right]. 
 \eeqs
 The form of the expansion \eqref{def-kappa-p,0} follows from the definition of $\bar p$, \eqref{est-A-|Gamma-p|}, and $(A(\mbp)-1)$ is quadratic. The $H^k$ estimates of $\mathcal Q_2$ follows directly from the formula for the quadratic terms, the independence of $A(\mbp)$ from $\tilde s_\mbp$,  and the embedding estimates in Lemma \ref{lem-est-V}.  
 
 The curvature bounds in \eqref{bound-kappa-p} follow directly from these expansions and the embedding estimate of Lemma \ref{lem-est-V}. To establish that the normals are nearly parallel, from the definition \eqref{def-kappa} of $\mbn_\mbp$ we have
 \beqs
 \mbn_\mbp\cdot \mbn_0= \frac{e^{-\pi\mathcal R/2}\bm \gamma_\mbp'}{|\bm \gamma_\mbp'|} \cdot \mbn_0. 
 \eeqs
 The estimate \eqref{est-bn-p0} follows directly by  \eqref{gamma'-bot} and \eqref{est-|gamma'|}. This completes the proof.  
\end{proof}

In Lemma 2.11 of \cite{CP-linear} it is established that there exists $K,\ell,$ and $\delta>0$ depending only upon the choice of $R_0$ such that for each $\mbp\in\cD_\delta$ the curve  $\Gamma_\mbp\in \cG_{K,2\ell}^2$. In particular $\Gamma_\mbp$  has a tubular neighborhood of width $2\ell$, denoted $\Gamma_\mbp^{2\ell}$ and called the reach of $\Gamma_\mbp$, in which the local coordinates $(s_{\mathbf p}, z_{\mathbf p})$ are well defined. On the interface $\Gamma_\mbp$, where $z_\mbp=0$, we have the relation $s_\mbp=s$.   The coordinate systems are summarized in the following table. 
\begin{table}[h!]
\centering
\begin{tabular}{c|c|c||c|c|c}
\hline
    Curve & Parameterization& Arc-length & Neighborhood & Local coordinates & Arc-length extension \\[2pt]
    \hline 
    $ \Gamma_0$ & $\bm \gamma_0(s), s\in \msI$ & $s\in \msI$ & $\Gamma_0^{2\ell}$ & $(s,z), s\in \msI$ & $s\in \msI$ \\[2pt]
    \hline 
    $\Gamma_\mbp$ & $\bm \gamma_\mbp(s), s\in \msI$ & $\tilde s \in \msI_\mbp$ & $\Gamma_\mbp^{2\ell}$ & $(s_\mbp, z_\mbp), s_\mbp \in \msI$ & $\tilde s_\mbp\in\msI_\mbp$\\[3pt]
    \hline
\end{tabular}
\vskip 0.1in
\caption{Table of Local Coordinates: $\msI=[0, |\Gamma_0|]$ and $ \msI_\mbp=[0, |\Gamma_\mbp|]$. The geometric quantities $\mbn_\mbp, \bm\gamma_\mbp$ and $\kappa_\mbp$ on $\Gamma_\mbp$ have natural extensions to $\Gamma_\mbp^{2\ell}$. }
\label{table:1}
\end{table}

We say a function $f=f(s_\mbp)$ lies in $L^2(\msI_\mbp)$ if \beqs 
\|f\|_{L^2(\msI_\mbp)}^2:=\int_{\msI} f^2(s_\mbp) |\bm \gamma_\mbp'|\dd s_\mbp <\infty   \quad \hbox{or equivalently} \quad \int_{\msI_\mbp} f^2(s_\mbp) \dd \tilde s_\mbp <\infty.  
\eeqs 

\begin{notation}\label{Notation-h}
To simplify the presentation of the subsequent calculations, we will use the blanket notation $h(z_\mbp, \bm \gamma_{\mathbf p}^{(k)})$ for any smooth function defined in $\Gamma_\mbp^{2\ell}$ that decays exponentially close  to a constant as $|z_\mbp|\to 2\ell/\varep$  and depends upon $s_{\mbp}$ only through $|\bm \gamma_\mbp'|,  \kappa_\mbp,  \mbn_\mbp\cdot \mbn_0$  and their scaled derivatives $\varep^k\nabla_{s_\mbp}^k$ of order up to $k.$ In particular, if $h$ is independent of $z_\mbp$, then we denote it by $h(\bm \gamma_\mbp'')$.   

Similarly, $h(\bm \gamma_{\mathbf p,0}'')$ denotes  a smooth function whose $s_{\mbp}$ dependence arises from
the leading order perturbed curve $\bm\gamma_{\mbp,0}$ through its geometric quantities $|\bm \gamma_{\mbp,0}'|,  \kappa_{\mbp,0},  \mbn_{\mbp,0}\cdot \mbn_0$ and $\varep^k\p_{s_\mbp}^k$ their scaled derivatives up to order $2$.
\end{notation}
 
 We will frequently decompose these quantities in the form
 \beq\label{decomp-h}
 h(z_\mbp, \bm \gamma_\mbp'')=h(z_\mbp, \bm \gamma_{\mbp,0}'') +\left(h(z_\mbp, \bm \gamma_{\mbp}'')-h(z_\mbp, \bm \gamma_{\mbp,0}'')\right),
 \eeq
in order to obtain Lipschitz type estimates on the difference in terms of  $\hat\mbp$. This is formalized in the following Lemma. 
\begin{lemma}\label{lem-h}
Suppose $h=h(\bm \gamma_\mbp'')$ enjoys the properties of Notation\,\ref{Notation-h}, and is decomposed as in \eqref{decomp-h}. Then the leading order term $h(\bm \gamma_{\mbp,0}'')$ is a constant independent of $\tilde s_\mbp$ and if  $\hat\mbp \in \mbV_2$, then
\begin{equation}\label{est-h-p-V-2+2}
\| h(\bm \gamma_{\mbp}'')- h(\bm \gamma_{\mbp,0}'')\|_{L^2(\msI_{\mbp})}\lesssim \|\hat{\mbp}\|_{\mbV_2^2};
\end{equation}
and if $\hat\mbp \in \mbV_3^2$, then for $l\geq 1$, 
\begin{equation}\label{est-h-gamma''-de}
\left\|\varep^{l-1}\p_{s_{\mbp}}^{l}\left(h( \bm \gamma_{\mbp}'')-h(\bm \gamma_{\mbp,0}''\right)\right\|_{L^2(\msI_{\mbp})}\lesssim \|\hat{\mbp}\|_{\mbV_{3}^2}.
\end{equation}
\end{lemma}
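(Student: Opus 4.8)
The plan is to reduce everything to the explicit curvature expansion of Lemma \ref{lem-Gamma-p} together with the embeddings of Lemma \ref{lem-est-V}. First I would observe that by Notation \ref{Notation-h} a function $h(\bm\gamma_\mbp'')$ depends on $s_\mbp$ only through the triple $(|\bm\gamma_\mbp'|,\kappa_\mbp,\mbn_\mbp\cdot\mbn_0)$ and their scaled derivatives; at the leading order curve $\bm\gamma_{\mbp,0}$, which is a uniformly scaled, translated circle, each of these is constant in $\tilde s_\mbp$: indeed $|\bm\gamma_{\mbp,0}'| = 1+\mrp_0$, $\kappa_{\mbp,0} = -1/(R_0(1+\mrp_0))$, $\mbn_{\mbp,0}\cdot\mbn_0 = 1$, and all their $s_\mbp$-derivatives vanish. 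Hence $h(\bm\gamma_{\mbp,0}'')$ is a constant in $\tilde s_\mbp$, giving the first assertion. (Its dependence on $\mrp_0$ and the translation parameters is harmless.)

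For the $L^2$ estimate \eqref{est-h-p-V-2+2}, I would write the difference $h(\bm\gamma_\mbp'')-h(\bm\gamma_{\mbp,0}'')$ via the fundamental theorem of calculus along a path interpolating between the two argument triples, so that it is bounded pointwise by $C$ times the difference of the defining geometric quantities. By Lemma \ref{lem-Gamma-p}, $\kappa_\mbp - \kappa_{\mbp,0} = \mcQ_1 + \mcQ_2$ with $\|\mcQ_1\|_{L^2(\msI_\mbp)}\lesssim\|\hat\mbp\|_{\mbV_2^2}$ (from the $(1-\beta_j^2)\mrp_j$ coefficients and the orthonormality of the $\tilde\Theta_j$) and $\|\mcQ_2\|_{L^2(\msI_\mbp)}\lesssim\|\hat\mbp\|_{\mbV_2^2}^2$; similarly $|\bm\gamma_\mbp'| - (1+\mrp_0) = O(\|\hat\mbp\|_{\mbV_1})$ in $L^\infty$ and in $L^2$ after accounting for the $\bar p$ term, and $\mbn_\mbp\cdot\mbn_0 - 1 = O(\|\hat\mbp\|_{\mbV_1}^2)$ by \eqref{est-bn-p0}. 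The scaled derivatives enter only with an $\varep^k$ prefactor, so the worst term is the curvature, and since $\|\hat\mbp\|_{\mbV_1}\lesssim\|\hat\mbp\|_{\mbV_2^2}$ and $\|\hat\mbp\|_{\mbV_1}^2\lesssim\|\hat\mbp\|_{\mbV_2^2}$ (using smallness of $\hat\mbp$ and Lemma \ref{lem-est-V}), every contribution is controlled by $\|\hat\mbp\|_{\mbV_2^2}$, as claimed.

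For \eqref{est-h-gamma''-de}, I would differentiate $h(\bm\gamma_\mbp'')-h(\bm\gamma_{\mbp,0}'')$ $\ell$ times in $s_\mbp$; by the chain rule this produces terms that are products of derivatives of $h$ evaluated along the interpolation path times factors $\p_{s_\mbp}^{j}$ of the geometric quantities $(|\bm\gamma_\mbp'|,\kappa_\mbp,\mbn_\mbp\cdot\mbn_0)$ for $j\le\ell$, minus their leading-order counterparts (which are constant, hence contribute nothing once differentiated at least once). The key point is that the $\varep^{\ell-1}$ prefactor, combined with the fact that Notation \ref{Notation-h} already carries scaled derivatives $\varep^k\nabla_{s_\mbp}^k$, means each derivative of order $j$ of $\kappa_\mbp$ costs at most one factor of $\|\hat\mbp\|_{\mbV_{j+1}^2}$ or, for the quadratic remainder $\mcQ_2$, the $H^{j}$ bounds of Lemma \ref{lem-Gamma-p} which are all controlled by $\|\hat\mbp\|_{\mbV_{j+2}^2}\|\hat\mbp\|_{\mbV_2^2}\lesssim\|\hat\mbp\|_{\mbV_{j+2}^2}$; since we only ask for an $\varep^{\ell-1}$-weighted $\ell$-th derivative, the top-order term is $\varep^{\ell-1}\p_{s_\mbp}^{\ell-1}\mcQ_1 \sim \varep^{\ell-1}\|\hat\mbp\|_{\mbV_{\ell+1}^2}$-ish, which for $\ell\le 3$ is bounded by $\|\hat\mbp\|_{\mbV_3^2}$ after absorbing powers of $\varep$ (here one uses that $\varep^{\ell-1}\beta_j^{\ell-1}\lesssim 1$ on the relevant frequency range, or simply that $N_1\sim\varep^{-1}\rho$ and the $\mbV$-norm scaling in Lemma \ref{lem-est-V}). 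The main obstacle is precisely this bookkeeping of how many $\mbV$-derivatives each term consumes versus how many the hypothesis $\hat\mbp\in\mbV_3^2$ supplies, together with correctly counting the compensating powers of $\varep$ from the scaled derivatives in Notation \ref{Notation-h}; once the chain-rule structure is laid out and Lemma \ref{lem-Gamma-p}'s $H^k$ bounds on $\mcQ_2$ are invoked term by term, the estimate closes.
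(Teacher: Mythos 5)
Your proposal is correct and follows essentially the same route as the paper, which likewise deduces the constancy of $h(\bm\gamma_{\mbp,0}'')$ from the explicit constants $|\bm\gamma_{\mbp,0}'|=1+\mrp_0$, $\kappa_{\mbp,0}=-1/(R_0(1+\mrp_0))$, $\mbn_{\mbp,0}\cdot\mbn_0=1$, and obtains both estimates directly from the expansions of $|\bm\gamma_\mbp'|$, $\kappa_\mbp$, $\mbn_\mbp$ in Lemma\,\ref{lem-Gamma-p} together with the embeddings of Lemma\,\ref{lem-est-V}; your write-up is in fact more detailed than the paper's. The only blemish is a harmless off-by-one in the top-order term (it should be $\varep^{\ell-1}\p_{s_\mbp}^{\ell}\mcQ_1\sim\varep^{\ell-1}\|\hat\mbp\|_{\mbV_{\ell+2}^2}$), which is still absorbed by the same $\varep^{\ell-1}\beta_j^{\ell-1}\lesssim 1$ argument you give.
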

\begin{proof} Estimates in  \eqref{est-h-p-V-2+2}-\eqref{est-h-gamma''-de}, can be derived directly by the approximations of $|\bm \gamma_\mbp'|, \kappa_\mbp$ and $\mbn_\mbp$ in \eqref{est-|gamma'|}, \eqref{def-kappa-p,0}. We only need to verify  $h(z_\mbp, \bm \gamma_{\mbp,0}'')$ is independent of $s_\mbp$. This is true since $|\bm \gamma_{\mbp,0}'|=1+\mrp_0$ by \eqref{est-|gamma'|},  $\kappa_{\mbp,0}$ admits form in \eqref{def-kappa-p,0} and  \beq \mbn_0\cdot \mbn_{\mbp,0}=1, \quad\hbox{for}\quad  \mbn_{\mbp,0}= \frac{e^{-\pi \mathcal R/2}\bm \gamma_{\mbp,0}'}{|\bm \gamma_{\mbp,0}'|}.\eeq 
Here we used \eqref{gamma'-bot} with $\hat\mbp=0.$
\end{proof}

\begin{lemma}\label{Notation-e_i,j}
Recalling the notation of section\,\ref{ssec-Notation}, if $f\in L^2(\msI_\mbp)$, then there exists a unit vector $\bm e=(e_i)$ such that
\beq\label{def-e-i}
\int_{\msI_\mbp}f \tilde \Theta_i \dd \tilde s_\mbp=O(\|f\|_{L^2(\msI_\mbp)})e_i.
\eeq
If  in addition $f\in L^\infty$ on $\msI_\mbp$, then for any vector $\mathbf a=(\mathrm a_j)\in l^2$, we have
\beq\label{est-e-i,j}
\left|\sum_{j}\int_{\msI_\mbp} f\tilde \Theta_i \mathrm a_j\tilde \Theta_j\dd \tilde s_\mbp\right|\lesssim \|\mathbf  a\|_{l^2}\|f\|_{L^\infty}e_i,
\eeq
and there exists a matrix $\mbE=(\mbE_{ij})$ with $l^2_*$ norm one,  such that
\beq\label{def-e-i,j}
\int_{\msI_\mbp}f\tilde \Theta_i \tilde \Theta_j \dd \tilde s_\mbp =O(\|f\|_{L^\infty}) \mbE_{ij}.
\eeq
\end{lemma}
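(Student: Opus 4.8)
The plan is to prove the three statements of Lemma~\ref{Notation-e_i,j} as direct consequences of the orthonormality (after rescaling) of the $\tilde\Theta_i$ in $L^2(\msI_\mbp)$ recorded in \eqref{ortho-tTheta}, together with Bessel's inequality and the triangle inequality. All three are essentially bookkeeping statements that package Cauchy--Schwarz estimates into the $O(a)e_i$ and $O(a)\mbE_{ij}$ notation of Section~\ref{ssec-Notation}; there is no substantive analytic difficulty, so the emphasis will be on getting the constants and the unit-vector/unit-operator-norm claims exactly right.

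\smallskip
\noindent\textbf{Step 1 (the vector $\bm e$ in \eqref{def-e-i}).} Write $a_i := \int_{\msI_\mbp} f\,\tilde\Theta_i\dd\tilde s_\mbp$. By \eqref{ortho-tTheta} the family $\{(1+\mrp_0)^{-1/2}\tilde\Theta_i\}_{i=0}^{N_1-1}$ is orthonormal in $L^2(\msI_\mbp)$, so Bessel's inequality gives
\beqs
\sum_{i=0}^{N_1-1} |a_i|^2 = (1+\mrp_0)\sum_{i} \left|\int_{\msI_\mbp} f\,\frac{\tilde\Theta_i}{\sqrt{1+\mrp_0}}\dd\tilde s_\mbp\right|^2 \leq (1+\mrp_0)\|f\|_{L^2(\msI_\mbp)}^2.
\eeqs
Since $\mrp_0>-1/2$ on $\cD_\delta$ (and in fact $|\mrp_0|\lesssim 1$ there), the factor $(1+\mrp_0)^{1/2}$ is bounded by a constant $C$, so $\|\mathbf a\|_{l^2}\leq C\|f\|_{L^2(\msI_\mbp)}$. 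If $\mathbf a\neq 0$ set $e_i := a_i/\|\mathbf a\|_{l^2}$; then $\bm e$ is a unit vector and $a_i = \|\mathbf a\|_{l^2}\,e_i = O(\|f\|_{L^2(\msI_\mbp)})\,e_i$, which is \eqref{def-e-i}. If $\mathbf a = 0$ any unit vector works.

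\smallskip
\noindent\textbf{Step 2 (the bilinear bound \eqref{est-e-i,j}).} Fix $i$ and write $g := f\tilde\Theta_i$. Then $\sum_j \int_{\msI_\mbp} f\tilde\Theta_i\,\mathrm a_j\tilde\Theta_j\dd\tilde s_\mbp = \int_{\msI_\mbp} g\,\big(\sum_j \mathrm a_j\tilde\Theta_j\big)\dd\tilde s_\mbp$. By Cauchy--Schwarz in $L^2(\msI_\mbp)$ and then \eqref{ortho-tTheta} applied to the finite sum $\sum_j \mathrm a_j\tilde\Theta_j$,
\beqs
\left|\int_{\msI_\mbp} g\sum_j \mathrm a_j\tilde\Theta_j\dd\tilde s_\mbp\right| \leq \|g\|_{L^2(\msI_\mbp)}\Big(\sum_j |\mathrm a_j|^2 (1+\mrp_0)\Big)^{1/2} \lesssim \|f\tilde\Theta_i\|_{L^2(\msI_\mbp)}\|\mathbf a\|_{l^2}.
\eeqs
Now $\|f\tilde\Theta_i\|_{L^2(\msI_\mbp)}\leq \|f\|_{L^\infty}\|\tilde\Theta_i\|_{L^2(\msI_\mbp)} = \|f\|_{L^\infty}(1+\mrp_0)^{1/2}\lesssim \|f\|_{L^\infty}$, giving the scalar bound $\lesssim \|\mathbf a\|_{l^2}\|f\|_{L^\infty}$ uniformly in $i$. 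Dividing by this scalar bound (when nonzero) and collecting the resulting normalized quantities over $i$ produces a unit vector $\bm e$, which is \eqref{est-e-i,j}.

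\smallskip
\noindent\textbf{Step 3 (the matrix $\mbE$ in \eqref{def-e-i,j}).} Let $\mbE_{ij} := \int_{\msI_\mbp} f\tilde\Theta_i\tilde\Theta_j\dd\tilde s_\mbp$, which defines a bounded operator on $l^2$ (acting on $\mathbf a$ by $(\mbE\mathbf a)_i = \sum_j \mbE_{ij}\mathrm a_j$). For $\mathbf a,\mathbf c\in l^2$ with unit $l^2$ norm, $\langle \mbE\mathbf a,\mathbf c\rangle_{l^2} = \int_{\msI_\mbp} f\,\big(\sum_i \mathrm c_i\tilde\Theta_i\big)\big(\sum_j \mathrm a_j\tilde\Theta_j\big)\dd\tilde s_\mbp$, and by H\"older together with \eqref{ortho-tTheta} this is bounded by $\|f\|_{L^\infty}\|\sum_i \mathrm c_i\tilde\Theta_i\|_{L^2(\msI_\mbp)}\|\sum_j \mathrm a_j\tilde\Theta_j\|_{L^2(\msI_\mbp)} = \|f\|_{L^\infty}(1+\mrp_0)\|\mathbf c\|_{l^2}\|\mathbf a\|_{l^2}\lesssim \|f\|_{L^\infty}$. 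Taking the supremum over unit $\mathbf a,\mathbf c$ yields $\|\mbE\|_{l^2_*}\lesssim \|f\|_{L^\infty}$; rescaling $\mbE$ by this norm gives an operator of norm one and hence \eqref{def-e-i,j}. The only point requiring any care — and the ``main obstacle,'' such as it is — is keeping track of the $(1+\mrp_0)$ factors produced by the non-normalized orthogonality \eqref{ortho-tTheta} and confirming they are absorbed into the generic constant $C$ using $\mbp\in\cD_\delta$; everything else is a routine application of Cauchy--Schwarz and Bessel's inequality.
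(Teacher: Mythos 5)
Your overall route — orthonormality of $(1+\mrp_0)^{-1/2}\tilde\Theta_i$ from \eqref{ortho-tTheta}, Bessel's inequality, and Cauchy--Schwarz — is exactly what the paper's one-line proof ("Plancherel and classic applications of Fourier theory") is pointing at, and your Steps 1 and 3 are correct, including the bookkeeping of the $(1+\mrp_0)$ factors.

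Step 2, however, has a genuine gap in the final packaging. In the paper's notation (Section 1.1, item 5), a bound of the form $|b_i|\lesssim M\,e_i$ with $\bm e$ a unit vector means precisely that the \emph{vector} $\mathbf b=(b_i)_i$ satisfies $\|\mathbf b\|_{l^2}\lesssim M$. What you prove in Step 2 is the scalar bound $|b_i|\lesssim \|\mathbf a\|_{l^2}\|f\|_{L^\infty}$ \emph{uniformly in $i$}, i.e.\ an $l^\infty$ bound on $\mathbf b$. Dividing each $b_i$ by that scalar bound does \emph{not} produce a unit vector in $l^2$: it produces a vector whose entries have modulus at most one, whose $l^2$ norm can be as large as $N_1^{1/2}\sim\varep^{-1}\rho^{1/4}$. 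So as written, Step 2 only yields $\|\mathbf b\|_{l^2}\lesssim N_1^{1/2}\|\mathbf a\|_{l^2}\|f\|_{L^\infty}$, which is weaker than \eqref{est-e-i,j} by an asymptotically large factor. The fix is one line and uses machinery you already have: write $b_i=\int_{\msI_\mbp}\bigl(f\sum_j\mathrm a_j\tilde\Theta_j\bigr)\tilde\Theta_i\dd\tilde s_\mbp$ and apply your Step 1 to the function $F:=f\sum_j\mathrm a_j\tilde\Theta_j\in L^2(\msI_\mbp)$, noting $\|F\|_{L^2(\msI_\mbp)}\le\|f\|_{L^\infty}\|\sum_j\mathrm a_j\tilde\Theta_j\|_{L^2(\msI_\mbp)}\lesssim\|f\|_{L^\infty}\|\mathbf a\|_{l^2}$; this gives $\|\mathbf b\|_{l^2}\lesssim\|\mathbf a\|_{l^2}\|f\|_{L^\infty}$ directly. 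Equivalently, observe that $\mathbf b=\mbE\mathbf a$ with $\mbE$ as in your Step 3, so \eqref{est-e-i,j} follows from the operator-norm bound $\|\mbE\|_{l^2_*}\lesssim\|f\|_{L^\infty}$ you establish there.
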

\begin{proof} The estimates follow from Plancherel and classic applications of Fourier theory.
\end{proof}

The following Lemma estimates the $\mbp$-variation of the  local coordinates  associated to  $\Gamma_{\mathbf p}$ which is quoted from Lemma 6.2 of \cite{CP-linear}. In particular it estimates the difference between  
$(s_{\mathbf p}, z_{\mathbf p})$ and $(s,z)$  in terms of $\mathbf p$.  
\begin{lemma}
\label{lem-change-of-coord}
Let $(s_{\mathbf p}, z_{\mathbf p})$ be the local coordinates subject to $\Gamma_{\mathbf p}$ on $\Gamma_\mbp^{2\ell}$. Assuming \eqref{A-00} the tangent coordinate $s_\mbp$ satisfies
\beqs
\|\nabla_\mbp s_{\mbp}\|_{L^2(\msI_\mbp)}\lesssim 1;
\eeqs 
while $z_\mbp$ have the $\mbp$-gradient
\beqs
\frac{\p z_\mbp}{\p \mrp_j}= \varep^{-1} \xi_j(s_\mbp),
\eeqs 
where $\xi_j$ is a function of $s_\mbp$ given explicitly by 
\beqs
\xi_j(s_\mbp)=\left\{\begin{aligned}
&-\left(\frac{R_0+\bar p }{A}\left(1-(1+\mrp_0)\p_{\mrp_0}\ln A  \right)- \frac{\tilde s_\mbp \bar p'}{A} \right)\mathbf n_0\cdot \mathbf n_{\mathbf p}, \qquad & j=0\\[3pt]
&\h{50pt}-\Theta_0\bE_j\cdot \mathbf n_{\mathbf p}, & j=1,2;\\[3pt]
& - \left(\tilde \Theta_j-\frac{(1+\mrp_0)\p_{\mrp_j}\ln A}{A} (R_0+\bar p) \right)\mathbf n_0\cdot \mathbf n_{\mathbf p}& j\geq 3.
\end{aligned}
\right.
\eeqs 
Moreover, 
we have the estimate
\begin{equation}\label{est-diff-s-z-p}
|s_{\mathbf p}- s|\lesssim \|\mathbf p\|_{l^1},\quad |z_{\mathbf p}-z|\leq \varep^{-1}\|\mathbf p\|_{l^1}.
\end{equation}
\end{lemma}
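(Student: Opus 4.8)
The plan is to treat the local coordinate map as implicitly defined and to differentiate in the meander parameters $\mbp$ with the ambient point $x$ held fixed. By \eqref{coord-wh} applied to $\Gamma_\mbp$, for $x$ in the reach $\Gamma_\mbp^{2\ell}$ the pair $(s_\mbp,z_\mbp)$ is the unique solution of $x=\bm\gamma_\mbp(s_\mbp)+\varep z_\mbp\,\mbn_\mbp(s_\mbp)$, where $\bm\gamma_\mbp$ is parameterized by the fixed variable $s\in\msI$, so that $\mbp$ enters this identity both directly, through $\bm\gamma_\mbp$ and $\mbn_\mbp$, and through $s_\mbp$ and $z_\mbp$. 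First I would note that, since $\Gamma_\mbp\in\cG_{K,2\ell}^2$ for every $\mbp\in\cD_\delta$, the Jacobian $1-\varep z_\mbp\kappa_\mbp$ of the change $x\mapsto(s_\mbp,z_\mbp)$ is bounded away from zero uniformly in $\mbp$ and $x$; this is what makes the implicit function theorem applicable and legitimizes differentiating the coordinate relation in $\mbp$.

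Writing $\partial_{\mrp_j}$ for the total $\mrp_j$-derivative at fixed $x$, a prime for $\partial_s$, and using the Frenet relation $\partial_s\mbn_\mbp=-\kappa_\mbp\bm\gamma_\mbp'$, differentiation of the coordinate identity gives
\beqs
0=\partial_{\mrp_j}\bm\gamma_\mbp+\bigl(1-\varep z_\mbp\kappa_\mbp\bigr)\,\bm\gamma_\mbp'\,\partial_{\mrp_j}s_\mbp+\varep\,(\partial_{\mrp_j}z_\mbp)\,\mbn_\mbp+\varep z_\mbp\,\partial_{\mrp_j}\mbn_\mbp,
\eeqs
with $\partial_{\mrp_j}\bm\gamma_\mbp$ and $\partial_{\mrp_j}\mbn_\mbp$ taken at fixed $s$. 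Taking the inner product with $\mbn_\mbp$ and using $\mbn_\mbp\cdot\bm\gamma_\mbp'=0$ and $\mbn_\mbp\cdot\partial_{\mrp_j}\mbn_\mbp=\tfrac12\partial_{\mrp_j}|\mbn_\mbp|^2=0$ isolates $\partial_{\mrp_j}z_\mbp=-\varep^{-1}(\partial_{\mrp_j}\bm\gamma_\mbp)\cdot\mbn_\mbp=:\varep^{-1}\xi_j(s_\mbp)$, which indeed depends on $x$ only through $s_\mbp$. Taking the inner product with $\bm\gamma_\mbp'$ instead, and using $(\partial_{\mrp_j}\mbn_\mbp)\cdot\bm\gamma_\mbp'=-\mbn_\mbp\cdot\partial_s(\partial_{\mrp_j}\bm\gamma_\mbp)$, obtained by differentiating the identity $\mbn_\mbp\cdot\bm\gamma_\mbp'\equiv 0$ in $\mrp_j$, produces an explicit formula for $\partial_{\mrp_j}s_\mbp$ with the nonvanishing denominator $(1-\varep z_\mbp\kappa_\mbp)|\bm\gamma_\mbp'|^2$.

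It then remains to compute $\partial_{\mrp_j}\bm\gamma_\mbp$ from the definition \eqref{def-gamma-p}. For $j=1,2$ this is immediate, $\partial_{\mrp_j}\bm\gamma_\mbp=\Theta_0\bE_j$, whence $\xi_j=-\Theta_0\bE_j\cdot\mbn_\mbp$; for $j=0$ and $j\ge 3$ one differentiates the prefactor $(1+\mrp_0)/A(\mbp)$, the profile $\bar p(\tilde s)$, whose $\mrp_0$-dependence enters through $|\Gamma_\mbp|=2\pi R_0(1+\mrp_0)$ in $\tilde\Theta_i(\tilde s)=\Theta_i(\tilde s/(1+\mrp_0))$ and whose $\mrp_j$-dependence is the coefficient $\mrp_j$, and the arc-length reparameterization $\tilde s=\tilde s(s,\mbp)$, obtaining $\partial_{\mrp_j}\tilde s$ by differentiating \eqref{def-ts}. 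Using the elementary geometry of the base circle together with the bound $\|\nabla_\mbp A\|_{l^2}\lesssim\|\hat\mbp\|_{\mbV_2^2}$ and the tangent and curvature expansions \eqref{est-|gamma'|}, \eqref{def-kappa-p,0} of Lemma \ref{lem-Gamma-p}, one reads off the stated forms of $\xi_0$ and of $\xi_j$, $j\ge 3$, the higher-order remainders being absorbed separately. For the norm bounds I would then insert the formula for $\partial_{\mrp_j}s_\mbp$ into the $l^2$ sum over $j$ and estimate it with the orthonormality \eqref{ortho-tTheta} of the $\tilde\Theta_j$ and the Parseval-type and embedding bounds of Lemmas \ref{Notation-e_i,j} and \ref{lem-est-V}, which on $\cD_\delta$ yields $\|\nabla_\mbp s_\mbp\|_{L^2(\msI_\mbp)}\lesssim 1$; the $l^\infty$-in-$j$ bounds on $\nabla_\mbp s_\mbp$ and on $(\xi_j)_j$ are read off from the explicit formulas, since $|\tilde\Theta_j|,\,|\mbn_0\cdot\mbn_\mbp|\lesssim 1$. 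Finally, I would obtain the pointwise estimates \eqref{est-diff-s-z-p} by integrating the gradient formulas along the segment $\tau\mapsto\tau\mbp$, $\tau\in[0,1]$, which remains in $\cD_\delta$ since \eqref{A-00} describes a set star-shaped about the origin, and along which $x$ stays within the reach of $\Gamma_{\tau\mbp}$; since $\Gamma_{\mathbf{0}}=\Gamma_0$ gives $(s_{\mathbf{0}},z_{\mathbf{0}})=(s,z)$, this produces $|s_\mbp-s|\le\|\mbp\|_{l^1}\sup_{\tau}\|\nabla_\mbp s_{\tau\mbp}\|_{l^\infty}\lesssim\|\mbp\|_{l^1}$ and, likewise, $|z_\mbp-z|\le\varep^{-1}\|\mbp\|_{l^1}$.

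The step I expect to be the main obstacle is computing $\partial_{\mrp_j}\bm\gamma_\mbp$ and its $s$-derivative precisely: the nested $\mbp$-dependence of $A(\mbp)$, of $|\Gamma_\mbp|$, and of the arc-length map $\tilde s$ makes these expressions delicate, and one must show that the resulting sums over the $O(\varep^{-1}\rho)$-dimensional index set are controlled, which forces one to exploit the exact orthonormality \eqref{ortho-tTheta} rather than crude triangle-inequality estimates. The uniform lower bound on the Jacobian $1-\varep z_\mbp\kappa_\mbp$, used throughout, rests on the reach property $\Gamma_\mbp\in\cG_{K,2\ell}^2$ recorded after Lemma \ref{lem-Gamma-p}. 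These steps are carried out in detail as Lemma 6.2 of \cite{CP-linear}.
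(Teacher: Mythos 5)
Your proposal is correct and follows the natural route: the paper itself gives no proof of this lemma (it is quoted from Lemma 6.2 of the companion paper \cite{CP-linear}), and implicit differentiation of the coordinate identity $x=\bm\gamma_\mbp(s_\mbp)+\varep z_\mbp\,\mbn_\mbp(s_\mbp)$ at fixed $x$, followed by projection onto $\mbn_\mbp$ and $\bm\gamma_\mbp'$ and direct computation of $\p_{\mrp_j}\bm\gamma_\mbp$ using $\bm\gamma_{\bar p}=(R_0+\bar p)\mbn_0$, is exactly the expected argument. Your integration along the segment $\tau\mapsto\tau\mbp$ for \eqref{est-diff-s-z-p} and your attention to the nested $\mbp$-dependence through $A(\mbp)$ and the arc-length map $\tilde s$ address the only genuinely delicate points.
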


We conclude this section by introducing the $L^2$-inner product and Cartesian Laplacian in the local coordinates of $\Gamma_\mbp$.  For any $a>0$ we define the interval 
\beq
\mbR_{a}=[-a/\varep,  a/\varep]. 
\eeq 
For each $f, g\in L^2(\Omega)$ with support in $\Gamma_\mbp^{2\ell}$, their $L^2$-inner product can be written in local coordinates  $(s_\mbp, z_\mbp)$ as
\beq
\left<f, g\right>_{L^2}=\int_{\mathbb R_{2\ell}}\int_{\msI}  f(s_\mbp, z_\mbp) g(s_\mbp, z_\mbp) \varep(1-\varep z_\mbp \kappa_\mbp) |\bm \gamma_\mbp'| \, \mrd s_\mbp\mrd z_\mbp; 
\eeq 
or equivalently in terms of $(\tilde s_\mbp, z_\mbp)$ with $\dd\tilde s_\mbp=|\bm \gamma_\mbp'|\dd s_\mbp$ and $\tilde s_\mbp\in\msI_\mbp$ as
\beq
\left<f, g\right>_{L^2}=\int_{\mathbb R_{2\ell}}\int_{\msI_\mbp}  f(s_\mbp, z_\mbp) g(s_\mbp, z_\mbp) \varep(1-\varep z_\mbp \kappa_\mbp) \, \mrd \tilde s_\mbp\mrd z_\mbp.
\eeq 
 
The $\varep$-scaled Laplacian admits the local expansion
\begin{equation}\label{eq-Lap-induced}
\varep^2 \Delta_{  x}= \p_{z_\mbp}^2+\varep \mathrm H_\mbp \p_{z_\mbp}+\varep^2\Delta_g, \qquad \Delta_{g}:= \Delta_{s_\mbp}+\varep z_\mbp  D_{s_\mbp,2},
\end{equation}
where $\mathrm H_\mbp$  is the extended curvature
\begin{equation}\label{def-H}
\mathrm H_\mbp(s_\mbp,z_\mbp):= -\frac{\kappa(s_\mbp)}{1-\varep z_\mbp\kappa(s_\mbp)},
\end{equation}
 $-\Delta_{s_\mbp}=-\p_{\tilde s_\mbp}^2$ is the Laplace-Beltrami operator on  the surface $\Gamma_\mbp$ and $D_{s_\mbp,2}$ is a relatively bounded perturbation of $\Delta_{s_\mbp}$. In particular, 
\begin{equation}\label{def-Ds}
  D_{s_\mbp,2} =a(s_\mbp, z_\mbp)\Delta_{s_\mbp}  + b(s_\mbp, z_\mbp) \p_{s_\mbp},
\end{equation}
where the smooth coefficients $a, b$ are given explicitly by
\begin{equation}\label{def-a,b}
a(s_\mbp, z_\mbp)=(\varep z_\mbp)^{-1} \left( \frac{1}{|1-\varep z_\mbp\kappa_\mbp|^2}-1\right), \qquad b(s_\mbp, z_\mbp)=\frac{(\varep z_\mbp)^{-1}}{2|\bm \gamma_\mbp'|^2}\p_{s_\mbp} a(s_\mbp, z_\mbp).
\end{equation}


\section{Bilayer manifold and linear stability}\label{sec-profile}
The bilayer manifold $\cM_b$ arises as the graph of the bilayer \muckymucks $\Phi_\mbp$ over its domain $\cD_\delta$. The $\mbp$-varied interfaces $\Gamma_\mbp$ are integral in the construction of of $\Phi_\mbp.$ We establish the coercivity of the FCH energy on the tangent plane to $\cM_b.$

\subsection{Bilayer manifold}

We refine the construction of the bilayer \muckymuck $\Phi_\mbp$,  developed in \cite{CP-linear} to accommodate the equilibrium $\Phi_{\mbp^*}$ of the FCH flow, and induce Lipschitz estimates on $\Phi_\mbp$ for $\mbp$ near $\mbp^*.$  

The bilayer \muckymuck is constructed through a matching of an inner description in the reach $\Gamma_\mbp^{2\ell}$ to an outer distribution on the remainder of $\Omega$.  The inner construction begins with $\phi_0$ defined on $L^2(\mathbb R)$ as the nontrivial homoclinic solution of 
\begin{equation}\label{def-phi0n2}
\p_z^2 \phi_{0}-W'(\phi_{0})=0, \qquad \lim_{|z|\to \infty} \phi_0(z)=b_-. 
\end{equation} 
The orbit $\phi_0$ is unique up to translation, even about $z=0$, and converges to $b_-$ as $z$ tends 
to $\pm \infty$ at the exponential rate $\sqrt{W''(b_-)}>0.$  The linearization  $\mathrm L_{\kpp 0}$ of  \eqref{def-phi0n2} about $\phi_0$, 
\begin{equation}\label{def-rL-p0}
\mathrm L_{\kpp 0}:=-\p_{z_{\kpp}}^2+W''(\phi_{0}(z_{\kpp})),
\end{equation}
is a Sturm-Liouville operator on the real line whose coefficients decay exponentially fast to constants at $z_{\kpp}=\infty$.  The following Lemma follows from classic results and direct calculations, see for example Chapter 2.3.2 of \cite{KP-13}. 
\begin{lemma}\label{lem-L0}
The spectrum of \,$\mathrm L_{\kpp 0}$ is real, and uniformly positive except for two point spectra: $\la_0<0$ and $\la_1=0$. The ground state eigenfunction $\psi_0$ of $\mathrm L_{\kpp 0}$ is even and positive, with ground state eigenvalue $\la_0<0$. The operator $\mrL_{\kpp 0}$ has an inverse that
is well defined on the $L^2$ perp of its kernel, span$\{\phi_0^\prime\},$ and both $\mrL_{\kpp 0}$ and its inverse preserve evenness and oddness. 
\end{lemma}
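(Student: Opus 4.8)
The statement is a standard spectral fact about the scalar Schrödinger operator $\mathrm L_0 = -\partial_z^2 + W''(\phi_0(z))$, and the plan is to assemble it from Sturm--Liouville theory plus the special structure of the homoclinic orbit. First I would record that $\mathrm L_0$ is a self-adjoint operator on $L^2(\mathbb R)$ with domain $H^2(\mathbb R)$, since $W''(\phi_0(\cdot))$ is a bounded smooth potential converging exponentially to the positive constant $W''(b_-)$ as $|z|\to\infty$. By Weyl's theorem on the essential spectrum, $\sigma_{\mathrm{ess}}(\mathrm L_0) = [W''(b_-),\infty)$, so the spectrum is real, bounded below, and everything below $W''(b_-)$ consists of isolated eigenvalues of finite (here, simple) multiplicity.

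The key structural observation is that differentiating the profile equation $\partial_z^2\phi_0 = W'(\phi_0)$ in $z$ gives $\mathrm L_0 \phi_0' = 0$, so $\phi_0' \in \ker \mathrm L_0$ and $\phi_0' \in L^2(\mathbb R)$ because $\phi_0 \to b_-$ exponentially. Thus $\lambda = 0$ is an eigenvalue. Since $\phi_0$ is homoclinic to $b_-$ and is the \emph{unique} nontrivial such orbit, even about $z=0$, the derivative $\phi_0'$ is odd and has exactly one sign change (at $z=0$); by Sturm oscillation theory for Schrödinger operators on the line, an eigenfunction with exactly one nodal point corresponds to the \emph{second} eigenvalue counting from the bottom. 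Hence there is precisely one eigenvalue $\lambda_0$ strictly below $0$, its eigenfunction $\psi_0$ has no zeros and may be chosen positive and (by uniqueness of the ground state together with the reflection symmetry $z\mapsto -z$, which commutes with $\mathrm L_0$) even, and there are no eigenvalues in $(0, W''(b_-))$. This gives the claimed picture: $\sigma(\mathrm L_0)$ is real, uniformly positive away from the two points $\lambda_0 < 0$ and $\lambda_1 = 0$.

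For the invertibility statement: $0$ is a simple isolated eigenvalue with eigenspace $\operatorname{span}\{\phi_0'\}$, so $\mathrm L_0$ restricted to $\{\phi_0'\}^\perp$ (the $L^2$-orthogonal complement of the kernel, which is also the range of $\mathrm L_0$ by self-adjointness and closedness of the range away from an isolated eigenvalue) is a bijection onto that subspace, and the spectral gap between $0$ and the rest of the spectrum makes the inverse bounded. Finally, the parity-preservation claims follow because the reflection operator $(\mathcal S f)(z) = f(-z)$ commutes with $\mathrm L_0$ (since $W''(\phi_0(\cdot))$ is even), hence $\mathrm L_0$ maps even functions to even functions and odd to odd, and the same holds for its inverse on $\{\phi_0'\}^\perp$, which is $\mathcal S$-invariant since $\phi_0'$ is odd.

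There is no real obstacle here; the one point that warrants care is the oscillation-theoretic count — making precise that ``$\phi_0'$ has exactly one zero'' forces $\lambda_0$ to be the \emph{unique} negative eigenvalue rather than merely \emph{a} negative eigenvalue. This is handled either by invoking the line version of the Sturm–Liouville oscillation theorem directly (the $n$-th eigenfunction has exactly $n$ zeros, starting the count at $n=0$), or, as the excerpt suggests, by simply citing the classical computation in Chapter 2.3.2 of \cite{KP-13}, where this spectral decomposition of the linearization about a homoclinic is worked out explicitly for double-well-type nonlinearities.
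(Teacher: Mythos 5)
Your proposal is correct and follows exactly the classical Sturm--Liouville route that the paper itself invokes: the paper offers no written proof, simply citing Chapter 2.3.2 of \cite{KP-13} for these ``classic results and direct calculations,'' and your argument (Weyl's theorem for the essential spectrum, $\mathrm L_0\phi_0'=0$ from differentiating the profile equation, the oscillation-theoretic count forcing a single simple negative eigenvalue with positive even ground state, the spectral gap for bounded invertibility on $\{\phi_0'\}^\perp$, and commutation with the reflection $z\mapsto -z$ for parity preservation) is precisely that standard computation written out in full.
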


The base profile $\phi_0$ is a function of one variable, we extend it to $\Gamma_\mbp^{2\ell}$ and $\Omega$ through the dressing process.
\begin{defn}[Dressing]\label{def-dressing}
Given a function $f(z): \mathbb R\rightarrow \mathbb R$ which tends to a constant $f^\infty$ and whose
derivatives tend to zero at an $\varep$ independent exponential rate as $z\rightarrow \pm \infty$, we define the dressed function, $f^d\in L^2(\Omega)$, of $f$ with respect to $\Gamma_\mbp$ as
\begin{equation*}
f^d (x)=  f(z_\mbp(  x)) \chi(\varep |z_\mbp(  x)|/\ell)+f^\infty(1-\chi(\varep |z_\mbp(x)|/\ell)), \qquad \forall x\in\Omega.
\end{equation*} 
Here $\chi:\mathbb R\rightarrow \mathbb R$ is a fixed smooth cut-off function satisfying: $\chi(r)=1$ if $r\leq 1$ and $\chi(r)=0$ if $r\geq 2$. Where there is no ambiguity we abuse notation and use  $f(z_\mbp)$ to denote the dressed function $f^d$.   
\end{defn}

A function $f=f(x)\in L^1$ is said to be {\it localized} near the interface $\Gamma_\mbp$ if there exists $\nu>0$ such that for all $  x\in \Gamma_\mbp^{2\ell}$, 
\begin{equation*}
|f(  x(s_\mbp, z_\mbp))| \lesssim e^{-\nu|z_\mbp|}.
\end{equation*}
We introduce the dressed operator as follows.  
\begin{defn}[Dressed operator]\label{def-Ld}
Let $\mrL: {\mathrm D}\subset L^2(\mbR)\mapsto L^2(\mbR)$  be a self-adjoint differential operator with smooth coefficients whose derivatives of all order decay to zero at an exponential rate
at $\infty.$ We define the space $\mathcal S $ to consist of the functions $f$ as in Definition \ref{def-dressing}. The to each $\mbp\in\cD_\delta$ the 
dressed operator $\mrL_\mbp:\mathrm D\cap\mathcal S \mapsto L^2(\Omega)$ and its $r$'th power, $r\in\mathbb N$ are given by
\beq
\mrL_{\mbp}^r f := (\mrL^r f)^d .
\eeq
If $r<0$ then we assume that $f\in\mathcal R(\mrL)$ and the inverse $\mrL_\mbp^{-1} f$ decays exponentially to a constant at $\pm\infty.$
\end{defn}
Since $1$ is orthogonal to $\phi_0^\prime$ we may define
$\mathrm L_{\kpp 0}^{-k} 1$  on the real line $\mathbb R$ and its dressing subject to $\Gamma_\mbp$, denoted by $B_{\mbp,k}$  as a function of $x$, that is, 
\beq \label{def-B+dp,k}
B_{\mathbf p,k}(x):=\mrL_{\mbp,0}^{-k}1=(\mrL_0^{-k}1)^d , \qquad x\in \Omega.
\eeq 
 When $\mbp=\bm 0$, we drop the subscript $\mbp$ and denote $B_{\mbp,k}$ as $B_{k}$.   Recalling the averaging operator, \eqref{def-massfunc} we introduce
\begin{equation}
\label{def-Bdp-k-mass}
\oB _{\mathbf p, k}:={|\Omega|}\left<B_{\mathbf p,k} \right>_{L^2}=\int_\Omega B_{\mbp, k} \dd x.
\end{equation} 
With this notation we define the first correction $\phi_1$ to the pulse profile
\begin{equation}\label{def-phi1}
\begin{aligned}
\phi_1(\sigma)=\phi_{1}(z_{\mathbf p};\sigma)&:=\sigma B_{\mbp,2}+\frac{\eta_d}{2} \mathrm L_{\mathbf p,0}^{-1}\left( z_{\mathbf p}\phi_{0}'(z_\mbp) \right),
\end{aligned}
\end{equation}
which depends upon the bulk density and meander parameters, $\sigma$ and $\mbp$. 
As a function on $\mbR$,  $\phi_1$ is smooth and is even about $z=0$.  

The following Lemma establishes a family of bilayer \muckymucks that share a common leading order residual. In Definition\,\ref{def-Phi-p-1} we establish a particular choice of bilayer \muckymuck that supports the equilibrium bilayer \muckymuck with a nearly circular interface. 

\begin{lemma}\label{lem-def-Phi-p}
Let $\phi_0$, $\phi_1,$ be as defined in \eqref{def-phi0n2}, \eqref{def-phi1} respectively.  Then for any smooth function  $\phi_{\geq 3}=\phi_{\geq 3}(z_\mbp)$  that, up to terms exponentially small in $\varep$, decays exponentially to a constant $\phi_{\geq 3}^\infty$  as $|z|\to \infty$ 
and exponential term $\phi_{\mbp, e}=\phi_{\mbp, e}(x)$  defined on $\Omega$, 
 there exists $\nu>0$ and a function $\phi_2=\phi_2(\bm\gamma_\mbp'',z_\mbp;\sigma)$ that satisfies the assumptions of Notation\,\ref{Notation-h} such that the bilayer \muckymuck
\begin{equation}
\label{def-Phi-p}
\Phi_{\mathbf p}(x; \sigma):=\phi_{0}(z_{\mathbf p})+\varep \phi_{1}( z_{\mathbf p}; \sigma)+\varep^2\phi_{\geq 2}(\bm \gamma_\mbp'', z_{\mathbf p};\sigma)+e^{-\ell \nu/\varep}\phi_{\mbp, e}(x),
\end{equation}
with $\phi_{\geq 2}:=\phi_{2}+\varep \phi_{\geq 3}(z_\mbp)$
has the  residual 
\begin{equation}\label{exp-mF}
\begin{aligned}
\mathrm F(\Phi_{\mathbf p})=\mrF_m(s_\mbp, z_\mbp)+e^{-\ell\nu/\varep}\mrF_e(x) \quad \hbox{with}\quad  \mrF_m=\varep\sigma+\varep^2\mathrm F_2+\varep^3\mathrm F_3+\varep^4\mathrm F_{\geq 4},
\end{aligned}
\end{equation}
whose expansion terms take the form
\begin{equation}\label{F-234}
\begin{aligned}
\mathrm F_2&=\kappa_{\mathbf p}(\sigma-\sigma_1^*) f_2(z_{\mathbf p});\qquad \qquad 
\mathrm F_3=-\phi_0'\Delta_{s_{\mathbf p}}\kappa_{\mathbf p} +f_3(z_{\mathbf p}, \bm \gamma_{\mbp}''),\\
&\mathrm F_{\geq 4}= \Delta_{g}  f_{4,1}(z_{\mathbf p}, \bm \gamma_{\mbp}'')+f_{4,2}(z_{\mathbf p}, \bm \gamma_{\mbp}'').
\end{aligned}
\end{equation}
Here $f_2$ is localized near $\Gamma_\mbp$ and odd, and $f_3, f_{4,1}, f_{4,2}$ are smooth functions which decay exponentially fast to a constant as $|z|\to \infty$ and satisfy the assumptions of Notation\,\ref{Notation-h}. In addition,  the projections of $\mathrm F_2, \mathrm F_3$ satisfy
\begin{equation}\label{est-proj-rF2,3}
\begin{aligned}
&\int_{\mathbb R_{2\ell}} \mathrm F_2\,\phi_0'\dd z_{\mathbf p}=m_0(\sigma_1^*-\sigma)\kappa_{\mathbf p}+O(e^{-\ell\nu/\varep}); 
\\
&\int_{\mathbb R_{2\ell}} \mathrm F_3 \,\phi_0'\, \mrd z_{\mathbf p}=m_1^2\left(-\Delta_{s_{\mathbf p}}\kappa_{\mathbf p} -\frac{\kappa_{\mathbf p}^3}{2} +\alpha \kappa_{\mathbf p}\right)+O(e^{-\ell\nu/\varep}).
\end{aligned}
\end{equation}
Here  $\alpha =\alpha (\sigma; \eta_1, \eta_2)$  depends smoothly on $\sigma$.
\end{lemma}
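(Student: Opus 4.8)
The plan is to construct $\Phi_\mbp$ by the standard inner/outer matched asymptotic expansion, plugging the ansatz \eqref{def-Phi-p} into the chemical potential operator $\mrF$ defined in \eqref{eq-FCH-L2-p} and organizing the residual by powers of $\varep$. First I would recall that in the local coordinates $(s_\mbp,z_\mbp)$ the scaled Laplacian has the expansion \eqref{eq-Lap-induced}, so that $\varep^2\Delta_x = \p_{z_\mbp}^2 + \varep\mathrm H_\mbp\p_{z_\mbp} + \varep^2\Delta_g$; substituting this and expanding $W'(\Phi_\mbp)$, $W''(\Phi_\mbp)$ in Taylor series about $\phi_0(z_\mbp)$ produces a hierarchy. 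At order $\varep^0$ the inner equation is $(\p_{z_\mbp}^2 - W''(\phi_0))(\p_{z_\mbp}^2\phi_0 - W'(\phi_0))$, which vanishes identically by \eqref{phi0-def}; the leading nonzero contribution to $\mrF(\Phi_\mbp)$ is the constant $\varep\sigma$ coming from the functionalization term $\varep^p(\eta_1\varep^2\Delta u - \eta_2 W'(u))$ evaluated at $\phi_0$ together with $\mrL_{0}\mrL_0$ acting on $\varep\phi_1$ — here one uses that $\phi_1$ in \eqref{def-phi1} is chosen precisely so that $\mrL_0^2(\sigma B_2) = \sigma$ via $\mrL_0^{-2}1 = B_2$, and the term $\tfrac{\eta_d}{2}\mrL_0^{-1}(z\phi_0')$ absorbs the curvature-independent part of the $\varep^1$ correction. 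Collecting the $\varep^2$ terms I expect $\mrF_2$ to be the curvature-linear piece: the curvature coupling $\varep\mathrm H_\mbp\p_{z_\mbp}$ acting on $\phi_0$ contributes $-\varep^2\kappa_\mbp\phi_0'$ (to leading order in $z_\mbp$), the $\varep^2$ piece of $W''$-expansion times $\phi_1$ contributes further curvature terms, and the constant $\sigma_1^*$ emerges as the special value at which the $z$-independent ($\kappa_\mbp$-multiplying) part balances; this gives the form $\mathrm F_2 = \kappa_\mbp(\sigma-\sigma_1^*)f_2(z_\mbp)$ with $f_2$ localized and odd (oddness because it is built from $\phi_0'$, $\mrL_0^{-1}$ applied to even/odd functions, all of which respect parity by Lemma \ref{lem-L0}).

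For the $\varep^3$ term I would track where $\Delta_{s_\mbp}\kappa_\mbp$ can arise: the only mechanism producing a surface-diffusion of curvature at this order is the commutator of $\p_{z_\mbp}^2$ with the $\varep\mathrm H_\mbp\p_{z_\mbp}$ term acting through two layers of $\mrL_0$, combined with the $\varep^2\Delta_g$ term hitting $\phi_0$; the coefficient of $\Delta_{s_\mbp}\kappa_\mbp$ works out to $-\phi_0'$, and everything else that is not proportional to $\Delta_{s_\mbp}\kappa_\mbp$ is bundled into $f_3(z_\mbp,\bm\gamma_\mbp'')$, which depends on $s_\mbp$ only through the geometric quantities $|\bm\gamma_\mbp'|,\kappa_\mbp,\mbn_\mbp\cdot\mbn_0$ and their scaled derivatives — i.e. it satisfies Notation \ref{Notation-h} — because all such $s_\mbp$-dependence enters only via $\mathrm H_\mbp$ and $\Delta_g$, whose coefficients $a,b$ in \eqref{def-a,b} are of this form. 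The choice of $\phi_2$ is then dictated: it must be selected to solve $\mrL_0^2\phi_2 = (\text{the } z\text{-dependent but non-}\Delta_{s_\mbp}\kappa_\mbp \text{ part of the } \varep^3 \text{ inner equation})$, with a solvability condition handled by the freedom in the $\kappa_\mbp$-multiplying constant; since this forcing satisfies Notation \ref{Notation-h} and decays exponentially, so does $\phi_2$ (using that $\mrL_0^{-1}$ preserves exponential decay to a constant). The remaining terms of order $\varep^4$ and higher are lumped into $\mathrm F_{\geq 4}$; I would organize it as $\Delta_g f_{4,1} + f_{4,2}$ simply by collecting all terms containing the operator $\Delta_g$ acting on an $h$-type function into the first group and the rest into the second, both manifestly of Notation \ref{Notation-h} type. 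The outer region contributes only $e^{-\ell\nu/\varep}\mrF_e$ because away from $\Gamma_\mbp$ the profile is exponentially close to the constant $b_- + \varep\sigma/(W''(b_-))^2$ (as in \eqref{form-u}), which is an exact constant solution up to the cutoff-induced exponential error from the dressing in Definition \ref{def-dressing}; the freedom in $\phi_{\geq 3}(z_\mbp)$ and $\phi_{\mbp,e}(x)$ in the statement is exactly what lets us match inner and outer across the cutoff and enforce the mass constraint.

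Finally I would prove the two projection identities \eqref{est-proj-rF2,3}. For the first, $\int_{\mbR_{2\ell}}\mathrm F_2\,\phi_0'\dd z_\mbp = \kappa_\mbp(\sigma-\sigma_1^*)\int f_2\phi_0'\dd z_\mbp$, and one computes $\int f_2\phi_0' = -m_0$ up to exponentially small truncation error, using the explicit form of $f_2$ and an integration by parts that converts the integral into $\int(\phi_0-b_-)\dd z = m_0$ from \eqref{def-m0} — hence $= m_0(\sigma_1^*-\sigma)\kappa_\mbp + O(e^{-\ell\nu/\varep})$. For the second, $\int \mathrm F_3\,\phi_0'\dd z_\mbp = -\Delta_{s_\mbp}\kappa_\mbp\int(\phi_0')^2\dd z_\mbp + \int f_3\,\phi_0'\dd z_\mbp = -m_1^2\Delta_{s_\mbp}\kappa_\mbp + \int f_3\phi_0'\dd z_\mbp$ by \eqref{def-hatla1*}; the remaining projection $\int f_3\phi_0'\dd z_\mbp$ must be evaluated carefully — this is where the cubic curvature term $-\tfrac{\kappa_\mbp^3}{2}$ and the linear term $\alpha\kappa_\mbp$ come from — by expanding $\mathrm H_\mbp = -\kappa_\mbp(1+\varep z_\mbp\kappa_\mbp + \cdots)$, tracking the $\kappa_\mbp^3$ contributions through the $z_\mbp\p_{z_\mbp}$ operators and using parity to kill odd-integrand terms, with $\alpha$ picking up the $\sigma$-dependent contributions from $\phi_1$. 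I expect the main obstacle to be precisely this last bookkeeping: correctly identifying which terms at orders $\varep^3$ (and the relevant cross terms) survive the projection against $\phi_0'$ to yield exactly $-\tfrac{\kappa_\mbp^3}{2}+\alpha\kappa_\mbp$, since it requires carrying the curvature expansion of $\mathrm H_\mbp$ to quadratic order in $\varep z_\mbp\kappa_\mbp$ and keeping careful track of the several $\mrL_0^{-1}$ inversions and their parities, while discarding everything that integrates to zero or is absorbed into the exponentially small remainder.
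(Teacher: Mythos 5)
Your proposal reconstructs the full inner/outer matched asymptotic computation, whereas the paper does essentially no computation here: its proof of Lemma~\ref{lem-def-Phi-p} is a one-line citation to Lemma 3.2 of the companion paper \cite{CP-linear}, together with the single observation that the extra lower-order terms incorporated into $\Phi_\mbp$ in the present paper (the free $\varep^3\phi_{\geq3}(z_\mbp)$ inside $\phi_{\geq 2}$ and the exponential correction $e^{-\ell\nu/\varep}\phi_{\mbp,e}$) do not affect the form of $\mrF_2,\mrF_3,\mrF_{\geq4}$. Your sketch of the hierarchy is the right one and matches what the companion paper does — the cancellation at $O(1)$ via \eqref{phi0-def}, the choice of $\phi_1$ so that $\mrL_0^2(\sigma B_2)=\sigma$ and $\tfrac{\eta_d}{2}\mrL_0(z\phi_0')=-\eta_d W'(\phi_0)$ kills the $O(\varep)$ inhomogeneity, the parity bookkeeping for $f_2$, and the origin of $-\tfrac{\kappa_\mbp^3}{2}+\alpha\kappa_\mbp$ in the expansion of $\mathrm H_\mbp$ to quadratic order in $\varep z_\mbp\kappa_\mbp$. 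What your write-up buys is a self-contained derivation; what it does not address is precisely the one point the paper's proof singles out, namely why the statement holds for an \emph{arbitrary} admissible $\phi_{\geq3}$ and $\phi_{\mbp,e}$. The term $\varep^3\phi_{\geq3}(z_\mbp)$ feeds $\varep^3\mrL_0^2\phi_{\geq3}$ into $\mrF_3$, so it does change $f_3$; the reason the projection identity \eqref{est-proj-rF2,3} survives is that $\int \mrL_0^2\phi_{\geq3}\,\phi_0'\,\dd z=\int\phi_{\geq3}\,\mrL_0^2\phi_0'\,\dd z=0$ by self-adjointness of $\mrL_0$ and $\mrL_0\phi_0'=0$ (up to exponentially small boundary terms from the cutoff), and $\alpha$ is therefore independent of the choice of $\phi_{\geq3}$. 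You allude to the "freedom" in $\phi_{\geq3}$ only as a matching device; to make your proof complete for the statement as written you should add this one-line verification. With that addition your argument is a correct, more detailed substitute for the citation the paper relies on.
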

\begin{proof}
This follows from Lemma 3.2 of \cite{CP-linear}. While we have incorporated lower order terms terms in $\Phi_\mbp$, they do not affect the form of $\mrF_{2,3,4}$. 
\end{proof}
\begin{remark}
Explicit forms of  $\phi_{2}$ and $\alpha$ are given in \cite{CP-linear}.  These forms are omitted as they do not impact the central result of this work.
\end{remark}

To recover the relaxation to equilibrium requires a particular choice for the lowest order term $\phi_{\geq 3}$ in Lemma\, \ref{lem-def-Phi-p}. Fixing a system mass $M_0$ determines an equilibrium of prescribed radius $\mrp_0^*$ and bulk density $\sigma^*$. This leaves only the translational parameters $\mrp_1$ and $\mrp_2$ to characterize the available equilibrium states. Under the FCH flow, we anticipate that the meander and bulk density parameters converge  to one of these equilibrium,
\beq
  \mbp(t)\to \mbp^*=(\mrp_0^*,\mrp_1^*,\mrp_2^*, 0, \cdots, 0), \quad \sigma(t)\to\sigma^*=\sigma_1^*+O(\varep).
\eeq  
The existence of equilibrium bilayer \muckymucks associated to circular interfaces, up to exponentially small terms, was established in \cite{DHPW-14}. The result is quoted here.
\begin{lemma}\label{lem-exist-equi}
Let $\varep>0$ be sufficiently small and let $\Gamma_*$ be a circle  with curvature $\kappa_*$, centered at the origin, and strictly contained within the periodic domain $\Omega$. Let  $z_*$ denote $\varep$-scaled  distance to $\Gamma_*$. Then for each $\varep$ sufficiently small there exists a unique constant  $\sigma^*=\sigma_1^*+\varep\sigma_{\geq2}^*(\kappa_*,\varep)$, a uniformly (in $\varep$) bounded 
function  $\phi_{\geq 3}^*=\phi_{\geq 3}^*(z_*;\kappa_*,\varep)$ which decays exponentially fast to a constant as $z\to\infty$,
and a uniformly (in $\varep$) smooth function $\phi_e=\phi_e(x;\kappa_*,\varep, \Omega)$ and a $\nu>0$ such that,
\begin{equation*}
\begin{aligned}
 \Phi_*(x):=&\phi_{0}(z_*(x))+\varep\phi_1(z_*(x);\sigma^*) +\varep^2\phi_2(z_*(x); \sigma^*,\kappa_*)+\varep^3\phi_{\geq 3}^*(z_*(x);\sigma^*, \kappa_*) \\
 &+ e^{-\ell\nu/\varep}\phi_{e}(x;\sigma^*, \kappa_*),
 \end{aligned}
\end{equation*}
are equilibrium of \eqref{eq-FCH-L2} subject to periodic boundary conditions on $\Omega.$ Translates of periodic extensions of $\Phi_*$ are also exact equilibrium.
\end{lemma}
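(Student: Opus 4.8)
The statement is proved in \cite{DHPW-14}; the route I would follow is a Lyapunov--Schmidt reduction organized around the asymptotic ansatz of Lemma\,\ref{lem-def-Phi-p}, specialized to a circular interface. An equilibrium of \eqref{eq-FCH-L2} is a function $u$ with $\mrF(u)$ equal to a spatial constant, so the task is to choose the scalar $\sigma^*$, the correction profile $\phi_{\geq 3}^*(z_*)$, and an exponentially small full-domain term $\phi_e$ so that $\mrF(\Phi_*)$ is constant. The decisive simplification is that when $\Gamma_*$ is a circle every geometric quantity entering Notation\,\ref{Notation-h} is constant --- $\kappa_\mbp\equiv\kappa_*$, $\Delta_{s_\mbp}\kappa_*=0$, $\mbn_\mbp\cdot\mbn_0\equiv 1$ --- so the inner residual $\mrF_m$ in \eqref{exp-mF}--\eqref{F-234} collapses to a function of the single scaled normal variable $z_*$ with $\kappa_*$ entering only as a parameter, and the problem becomes essentially one-dimensional up to exponentially small contributions from the cut-off $\chi$ and from the periodicity of $\Omega$.

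First I would solve the inner problem on the real line. Writing $\Phi_*=\phi_0(z_*)+\varep\phi_1(z_*;\sigma)+\varep^2\phi_2(z_*;\sigma,\kappa_*)+\varep^3 w$ with $(\sigma,w)$ unknown, one expands $\mrF$ and collects powers of $\varep$; the leading linear operator acting on $w$ is the dressed square $\mrL_0^2$, with $\mrL_0$ the Sturm--Liouville operator \eqref{def-rL-p0}. By Lemma\,\ref{lem-L0}, $\mrL_0$ is Fredholm on $L^2(\mbR)$ with one-dimensional kernel $\mathrm{span}\{\phi_0'\}$ and preserves parity, hence so does $\mrL_0^2$. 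At order $\varep^2$ the solvability condition is $\int \mrF_2\,\phi_0'\dd z = 0$, which by \eqref{est-proj-rF2,3} reads $m_0(\sigma_1^*-\sigma)\kappa_*=0$ and pins $\sigma=\sigma_1^*$ at leading order; the solvability conditions at the next orders --- consistent because $\mrF_2$ and $\phi_0'$ are odd while $\mrL_0^{-1}$ preserves oddness --- determine the correction $\sigma_{\geq 2}^*=\sigma_{\geq 2}^*(\kappa_*,\varep)$ and the profile $\phi_{\geq 3}^*$. Rather than truncating, I would close this as an exact contraction for the pair $(\sigma^*,\phi_{\geq 3}^*)$ in an exponentially weighted space, so that the inner residual is reduced to its constant value $\varep\sigma^*+O(\varep^2)$ with no algebraically small remainder; this step is a direct but lengthy recursion built on the explicit forms of $\phi_1,\phi_2$ recorded in \cite{CP-linear}.

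Next I would promote the inner solution to a genuine equilibrium on $\Omega$. Dressing the inner profile via Definition\,\ref{def-dressing} and taking its periodic extension introduces errors supported near $\varep z_*=\ell$, where $\chi$ varies, and near the periodic images of $\Gamma_*$; because $\Gamma_*$ is strictly interior to $\Omega$ and every profile decays exponentially in $z_*$, these errors are $O(e^{-\ell\nu/\varep})$. I would then close a contraction mapping on $\Omega$ for $\phi_e$: the linearization of $\mrF$ about the dressed ansatz is, modulo lower-order and exponentially small terms, the dressed operator $\mrL_0^2$, whose only small spectrum comes from the translational neutral modes $\phi_0'(z_*)\,\mbn_*\cdot\bE_i$, $i=1,2$. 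Fixing the circle to be centered at the origin removes precisely this neutral subspace, and inverting on its $L^2$-orthogonal complement with an $\varep$-uniform bound --- the coercivity of $\Pi_0\mbL_\mbp$ developed in Section\,\ref{sec-profile} and in \cite{CP-linear} --- yields a unique $\phi_e$ that is $O(1)$ after scaling by $e^{-\ell\nu/\varep}$. For fixed $\Gamma_*$ the data $\sigma^*,\phi_{\geq 3}^*,\phi_e$ produced this way are unique, and the translation invariance of \eqref{eq-FCH-L2} supplies the stated family of translated equilibria.

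The main obstacle is the $\varep$-uniform invertibility of the linearized FCH operator transverse to the translational modes --- equivalently, verifying that the factored structure $\mrL_0^2$ genuinely controls the full linearization $\mrF'(\Phi_*)$ on the finite periodic domain with constants independent of $\varep$ --- together with the bookkeeping needed to certify that every discarded contribution (cut-off tails, curvature couplings, interactions between periodic images) is truly of size $e^{-\ell\nu/\varep}$. Once that spectral estimate is in hand, the contraction argument and the solvability recursion are routine.
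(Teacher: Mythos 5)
The paper does not prove this lemma: it is quoted verbatim from \cite{DHPW-14} (``The result is quoted here''), so there is no in-paper argument to compare against step by step. Your formal-expansion half is a sound reconstruction of what the construction must look like and is consistent with the surrounding machinery: for a circle all quantities of Notation\,\ref{Notation-h} are $s$-independent, the order-$\varep^2$ solvability condition $\int\mrF_2\,\phi_0'\,\mrd z=m_0(\sigma_1^*-\sigma)\kappa_*=0$ forces $\sigma=\sigma_1^*$ at leading order, and the next order determines $\sigma_{\geq 2}^*$ exactly as reflected in \eqref{def-hatla*2}. (Minor quibble: the parity remark is backwards as stated --- $\mrF_2$ and $\phi_0'$ are both odd, so their product is even and the integral is \emph{not} automatically zero; that non-vanishing is precisely what pins $\sigma$. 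What parity buys you is that the even components of the residual are automatically orthogonal to $\phi_0'$.)

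The genuine gap is in the closing contraction for $\phi_e$. You assert that the linearization about the dressed circular profile has small spectrum only on the two translational modes and is otherwise invertible with an $\varep$-uniform bound, citing the coercivity of Section\,\ref{sec-profile}. That is not what those results say, and it is not true: on a circle the linearization $\Pi_0\mbL_\mbp$ carries $N_0\sim\varep^{-1}\rho^{1/2}$ pearling eigenvalues of size $O(\varep)$ (and only under $(\mathbf{PSC})$ are they positive at all) plus $N_1\sim\varep^{-1}\rho^{1/4}$ meander eigenvalues $\Lambda_{1i}^2=(\la_1+\varep^2\beta_{\mbp,i}^2)^2$ that are as small as $O(\varep^4)$ for low $\beta_i$; Lemma\,\ref{lem-coer} gives coercivity only on the orthogonal complement of the \emph{entire} slow space $\mcZ_*$, with constant $C\rho^2$, not on the complement of the translations. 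So the contraction as you set it up does not close. It can be repaired, but you must say how: either (i) work in the radially symmetric class, which eliminates all nonradial pearling and meander modes, use the free parameter $\sigma^*$ (equivalently the far-field mass constraint) to absorb the radial dilation direction, and then note that because the residual after the inner construction is $O(e^{-\ell\nu/\varep})$ an inverse that is merely polynomially large in $\varep^{-1}$ on the remaining modes still yields an exponentially small $\phi_e$ --- provided you rule out exact degeneracy of those eigenvalues; or (ii) follow what \cite{DHPW-14} actually does, namely construct the radial equilibrium as a solution of the fourth-order radial ODE $\mrF(u)=\varep\sigma$ by persistence of the homoclinic profile, which sidesteps the two-dimensional spectral question entirely and reserves the functional-analytic correction only for the exponentially small breaking of radial symmetry by the period box. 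As written, the spectral claim underlying your final step is false, and that is the step the whole lemma hangs on.
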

The exponential correction $\phi_e$ is nontrivial outside the reach $\Gamma_*^{2\ell}$, and it arises from the interaction of the radial equilibrium inside the reach with the period box $\Omega=[-L,L]^2.$

\begin{defn}\label{def-Phi-p-1} The bilayer \muckymuck $\Phi_\mbp$ is defined through \eqref{def-Phi-p} with $\phi_{\geq3}$ chosen as a dress of  $\phi^*_{\geq3}$ around $\Gamma_\mbp$, and $\phi_{\mbp,e}(x)$ as a linear translation of $\phi_e$ from Lemma\,\ref{lem-exist-equi}. Specifically we take
 \beqs
 \phi_{\geq 3}(z_\mbp):=\phi_{\geq 3}^*(z_\mbp; \sigma^*, \kappa_*), \qquad   \phi_{\mbp,e}
=\phi_e(x-\mrp_1\Theta_0\bE_1-\mrp_2\Theta_0\bE_2; \kappa_*, \sigma^*).
 \eeqs
\end{defn}
The FCH gradient flow preserves system mass, which is set by the initial data. We adjust the value of the bulk density parameter $\sigma$ so that the mass of  $\Phi_{\mathbf p}$ equals the system mass. From the form \eqref{def-Phi-p} of $\Phi_{\mbp}$ with $\phi_1=\phi_1(\sigma)$ given by \eqref{def-phi1}  we deduce that 
\beq
\label{eqn:SysMass}
\langle \Phi_\mbp(x)-b_-\rangle_{L^2} = \varep\frac{M_0}{|\Omega|},
\eeq
precisely if 
\begin{equation}\label{def-sigma}
\begin{aligned}
\sigma(\mathbf p)= \frac{1}{\oB_{\mathbf p,2} } \Bigg\{ M_0- \int_\Omega \bigg[&\frac{1}{\varep}\Big(  \phi_0(z_{\mathbf p}) -b_-  +\varep^2\phi_{\geq 2} (\bm \gamma''_{\mathbf p}, z_{\mathbf p})+e^{-\ell\nu/\varep}\phi_{\mbp, e}\Big) +\frac{\eta_d}{2} \left(\mathrm L_{\mathbf p, 0}^{-1}(z_{\mathbf p}\phi_0')\right) \bigg]\, \mrd   x\Bigg\}.
\end{aligned}
\end{equation}
The following result approximates the dependence of $\sigma$ on $(\mrp_0, \hat\mbp)$. 
\begin{lemma}\label{lem-sigma}
Let $\varep\in(0,\varep_0)$ with $\varep_0$ small enough, and consider system mass of the form \eqref{eqn:SysMass} with $M_0$ independent of $\varep$.  
The dependence of the bulk density $\sigma=\sigma(\mbp)$ on $\mbp\in\cD_\delta$ satisfies
\begin{equation*}
\sigma(\mbp)=\sigma_0-\frac{c_0 m_1^2}{m_0}R_0 \mathrm p_0+\varep  \mathcal C(\mrp_0)\mrp_0+O\left(\varep^2 \|\hat{\mathbf p}\|_{\mbV_2^2}\right)
\end{equation*}
for some smooth function $\mathcal C(\mrp_0)$  of $\mrp_0$ that is bounded uniformly, independent of $\varep$. The base bulk density $\sigma_0=\sigma_0(\Gamma_0, M_0)$ is independent of $\mbp$, and  $c_0$ is a fixed positive constant; indeed,
\beq\label{def-c0}
\sigma_0(\Gamma_0, M_0)=\frac{M_0  -m_0|\Gamma_0|}{B_2^\infty|\Omega|}  +O(\varep^2),  \qquad c_0=\frac{2\pi m_0^2}{\oB_2 m_1^2}>0.
\eeq
\end{lemma}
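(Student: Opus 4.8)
The plan is to expand the mass identity \eqref{def-sigma} in powers of $\varep$ and in the meander parameter $\mbp$, isolating the contribution of $\mrp_0$ at leading and next order while collecting all $\hat\mbp$-dependence into a quadratic remainder.  First I would substitute the decomposition $\Phi_\mbp=\phi_0(z_\mbp)+\varep\phi_1(\sigma)+\varep^2\phi_{\geq2}+e^{-\ell\nu/\varep}\phi_{\mbp,e}$ into \eqref{eqn:SysMass}, use $\phi_1(\sigma)=\sigma B_{\mbp,2}+\tfrac{\eta_d}{2}\mrL_{\mbp,0}^{-1}(z_\mbp\phi_0')$, and solve for $\sigma$, obtaining \eqref{def-sigma}.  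The key observation is that integrals of dressed, localized functions over $\Omega$ reduce, via the local coordinate change $\dd x=\varep(1-\varep z_\mbp\kappa_\mbp)|\bm\gamma_\mbp'|\dd s_\mbp\dd z_\mbp$, to $|\Gamma_\mbp|$ times a one-dimensional integral over $\mbR$, plus curvature-weighted corrections of relative order $\varep$ and exponentially small tail errors.  Since $|\Gamma_\mbp|=(1+\mrp_0)|\Gamma_0|$ by \eqref{est-A-|Gamma-p|}, the dominant $\mrp_0$-dependence is linear; the $\hat\mbp$-dependence enters only through $\kappa_\mbp$ (hence through $\mathcal Q_1+\mathcal Q_2$ of Lemma\,\ref{lem-Gamma-p}) and through $\phi_{\geq2}(\bm\gamma_\mbp'',\cdot)$, and because $\int_{\msI_\mbp}\tilde\Theta_j\dd\tilde s_\mbp=0$ for $j\geq3$ the linear-in-$\hat\mbp$ terms integrate to zero, leaving a remainder controlled by $\|\hat\mbp\|_{\mbV_2^2}^2\lesssim\|\hat\mbp\|_{\mbV_2^2}$ together with the Lipschitz bound \eqref{est-h-p-V-2+2} of Lemma\,\ref{lem-h} applied to $h=\phi_{\geq2}(\bm\gamma_\mbp'',\cdot)-\phi_{\geq2}(\bm\gamma_{\mbp,0}'',\cdot)$.

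Next I would extract the precise coefficients.  At $O(1)$ the identity reads $M_0-m_0|\Gamma_\mbp|=\sigma\,\oB_{\mbp,2}+O(\varep)$ where $\oB_{\mbp,2}=|\Gamma_\mbp|\int_{\mbR}B_2\dd z+B_2^\infty(|\Omega|-|\Gamma_\mbp^{2\ell}|)+\cdots$; to leading order in $\varep$ this is $B_2^\infty|\Omega|$, which combined with $|\Gamma_\mbp|=(1+\mrp_0)|\Gamma_0|$ gives
\[
\sigma(\mbp)=\frac{M_0-m_0(1+\mrp_0)|\Gamma_0|}{B_2^\infty|\Omega|}+O(\varep)=\sigma_0-\frac{m_0|\Gamma_0|}{B_2^\infty|\Omega|}\mrp_0+O(\varep),
\]
so that $\sigma_0$ is exactly the $\mrp_0$-independent quantity in \eqref{def-c0}.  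To match the stated coefficient $-c_0 m_1^2 R_0/m_0$ of $\mrp_0$ I would use $|\Gamma_0|=2\pi R_0$, giving $m_0|\Gamma_0|/(B_2^\infty|\Omega|)\cdot\mrp_0 = (2\pi m_0 R_0/(B_2^\infty|\Omega|))\mrp_0$; then I must reconcile this with $c_0=2\pi m_0^2/(\oB_2 m_1^2)$, which requires identifying $\oB_2=B_2^\infty|\Omega|(1+O(\varep))$ and tracking how the $O(\varep)$ corrections to $\oB_{\mbp,2}$ themselves carry further $\mrp_0$-dependence — these land in the $\varep\,\mathcal C(\mrp_0)\mrp_0$ term.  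The smooth $\varep$-coefficient $\mathcal C(\mrp_0)$ absorbs (i) the curvature correction $-\varep z_\mbp\kappa_{\mbp,0}$ in the Jacobian, which is a smooth function of $\kappa_{\mbp,0}=-1/(R_0(1+\mrp_0))$ hence smooth in $\mrp_0$, (ii) the $\varep$-order piece of $\phi_1$ and $\phi_{\geq2}$ evaluated on the leading circle, and (iii) the $\mrp_0$-variation of $\oB_{\mbp,2}$ at order $\varep$; each is uniformly bounded independent of $\varep$ by the exponential localization of $B_2,\phi_0',\phi_{\geq2}$ and the bounds of Lemma\,\ref{lem-Gamma-p}.

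The main obstacle I anticipate is bookkeeping rather than analysis: carefully separating the genuinely linear-in-$\mrp_0$ contribution from the terms that are formally $O(\varep)$ but still depend on $\mrp_0$, so as to land exactly on the form $\sigma_0 - (c_0 m_1^2/m_0)R_0\mrp_0 + \varep\mathcal C(\mrp_0)\mrp_0 + O(\varep^2\|\hat\mbp\|_{\mbV_2^2})$ with the constant $c_0$ as in \eqref{def-c0}.  In particular one must verify that no term of the form $\varep^2\|\hat\mbp\|_{\mbV_2^2}$ is actually larger — this uses that the $\hat\mbp$-linear parts cancel upon integration against $1$ over $\msI_\mbp$, so the first surviving $\hat\mbp$-dependence is quadratic and carries an extra $\varep$ beyond the $O(\varep^0)$ and $O(\varep)$ $\mrp_0$-terms, or equivalently is controlled via the Lipschitz estimate \eqref{est-h-p-V-2+2}.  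Since $M_0$ is independent of $\varep$ and $B_2^\infty,\oB_2,m_0,m_1$ are fixed $\varep$-independent constants (up to $O(\varep^2)$ in $\oB_2$, which is why the $O(\varep^2)$ appears in \eqref{def-c0}), dividing through by $\oB_{\mbp,2}$ preserves all the stated orders, and the proof concludes by reading off $c_0>0$ from its explicit positive expression.
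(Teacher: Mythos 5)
Your proposal follows essentially the same route as the paper: expand the mass identity \eqref{def-sigma} term by term, reduce the integrals of dressed localized functions to $|\Gamma_\mbp|=(1+\mrp_0)|\Gamma_0|$ times one-dimensional constants (with the curvature-weighted Jacobian corrections vanishing by parity), control the $\hat\mbp$-dependence of the $\phi_{\geq2}$ term via the Lipschitz estimates of Notation~\ref{Notation-h}, and reconcile $\oB_2$ with $B_2^\infty|\Omega|$ up to $O(\varep)$ to read off $c_0$. This matches the paper's proof in both structure and the key coefficient bookkeeping.
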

\begin{remark}
The base interface and system mass $(\Gamma_0,M_0)$ form an admissible pair if and only if $\sigma_0(\Gamma_0,M_0)$ is bounded uniformly in $\varep.$
\end{remark}
\begin{proof} We address the terms on the right-hand side of \eqref{def-sigma} one by one. First,  under the local coordinates near $\Gamma_\mbp$  we rewrite
\beqs
\int_\Omega (\phi_0(z_\mbp)-b_-) \dd x= \varep \int_{\msI}\int_{\mbR_{2\ell}} (\phi_0(z_\mbp)-b_-) |\bm \gamma_\mbp'| (1-\varep z_\mbp\kappa_\mbp) \dd s_\mbp\dd z_\mbp.
\eeqs 
Since $\phi_0(z_\mbp)-b_-=\phi_0^d-b_-$ is localized near $\Gamma_\mbp$ and  even with respect to $z_\mbp$, we have 
\beqs
\int_\Omega (\phi_0(z_\mbp)-b_-) \dd x= \varep |\Gamma_\mbp| \int_{\mbR_{2\ell}} (\phi_0(z) -b_-)\chi(\varep z/\ell) \dd z.
\eeqs 
With $m_0$ defined in \eqref{def-m0}, there exists some constant $C_1$ independent of $\mbp$ such that
\beq\label{sigma-est-1}
\int_\Omega (\phi_0(z_\mbp)-b_-) \dd x= \varep |\Gamma_\mbp| \left(m_0 +C_1e^{-\ell\nu/\varep}\right). 
\eeq
The remaining leading order term depends on $z_\mbp$ only and is localized. We deduce that
\beq\label{sigma-est-2}
\int_\Omega  \mrL_{\mbp, 0}^{-1}(z_\mbp \phi_0')\dd x =  C_2\varep |\Gamma_\mbp| , \qquad C_2:= \int_{\mbR_{2\ell}} \mrL_0^{-1} (z\phi_0')\chi(\varep z/\ell) \dd z.    
\eeq
The constant $\oB_{\mbp, 2}$ defined in \eqref{def-sigma} is the mass of the dressed function $B_{\mbp, 2}$ introduced in \eqref{def-B+dp,k}-\eqref{def-Bdp-k-mass}. Since $B_{\mbp, 2}$ approaches $B_{\mbp, 2}^\infty=B_2^\infty$ as $|z|\to \infty$, we may rewrite 
\beqs
\oB_{\mbp,2}=\int_\Omega \left(B_{\mbp, 2}-B_{2}^\infty\right) \dd x +B_2^\infty |\Omega|  
\eeqs
The first term is localized and even with respect to $z_\mbp$ from which we deduce
\beq\label{sigma-est-3}
\oB_{\mbp,2}=\varep C_3  |\Gamma_\mbp| +B_2^\infty |\Omega|, \qquad  C_3:= \int_{\mbR_{2\ell}} (B_2 -B_2^\infty)\chi(\varep z/\ell) \dd z. 
\eeq
Finally, the term  $\phi_{\geq 2}=\phi_{\geq 2}(z_\mbp, \bm \gamma_\mbp'')$ has the properties of Notation \ref{Notation-h}. Subtracting the far field value $\phi_{\geq 2}^\infty$, integrating out $z_\mbp$ and applying Lemma \ref{lem-est-h-proj} from the Appendix for $j=0, k=0$  we find  
\beq\label{sigma-est-4}
\int_\Omega \phi_{\geq 2}(z_\mbp, s_\mbp)\dd x=\phi_{\geq 2}^\infty |\Omega| +\varep f(\mrp_0) +O(\varep \|\hat\mbp\|_{\mbV_2^2}),  
\eeq
for some smooth function $f=f(\mrp_0)$. Combining \eqref{sigma-est-1}-\eqref{sigma-est-2} and \eqref{sigma-est-3}-\eqref{sigma-est-4} with \eqref{def-sigma} yields
\beqs
\sigma(\mbp)= \frac{1}{B_2^\infty|\Omega| +\varep C_3|\Gamma_\mbp|} \left\{M_0 -\frac{e^{-\ell\nu/\varep} }{\varep}  M_e -\varep^2 \phi_{\geq 2}^\infty |\Omega| - (m_0 +C\varep)|\Gamma_\mbp|  +\varep^2 f(\mrp_0) +O(\varep^2 \|\hat\mbp\|_{\mbV_2^2}) \right\}
\eeqs
where $C=C_1\varep^{-1}e^{-\ell\nu/\varep}+C_2 \lesssim 1$  and 
\beqs
M_e
=\int_\Omega \phi_{\mbp,e}(x)\dd x = \int_\Omega \phi_{e}(x)\dd x
\eeqs
is a mass correction arising from the exponentially small term  $\phi_{\mbp, e}$ from Definition\,\ref{def-Phi-p-1}.
The Lemma follows from \eqref{est-A-|Gamma-p|} by extracting the leading order terms, introducing the $\mbp$ independent constants $c_0$,
\beqs
\sigma_0:= \frac{1}{B_2^\infty|\Omega|} \left\{M_0 -\frac{e^{-\ell\nu/\varep} }{\varep}  M_e -\varep^2 \phi_{\geq 2}^\infty |\Omega| -m_0|\Gamma_0|\right\} \lesssim 1,
\eeqs
and taking $\varep_0$ small enough. 
\end{proof}

We introduce the equilibrium meander parameters $\mbp^*=(\mrp_0^*,\mrp_1^*, \mrp_2^*, \bm 0)$ for which $\sigma(\mbp^*)=\sigma^*.$  With this notation we have the following result.

\begin{cor}\label{cor-def-hatla} 
Suppose $\mbp^*=(\mrp_0^*, \mrp_1^*, \mrp_2^*, \bm 0)\in \cD_\delta$ the bulk density parameter $\sigma=\sigma(\mbp)$ satisfies the relation  
\begin{equation}\label{re-hatla*-p0*}
\sigma^*-\sigma(\mathbf{p})= \frac{c_0m_1^2}{m_0} R_0(\mathrm p_0-\mathrm p_0^*)+O\left(\varep|\mathrm p_0-\mathrm p_0^*|, \varep^2\|\hat{\mathbf p}\|_{\mbV_2^2}\right).
\end{equation} 

\end{cor}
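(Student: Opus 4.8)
The plan is to read off Corollary~\ref{cor-def-hatla} as a direct algebraic consequence of Lemma~\ref{lem-sigma} applied twice: once at a general $\mbp\in\cD_\delta$ and once at the equilibrium parameter $\mbp^*$. First I would write down the expansion of Lemma~\ref{lem-sigma} at $\mbp$,
\beqs
\sigma(\mbp)=\sigma_0-\frac{c_0 m_1^2}{m_0}R_0\mrp_0+\varep\,\mathcal C(\mrp_0)\mrp_0+O(\varep^2\|\hat\mbp\|_{\mbV_2^2}),
\eeqs
and then, since $\mbp^*=(\mrp_0^*,\mrp_1^*,\mrp_2^*,\bm 0)$ has $\hat\mbp^*=\bm 0$, the same lemma at $\mbp^*$ gives
\beqs
\sigma^*=\sigma(\mbp^*)=\sigma_0-\frac{c_0 m_1^2}{m_0}R_0\mrp_0^*+\varep\,\mathcal C(\mrp_0^*)\mrp_0^*.
\eeqs
Here I use the defining property $\sigma(\mbp^*)=\sigma^*$ stated just before the corollary. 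Subtracting the first display from the second, the $\mbp$-independent base density $\sigma_0$ cancels exactly, leaving
\beqs
\sigma^*-\sigma(\mbp)=\frac{c_0 m_1^2}{m_0}R_0(\mrp_0-\mrp_0^*)+\varep\bigl(\mathcal C(\mrp_0^*)\mrp_0^*-\mathcal C(\mrp_0)\mrp_0\bigr)+O(\varep^2\|\hat\mbp\|_{\mbV_2^2}).
\eeqs

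The only remaining point is to absorb the $\varep$-order discrepancy into the stated error term $O(\varep|\mrp_0-\mrp_0^*|)$. Since $g(\mrp_0):=\mathcal C(\mrp_0)\mrp_0$ is smooth in $\mrp_0$ with derivative bounded uniformly in $\varep$ on the compact range of $\mrp_0$ allowed by $\cD_\delta$ (recall $\mrp_0>-1/2$ and the $l^2$-constraint in \eqref{A-00} keeps $\mrp_0$ bounded), the mean value theorem gives $|g(\mrp_0^*)-g(\mrp_0)|\lesssim|\mrp_0-\mrp_0^*|$ with an $\varep$-independent constant. Hence $\varep\bigl(g(\mrp_0^*)-g(\mrp_0)\bigr)=O(\varep|\mrp_0-\mrp_0^*|)$, which is exactly the first error contribution in \eqref{re-hatla*-p0*}. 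Combining yields
\beqs
\sigma^*-\sigma(\mbp)=\frac{c_0 m_1^2}{m_0}R_0(\mrp_0-\mrp_0^*)+O\bigl(\varep|\mrp_0-\mrp_0^*|,\ \varep^2\|\hat\mbp\|_{\mbV_2^2}\bigr),
\eeqs
as claimed.

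There is essentially no analytic obstacle here; the corollary is a bookkeeping step isolating the leading linear dependence of the bulk density on the length parameter $\mrp_0$ relative to its equilibrium value. The one thing to be careful about is that $\mathcal C$ itself may depend on $\varep$, so I would make explicit in the write-up that Lemma~\ref{lem-sigma} already asserts $\mathcal C$ is bounded uniformly in $\varep$, and that this uniformity passes to its difference quotient because the range of $\mrp_0$ is a fixed bounded interval independent of $\varep$ (a consequence of $\mbp\in\cD_\delta$). The $\|\hat\mbp\|_{\mbV_2^2}$ error is carried along verbatim from Lemma~\ref{lem-sigma} and needs no further manipulation since $\hat\mbp^*=\bm 0$ contributes nothing from the $\mbp^*$ evaluation.
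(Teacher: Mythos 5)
Your proposal is correct and is essentially the argument the paper intends: the corollary is an immediate consequence of Lemma~\ref{lem-sigma} evaluated at $\mbp$ and at $\mbp^*$ (where $\hat\mbp^*=\bm 0$ kills the $\varep^2\|\hat\mbp\|_{\mbV_2^2}$ contribution), with the $\varep$-order terms absorbed into $O(\varep|\mrp_0-\mrp_0^*|)$ by smoothness of $\mathcal C(\mrp_0)\mrp_0$. The paper gives no separate proof, and your bookkeeping, including the remark on the uniform-in-$\varep$ boundedness of $\mathcal C$, matches the intended derivation.
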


To establish the long-time convergence to equilibrium requires Lipschitz estimates on the residual of the bilayer \muckymucks $\Phi_\mbp$ for $\mbp$ near $\mbp^*.$   
\begin{lemma}\label{lem-est-L2-F}
For $\mbp\in \cD_\delta$ 
the components of the residual $\mrF(\Phi_{\mathbf p})$ given in \eqref{exp-mF} satisfy
\begin{equation*}
\|\mathrm F_2\|_{L^2}\lesssim \varep^{1/2}|\sigma-\sigma^*| + \varep^{3/2}; \;\; \|\mathrm F_3-\mrF_3^\infty\|_{L^2}+\|\mathrm F_{\geq 4}-\mrF_{\geq 4}^\infty\|_{L^2}\lesssim \varep^{1/2}\left(\|\hat{\mathbf p}\|_{\mbV_4^2}+1\right),
\end{equation*}
and the exponential residual satisfies $\|\mrF_e\|_{L^2}\lesssim 1$. If $\Phi_{\mbp}$ is chosen as in Definition\,\ref{def-Phi-p-1}   then the overall residual satisfies the Lipschitz estimate
\begin{equation*}
\|\Pi_0\mathrm F(\Phi_{\mathbf p})\|_{L^2}\lesssim \varep^{5/2}|\mrp_0-\mrp_0^*|+\varep^{5/2}\|\hat{\mathbf p}\|_{\mbV_2^2}+\varep^{7/2}\|\hat{\mathbf p}\|_{\mbV_4^2}.
\end{equation*}
\end{lemma}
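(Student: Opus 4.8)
\textbf{Proof strategy for Lemma \ref{lem-est-L2-F}.}
The plan is to estimate each component $\mrF_2,\mrF_3,\mrF_{\geq 4},\mrF_e$ of the residual separately using the explicit expansion \eqref{exp-mF}--\eqref{F-234}, then assemble the pieces after applying the zero-mass projection $\Pi_0$. For the $L^2(\Omega)$ norms I would systematically pass to the local coordinates $(s_\mbp,z_\mbp)$ on the reach $\Gamma_\mbp^{2\ell}$, so that $\|\cdot\|_{L^2(\Omega)}^2$ picks up a factor $\varep(1-\varep z_\mbp\kappa_\mbp)|\bm\gamma_\mbp'|$; this is exactly where the $\varep^{1/2}$ prefactors come from, since a function localized near $\Gamma_\mbp$ with $O(1)$ $z_\mbp$-profile has $L^2(\Omega)$ norm of size $\varep^{1/2}\|\kappa_\mbp\text{-dependent factor}\|_{L^2(\msI_\mbp)}$. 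For $\mrF_2=\kappa_\mbp(\sigma-\sigma_1^*)f_2(z_\mbp)$, since $f_2$ is localized and odd, the $L^2$ norm is $\lesssim \varep^{1/2}|\sigma-\sigma_1^*|\,\|\kappa_\mbp\|_{L^2(\msI_\mbp)}$; writing $\sigma-\sigma_1^*=(\sigma-\sigma^*)+(\sigma^*-\sigma_1^*)$ and using $\sigma^*-\sigma_1^*=O(\varep)$ from Lemma \ref{lem-exist-equi} together with the bound $\|\kappa_\mbp\|_{L^\infty}\lesssim 1+\|\hat\mbp\|_{\mbV_2}\lesssim 1$ on $\cD_\delta$ gives the stated $\varep^{1/2}|\sigma-\sigma^*|+\varep^{3/2}$. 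For $\mrF_3$ and $\mrF_{\geq 4}$, after subtracting the far-field constants $\mrF_3^\infty,\mrF_{\geq4}^\infty$ (which makes the integrands localized and hence amenable to the $\varep^{1/2}$ scaling), the terms $\phi_0'\Delta_{s_\mbp}\kappa_\mbp$ and $\Delta_g f_{4,1}$ contribute via the curvature expansion \eqref{def-kappa-p,0}: the $\mcQ_1$ piece of $\Delta_{s_\mbp}\kappa_\mbp$ is controlled by $\|\hat\mbp\|_{\mbV_4^2}$ (two extra Laplace--Beltrami derivatives on top of the $\mbV_2$-type norm appearing in $\kappa_\mbp$) and the quadratic remainder $\mcQ_2$ by Lemma \ref{lem-Gamma-p}; the $f_3,f_{4,2}$ pieces satisfy Notation \ref{Notation-h} so their localized $L^2(\msI_\mbp)$ norms are $O(1)$ by the embeddings of Lemma \ref{lem-est-V} and Lemma \ref{lem-h}. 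The exponential term satisfies $\|\mrF_e\|_{L^2}\lesssim 1$ since $\phi_{\mbp,e}$ is a translate of the uniformly smooth $\phi_e$ from Lemma \ref{lem-exist-equi} and the translation is an $L^2$-isometry.

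The crux is the final Lipschitz estimate, and here the key point is that \emph{the bilayer distribution $\Phi_\mbp$ of Definition \ref{def-Phi-p-1} is built so that its residual vanishes to high order at $\mbp=\mbp^*$} — indeed $\Phi_{\mbp^*}$ is, up to the $\varep^2\phi_{\geq2}$-level corrections plus the exactly-cancelling $\phi^*_{\geq3}$ and $\phi_e$, the genuine equilibrium $\Phi_*$ of Lemma \ref{lem-exist-equi}, so $\Pi_0\mrF(\Phi_{\mbp^*})$ is exponentially small. The strategy is therefore to exploit cancellation rather than to bound each term crudely: I would use the projection identities \eqref{est-proj-rF2,3} to extract the "slow" part of $\mrF$ and then re-express the leading $\varep^2$ and $\varep^3$ contributions in terms of $\sigma-\sigma^*$ and the geometric deviation $\hat\mbp$. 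Specifically, the $\varep^2\mrF_2$ term carries a factor $\kappa_\mbp(\sigma-\sigma_1^*)$; decomposing $\kappa_\mbp=\kappa_{\mbp,0}+\mcQ_1+\mcQ_2$ and $\sigma-\sigma_1^*=(\sigma-\sigma^*)+O(\varep)$, the piece $\kappa_{\mbp,0}(\sigma-\sigma^*)$ is controlled via Corollary \ref{cor-def-hatla}, which gives $\sigma-\sigma^*=O(|\mrp_0-\mrp_0^*|,\varep^2\|\hat\mbp\|_{\mbV_2^2})$, yielding a $\varep^{5/2}|\mrp_0-\mrp_0^*|$ contribution; the $\mcQ_1+\mcQ_2$ pieces contribute $\varep^{5/2}\|\hat\mbp\|_{\mbV_2^2}$ (linear-in-$\hat\mbp$ terms survive because $\Phi_\mbp$ was only tuned to kill the circular residual, not the meandering one, but they are $O(\varep^2)\times\varep^{1/2}\times\|\hat\mbp\|_{\mbV_2}$ — wait, one must be careful: the linear-in-$\hat\mbp$ part of $\kappa_\mbp$ times the $O(\varep)$ part of $(\sigma-\sigma_1^*)$ gives $\varep^3\times\varep^{1/2}\|\hat\mbp\|_{\mbV_2}$, while times $(\sigma-\sigma^*)$ it is even smaller, so it is the $\mcQ_2$-type and $\mrF_3$-type terms, with their $\|\hat\mbp\|_{\mbV_2^2}$ and $\|\hat\mbp\|_{\mbV_4^2}$ norms, that set the $\varep^{5/2}\|\hat\mbp\|_{\mbV_2^2}+\varep^{7/2}\|\hat\mbp\|_{\mbV_4^2}$ scale). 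The $\varep^3\mrF_3$ term contributes $\varep^3\|\Delta_{s_\mbp}\kappa_\mbp\|_{L^2}\lesssim\varep^{7/2}\|\hat\mbp\|_{\mbV_4^2}$ plus lower order, using the $\mbV_4^2$-control of two extra Laplace--Beltrami derivatives on the $\mcQ_1$ part; the quadratic $\mcQ_2$-part of $\Delta_{s_\mbp}\kappa_\mbp$ and the $f_3$ remainder are then absorbed into $\varep^{5/2}\|\hat\mbp\|_{\mbV_2^2}$ via Lemmas \ref{lem-Gamma-p} and \ref{lem-h}. Finally $\varep^4\mrF_{\geq4}$ and the exponential term are both lower order.

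The main obstacle I anticipate is the careful bookkeeping needed to see that \emph{no term is larger than advertised} — in particular, confirming that the $f_3$, $f_{4,1}$, $f_{4,2}$ contributions, which are merely "$O(1)$ in $\msI_\mbp$" a priori, in fact carry an extra power of $\varep$ or of $\|\hat\mbp\|$ once one subtracts the $\mbp=\mbp^*$ value and uses that the $\hat\mbp$-independent, circular part of the residual is killed by the choice of $\phi_{\geq3}=\phi^*_{\geq3}$ and $\phi_{\mbp,e}$. Concretely, one must show that $h(\bm\gamma_\mbp'')-h(\bm\gamma_{\mbp^*}'')$-type differences, for the various coefficient functions $h$ satisfying Notation \ref{Notation-h}, are Lipschitz in $\hat\mbp$ with the $\mbV_2^2$ (resp.\ $\mbV_4^2$) norm — this is exactly the content of Lemma \ref{lem-h} applied with the base point shifted from $\mbp=0$ to $\mbp=\mbp^*$, and combined with Lemma \ref{lem-sigma}/Corollary \ref{cor-def-hatla} to handle the $\sigma$-dependence of these coefficients. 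A secondary technical point is that $\Pi_0$ subtracts the spatial average, which for localized functions is itself $O(\varep|\Gamma_\mbp|/|\Omega|)$ times the $z$-integral and hence only improves the estimates, but one should note that it does not destroy the localization structure used for the $\varep^{1/2}$ bookkeeping.
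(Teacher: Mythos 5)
Your proposal is correct and follows essentially the same route as the paper: component-wise estimates in the local coordinates $(s_\mbp,z_\mbp)$ for the first set of bounds, and for the Lipschitz estimate the identity $\|\Pi_0\mrF(\Phi_\mbp)\|_{L^2}=\|\Pi_0\mrF(\Phi_\mbp)-\Pi_0\mrF(\Phi_*)\|_{L^2}$ (your observation that the residual is killed at the equilibrium by the choice of $\phi_{\geq3}^*$ and $\phi_e$), followed by term-by-term Lipschitz bounds on the geometric coefficients via Lemmas \ref{lem-change-of-coord}, \ref{lem-Gamma-p}, \ref{lem-h} and the conversion of $|\sigma-\sigma^*|$ into $|\mrp_0-\mrp_0^*|+\varep^2\|\hat\mbp\|_{\mbV_2^2}$ through Corollary \ref{cor-def-hatla}. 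The only cosmetic difference is that the paper compares against the translated exact equilibrium $\Phi_*$ (with $\Gamma_*$ recentered at $(\mrp_1,\mrp_2)$) rather than expanding about $\mbp^*$ directly, which cleanly disposes of the translational degrees of freedom and of $\mrF_e$ by exact cancellation.
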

\begin{proof} 
The $L^2$-bounds  of the difference between  $\mathrm F_k$ and its bulk value for $k=2, 3, 4$ follow from the expressions for $\mathrm F_2, \mathrm F_3, \mathrm F_4$ given 
in Lemma \ref{lem-def-Phi-p}, see \cite{CP-linear} for details. 
The $L^2$-estimate of $\Pi_0\mathrm F(\Phi_{\mathbf p})$ follows from comparing it to the zero residual of $\Phi_*$. Indeed we write
\begin{equation}\label{est-L2-Pi0F-1}
\|\Pi_0\mathrm F(\Phi_{\mathbf p})\|_{L^2}=\|\Pi_0\mathrm F(\Phi_{\mathbf p})-\Pi_0\mathrm F(\Phi_*)\|_{L^2}
\end{equation}
where $\Phi_*$ is the equilibrium solution associated with bulk density state $\sigma^*$ and interface  $\Gamma_*$ with parameterization
\beq\label{def-Gamma*}
\bm \gamma_*:=\bm \gamma_{\mathbf p^*}+\Theta_0\Big((\mathrm p_1-\mathrm p_1^*)\bE_1+(\mathrm p_2-\mathrm p_2^*)\bE_2\Big).
\eeq 
obtained by translating $\Gamma_{\mathbf p^*}$ to place its center at $(\mrp_1,\mrp_2)$.
The triangle inequality and the expansion of $\mathrm F=\mathrm F(\Phi_{\mathbf p})$ from Lemma \ref{lem-def-Phi-p}, yield the estimate
\begin{equation}\label{est-L2-Pi0F-2}
\begin{aligned}
\|\Pi_0(\mathrm F(\Phi_{\mathbf p})- \mathrm F(\Phi_*))\|_{L^2} \leq& \varep^2\|\Pi_0(\mathrm F_2-\mathrm F_2(\Phi_*))\|_{L^2}+\varep^3\|\Pi_0(\mathrm F_3-\mathrm F_3(\Phi_{*}))\|_{L^2}\\
      &+\varep^4\| \Pi_0(\mathrm F_{\geq 4}- \mathrm F_{\geq 4}(\Phi_*))\|_{L^2} +e^{-\ell\nu/\varep}\|\mrF_e-\mrF_e(\Phi_*)\|_{L^2}.
\end{aligned}
\end{equation}
We use the form of the $\mrF_k(\Phi_*)$ residuals to establish that they are Lipschitz in $(\mrp_0-\mrp_0^*, \hat\mbp)$.
We observe from Lemma \ref{lem-def-Phi-p} that $\mathrm F_2$ admits the general form 
$\mathrm F_2=\kappa_{\mathbf p} f_2(z_{\mathbf p})(\sigma-\sigma_1^*)$, while $\mathrm F_2(\Phi_*)= \kappa_*f_2(z_*)(\sigma^*-\sigma_1^*)$.  We deduce that
\begin{equation*}
\|\mathrm F_2-\mathrm F_2(\Phi_*)\|_{L^2}^2\leq |\sigma-\sigma^*|^2 \int_\Omega \kappa_{\mathbf p}^2f_2^2(z_{\mathbf p})\dd   x 
+|\sigma^*-\sigma_1^*|^2\int_\Omega |\kappa_{\mathbf p} f_2(z_{\mathbf p}) -\kappa_*f_2(z_*)|^2\dd   x.
\end{equation*}
The integrals contribute a factor of $\varep$ since the integrands are bounded and their support is localized near the interfaces $\Gamma_{\mathbf p}$ and $\Gamma_*$, respectively.  We decompose the second integrand as $\kappa_{\mathbf p}\left(f(z_{\mathbf p})-f(z_*)\right)+\left(\kappa_{\mathbf p}-\kappa_*\right)f(z_*)$ which we bound by 
$|z_{\mathbf p}-z_*|+|\kappa_{\mathbf p}-\kappa_*|$ in its support set. Using the estimates of Lemmas \ref{lem-change-of-coord}, \ref{lem-Gamma-p} and \ref{lem-est-V},  recalling
 $|\sigma^*-\sigma_1^*|\lesssim \varep$ we arrive at the bound
\begin{equation}\label{est-F2}
\|\Pi_0\mathrm F_2-\Pi_0\mathrm F_2(\Phi_*)\|_{L^2}^2\lesssim \varep|\sigma-\sigma^*|^2+\varep\left(\,|\mathrm p_0-\mathrm p_0^*|^2+\|\hat{\mathbf p}\|_{\mbV_2^2}^2\right).
\end{equation}
 The $L^2$ bounds of $\mathrm F_3-\mathrm F_3(\Phi_*)$ and $\mathrm F_{\geq 4}-\mathrm F_{\geq 4}(\Phi_*)$ involve higher derivatives of the perturbed curve $\bm \gamma_\mbp$ from \eqref{def-gamma-p} which are controlled with through  \eqref{ortho-tTheta}, specifically 
\begin{equation*}
\|\varep^2\Delta_{s_\mbp}\bm \gamma_\mbp^{(k)}\|_{L^2(\msI_\mbp)}+ \|\bm \gamma_\mbp^{(k)}-\bm \gamma_{*}^{(k)}\|_{L^2(\msI)}\lesssim \|\hat{\mathbf p}\|_{\mbV_{k}^2} +|\mrp_0-\mrp_0^*|.
\end{equation*}
And with a use of the above bound and form of $\mrF_3, \mrF_4$ in Lemma \ref{lem-def-Phi-p}, one can establish similarly 
\beq\label{est-F34}
\|\Pi_0\mrF_3-\Pi_0\mrF_3(\Phi_*)\|_{L^2}^2 +\|\Pi_0\mrF_4-\Pi_0\mrF_4(\Phi_*)\|_{L^2}^2 \lesssim  \|\hat\mbp\|_{\mbV_4^2}^2 +\varep^{-1} |\mrp_0-\mrp_0^*|^2.
\eeq 
The term $\mrF_e$ incorporates residual from $\phi_e$ and from the dressing process. However $\phi_e$ in Lemma \ref{lem-def-Phi-p} cancels with the corresponding term in $\Phi_*$, and the
residual is due solely to the dressing which contributes
\beq\label{est-Fe}
\|\mrF_e-\mrF_e(\Phi_*)\|_{L^2} 
\lesssim  \|\hat\mbp\|_{\mbV_4^2}.
\eeq
Reporting the term-wise bounds, \eqref{est-F2}, \eqref{est-F34} and \eqref{est-Fe}, into \eqref{est-L2-Pi0F-2}, and applying Corollary \eqref{cor-def-hatla} completes the proof.
\end{proof}

\subsection{Linear coercivity}\label{sec-linear}
The  nonlinear stability analysis hinges upon the properties of the  linearization of the flow \eqref{eq-FCH-L2} about the  bilayer \muckymucks $\Phi_\mbp$ introduced in Definition \ref{def-Phi-p-1}.   The linearization takes the form $\Pi_0\mbL_{\mathbf p}$ where
\begin{equation}\label{def-bLp}
\begin{aligned}
\mbL_{\mathbf p} :=\frac{\delta^2\mathcal F}{\delta u^2}\Big|_{u=\Phi_{\mathbf p} }=&(\varep^2\Delta-W''(\Phi_{\mathbf p} )+\varep \eta_1)(\varep^2\Delta -W''(\Phi_{\mathbf p} )) \\
&- (\varep^2 \Delta \Phi_{\mathbf p} - W'(\Phi_{\mathbf p} ))W'''(\Phi_{\mathbf p} )+\varep\eta_dW''(\Phi_{\mathbf p} ),
\end{aligned}
\end{equation}
denotes the second variational derivative of the free energy $\mathcal F$ at $\Phi_{\mathbf p} $ and $\eta_d:=\eta_1-\eta_2$. 
When restricted to functions with support within the $\Gamma_\mbp^{2\ell}$,  the Cartesian Laplacian admits the expansion \eqref{eq-Lap-induced} and  the leading order operator takes the form
\begin{equation}
\begin{aligned}
\label{def-cLp0}
\mbL_{\mbp, 0} := \mathcal L_\mbp^2, \qquad \hbox{where} \quad \mathcal L_\mbp:=\mrL_{\mbp,0}-\varep^2\Delta_{s_{\mbp}}. 
\end{aligned}
\end{equation}
 Spectral analysis of the leading order operator  $\mbL_{\mbp,0}$ leads to definitions of slow spaces $\mcZ$, for which the operator is uniformly coercive on the orthogonal complement,
  see \cite{DHPW-14, HP-15, NP-17}. However these spaces are only invariant under the linear operator up to order of $\varep.$ 
This is not sufficient to close the nonlinear energy estimates required to establish stability. Consequently the modified space slow spaces were introduced in \cite{CP-linear}. We summarize these results below.
 


Let $\psi_{k}(z)$ denote the normalized  eigenfunctions of the operator $\mathrm L_{0}$ defined in Lemma\,\ref{lem-L0}. The dressed  and scaled version $\tilde\psi _k$ is defined as
\begin{equation*}
\tilde \psi_{ k} (z_{\mathbf p}):=\varep^{-1/2}\psi _{ k}(z_{\mathbf p}).
\end{equation*}
\begin{defn}[Slow spaces]\label{def-slow-space}
For $k=0, 1$, fixed $\rho>0$, and all $\mbp\in\cD_\delta$ we introduce the disjoint index sets:
\begin{equation}\label{def-Sigma}
\Sigma_k=\Sigma_k(\mathbf p, \rho)= \{j \bigl| \,\La_{ki}^2:=(\la_k+\varep^2\beta_{\mathbf p,i}^2)^2 \leq \rho\}, \quad \hbox{and}\quad \Sigma:=\Sigma_0\cup \Sigma_1,
\end{equation}
and the  slow space $\mathcal Z_*=\mathcal Z_*(\mathbf p, \rho)\subset L^2$, as the union of the pearling and meander spaces, $\mcZ_*^0$ and $\mcZ_*^1$,
\begin{equation}\label{def-Z*}
\mathcal Z_*:=\mathcal Z_*^0\cup \mathcal Z_*^1 \qquad  \hbox{with}\qquad \mathcal Z_*^k=\mathrm{span}\left\{Z_{\mathbf p, *}^{I(i)i}, i\in \Sigma_k\right\}.
\end{equation} 
Here the modified basis functions take the form
\begin{equation}\label{eq-def-Z*}
\begin{aligned}
Z_{\mathbf p,*}^{I(i)i}&:=\left(\tilde \psi_{I(i)} +\varep \tilde \varphi_{1,i} \right)\tilde \Theta_i+\varep\tilde\varphi_{2,i}  \varep \tilde \Theta_i', 
\end{aligned}
\end{equation}
where $\tilde \varphi_{l,i}=\varep^{-1/2} \varphi_{l,i}(z_{\mathbf p}, \bm \gamma''_{\mathbf p})$, enjoys the properties of Notation\,\ref{Notation-h},  localized near $\Gamma_\mbp$   and satisfying
\begin{equation}\label{ortho-varphi}
\int_{\mathbb R}\varphi_{l,i}(z,\bm \gamma_{\mbp}'') \psi_{I(i)}(z)\dd z=0, \qquad l=1,2, \quad i\in \Sigma.
\end{equation}
 The leading order term of $Z_{\mbp, *}^{I(i)i}$, obtained by setting  $\varphi_{l,i}=0$,  is denoted $Z_{\mbp}^{I(i)i}$, that is 
\beq
\label{def-Zp}
Z_{\mbp}^{I(i)i}:=\tilde \psi_{I(i)}\tilde \Theta_i. 
\eeq
\end{defn}
The Laplace-Beltrami eigenvalues $\beta_{\mbp, j}$ are defined in \eqref{tTheta''}. They satisfy the classical Weyl's Law asymptotics, \cite{W-1910},
\beq\label{est-N01}
N_0:=|\Sigma_0| \sim \varep^{-1}\rho^{1/2}, \qquad N_1:=|\Sigma_1|\sim \varep^{-1}\rho^{1/4}.
\eeq
In addition, for $\rho$ suitably small, the pearling index set $\Sigma_0$ is far away from the meandering index set $\Sigma_1$. In particular, 
\beq\label{MP-gap}
|\beta_k-\beta_j| \geq C\varep^{-1}, \qquad k\in \Sigma_0, j\in \Sigma_1. 
\eeq
 Thanks to the pearling-meandering gap \eqref{MP-gap}, orthogonality \eqref{ortho-tTheta} and \eqref{ortho-varphi}, the basis functions of $\mcZ_*$ are almost orthogonal  as
 \begin{equation}\label{ortho-Z*}
\begin{aligned}
&\left<Z_{\mathbf p, *}^{I(i)i}, Z_{\mathbf p, *}^{I(j)j}\right>_{L^2}= \left\{ \displaystyle\begin{array}{cc}\left(1+\mrp_0\right)\delta_{ij}+O\left(\varep^2, \varep^{2}\|\hat{\mathbf p}\|_{\mbV_2^2}\right)\mbE_{ij},  &I(i)=I(j);\\[2pt]
 \qquad O\left(\varep^2,  \varep^2\|\hat{\mathbf p}\|_{\mbV_4^2}\right)\mbE_{ij}, \qquad\qquad \qquad  &I(i)\neq I(j).
  \end{array}\right.
  \end{aligned}
\end{equation}  
We define the $L^2$ linear projection on the subspace $\mathcal Z_*^k$  by $\Pi_{\mathcal Z_*^k}$ for $k=0, 1$. In particular, for any $u\in L^2$ there exists a unique constant vector $(u_i)_{i\in \Sigma_k}\in l^2(\mbR^{N_k})$ such that $\Pi_{\mcZ_*^k}u=\sum_{i\in \Sigma_k}u_iZ_{\mbp,*}^{ki}$.
When restricted to $\mathcal Z_*$ the bilinear form of the full linearized operator $\Pi_0\mbL_{\mathbf p} \big|_{\mathcal Z_*}$, induces an $N\times N$ matrix $\mathbb M^*$ with entries
 \begin{equation}\label{def-M+*}
 \mathbb M_{ij}^*=\left<\Pi_0 \mbL_{\mathbf p}  Z_{\mathbf p, *}^{I(i)i}, Z_{\mathbf p, *}^{I(j)j}\right>_{L^2}.
 \end{equation}
We decompose $\mathbb M^*$ into a block structure corresponding to the pearling and meandering spaces,
\begin{equation}
\mathbb M^*=\left(\begin{array}{cc}
\mathbb M^*(0,0) & \mathbb M^*(0,1)\\
\mathbb M^*(1,0) &\mathbb M^*(1,1)
\end{array}\right),\qquad \mathbb M_{ij}^*(k,l)=\mathbb M^*_{ij} \quad \hbox{for}\;\;\,i\in \Sigma_k, j\in \Sigma_l.
\end{equation}
A detailed analysis of $\mbM^*$ is given in \cite{CP-linear}. In particular, under the {\sl pearling stability condition} 
\beq\label{cond-P-stab}
(\mathbf{PSC}) \qquad \sigma S_1+\eta_d \la_0>0,
\eeq
the pearling sub-block $\mbM^*(0,0)$ is positive definite, that is, \begin{equation*} \mbq^T \mbM^*(0,0) \mbq \geq  \frac{\varep}{2}(1+\mrp_0)(\sigma S_1+\eta_d \la_0) \|\mbq\|_{l^2}^2, \quad \textrm {for\, all}\, \mbq\in\mbR^{N_0}.\end{equation*}

 \begin{lemma}\label{thm-coupling est}
Let $\varep\in(0,\varep_0)$ with $\varep_0$ and $\rho$ suitably small.  
For any  function $v^\bot\in (\mcZ^1_*)^\bot$ and $\mbp\in \cD_\delta$, we can bound the meandering coupling as following: 
\beqs
 \|\Pi_{\mathcal Z^1_*}\mbL_{\mathbf p}   v^\bot\|_{L^2}\lesssim \left(\varep^2+ \varep^{2}\|\hat\mbp\|_{\mbV_4^2}\right)\|v\|_{L^2}.
\eeqs
\end{lemma}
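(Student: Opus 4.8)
The plan is to transpose $\mbL_\mbp$ onto the slow space using its self-adjointness, to discard the in-$\mcZ^1_*$ part using $v^\bot\perp\mcZ^1_*$, and thereby reduce the assertion to the near-invariance of the modified meandering space $\mcZ^1_*$ under $\mbL_\mbp$ --- which is precisely the property the correctors $\tilde\varphi_{l,i}$ of Definition~\ref{def-slow-space} are designed to produce, and which is analyzed in \cite{CP-linear}. For the reduction, set $w:=\Pi_{\mcZ^1_*}\mbL_\mbp v^\bot\in\mcZ^1_*$. Since $\Pi_{\mcZ^1_*}$ is the orthogonal $L^2(\Omega)$-projection, $w\in\mcZ^1_*$, and $\mbL_\mbp=\frac{\delta^2\mcF}{\delta u^2}\big|_{u=\Phi_\mbp}$ is self-adjoint on $L^2(\Omega)$,
\[
\|w\|_{L^2}^2=\langle w,\mbL_\mbp v^\bot\rangle_{L^2}=\langle\mbL_\mbp w,v^\bot\rangle_{L^2}=\big\langle(I-\Pi_{\mcZ^1_*})\mbL_\mbp w,\,v^\bot\big\rangle_{L^2}\le\|(I-\Pi_{\mcZ^1_*})\mbL_\mbp w\|_{L^2}\,\|v^\bot\|_{L^2},
\]
the third equality using $v^\bot\perp\mcZ^1_*$. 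Consequently, if one establishes the near-invariance estimate $\|(I-\Pi_{\mcZ^1_*})\mbL_\mbp u\|_{L^2}\lesssim(\varep^2+\varep^2\|\hat\mbp\|_{\mbV_4^2})\|u\|_{L^2}$ for every $u\in\mcZ^1_*$, then applying it with $u=w$ and dividing gives $\|\Pi_{\mcZ^1_*}\mbL_\mbp v^\bot\|_{L^2}=\|w\|_{L^2}\lesssim(\varep^2+\varep^2\|\hat\mbp\|_{\mbV_4^2})\|v^\bot\|_{L^2}\le(\varep^2+\varep^2\|\hat\mbp\|_{\mbV_4^2})\|v\|_{L^2}$, which is the Lemma.

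For the near-invariance it suffices, by linearity and the almost-orthonormality \eqref{ortho-Z*} (which gives $\|\sum_{i\in\Sigma_1}w_iZ_{\mbp,*}^{1i}\|_{L^2}\sim\|(w_i)\|_{l^2}$ for $\varep,\rho$ small and $\mbp\in\cD_\delta$), to control the residuals $R_i:=(I-\Pi_{\mcZ^1_*})\mbL_\mbp Z_{\mbp,*}^{1i}$, $i\in\Sigma_1$, in $L^2$. Expand $\mbL_\mbp=\mbL_{\mbp,0}+\varep\,\mathcal B_1+\varep^2\mathcal B_2$ with $\mbL_{\mbp,0}=\mathcal L_\mbp^2$ as in \eqref{def-cLp0}, using \eqref{eq-Lap-induced} and \eqref{def-bLp}, where $\mathcal B_1,\mathcal B_2$ are differential operators with uniformly bounded coefficients. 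Because $\la_1=0$ and $-\Delta_{s_\mbp}\tilde\Theta_i=\beta_{\mbp,i}^2\tilde\Theta_i$ by \eqref{tTheta''}, the leading operator acts diagonally, $\mathcal L_\mbp(\tilde\psi_1\tilde\Theta_i)=\varep^2\beta_{\mbp,i}^2\,\tilde\psi_1\tilde\Theta_i$, so $\mbL_{\mbp,0}$ maps $Z_{\mbp}^{1i}=\tilde\psi_1\tilde\Theta_i$ to a multiple of itself and contributes nothing outside $\mathrm{span}\{Z_{\mbp}^{1i}\}$. The only obstruction at order $\varep$ is therefore the $\mathrm{span}\{Z_{\mbp}^{1i}\}$-orthogonal part of $\varep\,\mathcal B_1Z_{\mbp}^{1i}$, and the correctors $\varep\tilde\varphi_{1,i}\tilde\Theta_i+\varep^2\tilde\varphi_{2,i}\tilde\Theta_i'$ in $Z_{\mbp,*}^{1i}$ are built --- through their defining equations and the normalization \eqref{ortho-varphi} --- precisely to cancel it, solvability holding since $\ker\mathcal L_\mbp=\mathrm{span}\{\phi_0'\}$ and $\varphi_{l,i}\perp\psi_1$. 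On the circular base $\Gamma_0$ all coefficients of $\mbL_\mbp$ are $s_\mbp$-independent, so after this cancellation $\mbL_\mbp Z_{\mbp,*}^{1i}$ differs from an element of $\mcZ^1_*$ by a term that is genuinely $O(\varep^2)$ in $L^2$, sits at surface frequency $\beta_i$, and has $z$-profile orthogonal to $\psi_1$; by the pearling--meander gap \eqref{MP-gap} this term has no $\mcZ^0_*$ component. The remaining, $\hat\mbp$-dependent part of the coefficients of $\mbL_\mbp$ (entering through $\Phi_\mbp$, $\kappa_\mbp$, $\mbn_\mbp$ and the perturbation $D_{s_\mbp,2}$) is controlled by the Lipschitz decomposition of Lemma~\ref{lem-h} together with the curvature expansion \eqref{def-kappa-p,0} of Lemma~\ref{lem-Gamma-p}; since $\mbL_\mbp$ is fourth order and $Z_{\mbp,*}^{1i}$ carries two extra scaled surface derivatives, this requires the geometry up to four scaled surface derivatives and produces the weight $\|\hat\mbp\|_{\mbV_4^2}$; the same $\hat\mbp$-driven frequency spreading is also the only channel by which $R_i$ reaches the pearling band, so its pearling part is absorbed into this term. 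Altogether $\|R_i\|_{L^2}\lesssim\varep^2+\varep^2\|\hat\mbp\|_{\mbV_4^2}$.

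It remains to pass from these individual bounds to the operator bound for $u=\sum_{i\in\Sigma_1}w_iZ_{\mbp,*}^{1i}$, i.e.\ to avoid a loss of the factor $N_1\sim\varep^{-1}\rho^{1/4}$ when summing $\|\sum_iw_iR_i\|_{L^2}^2=\sum_{i,j}w_iw_j\langle R_i,R_j\rangle_{L^2}$. Here one uses that each $R_i$ is concentrated at surface frequency $\beta_i$ with $z$-content in $\mathrm{span}\{\psi_1\}^\bot$, so $\langle R_i,R_j\rangle_{L^2}$ vanishes whenever $\lceil i/2\rceil\ne\lceil j/2\rceil$ up to a contribution weighted by $\hat\mbp$ (the only paired frequencies being $\beta_{2k-1}=\beta_{2k}$); a Schur-type estimate then bounds the $l^2_*$-norm of the Gram matrix $(\langle R_i,R_j\rangle)$ by $(\varep^2+\varep^2\|\hat\mbp\|_{\mbV_4^2})^2$, whence $\|\sum_iw_iR_i\|_{L^2}\lesssim(\varep^2+\varep^2\|\hat\mbp\|_{\mbV_4^2})\|(w_i)\|_{l^2}\sim(\varep^2+\varep^2\|\hat\mbp\|_{\mbV_4^2})\|u\|_{L^2}$. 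This proves the near-invariance and hence the Lemma. The main obstacle is exactly the upgrade from the naive $O(\varep)$ near-invariance of the unmodified slow space --- which does not close the nonlinear estimates --- to $O(\varep^2)$ via the correctors, carried out uniformly over $\mbp\in\cD_\delta$, with the sharp $\mbV_4^2$-weight on the geometric perturbation and with pearling leakage suppressed by the frequency gap \eqref{MP-gap}; once that is in hand, the self-adjointness reduction is immediate.
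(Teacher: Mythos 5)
The paper itself does not prove this lemma in-house: its ``proof'' is a one-line citation to Theorem 4.11 of \cite{CP-linear}. Measured against that, your opening move is a genuine and correct contribution: using self-adjointness of $\mbL_\mbp$ and orthogonality of the projection, you convert the coupling bound into the near-invariance statement $\|(I-\Pi_{\mcZ_*^1})\mbL_\mbp u\|_{L^2}\lesssim(\varep^2+\varep^2\|\hat\mbp\|_{\mbV_4^2})\|u\|_{L^2}$ for $u\in\mcZ_*^1$; the chain $\|w\|^2=\langle w,\mbL_\mbp v^\bot\rangle=\langle(I-\Pi_{\mcZ_*^1})\mbL_\mbp w,v^\bot\rangle$ is valid, and it also explains why the estimate can be stated with only $\|v^\bot\|_{L^2}$ on the right even though $\mbL_\mbp$ is fourth order. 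The identification of $\mbL_{\mbp,0}=\mcL_\mbp^2$ as acting diagonally on $Z_\mbp^{1i}=\tilde\psi_1\tilde\Theta_i$ (since $\la_1=0$) and of the curvature term $\varep\mathrm H_\mbp\p_{z_\mbp}$ as the order-$\varep$ obstruction that the correctors must remove is the right skeleton.

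The gap is that the two steps carrying all of the quantitative content are asserted rather than established. First, you never write the defining equations for $\varphi_{1,i}$ and $\varphi_{2,i}$, so the claim that the order-$\varep$ residual of $\mbL_\mbp Z_{\mbp,*}^{1i}$ is ``precisely cancelled'' is an appeal to design intent, not a computation; in particular the cross terms $\varep\mbL_{\mbp,0}(\tilde\varphi_{1,i}\tilde\Theta_i)$ and $\varep^2\mbL_{\mbp,0}(\tilde\varphi_{2,i}\varep\tilde\Theta_i')$ reintroduce order-$\varep$ contributions through $[\mcL_\mbp,\varphi_{l,i}]$ that must be checked to close at $O(\varep^2)$, and the solvability condition you invoke concerns $\mrL_{\mbp,0}$ acting in $z$ alone, not $\mcL_\mbp$. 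Second, and more seriously, the passage from $\|R_i\|_{L^2}\lesssim\varep^2+\varep^2\|\hat\mbp\|_{\mbV_4^2}$ to the operator bound must not lose the factor $N_1^{1/2}\sim\varep^{-1/2}\rho^{1/8}$, and your justification --- that the Gram matrix $(\langle R_i,R_j\rangle)$ is nearly diagonal because each $R_i$ ``sits at surface frequency $\beta_i$'' --- fails exactly where the $\hat\mbp$-dependence enters: multiplication by $\kappa_\mbp$, $|\bm\gamma_\mbp'|$, and the coefficients of $D_{s_\mbp,2}$ convolves the $\tilde\Theta_i$ against the Fourier coefficients of $\bar p$ and spreads each $R_i$ across all frequencies. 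Controlling the resulting off-diagonal mass requires the $l^1$-type summability encoded in $\|\hat\mbp\|_{\mbV_1}\leq C\delta$ together with the weighted bookkeeping that produces the $\mbV_4^2$ weight (this is in essence the content of Lemma \ref{Notation-e_i,j} applied with $f$ built from four scaled surface derivatives of the geometry); the phrase ``a Schur-type estimate then bounds the $l^2_*$-norm of the Gram matrix'' names the needed tool but does not supply the estimate. These two items are precisely what Theorem 4.11 of \cite{CP-linear} provides, so as written your argument reduces the lemma to the cited result rather than proving it.
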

\begin{proof}
This is a direct consequence of Theorem 4.11 of \cite{CP-linear}. 
\end{proof}

Perpendicular to the slow space $\mcZ_*$,  the linearized operator $\Pi_0\mbL_{\mathbf p} $ is coercive. 
\begin{lemma}\label{lem-coer}
Fix $\rho>0$ and  $\varep_0>0$ sufficiently small. Then there exists $C>0$ such that for all $\varep\in(0, \varep_0)$ and all $w\in \mathcal Z_*^\bot$  we have the coercivity estimates
\begin{equation}\label{coer-bLp}
\left< \mbL_{\mathbf p} w,w\right>_{L^2}\geq C\rho^{2} \left(\varep^4\|w\|_{H^2}^2+\|w\|_{L^2}^2\right)\qquad\hbox{and}\quad \|\mbL_{\mathbf p}  w\|_{L^2}^2\geq  C \rho^{2}\left< \mbL_{\mathbf p} w,w\right>_{L^2}.
\end{equation}
\end{lemma}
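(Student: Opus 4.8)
The proof combines a spectral decomposition of the linearization near the interface with a direct far--field estimate, in the spirit of \cite{DHPW-14, CP-linear}. The starting point is the factorization, valid on functions supported in the reach $\Gamma_\mbp^{2\ell}$,
\[
\mbL_{\mathbf p} = \mbL_{\mathbf p,0} + \varep\,\mathcal B_{\mathbf p},\qquad
\mbL_{\mathbf p,0}=\mathcal L_\mbp^2,\quad \mathcal L_\mbp=\mathrm L_{\mathbf p,0}-\varep^2\Delta_{s_\mbp},
\]
where $\mathcal B_\mbp$ is a second order operator with $\varep$--uniformly bounded, localized coefficients: it collects the curvature terms $\varep\,\mathrm H_\mbp\p_{z_\mbp}$ and $\varep z_\mbp D_{s_\mbp,2}$ from \eqref{eq-Lap-induced}, the $\varep\eta_1$ and $\varep\eta_d$ contributions in \eqref{def-bLp}, and the $W'''(\Phi_\mbp)$ term, which is $O(\varep)$ since $\varep^2\Delta\phi_0-W'(\phi_0)=O(\varep)$ by \eqref{phi0-def}. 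First I would introduce a partition of unity separating $\Gamma_\mbp^{2\ell}$ from the far field $\Omega\setminus\Gamma_\mbp^{\ell}$. On the far field $\Phi_\mbp$ is within $O(\varep)$ of $b_-$ with $W''(b_-)>0$, so $\mbL_\mbp$ is, up to an $O(\varep)$ perturbation, the constant coefficient operator $(\varep^2\Delta-W''(b_-))^2+\varep\eta_1(\varep^2\Delta-W''(b_-))+\varep\eta_d W''(b_-)$, which is uniformly coercive in the $\Hin$ norm for $\varep$ small; this is where the FCH hypothesis $W''(b_-)>0$ enters. The cross terms produced by the cut--off are supported where $\Phi_\mbp\approx b_-$ and are controlled the same way.

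Near the interface I would pass to local coordinates $(s_\mbp,z_\mbp)$ and split $w$ fibre--wise in the $z_\mbp$ variable along the eigenfunctions of $\mathrm L_0$: write $w=\Pi_{\mathrm{slow},z}w+w^{\mathrm{fast}}$, where $\Pi_{\mathrm{slow},z}$ projects each fibre onto $\Span\{\psi_0(z_\mbp),\psi_1(z_\mbp)\}$ (with $\psi_1\propto\phi_0'$ the kernel element). On $w^{\mathrm{fast}}$ one has $\mathrm L_{\mathbf p,0}\ge\la_2>0$ uniformly and $-\varep^2\Delta_{s_\mbp}\ge0$, hence $\langle\mbL_{\mathbf p,0}w^{\mathrm{fast}},w^{\mathrm{fast}}\rangle=\|\mathcal L_\mbp w^{\mathrm{fast}}\|_{L^2}^2\gtrsim\|w^{\mathrm{fast}}\|_{L^2}^2+\|\varep^2\Delta_{s_\mbp}w^{\mathrm{fast}}\|_{L^2}^2\gtrsim\|w^{\mathrm{fast}}\|_{\Hin}^2$ after elliptic regularity in $z_\mbp$, with an $O(1)$ constant. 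On the two--dimensional slow fibre--space, $\Pi_{\mathrm{slow},z}w\approx a_0(s_\mbp)\tilde\psi_0(z_\mbp)+a_1(s_\mbp)\tilde\psi_1(z_\mbp)$, and $\mathcal L_\mbp(a_k\tilde\psi_k)=((\la_k-\varep^2\Delta_{s_\mbp})a_k)\tilde\psi_k+O(\varep)$; expanding $a_k=\sum_i a_{ki}\tilde\Theta_i$ and using $-\Delta_{s_\mbp}\tilde\Theta_i=\beta_{\mathbf p,i}^2\tilde\Theta_i$ from \eqref{tTheta''} gives $\langle\mbL_{\mathbf p,0}\Pi_{\mathrm{slow},z}w,\Pi_{\mathrm{slow},z}w\rangle=\sum_{k=0,1}\sum_i \La_{ki}^2|a_{ki}|^2+O(\varep)\|w\|_{L^2}^2$, reducing matters to a scalar Laplace--Beltrami problem on $\msI_\mbp$.

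To finish, I would invoke the constraint $w\in\mcZ_*^\bot$. Since the modified basis functions $Z_{\mathbf p,*}^{I(i)i}$ agree with $Z_\mbp^{I(i)i}=\tilde\psi_{I(i)}\tilde\Theta_i$ up to $O(\varep)$ and are almost orthonormal by \eqref{ortho-Z*}, orthogonality to $\mcZ_*$ forces $|a_{ki}|\lesssim\varep\|w\|_{L^2}$ for $i\in\Sigma_k$, while for $i\notin\Sigma_k$ one has $\La_{ki}^2>\rho$ by \eqref{def-Sigma}; hence $\sum_{k}\sum_{i\notin\Sigma_k}\La_{ki}^2|a_{ki}|^2\ge\rho\,\|\Pi_{\mathrm{slow},z}w\|_{L^2}^2-C\varep^2\|w\|_{L^2}^2$, and since $\varep^4\beta_{\mathbf p,i}^4\lesssim\La_{ki}^2+C$ one also recovers the $\varep^4\|\cdot\|_{H^2}^2$ part. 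Combining the fast and slow sectors, adding the far--field bound, and absorbing the $\varep\mathcal B_\mbp$ perturbation and the $z_\mbp$--$s_\mbp$ coupling/metric errors via $\varep_0$ small relative to $\rho$, one obtains $\langle\mbL_\mbp w,w\rangle\ge C\rho\,\|w\|_{\Hin}^2$, which gives the stated bound (as $\rho^2\le\rho$); the residual errors of order $\varep$ relative to $\rho$ are what make the convenient form $C\rho^2$ natural. The second inequality $\|\mbL_\mbp w\|_{L^2}^2\ge C\rho^2\langle\mbL_\mbp w,w\rangle$ then follows since $\mbL_\mbp$ is self--adjoint with no spectrum of its restriction to $\mcZ_*^\bot$ in $(0,c\rho)$, so $\|\mbL_\mbp w\|_{L^2}^2=\langle\mbL_\mbp^2w,w\rangle\ge c\rho\,\langle\mbL_\mbp w,w\rangle$. \textbf{The main obstacle} is the control of the coupling between the $z_\mbp$--fast and $z_\mbp$--slow sectors introduced by the curvature--dependent metric factors $\varep z_\mbp\kappa_\mbp$ and by the $O(\varep)$ corrections $\tilde\varphi_{l,i}$ in the modified basis, together with the verification that $w\in\mcZ_*^\bot$ really annihilates the near--resonant modes $i\in\Sigma_k$ up to $O(\varep)$, i.e. that $\mcZ_*$ approximates the genuine slow eigenspace of $\mbL_\mbp$ closely enough; this bookkeeping is carried out in \cite{CP-linear}, whose near--orthogonality and approximate--eigenfunction estimates would be quoted here, leaving the present argument to assemble the sector--wise bounds.
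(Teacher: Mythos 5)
The paper itself offers no proof of Lemma~\ref{lem-coer}: like Lemma~\ref{thm-coupling est}, it is imported wholesale from the companion paper \cite{CP-linear}, and the surrounding text only recalls the heuristic (coercivity of $\mbL_{\mbp,0}=\mathcal L_\mbp^2$ off the slow space, with the modified basis $Z_{\mbp,*}^{I(i)i}$ introduced precisely because the naive slow space is invariant only to $O(\varep)$). Your sketch reconstructs the standard argument of that line of work --- far-field coercivity from $W''(b_-)>0$, fibre-wise splitting along $\psi_0,\psi_1$ in the reach, the lower bound $\La_{ki}^2>\rho$ for $i\notin\Sigma_k$ from \eqref{def-Sigma}, and absorption of the metric and $O(\varep)$ basis corrections for $\varep_0\ll\rho$ --- and this is indeed the right architecture; since the first inequality then holds with constant $C\rho$, the stated $C\rho^2$ follows a fortiori.

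The one step that does not go through as written is your justification of the second inequality. You invoke self-adjointness and the absence of spectrum of ``the restriction of $\mbL_\mbp$ to $\mcZ_*^\bot$'' in $(0,c\rho)$ to write $\|\mbL_\mbp w\|_{L^2}^2=\langle\mbL_\mbp^2w,w\rangle\ge c\rho\langle\mbL_\mbp w,w\rangle$. But $\mcZ_*^\bot$ is \emph{not} invariant under $\mbL_\mbp$ --- that failure of invariance is exactly why the modified space $\mcZ_*$ was constructed --- so $\mbL_\mbp w$ has a nonzero component in $\mcZ_*$ and the compression $\Pi_{\mcZ_*^\bot}\mbL_\mbp\Pi_{\mcZ_*^\bot}$, to which a spectral argument could apply, is not the operator appearing in the claim. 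The correct route is to write $\|\mbL_\mbp w\|_{L^2}^2\ge\|\Pi_{\mcZ_*^\bot}\mbL_\mbp w\|_{L^2}^2$ (or to estimate the $\mcZ_*$-component via the coupling bound of Lemma~\ref{thm-coupling est}, which is $O(\varep^2(1+\|\hat\mbp\|_{\mbV_4^2}))\|w\|_{L^2}$ and hence absorbable) and then apply the first coercivity estimate together with Cauchy--Schwarz, $\langle\mbL_\mbp w,w\rangle\le\|\Pi_{\mcZ_*^\bot}\mbL_\mbp w\|_{L^2}\|w\|_{L^2}+O(\varep^2)\|w\|_{L^2}^2$ and $\|w\|_{L^2}^2\le C^{-1}\rho^{-2}\langle\mbL_\mbp w,w\rangle$. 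You do flag the invariance issue as ``the main obstacle,'' but you file it under bookkeeping for the first inequality, whereas it is the second inequality that is actually false without this repair; with it, your argument matches the one carried out in \cite{CP-linear}.
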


 We say that the base interface $\Gamma_0$ and the scaled system mass $M_0$ introduced in \eqref{def-Mass} are an \textit{admissible pair} if the system mass balances with the length of $\Gamma_0$ in the sense that 
 \beq \label{adm-pair}
 \left|M_0- m_0|\Gamma_0| \right|\lesssim  1.
 \eeq
The bilayer manifold is defined as the graph of the bilayer \muckymucks over the domain $\cD_\delta.$ 
\begin{defn}[Bilayer Manifold]
\label{def-bM0}
  Given a  circle $\Gamma_0$ with radius $R_0>0$ and system mass $M_0$ which form an admissible pair, \eqref{adm-pair}. We define the $N_1$-dimensional  bilayer manifold $\cM_b(\Gamma_0,M_0;\rho)$  
  to be the graph of the map $\mbp\mapsto \Phi_\mbp(\sigma(\mbp))$ over the domain $\cD_\delta$, where the bilayer \muckymuck $\Phi_\mbp(\sigma)$ is introduced in Definition \ref{def-Phi-p-1}  with bulk density $\sigma=\sigma(\mbp)$ prescribed through  \eqref{eqn:SysMass}, or alternatively \eqref{def-sigma}.  The spectral parameter $\rho$ controls the dimension $N_1$, and is introduced in Definition \ref{def-slow-space}.
\end{defn}


\section{Dynamics of meandering modulation parameters}\label{sec-p}
We derive the projection of the mass preserving gradient flow \eqref{eq-FCH-L2-p}-\eqref{eq-FCH-L2} onto the approximate tangent space of the bilayer manifold. We restrict attention to the affine space defined by the mass constraint \ref{def-Mass}. We denote by $\mcV_R(\cM_b, \cO_{m,\delta})$ the tubular neighborhood of thickness $R$ in the $H^2$ inner norm, \eqref{def-Hin}, that surrounds the restriction of the bilayer manifold $\cM_b$ restricted to domain $\cO_{m, \delta}\subset\cD_\delta$,
\beq
\label{def-tubular}
\mcV_R(\cM_b, \cO_{m,\delta}):= \left\{u\in H^2 \; \big| \; \min_{\mbp\in \cO_{m,\delta}}\|u-\Phi_\mbp\|_{\Hin}<R, \quad \left<u-b_-\right>_{L^2}=\frac{\varep M_0}{|\Omega|}\right\}.
\eeq 
where $\cO_{m,\delta}$ as a subset of $\cD_\delta$, defined in \eqref{A-00}, is given by
\beq\label{def-cO-p}
 \mathcal O_{m,\delta}  := \left\{ \mbp \in \mbR^{N_1}\; \big| \; |\mrp_0|+ \|\hat\mbp\|_{\mbV_3^2}\leq m \delta \right\}\subset \cD_\delta.
\eeq 

\begin{defn}[Manifold Projection] 
Let $\mcU$ be a neighborhood of $\cM_b.$ We say $\Pi_{\cM_b}u:=\Phi_\mbp(\sigma)$ is a projection of $\mcU$ onto $\cM_b$ and $\Pi_{\cM_b}^\bot u:= v^\bot $ is its compliment if for each $u\in\mcU$ there exist a unique  $\mbp\in \cD_\delta$ and mass-free orthogonal perturbation $v^\bot\in (\mcZ_{*}^1(\mbp))^\bot$ such that
\beq\label{decomp-u-Mb}
u=\Phi_\mbp+v^\bot.
\eeq
We call $\mbp$ the meander parameters associated to $u.$
\end{defn}
The following, from Lemma 5.2 of \cite{CP-linear}, establishes the existence of a manifold projection.
\begin{lemma}\label{lem-Manifold-Projection}
Let $\cM_b=\cM_b(\Gamma_0,M_0)$ be the bilayer manifold of Definition  \ref{def-bM0}. Let $\delta,\varep_0>0$ be sufficiently small, then for all $\varep\in(0,\varep_0)$ there exists a manifold projection $\Pi_{\cM_b}$ defined on the tubular set $\mcU=\mcV_{\delta\varep}(\cM_b, \cO_{2,\delta}).$  Moreover, for $u\in\mcU$ of the form $u=\Phi_{\mbp_0}+v$ with $\mbp_0\in\cD_\delta$ and massless perturbation $v\in H^2$ satisfying $\|v\|_{\Hin}\leq \delta \varep$, then $u$'s meander parameters $\mbp$ and orthogonal perturbation, $v^\bot$ satisfy
\begin{equation*}
\begin{aligned}
\|\mbp-\mbp_0\|_{l^2} \lesssim \varep^{1/2}\|v\|_{L^2}; \qquad \|v^\bot\|_{\Hin}\lesssim \|v\|_{\Hin}. 
\end{aligned}
\end{equation*}
\end{lemma}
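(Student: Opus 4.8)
Since $\mcU=\mcV_{\delta\varep}(\cM_b,\cO_{2,\delta})$, every $u\in\mcU$ has a base point $\mbp_0\in\cO_{2,\delta}$ with $\|u-\Phi_{\mbp_0}\|_{\Hin}<\delta\varep$, so the existence of $\Pi_{\cM_b}$ on $\mcU$ reduces to the pointwise ``moreover'' statement; fix $u=\Phi_{\mbp_0}+v$ with $\mbp_0\in\cD_\delta$ and $\|v\|_{\Hin}\le\delta\varep$. First note that the mass constraint is free: each $\Phi_\mbp$ carries the mass $\varep M_0/|\Omega|$ by the slaving $\sigma=\sigma(\mbp)$ of \eqref{eqn:SysMass}, and so does $u$, so any $v^\bot=u-\Phi_\mbp$ is automatically massless. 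It therefore suffices to find $\mbp$ near $\mbp_0$ solving the $N_1$ scalar equations
\[
G_i(\mbp):=\bigl\langle u-\Phi_\mbp,\,Z_{\mbp,*}^{I(i)i}\bigr\rangle_{L^2}=0,\qquad i\in\Sigma_1,
\]
which encode $v^\bot:=u-\Phi_\mbp\in(\mcZ_*^1(\mbp))^\bot$; one then sets $\Pi_{\cM_b}u:=\Phi_\mbp$ and $\Pi_{\cM_b}^\bot u:=v^\bot$. The plan is a quantitative implicit function theorem centered at $\mbp_0$.

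The decisive object is the Jacobian $DG(\mbp_0)$, with entries $\p_{\mrp_j}G_i(\mbp_0)=-\langle\p_{\mrp_j}\Phi_{\mbp_0},Z_{\mbp_0,*}^{I(i)i}\rangle+\langle v,\p_{\mrp_j}Z_{\mbp_0,*}^{I(i)i}\rangle$. Since $\p_{\mrp_j}\Phi_\mbp$ is dominated by $\varep^{-1}\phi_0'(z_\mbp)\,\xi_j(s_\mbp)$, with $\xi_j$ from Lemma\,\ref{lem-change-of-coord} ($\xi_j\approx-\tilde\Theta_j$ for $j\ge3$, comparable $O(1)$ profiles for $j=0,1,2$), since $\Sigma_1$ selects $\tilde\psi_{I(i)}=\tilde\psi_1=\varep^{-1/2}\phi_0'/m_1$, and since the local-coordinate pairing carries a Jacobian factor $\varep$, the $\tilde\Theta$-orthogonality \eqref{ortho-tTheta} gives $\langle\p_{\mrp_j}\Phi_{\mbp_0},Z_{\mbp_0,*}^{1i}\rangle=-\varep^{-1/2}c_j\,\delta_{ij}+(\text{small})$ with $\varep$-independent constants $c_j$ bounded above and below (equal to $m_1(1+\mrp_0)$ on the shape modes). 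The off-diagonal and error terms must be bounded \emph{uniformly over the asymptotically large range} $0\le i,j\le N_1-1$, $N_1\sim\varep^{-1}\rho^{1/4}$, via the curvature expansion of Lemma\,\ref{lem-Gamma-p}, the difference bounds of Lemma\,\ref{lem-h}, the near-orthogonality \eqref{ortho-Z*}, the normalization \eqref{ortho-varphi}, and, for the $\langle v,\p_{\mrp_j}Z_{\mbp_0,*}^{1i}\rangle$ term, the smallness $\|v\|_{\Hin}\le\delta\varep$. Hence $DG(\mbp_0)=\varep^{-1/2}(D_0+E)$ with $D_0$ diagonal and boundedly invertible and $\|E\|_{l^2_*}$ small, so $\|DG(\mbp_0)^{-1}\|_{l^2_*}\lesssim\varep^{1/2}$; and the Bessel-type inequality from \eqref{ortho-Z*} yields $\|G(\mbp_0)\|_{l^2}^2=\sum_{i\in\Sigma_1}|\langle v,Z_{\mbp_0,*}^{1i}\rangle|^2\lesssim\|v\|_{L^2}^2$.

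I would then iterate the Newton map $\mbq\mapsto\mathcal T(\mbq):=-DG(\mbp_0)^{-1}\bigl[G(\mbp_0)+\mathcal R(\mbq)\bigr]$, where $\mathcal R(\mbq):=G(\mbp_0+\mbq)-G(\mbp_0)-DG(\mbp_0)\mbq$, on the ball $\{\|\mbq\|_{l^2}\le C\varep^{1/2}\|v\|_{L^2}\}$. The quadratic estimate $\|\mathcal R(\mbq)\|_{l^2}\lesssim\varep^{-1/2}\|\mbq\|_{l^2}^2$ and its Lipschitz analogue follow from the $C^2$ dependence on $\mbp$ of $\Phi_\mbp$ and of the modified basis $Z_{\mbp,*}$ (Lemmas\,\ref{lem-Gamma-p},\,\ref{lem-h}, Definition\,\ref{def-slow-space}), crucially exploiting the parity of $\phi_0$, which annihilates several would-be leading terms, and the frequency cutoff $N_1\lesssim\varep^{-1}$, which prevents the large-dimensional sums from eroding the powers of $\varep$. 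For $\varep_0,\delta$ small $\mathcal T$ is a contraction, producing a unique fixed point $\mbq$ with $\|\mbq\|_{l^2}\lesssim\varep^{1/2}\|v\|_{L^2}\lesssim\delta\varep^{3/2}$; applying $DG(\mbp_0)^{-1}$ instead to the $\mbV_r^2$-weighted components of $G(\mbp_0)$, which are themselves controlled by the $\Hin$-smallness of $v$, bounds the weighted norms $\|\hat\mbq\|_{\mbV_r^2}$ and so keeps $\mbp:=\mbp_0+\mbq$ inside $\cD_\delta$. Uniqueness of $\mbp$ within all of $\cD_\delta$, not merely within the small ball, is the standard normal-bundle (tubular neighborhood) fact: the required reach estimate is supplied by the Jacobian nondegeneracy $\|\Phi_\mbp-\Phi_{\mbp'}\|_{L^2}\gtrsim\varep^{-1/2}\|\mbp-\mbp'\|_{l^2}$ together with the $C^2$ bounds, after shrinking the tube radius $R=\delta\varep$ if necessary.

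For the Lipschitz bound on $v^\bot=u-\Phi_\mbp=v-(\Phi_\mbp-\Phi_{\mbp_0})$, write $\Phi_\mbp-\Phi_{\mbp_0}=D\Phi_{\mbp_0}[\mbq]+O(\|\mbq\|_{l^2}^2)$ in $\Hin$. The inner norm \eqref{def-Hin} is calibrated so that $\|\p_{\mrp_j}\Phi_{\mbp_0}\|_{\Hin}\lesssim\varep^{-1/2}$ uniformly in $j$: its $L^2$ part is $\sim\varep^{-1/2}$ as above, and its $\varep^4$-weighted $H^2$ part is of the same order, since the two Cartesian derivatives cost at most $\varep^{-2}$ acting normally on $\phi_0'(z_\mbp)$ and, by \eqref{tTheta''} and \eqref{est-N01}, at most $\beta_{N_1}^2/R_0^2\sim\varep^{-2}$ acting tangentially on $\tilde\Theta_j$. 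Combined with the near-orthogonality of $\{\p_{\mrp_j}\Phi_{\mbp_0}\}$ in $\Hin$ (again from \eqref{ortho-tTheta} and the parity of $\phi_0$), this gives $\|D\Phi_{\mbp_0}[\mbq]\|_{\Hin}\lesssim\varep^{-1/2}\|\mbq\|_{l^2}\lesssim\|v\|_{L^2}\le\|v\|_{\Hin}$, the quadratic remainder being lower order; hence $\|v^\bot\|_{\Hin}\lesssim\|v\|_{\Hin}$. The hardest part throughout is the uniformity in the asymptotically large dimension $N_1\sim\varep^{-1}\rho^{1/4}$ --- keeping $E$ small, $\mathcal R$ genuinely superlinear, and every $\p_{\mrp_j}\Phi_{\mbp_0}$ bounded in $\Hin$ --- which is precisely what the frequency cutoff in Definition\,\ref{def-P-interface} and the $\varep^4$ weight of \eqref{def-Hin} are built to secure.
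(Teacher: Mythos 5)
The paper does not prove this lemma: it is quoted verbatim from Lemma 5.2 of the companion paper \cite{CP-linear}, so there is no in-text argument to compare against. Your proposal is the natural quantitative implicit-function-theorem route, and its scalings are consistent with the computations the paper does carry out: the diagonal dominance and $\varep^{-1/2}$ size of your Jacobian are exactly the content of \eqref{est-bT-2} together with Lemma \ref{lem-xi} (the matrix $\mathbb T$ is $\varep^{-1/2}m_1(1+\mrp_0)(\mathbb I + O(\|\hat\mbp\|_{\mbV_2^2}))$), the Bessel bound $\|G(\mbp_0)\|_{l^2}\lesssim\|v\|_{L^2}$ follows from \eqref{ortho-Z*}, and the $\Hin$ calibration $\|\p_{\mrp_j}\Phi_{\mbp_0}\|_{\Hin}\lesssim\varep^{-1/2}$ is what makes the second estimate close. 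Your observation that the mass constraint is automatic because $\sigma$ is slaved to $\mbp$ via \eqref{eqn:SysMass} is also correct.

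Two points in the sketch carry essentially all of the difficulty and would need to be written out. First, the smallness of the perturbation $E$ of the Jacobian in $l^2_*$ \emph{uniformly in} $N_1\sim\varep^{-1}\rho^{1/4}$: entrywise bounds such as $|\langle v,\p_{\mrp_j}Z^{1i}_{\mbp_0,*}\rangle|\lesssim\varep^{-1}\|v\|_{L^2}$ do not suffice after multiplication by the dimension, and one needs a Hilbert--Schmidt or Schur-type estimate (row-wise Bessel in $i$, then summation over $j$) which is precisely where the hypothesis $\|v\|_{\Hin}\le\delta\varep$ enters to give $\|E\|_{l^2_*}\lesssim\delta\varep^{-1/2}\ll\varep^{-1/2}$. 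Second, uniqueness over all of $\cD_\delta$ rather than the small Newton ball: the saturation $\|\Phi_\mbp-\Phi_{\mbp'}\|_{L^2}\sim\varep^{1/2}$ for well-separated interfaces, versus the constraint $\|\Phi_\mbp-\Phi_{\mbp'}\|_{L^2}\le 2\delta\varep$, is what forces any two admissible base points together before local uniqueness applies; this deserves an explicit line. Neither point is a gap in the strategy, only in the level of detail of the sketch.
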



Assume that a solution $u=u(t)$ of the FCH gradient flow satisfies $u\in \mcV_{C\varep^{5/2}}(\cM_b, \cO_{2,\delta})$ on the interval $[0,T]$. Then for $\varep_0$ sufficiently small, depending on $\delta,$ we have $u\in\mcU$ and we may decompose $u$ as
\begin{equation}\label{decomp-u}
u(  x,t) = \Phi_{\mathbf p} (  x;  \sigma) +v^\bot( x, t),  \quad v^\bot\in (\mathcal Z_*^1)^\bot ,\quad \int_\Omega v^\bot\dd   x=0. 
\end{equation} 
The FCH gradient flow  \eqref{eq-FCH-L2} can be written in terms of the pair $(\mbp,v^\bot)=(\mbp(t),v^\bot(t))$ with the bulk density parameter $\sigma=\sigma(\mbp(t))$ through \eqref{def-sigma}.  Substituting the decomposition \eqref{decomp-u} into the FCH gradient flow leads to an equation for $\Phi_\mbp$ and $v^\bot$:
\begin{equation}\label{eq-v-Phi}
\p_t \Phi_{\mathbf p} =-\Pi_0 \mathrm F(\Phi_{\mathbf p} ) +\Rem [v^\bot],
\end{equation}
where  $\Rem[v^\bot]$ is the orthogonal remainder contributed by orthogonal perturbation $v^\bot$. Specifically
\beq\label{def-Rv}
\Rem[v^\bot]:= - \p_t v^\bot -\Pi_0 \mbL_{\mathbf p}  v^\bot-\Pi_0  \mathrm N( v^\bot),
\eeq
 where $\mathrm N(v^\bot)$ is the genuinely nonlinear term defined by
\begin{equation}\label{def-N}
\mathrm N( v^\bot):= \mathrm F(\Phi_{\mathbf p} + v^\bot)-\mathrm F(\Phi_{\mathbf p} ) - \mbL_{\mathbf p}  v^\bot.
\end{equation}
As substantiated in Remark \ref{remark-timede-proj} and Corollary \ref{cor-residual-proj}, projecting the flow \eqref{eq-v-Phi} onto the meander space $\mcZ_*^1$ yields the dynamics for $\mbp$, which at leading order is equivalent to the finite dimensional geometric flow defined by the system
\beq
\label{def-Vp}
\Pi_{G_1}\left(\p_t \bm \gamma_\mbp \cdot \mbn_\mbp - V_\mbp\right)=0, 
\eeq 
where the curvature induced velocity is given by 
\beq\label{normal-velocity}
V_\mbp= \varep^3 \frac{m_0}{m_1^2}(\sigma_1^*-\sigma)\kappa_\mbp -\varep^4 \left( \Delta_{s_\mbp}\kappa_\mbp +\frac{\kappa_\mbp^3}{2}+\alpha \kappa_{\mbp}\right), 
\eeq 
and $\Pi_{G_1}:L^2(\msI_\mbp)\mapsto L^2(\msI_\mbp)$ is the projection onto the space $G_1\subset L^2(\msI_\mbp)$ spanned by the first $N_1$ Laplace-Beltrami modes
\beq \label{def-PiG}
 \Pi_{G_1} f := \frac{1}{1+\mrp_0}\sum\limits_{j=0}^{N_1-1} 
 \tilde\Theta_j  \int_{\msI_\mbp}  f(\tilde s_\mbp)\tilde\Theta_j(\tilde s_\mbp) \, \mrd \tilde s_\mbp .
\eeq

The dependence of the bilayer \muckymuck, the slow space $\mcZ_*$, and the local coordinates $(z_\mbp,s_\mbp)$ on $\mbp$ plays a key role in the analysis. In the next three subsections we analyse the slow projection of $\p_t \Phi_\mbp$, the residual $\Pi_0 \mrF(\Phi_\mbp)$, and the remainder $\Rem[v^\bot]$. This is conducted under the a priori assumptions 
\begin{equation}\label{A-0}
\mbp \in \cO_{2, \delta}\subset \cD_\delta \qquad\hbox{and}\qquad \|\dot \mbp\|_{l^2}\lesssim \varep^3.
\end{equation}
In section\,\ref{sec-main-thm} the assumption on the rate $\dot\mbp$ will be folded into the norm bounds on $\mbp.$



\subsection{Projection of $\p_t\Phi_\mbp$}
The projection of $\p_t \Phi_\mbp$ onto the meandering slow space $\mcZ_*^1$, is determined by the $N_1\times N_1$ matrix $\mathbb T$ whose $(k,j)$-th component is defined by
\begin{equation}\label{def-bT}
\mathbb T_{kj}:=\left<\frac{\p \Phi_{\mathbf p} }{\p\h{0.5pt} \mathrm p_j}, Z_{\mathbf p, *}^{1k}\right>_{L^2}, \qquad \qquad \hbox{for $k,j\in \Sigma_1$}. 
\end{equation}
This matrix approximates the first fundamental form of the bilayer manifold induced by graph of $\Phi_\mbp$ over the approximate tangent space $\mathcal Z_*^1.$
 The asymptotic form  of $\p_{\mrp_j} \Phi_\mbp$ follows from Lemmas \ref{lem-change-of-coord} and \ref{def-Phi-p}. This is presented below, with full justification found in Lemma $6.4$ of  \cite{CP-linear}. 
\begin{lemma}\label{lem-Phi_t-p} The bilayer \muckymuck $\Phi_\mbp$  given in Definition \ref{def-Phi-p-1} satisfies
\beqs
\frac{\p \Phi_\mbp}{\p \mrp_j} = \frac{1}{\varep} (\phi_0'+\varep \phi_1')\xi_j(s_\mbp) +\varep \mathrm R_j, 
\eeqs
where $\xi_j(s_\mbp)=\varep \frac{\p z_\mbp}{\p \mrp_j}$ depending on parameter $\mbp$ are given in Lemma \ref{lem-change-of-coord}; and the remainder $\mathrm R=(\mathrm R_j)$ lies in $L^2$ and it's projection to the meandering slow space satisfies the estimate
\beqs
\|\Pi_{\mathcal Z_*^1}\mathrm R\|_{L^2}\lesssim \varep^{1/2}.
\eeqs
 \end{lemma}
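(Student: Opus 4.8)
\emph{Proof approach.} The plan is to differentiate \eqref{def-Phi-p} in $\mrp_j$ term by term, peel off the contribution of the $\mrp_j$-dependence of the scaled distance $z_\mbp$, and show that what is left projects to something of size $\varep^{1/2}$ on the meandering slow space. By Definition \ref{def-Phi-p-1} the only $\mrp_j$-dependence in $\Phi_\mbp=\phi_0(z_\mbp)+\varep\phi_1(z_\mbp;\sigma)+\varep^2\phi_{\geq 2}(\bm\gamma_\mbp'',z_\mbp;\sigma)+e^{-\ell\nu/\varep}\phi_{\mbp,e}(x)$ enters through the scaled distance $z_\mbp$, the bulk density $\sigma=\sigma(\mbp)$, the geometric data $\bm\gamma_\mbp''$ feeding $\phi_{\geq 2}$, and the rigid translation $(\mrp_1,\mrp_2)$ acting on $\phi_e$. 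Using $\partial_{\mrp_j}z_\mbp=\varep^{-1}\xi_j(s_\mbp)$ from Lemma \ref{lem-change-of-coord} (and that differentiating the dressing cut-off produces only exponentially small terms), the chain rule gives
\begin{equation*}
\frac{\partial\Phi_\mbp}{\partial\mrp_j}=\frac{1}{\varep}\bigl(\phi_0'(z_\mbp)+\varep\phi_1'(z_\mbp;\sigma)\bigr)\xi_j(s_\mbp)+\varep\,\partial_\sigma\bigl(\phi_1+\varep\phi_{\geq 2}\bigr)\partial_{\mrp_j}\sigma+\varep\,\partial_{z_\mbp}\phi_{\geq 2}\,\xi_j+\varep^2\mathscr G_j+e^{-\ell\nu/\varep}\mathscr E_j,
\end{equation*}
where $\mathscr G_j$ collects the terms generated by differentiating $|\bm\gamma_\mbp'|$, $\kappa_\mbp$, $\mbn_\mbp\cdot\mbn_0$ and their scaled derivatives inside $\phi_2$, and $\mathscr E_j$ is the derivative of the translated $\phi_e$. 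The first summand is the asserted leading term; the remaining four define $\varep\mathrm R_j$. That each $\mathrm R_j\in L^2$ is immediate: $\partial_\sigma\phi_1=B_{\mbp,2}$ is bounded, $\partial_{\mrp_j}\sigma$ is uniformly bounded by Lemma \ref{lem-sigma}, $\partial_{z_\mbp}\phi_{\geq 2}$ decays exponentially, $\mathscr G_j$ is controlled through the curvature expansion of Lemma \ref{lem-Gamma-p} and Notation \ref{Notation-h} for $\hat\mbp\in\mbV_2$, and $\phi_e$ is uniformly smooth.

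\emph{The projection bound.} For $\|\Pi_{\mcZ_*^1}\mathrm R\|_{L^2}\lesssim\varep^{1/2}$ I would exploit that, to leading order, every basis vector $Z_{\mbp,*}^{1i}$ of $\mcZ_*^1$ equals $\varep^{-1/2}\psi_1(z_\mbp)\tilde\Theta_i(\tilde s_\mbp)$ with $\psi_1\propto\phi_0'$ \emph{odd} in $z_\mbp$ (the $\tilde\varphi_{l,i}$ corrections being $O(\varep)$), so pairing $\mathrm R$ against $\mcZ_*^1$ annihilates the $z_\mbp$-\emph{even} part of $\mathrm R$ at leading order. Accordingly I split $\mathrm R_j$ into three groups. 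First, $\partial_{z_\mbp}\phi_{\geq 2}\,\xi_j$ is localized near $\Gamma_\mbp$ with a bounded Fourier weight, hence already $O(\varep^{1/2})$ in $L^2(\Omega)$ — the surface volume element supplies a factor $\varep$ — and $\Pi_{\mcZ_*^1}$, being the $L^2$-orthogonal projection, cannot increase its norm. Second, the non-localized pieces $\partial_{\mrp_j}\sigma\,B_{\mbp,2}$ and $\varep\,\partial_\sigma\phi_{\geq 2}\,\partial_{\mrp_j}\sigma$, of $L^2(\Omega)$-size $O(1)$, are \emph{even} in $z_\mbp$ (Lemma \ref{lem-L0} makes $B_{\mbp,2}=(\mrL_0^{-2}1)^d$ even, and $\phi_2$ is even in $z_\mbp$ by its explicit construction in \cite{CP-linear}); their pairing with $Z_{\mbp,*}^{1i}$ loses its leading part, the survivors being the $-\varep z_\mbp\kappa_\mbp$ correction of the volume element and the $O(\varep)$ basis corrections, so $\|\Pi_{\mcZ_*^1}(\partial_{\mrp_j}\sigma\,B_{\mbp,2})\|_{L^2}=O(\varep)$. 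Third, the geometric block $\mathscr G_j$ has the form (even profile in $z_\mbp$) times (Laplace--Beltrami modes carrying weights $\beta_j^k$ inherited from $\partial_{\mrp_j}\kappa_\mbp$ and its scaled derivatives), and although its $L^2(\Omega)$-size is large, the $O(\varep)$ loss in the overlap with the odd slow space, combined with the powers of $\varep$ that Notation \ref{Notation-h} attaches to each scaled derivative, exactly compensates the $\beta_j^k$ weights and leaves $O(\varep^{1/2})$. Summing over the $N_1\sim\varep^{-1}\rho^{1/4}$ indices of $\Sigma_1$ via the almost-orthonormality \eqref{ortho-Z*} preserves this bound, and the exponential piece is trivially negligible.

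\emph{Main obstacle and provenance.} The delicate part is the geometric block $\mathscr G_j$: one must combine the $z_\mbp$-parity of $\phi_2$ and of its derivatives in the geometric parameters with a precise accounting of which power of $\varep$ each Laplace--Beltrami weight $\beta_j$ is paired with — through the curvature expansion of Lemma \ref{lem-Gamma-p} and the scaled-derivative structure of Notation \ref{Notation-h} — and only then sum over $\Sigma_1$ using \eqref{ortho-Z*}; without the parity cancellation the high-frequency modes would spoil the estimate. The term-by-term differentiation, the localized estimates, and the parity accounting are exactly those carried out in Lemma 6.4 of \cite{CP-linear}, which I would invoke; the only new input is that the refined choices $\phi_{\geq 3}=\phi_{\geq 3}^*(z_\mbp;\sigma^*,\kappa_*)$ and $\phi_{\mbp,e}=\phi_e(\,\cdot-\mrp_1\Theta_0\bE_1-\mrp_2\Theta_0\bE_2)$ of Definition \ref{def-Phi-p-1} depend on $\mbp$ only through $z_\mbp$ and through an exponentially small translation, respectively, so they contribute only a localized (exponentially decaying) $z_\mbp$-derivative and an exponentially small term to $\mathrm R$, landing in the already-controlled parts of the estimate.
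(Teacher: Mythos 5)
Your proposal is correct and follows essentially the same route as the paper: the paper's entire justification for Lemma \ref{lem-Phi_t-p} is the remark that the asymptotic form follows from the chain rule via Lemmas \ref{lem-change-of-coord} and \ref{lem-def-Phi-p}, with the full term-by-term differentiation, localization, and parity accounting deferred to Lemma 6.4 of \cite{CP-linear} — exactly the citation you invoke for the delicate geometric block. Your explicit reconstruction of the chain-rule decomposition and the parity/localization reasoning for the projection bound is a faithful expansion of what that cited lemma carries out, and your observation that Definition \ref{def-Phi-p-1}'s refined $\phi_{\geq 3}$ and $\phi_{\mbp,e}$ contribute only already-controlled terms is precisely the point the paper makes in passing.
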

Introducing the canonical unit basis $\{\bB_k\}_{k\in \Sigma_1}$  of $\mathbb R^{N_1}$, from chain rule and \eqref{def-bT} we have
\begin{equation}\label{def-bTp'-ek}
\left<\mathbb T\dot{\mathbf p}, \bB_k\right>=\left<\p_t \Phi_\mbp, Z_{\mathbf p, *}^{1k}\right>_{L^2}\qquad\qquad \hbox{for} \;  k\in \Sigma_1.
\end{equation}
To motivate the next result we observe that, up to a multiplicative constant, the inner product on the right-hand side has the leading order
\beqs
\int_{\msI_\mbp}\p_t \bm \gamma_\mbp\cdot \mbn_\mbp \tilde \Theta_k \dd \tilde s_\mbp.
\eeqs 
From Lemma \ref{lem-change-of-coord} and chain rule we have the relation
\beq\label{GM-timede-proj}
\p_t \bm \gamma_\mbp \cdot \mbn_\mbp =  -\varep  \frac{\p z_\mbp}{\p t} = -\sum_{j\in\Sigma_1}\dot\mrp_j \xi_j(s_\mbp).  
\eeq 
The projection of $\xi_j(s_\mbp)$ to the Garlekin space $G_1$ involve the $(N_1-3) \times N_1$ matrix $\mbU$ 
with $(j,k)$-th component 
\beq\label{def-bU}
\mbU_{jk}:=\frac{1}{1+\mrp_0}\int_{\msI_\mbp} \tilde s_\mbp \tilde \Theta_j' \tilde \Theta_k \dd \tilde s_\mbp, \qquad j=3, 4, \cdots, N_1-1, \quad k\in \Sigma_1.
\eeq
In fact, we have the following approximation.
\begin{lemma}\label{lem-xi}
With $\xi_j(s_\mbp)$ defined in Lemma \ref{lem-change-of-coord}, it holds that for $j,k\in \Sigma_1$
\beqs 
-\int_{\msI_\mbp} \xi_j(s_\mbp)\tilde \Theta_k\dd \tilde s_\mbp=(1+\mrp_0)\left\{\begin{aligned}
& R_0/\Theta_0   +O(\|\hat\mbp\|_{\mbV_2^2})\mathbb E_{00}, & j=0, k=0;\ 
\\
& \mrp_k \bm 1_{\{k\geq 3\}}-\hat\mbp^T\mbU\mathbf B_k +O(\|\hat\mbp\|_{\mbV_2^2}^2)\mathbb E_{k0} &j=0,k\geq 1;\\
& \delta_{jk} + O(\|\hat\mbp\|_{\mbV_2^2}) \mbE_{kj} & j=1,2, k\in \Sigma_1;\\
& \delta_{jk} +O(\|\hat\mbp\|_{\mbV_2^2}^2) \mbE_{kj}, & j\geq 3, k\in \Sigma_1. 
\end{aligned}\right. 
\eeqs
Moreover when $j\in \Sigma_1, k\geq 3$, we have the following approximation for the weighted projection
\beqs 
-\int_{\msI_\mbp} \xi_j(s_\mbp)\beta_k\tilde \Theta_k\dd \tilde s_\mbp=(1+\mrp_0)\left\{\begin{aligned}
& \beta_k \mrp_k  -\hat\mbp^T\mbU\mathbf B_k +O(\|\hat\mbp\|_{\mbV_2^2}^2)\mathbb E_{k0} &j=0;\\
& \beta_k \delta_{jk} + O(\|\hat\mbp\|_{\mbV_2^2}) \mbE_{kj} & j=1,2;\\
&\beta_k \delta_{jk} +O(\|\hat\mbp\|_{\mbV_2^2}\|\hat\mbp\|_{\mbV_3^2}) \mbE_{kj}+O(\|\hat\mbp\|_{\mbV_2^2}^2)\beta_j, & j\geq 3. 
\end{aligned}\right. 
\eeqs
\end{lemma}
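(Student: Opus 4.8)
\medskip
\noindent\textbf{Proof proposal.} The plan is to substitute the explicit formulas for $\xi_j(s_\mbp)$ from Lemma~\ref{lem-change-of-coord} into the integrals and reduce every term to a short list of ingredients: the orthogonality \eqref{ortho-tTheta}, i.e. $\int_{\msI_\mbp}\tilde\Theta_j\tilde\Theta_k\,\dd\tilde s_\mbp=(1+\mrp_0)\delta_{jk}$; the expansion $\bar p=\sum_{i\geq 3}\mrp_i\tilde\Theta_i$ from \eqref{def-barp}; the length-normalization estimates $A(\mbp)=1+O(\|\hat\mbp\|_{\mbV_1}^2)$ and $\|\nabla_\mbp A\|_{l^2}\lesssim\|\hat\mbp\|_{\mbV_2^2}$ of Lemma~\ref{lem-Gamma-p}; the nearly-parallel normal estimate \eqref{est-bn-p0} and the tangent/normal identities \eqref{gamma'-bot}--\eqref{est-|gamma'|}; the relations \eqref{proj-n0-E-basis}; and the $L^\infty$-based Fourier bookkeeping of Lemma~\ref{Notation-e_i,j} (the $\bm e$ and $\mbE_{ij}$ notation). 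I would organize the argument by the ranges $j=0$, $j=1,2$, $j\geq 3$, each against the full index $k\in\Sigma_1$, and then rerun the same computation with the weight $\beta_k$ inserted.

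For $j\geq 3$, write $\xi_j=-\big(\tilde\Theta_j-c_j(\mbp)(R_0+\bar p)\big)\,\mbn_0\cdot\mbn_\mbp$ with $c_j(\mbp)=(1+\mrp_0)\p_{\mrp_j}\ln A/A$; the principal contribution is $\int\tilde\Theta_j\tilde\Theta_k=(1+\mrp_0)\delta_{jk}$, and the corrections come from the factor $\mbn_0\cdot\mbn_\mbp-1$, from $\bar p$ via $\int\bar p\tilde\Theta_k=(1+\mrp_0)\mrp_k\bm 1_{\{k\geq 3\}}$, and from the gradient $\nabla_\mbp A$, which assemble into rank-one or smoothly-weighted matrices whose $l^2_*$-norms are controlled by $\|\hat\mbp\|_{\mbV_2^2}$ times further factors of $\|\hat\mbp\|$, giving the claimed quadratic remainder. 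For $j=1,2$ I would use \eqref{proj-n0-E-basis} to replace $\Theta_0\bE_j\cdot\mbn_0$ by $\Theta_j$, split $\mbn_\mbp=\mbn_0+(\mbn_\mbp-\mbn_0)$, and carry out the change of variables $s\leftrightarrow\tilde s_\mbp$; the leading term is $(1+\mrp_0)\delta_{jk}$ and the linear-in-$\hat\mbp$ distortions of the coordinate change and of $\mbn_\mbp$ are absorbed into the $O(\|\hat\mbp\|_{\mbV_2^2})\mbE_{kj}$ remainder. For $j=0$, the distinctive feature is the term $-\tilde s_\mbp\bar p'/A$; integrating it against $\tilde\Theta_k$ reproduces the matrix $\mbU$ of \eqref{def-bU}, namely $\int_{\msI_\mbp}\tilde s_\mbp\bar p'\tilde\Theta_k\,\dd\tilde s_\mbp=(1+\mrp_0)\hat\mbp^T\mbU\mathbf B_k$ (with an integration by parts in $\tilde s_\mbp$ to treat $k=0$), the $\p_{\mrp_0}\ln A$ contribution being absorbed, which produces the two $j=0$ rows with leading coefficients $R_0/\Theta_0$ at $k=0$ and $\mrp_k\bm 1_{\{k\geq 3\}}-\hat\mbp^T\mbU\mathbf B_k$ at $k\geq 1$.

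For the $\beta_k$-weighted identities I would repeat the computation with $\beta_k\tilde\Theta_k$ replacing $\tilde\Theta_k$, now measuring $\hat\mbp$ in the weighted spaces $\mbV_r^2$ so that the extra factor $\beta_k$ is accounted for: $\int\bar p\,\beta_k\tilde\Theta_k=(1+\mrp_0)\beta_k\mrp_k\bm 1_{\{k\geq 3\}}$ and $\int\tilde\Theta_j\,\beta_k\tilde\Theta_k=(1+\mrp_0)\beta_k\delta_{jk}$ hold directly, while each cross term acquires one extra $\beta$; when that $\beta$ can be transferred onto a smooth coefficient the remainder stays $O(\|\hat\mbp\|_{\mbV_2^2})\mbE_{kj}$, and when it must be absorbed by differentiating $\tilde\Theta_j$ (possible only for $j\geq3$) it yields the $O(\|\hat\mbp\|_{\mbV_2^2}\|\hat\mbp\|_{\mbV_3^2})\mbE_{kj}+O(\|\hat\mbp\|_{\mbV_2^2}^2)\beta_j$ contribution listed in the statement, the appearance of two distinct weighted norms following from the embeddings of Lemma~\ref{lem-est-V}. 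I expect the main obstacle to be the error bookkeeping rather than any single estimate: several off-diagonal entries receive multiple individually-sizeable pieces (from the $A$-normalization derivatives, from $\tilde s_\mbp\bar p'$, from $\mbn_\mbp-\mbn_0$, and from the distortion of the local coordinates), and one must expand each carefully — exploiting that $A-1$ carries no linear part, that $\int_{\msI_\mbp}\bar p\,\dd\tilde s_\mbp=0$, and that $\mbn_\mbp-\mbn_0$ is tangential at first order — so that the collected remainder lands at exactly the order, and in exactly the weighted norm, recorded in the lemma.
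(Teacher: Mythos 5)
Your proposal is correct and follows essentially the same route as the paper: substitute the explicit $\xi_j$, split into the cases $j=0$, $j=1,2$, $j\geq 3$, extract the leading term from the orthogonality \eqref{ortho-tTheta}, recognize $\mbU$ in the $\tilde s_\mbp\bar p'$ contribution, and absorb the corrections from $A-1$, $\nabla_\mbp A$, and $\mbn_\mbp\cdot\mbn_0-1$ into the stated remainders. For the weighted identities the paper likewise isolates a remainder $\mathrm R[\xi_j]$ depending only on $\bar p,\bar p'$ and transfers the factor $\beta_k$ by integration by parts (its Lemma \ref{lem-h-proj-w}), which is exactly the mechanism you describe for producing the $\|\hat\mbp\|_{\mbV_3^2}$ and $\beta_j$ terms.
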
 
The proof of the Lemma is technical and is postponed to the Appendix.

We introduce the translational vector
\begin{equation}\label{def-p'}
\mrp_{1,2}=\sum\limits_{k=1}^{2}\mrp_k\bB_k,
\end{equation}
the constants $m_2, m_3$
\beq
\label{def-m34}
m_2=\frac{1}{2} \int_{\mathbb R} \mrL_0^{-1}(z\phi_0') \dd z; \qquad m_3= \frac{1}{2} \int_{\mathbb R} |z\phi_0'|^2 \dd z,
\eeq
and the scalar function $\mu_0=\mu_0(\sigma, \mrp_0)$,
\begin{equation}\label{def-mu0}
\mu_0(\sigma, \mrp_0):=\frac{m_1^2}{m_1^2+\varep(\sigma m_2+\eta_d m_3^2)}\frac{1}{1+\mrp_0}=1+O(\mrp_0,\varep).
\end{equation}

\begin{lemma}\label{lem-def-bT} 
Under the a priori  assumptions \eqref{A-0},  there exists a unit vector $\bm e=(e_k)_{k\in \Sigma_1}$ for which  \eqref{def-bTp'-ek} admits the expansion: 
\begin{enumerate}
\item   if $k=0$, 
\begin{equation}
\begin{aligned}
-\frac{\varep^{1/2}\mu_0}{m_1} \left<\mathbb T \dot{\mbp}, \mathbf B_0\right>=&\frac{R_0}{\Theta_0}\dot{\mrp}_0+O\left((\varep^2+\|\hat\mbp\|_{\mbV_2^2})\|\dot{\mbp}\|_{l^2}\right).
\end{aligned}
\end{equation}
\item if $k\geq 1$,
\begin{equation}\label{est-bTp'-ek}
\begin{aligned}
-\frac{\varep^{1/2}\mu_0}{m_1} \left<\mathbb T \dot{\mbp}, \bB_k\right>=\dot{\mrp}_k 
+ \dot{\mrp}_0  \left(\mrp_k \bm 1_{\{k\geq 3\}}- \hat \mbp^T\mathbb U\bB_k\right) +O\left(\|\hat{\mbp}\|_{\mbV_2^2}|\dot \mrp_{1,2}|+(\varep^2+\|\hat\mbp\|_{\mbV_2^2}^2)\|\dot{\mbp}\|_{l^2}\right) e_k,
\end{aligned}
\end{equation}
where $\mathbb U$ is given in \eqref{def-bU} and $\mathbf 1_{\{k\geq j\}}(k)$ returns $1$ for $k\geq j$ and $0$ for $k<j.$
\end{enumerate}
\end{lemma}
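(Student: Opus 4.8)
The plan is to expand the inner product $\left<\p_t\Phi_\mbp, Z_{\mbp,*}^{1k}\right>_{L^2}$ using the chain rule $\p_t\Phi_\mbp = \sum_j \dot\mrp_j \p_{\mrp_j}\Phi_\mbp$ together with the asymptotic form of $\p_{\mrp_j}\Phi_\mbp$ from Lemma~\ref{lem-Phi_t-p}, and then to identify the leading contribution with the projection of $\xi_j$ onto the Galerkin modes, which is exactly what Lemma~\ref{lem-xi} computes. First I would write
\beqs
\left<\mathbb T\dot\mbp,\bB_k\right>=\sum_{j}\dot\mrp_j\left<\frac{\p\Phi_\mbp}{\p\mrp_j},Z_{\mbp,*}^{1k}\right>_{L^2},
\eeqs
and insert $\p_{\mrp_j}\Phi_\mbp = \varep^{-1}(\phi_0'+\varep\phi_1')\xi_j(s_\mbp)+\varep\mathrm R_j$. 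Pairing $Z_{\mbp,*}^{1k}=(\tilde\psi_1+\varep\tilde\varphi_{1,k})\tilde\Theta_k+\varep\tilde\varphi_{2,k}\varep\tilde\Theta_k'$ against the first term and integrating out $z_\mbp$ produces a factor $\int_\mbR \psi_1(\phi_0'+\varep\phi_1')\,\mrd z$ times $-\int_{\msI_\mbp}\xi_j\tilde\Theta_k\,\mrd\tilde s_\mbp$ (the minus sign bookkeeping from \eqref{GM-timede-proj}), up to the $\varep$-corrections carried by $\tilde\varphi_{l,k}$ and the curvature weight $(1-\varep z_\mbp\kappa_\mbp)$ in the local-coordinate volume element. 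Since $\psi_1=\phi_0'/m_1$ is the (normalized) translational eigenmode, $\int_\mbR\psi_1\phi_0'\,\mrd z=m_1$, and the $\varep\phi_1'$ correction contributes the factor $\varep(\sigma m_2+\eta_d m_3^2)/m_1$ after using \eqref{def-m34} and the evenness/oddness parities from Lemma~\ref{lem-L0}; this is precisely what is packaged into $\mu_0$ in \eqref{def-mu0}, so that the normalization $-\varep^{1/2}\mu_0/m_1$ in the statement cancels the $z$-integral up to $O(\|\hat\mbp\|_{\mbV_2^2}+\varep)$ relative error coming from the $z$-dependence of $\tilde\varphi_{l,k}$ on $\bm\gamma_\mbp''$ and from the curvature weight.

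With the $z$-integration handled, the remaining tangential integrals $-\int_{\msI_\mbp}\xi_j\tilde\Theta_k\,\mrd\tilde s_\mbp$ are read off directly from Lemma~\ref{lem-xi}. For $k=0$ only $j=0$ contributes at leading order (giving $R_0/\Theta_0$), and all other $j$ contribute $O(\|\hat\mbp\|_{\mbV_2^2})$ against $\dot\mrp_j$, which after a Cauchy--Schwarz in $j$ (using $\|\dot\mbp\|_{l^2}$) yields the stated $O((\varep^2+\|\hat\mbp\|_{\mbV_2^2})\|\dot\mbp\|_{l^2})$ error; the $\varep^2$ piece is the contribution of the $\varep\mathrm R_j$ remainder term, bounded by $\varepsilon^2\|\Pi_{\mcZ_*^1}\mathrm R\|_{L^2}\|\dot\mbp\|_{l^2}$ via Lemma~\ref{lem-Phi_t-p} (the $\varep$ from the prefactor times $\varep^{1/2}$ from $\|\Pi_{\mcZ_*^1}\mathrm R\|_{L^2}$ times the $\varep^{1/2}$ normalization). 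For $k\ge 1$, the $j=1,2$ and $j\ge 3$ rows of Lemma~\ref{lem-xi} give $\delta_{jk}$ at leading order, producing $\dot\mrp_k$; the $j=0$ row gives $\mrp_k\bm 1_{\{k\ge 3\}}-\hat\mbp^T\mbU\bB_k$ multiplied by $\dot\mrp_0$, which is the cross term in \eqref{est-bTp'-ek}. The error terms must be tracked carefully: the $O(\|\hat\mbp\|_{\mbV_2^2})\mbE_{kj}$ entries for $j=1,2$ pair with $\dot\mrp_{1,2}$ to give the $\|\hat\mbp\|_{\mbV_2^2}|\dot\mrp_{1,2}|\,e_k$ term, while the $O(\|\hat\mbp\|_{\mbV_2^2}^2)\mbE_{kj}$ entries for $j=0$ and $j\ge 3$ pair with the full $\dot\mbp$ to give $O(\|\hat\mbp\|_{\mbV_2^2}^2\|\dot\mbp\|_{l^2})\,e_k$; here one uses that $\mbE$ has $l^2_*$-norm one so $\|\mbE\dot\mbp\|_{l^2}\le\|\dot\mbp\|_{l^2}$, per Notation~\ref{Notation-e_i,j}.

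The main obstacle is the careful accounting of which error terms are $O(\|\hat\mbp\|_{\mbV_2^2})$ versus $O(\|\hat\mbp\|_{\mbV_2^2}^2)$ and how they combine with $\dot\mrp_{1,2}$ versus the full $\dot\mbp$ — this is what distinguishes the sharp statement \eqref{est-bTp'-ek} from a cruder bound. The subtlety is that the translational variations $\p_{\mrp_1}\Phi_\mbp,\p_{\mrp_2}\Phi_\mbp$ have normal components $\Theta_1,\Theta_2$ that are \emph{not} orthogonal to the higher Galerkin modes $\tilde\Theta_k$ once the interface is perturbed away from a circle, so their overlap is genuinely $O(\|\hat\mbp\|_{\mbV_2^2})$ rather than $O(\|\hat\mbp\|_{\mbV_2^2}^2)$; this forces the separate $|\dot\mrp_{1,2}|$ term and cannot be absorbed into the $\|\dot\mbp\|_{l^2}$ error. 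A secondary technical point is controlling the $\varepsilon$-corrections in the $z$-integration uniformly in $k\in\Sigma_1$, which requires that $\tilde\varphi_{l,k}$ satisfies the $z$-orthogonality \eqref{ortho-varphi} so that its contribution is $O(\varep)$ rather than $O(1)$; combined with the $\mbV_2^2$ control of $\bm\gamma_\mbp''$ this keeps the relative error at the claimed order. Once these bookkeeping points are settled the result follows by assembling the term-by-term estimates and invoking the a~priori bound $\|\dot\mbp\|_{l^2}\lesssim\varep^3$ from \eqref{A-0} only where needed to simplify; the statement as written keeps $\|\dot\mbp\|_{l^2}$ explicit.
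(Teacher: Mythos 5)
Your proposal is correct and follows essentially the same route as the paper's proof: expand $\left<\mathbb T\dot\mbp,\bB_k\right>$ entrywise via Lemma~\ref{lem-Phi_t-p}, integrate out $z_\mbp$ using $\psi_1=\phi_0'/m_1$ and the orthogonality \eqref{ortho-varphi} to produce the factor $m_1^2+\varep(\sigma m_2+\eta_d m_3^2)$ absorbed into $\mu_0$, and then read off the tangential integrals from Lemma~\ref{lem-xi} with the same case-by-case error bookkeeping (including the separate $|\dot\mrp_{1,2}|$ term and the $O(\varep^{3/2}\mbE_{kj})$ remainder becoming $O(\varep^2)\|\dot\mbp\|_{l^2}$ after normalization). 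No gaps.
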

\begin{proof} 
We have the algebraic relation
\beq\label{est-bT-0}
-\left<\mbT \dot\mbp, \mathbf B_k\right>_{l^2}=-\sum_{j\in \Sigma_1}\mbT_{kj}\dot\mrp_j.
\eeq 
We calculate $\mbT_{kj}$ for $k,j\in \Sigma$ and then complete the sum over $j.$ From the definition of $\phi_0, \phi_1$ in \eqref{def-phi0n2}, \eqref{def-phi1},  we have the following identity
\beq\label{est-bT-1}
\int_{\mathbb R} (\phi_0'+\varep\phi_1')\phi_0' \dd z
= m_1^2+\varep(\sigma m_2+\eta_d m_3^2), 
\eeq
 where $m_1, m_2, m_3$ are defined in \eqref{def-hatla1*} and \eqref{def-m34}.  Using the expressions for $\p_{\mrp_j}\Phi_\mbp$ from Lemma \ref{lem-Phi_t-p} and for $Z_{\mbp, *}^{1k}$ with $k\in \Sigma_1$ from \eqref{eq-def-Z*},
 the equality $\psi_1=\phi_0'/m_1$, the localization of the dressing of the eigenfunctions of $\mrL_0$, and the orthogonality \eqref{ortho-varphi} we find
 \beqs
 -\mathbb T_{kj}=-\frac{1}{\varep^{1/2}} \int_{\mbR_{2\ell}} \int_{\msI_\mbp} (\phi_0'(z_\mbp) +\varep\phi_1'(z_\mbp)) \frac{\phi_0'}{m_1}  \xi_j(s_\mbp) \tilde \Theta_k  \dd \tilde s_\mbp \dd z_\mbp +O(\varep^{3/2} \mbE_{kj}),
 \eeqs
 where the remainder matrix $\mathbb E_{kj}$ has bounded 
 $l^2_*$ norm by Lemma\,\ref{Notation-e_i,j}. We use \eqref{est-bT-1} to simplify 
  \beq\label{est-bT-2}
 -\mathbb T_{kj}=-\frac{m_1^2 +\varep(\sigma m_2+\eta_d m_3^2)}{\varep^{1/2}}  \int_{\msI_\mbp}  \xi_j(s_\mbp) \tilde \Theta_k   \dd \tilde s_\mbp +O(\varep^{3/2} \mbE_{kj}).
 \eeq
To complete the proof we assume $k\geq 1$ use the three cases on $j$ in Lemma \ref{lem-xi}  and return to the summation \eqref{est-bT-0}. The second identity of the Lemma follows after introducing $\mu_0$ as in \eqref{def-mu0}. The first identity, for $k=0$, follows similarly except that the terms involving $\mbU$ 
are small and placed in the remainder. 
\end{proof}

\begin{remark}\label{remark-timede-proj}
By the definition of $\mbT_{kj}$, the identities \eqref{est-bT-2} and \eqref{est-bT-0} in the proof of Lemma \ref{lem-def-bT} imply 
\beqs
\left<\p_t \Phi_\mbp, Z_{\mbp,*}^{1k}\right>_{L^2} = \frac{m_1}{\varep^{1/2}}\sum_{j\in \Sigma_1}\dot\mrp_j\int_{\msI_\mbp}\xi_j(s_\mbp)\tilde \Theta_k \dd \tilde s_\mbp+O(\varep^{1/2}\|\dot\mbp\|_{l^2})e_k.
\eeqs 
When combined with identity \eqref{GM-timede-proj} this implies
\beq
\label{NV-LB-proj}
 \left<\p_t \Phi_\mbp, Z_{\mbp,*}^{1k}\right>_{L^2} =- \frac{ m_1}{\varep^{1/2}} \int_{\msI_\mbp} \p_t\bm \gamma_\mbp \cdot \mbn_\mbp  \tilde \Theta_k\dd \tilde s_\mbp+O(\varep^{1/2}\|\dot\mbp\|_{l^2})e_k.
\eeq 
\end{remark}

\begin{cor}\label{cor-proj-Phi_t} With the same assumptions as  Lemma\,\ref{lem-def-bT}, we have the approximation under weighted projection
\beqs
\begin{aligned}
-\frac{\varep^{1/2} \mu_0}{m_1} \left<\mathbb T\dot \mbp, \beta_k\mathbf B_k\right>=&\beta_k \dot\mrp_k  +\dot\mrp_0 \left(\beta_k \mrp_k - \hat\mbp^T \mbU  \beta_k \mathbf B_k \right) +O(\|\hat\mbp\|_{\mbV_2^2}  \|\dot \mrp_{1,2}\|_{l^2})e_k \\
& O((\varep^2\beta_k+\|\hat\mbp\|_{\mbV_2^2}\|\hat\mbp\|_{\mbV_3^2}) \|\dot \mbp\|_{l^2}) e_k+O(\|\hat\mbp\|_{\mbV_2^2}^2 \|\dot\mbp\|_{\mbV_1^2})e_k.
\end{aligned}
\eeqs
for $k\geq 3$.
\end{cor}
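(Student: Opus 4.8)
The plan is to repeat the computation in the proof of Lemma~\ref{lem-def-bT} with the test mode $\tilde\Theta_k$ replaced throughout by the weighted mode $\beta_k\tilde\Theta_k$, feeding in the \emph{weighted} projection estimates from the second half of Lemma~\ref{lem-xi} in place of the unweighted ones used there. Concretely, I would start from the weighted analogue of the algebraic identity \eqref{est-bT-0},
$$-\left<\mathbb T\dot\mbp,\beta_k\mathbf B_k\right>_{l^2}=-\beta_k\sum_{j\in\Sigma_1}\mathbb T_{kj}\dot\mrp_j ,$$
and multiply the representation \eqref{est-bT-2} of $\mathbb T_{kj}$ — which was established for every $k\geq 1$, hence in particular for $k\geq 3$ — by $\beta_k$ to obtain
$$-\beta_k\mathbb T_{kj}=-\frac{m_1^2+\varep(\sigma m_2+\eta_d m_3^2)}{\varep^{1/2}}\int_{\msI_\mbp}\xi_j(s_\mbp)\,\beta_k\tilde\Theta_k\dd\tilde s_\mbp+O(\varep^{3/2}\beta_k)\mathbb E_{kj}.$$
The key structural observation is that moving the factor $\beta_k$ onto $\tilde\Theta_k$ does not disturb the separation of variables: the $z_\mbp$-integral still produces the constant $m_1^2+\varep(\sigma m_2+\eta_d m_3^2)$ via \eqref{est-bT-1}, and only the tangential integral $\int_{\msI_\mbp}\xi_j\beta_k\tilde\Theta_k\dd\tilde s_\mbp$ is affected.

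Next I would insert the three-case weighted formula for $\int_{\msI_\mbp}\xi_j(s_\mbp)\beta_k\tilde\Theta_k\dd\tilde s_\mbp$ supplied by Lemma~\ref{lem-xi} (the cases $j=0$, $j=1,2$, $j\geq 3$) and carry out the sum over $j\in\Sigma_1$. The diagonal pieces $\beta_k\delta_{jk}$ collapse the summation and produce the leading term $\beta_k\dot\mrp_k$ together with the $\dot\mrp_0$-coupling $\dot\mrp_0\bigl(\beta_k\mrp_k-\hat\mbp^T\mathbb U\beta_k\mathbf B_k\bigr)$ built from the matrix $\mathbb U$ of \eqref{def-bU}; the $j=1,2$ off-diagonal remainders, after one application of the Fourier/almost-orthogonality bounds of Lemma~\ref{Notation-e_i,j}, contribute $O(\|\hat\mbp\|_{\mbV_2^2}\|\dot\mrp_{1,2}\|_{l^2})e_k$; and the $j\geq 3$ off-diagonal remainders, summed against $\dot\mrp_j$ and estimated with Lemma~\ref{Notation-e_i,j} and the embeddings of Lemma~\ref{lem-est-V}, yield the pieces $O(\|\hat\mbp\|_{\mbV_2^2}\|\hat\mbp\|_{\mbV_3^2}\|\dot\mbp\|_{l^2})e_k$ and $O(\|\hat\mbp\|_{\mbV_2^2}^2\|\dot\mbp\|_{\mbV_1^2})e_k$. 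Finally, multiplying through by $-\varep^{1/2}\mu_0/m_1$ and using the definition \eqref{def-mu0} of $\mu_0$ to cancel the prefactor $m_1^2+\varep(\sigma m_2+\eta_d m_3^2)$ — exactly the normalization already used in Lemma~\ref{lem-def-bT} — while noting that $\frac{\varep^{1/2}}{m_1}\,O(\varep^{3/2}\beta_k)=O(\varep^2\beta_k)$ turns the first remainder into the stated $O(\varep^2\beta_k\|\dot\mbp\|_{l^2})e_k$ term, assembles the claimed expansion. The a priori bound $\|\dot\mbp\|_{l^2}\lesssim\varep^3$ of \eqref{A-0} is used only implicitly, to ensure the remainders are genuinely subordinate.

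The one place requiring care — and the step I expect to be the main obstacle — is tracking how the extra factor $\beta_k$, which by Weyl's law \eqref{est-N01} can be as large as $\beta_{N_1-1}\sim\varep^{-1}\rho^{1/4}$, propagates through the error terms: every remainder must retain either an explicit compensating power of $\varep$ (as in $\varep^2\beta_k$, where the gain comes from the $\varep^{1/2}$ normalization together with the $O(\varep^{3/2})$ bound in \eqref{est-bT-2}) or be expressible in a $\beta$-weighted norm of $\hat\mbp$ or $\dot\mbp$ strong enough to absorb it — which is precisely why $\mbV_3^2$ and $\mbV_1^2$, rather than $\mbV_2^2$ and $l^2$, appear on the right-hand side. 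This is exactly the reason the weighted half of Lemma~\ref{lem-xi} is stated with $\beta_j$-weighted remainders rather than the cruder unweighted ones; granted that lemma, no estimate beyond \eqref{est-bT-2}, Lemma~\ref{lem-xi}, Lemma~\ref{Notation-e_i,j}, and Lemma~\ref{lem-est-V} is needed, and the remaining work is purely bookkeeping of the summation over $j\in\Sigma_1$.
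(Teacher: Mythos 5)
Your proposal is correct and follows essentially the same route as the paper: the paper's proof simply multiplies the representation \eqref{est-bT-2} of $\mathbb T_{kj}$ by $\beta_k$ and then invokes the weighted half of Lemma~\ref{lem-xi} to evaluate $\int_{\msI_\mbp}\xi_j\,\beta_k\tilde\Theta_k\dd\tilde s_\mbp$, exactly as you do. Your accounting of how each of the three $j$-cases and the $O(\varep^{3/2}\beta_k\mbE_{kj})$ remainder produce the stated error terms matches the paper's (much terser) argument.
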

\begin{proof}
Following the proof of Lemma\,\ref{lem-def-bT}, we estimate $\beta_k \mbT_{kj}$. From  \eqref{est-bT-2}, we have 
\beqs
\beta_k \mathbb T_{kj}=-\frac{m_1^2 +\varep(\sigma m_2+\eta_d m_3^2)}{\varep^{1/2}}  \int_{\msI_\mbp}  \beta_k \xi_j(s_\mbp) \tilde \Theta_k   \dd \tilde s_\mbp +O(\varep^{3/2} \beta_k \mbE_{kj}).
\eeqs
The corollary follows from Lemma \ref{lem-xi}.
\end{proof}

\begin{lemma}\label{cor-bU} The matrix  $\mathbb U=(\mathbb U_{lk})$  with components $l,k = 3, \cdots N_1-1$ defined in \eqref{def-bU} satisfies 
\beq\label{est-bU-1}
\begin{aligned}
  \|\mbU^T\|_{l_2^*} \lesssim  \|\mbD^{1/2}\|_{l_2^*}.
\end{aligned}
\eeq

\end{lemma}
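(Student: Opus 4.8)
The plan is to bypass any entrywise estimate on $\mbU$ — a direct Schur or H\"older bound on its entries loses a spurious factor $\log N_1$ — and instead to recognize $\mbU^T$ as, up to a harmless truncation, a map acting on generalized Fourier coefficients. Fix $\mathbf q=(\mathrm q_j)_{j=3}^{N_1-1}\in l^2$ and set $f:=\sum_{j=3}^{N_1-1}\mathrm q_j\tilde\Theta_j$, a finite trigonometric polynomial on $\msI_\mbp$. Unwinding \eqref{def-bU} and using $f'=\sum_j\mathrm q_j\tilde\Theta_j'$, for each $i\in\{3,\dots,N_1-1\}$ one has
\beq\label{Ut-Fourier}
(\mbU^T\mathbf q)_i=\sum_{j=3}^{N_1-1}\mbU_{ji}\,\mathrm q_j=\frac{1}{1+\mrp_0}\int_{\msI_\mbp}\tilde s_\mbp\, f'(\tilde s_\mbp)\,\tilde\Theta_i(\tilde s_\mbp)\dd\tilde s_\mbp,
\eeq
so that $\mbU^T\mathbf q$ is exactly the restriction to modes $3,\dots,N_1-1$ of the sequence of coefficients of $g:=\tilde s_\mbp f'$ with respect to the family $\{\tilde\Theta_i\}_{i\ge0}$.

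From \eqref{Ut-Fourier} the argument is three short steps. First, by the orthogonality \eqref{ortho-tTheta} the system $\{(1+\mrp_0)^{-1/2}\tilde\Theta_i\}_{i\ge0}$ is orthonormal in $L^2(\msI_\mbp)$, so Bessel's inequality gives $\|\mbU^T\mathbf q\|_{l^2}^2\le(1+\mrp_0)^{-1}\|g\|_{L^2(\msI_\mbp)}^2$ (the truncation to $i\ge3$ only drops nonnegative terms from the Bessel sum). Second, since the arc-length coordinate obeys $0\le\tilde s_\mbp\le|\Gamma_\mbp|=2\pi R_0(1+\mrp_0)$ by \eqref{est-A-|Gamma-p|}, we may drop it pointwise: $\|g\|_{L^2(\msI_\mbp)}\le|\Gamma_\mbp|\,\|f'\|_{L^2(\msI_\mbp)}$. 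Third, integrating by parts on the closed curve and invoking \eqref{tTheta''} and \eqref{ortho-tTheta},
\beqs
\|f'\|_{L^2(\msI_\mbp)}^2=(1+\mrp_0)\sum_{j=3}^{N_1-1}\beta_{\mbp,j}^2\,\mathrm q_j^2=\frac{1}{R_0^2(1+\mrp_0)}\,\|\mbD^{1/2}\mathbf q\|_{l^2}^2,
\eeqs
since $\beta_{\mbp,j}=\beta_j/(R_0(1+\mrp_0))$. Multiplying the three bounds, all powers of $R_0$ and $(1+\mrp_0)$ cancel and $\|\mbU^T\mathbf q\|_{l^2}\le2\pi\|\mbD^{1/2}\mathbf q\|_{l^2}\le2\pi\|\mbD^{1/2}\|_{l^2_*}\|\mathbf q\|_{l^2}$; taking the supremum over unit vectors $\mathbf q$ yields \eqref{est-bU-1} with implied constant $2\pi$.

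The step I expect to be the crux is the identity \eqref{Ut-Fourier}: once $\mbU^T$ is understood as ``differentiate $f$, multiply by $\tilde s_\mbp$, then re-expand and truncate'', everything else is Plancherel together with the trivial pointwise bound $|\tilde s_\mbp|\le|\Gamma_\mbp|$. The remaining care is purely bookkeeping — carrying the non-normalized weight $(1+\mrp_0)$ from \eqref{ortho-tTheta} through the Bessel step, and using the correct scaling $\beta_{\mbp,j}=\beta_j/(R_0(1+\mrp_0))$ rather than $\beta_j$ — and notably no Hilbert-transform-type operator bound is required.
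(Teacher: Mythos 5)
Your proof is correct and follows essentially the same route as the paper's: the paper likewise rewrites $\langle\mbU^T\hat\mbp,\bB_k\rangle$ as the $k$-th coefficient of $\tilde s_\mbp\,\bar p'$ against the $\tilde\Theta_k$, bounds $\|\bar p'\|_{L^2(\msI_\mbp)}\lesssim\|\mbD^{1/2}\hat\mbp\|_{l^2}$ via \eqref{re-barp-hatp}, and concludes by the Bessel-type estimate of Lemma\,\ref{Notation-e_i,j}. Your version merely makes explicit the pointwise bound $|\tilde s_\mbp|\le|\Gamma_\mbp|$ and the Plancherel computation of $\|f'\|_{L^2}$ that the paper cites.
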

\begin{proof}
With $\hat\mbp=(\mrp_3, \cdots, \mrp_{N_1-1})\in l^2$, $\bar p=\sum_{l=3}^{N_1-1} \mrp_l \tilde \Theta(\tilde s_\mbp)$, 
and $\mbU_{lk}$ as defined we have
\beqs
\left<\mbU^T\hat\mbp, \bB_k\right>_{l^2}=\sum_{k=3}^{N_1-1} \mbU_{lk} \mrp_l= \frac{1}{1+\mrp_0} \int_{\msI_\mbp} \bar p'(\tilde s_\mbp) \tilde \Theta_k \dd \tilde s_\mbp.
\eeqs
From \eqref{re-barp-hatp} $\|\bar p'\|_{L^2(\msI_\mbp)}\lesssim\|\hat\mbp\|_{\mbV_1^2}= \|\mbD^{1/2}\hat\mbp\|_{l^2}$, and the result follows from Lemma \ref{Notation-e_i,j}.
\end{proof}

\subsection{Projection of the residual}
 The projection of the bilayer \muckymuck residual $\mrF(\Phi_\mbp)$, given in Lemma \ref{lem-def-Phi-p}, to the meandering slow space $\mcZ_*^1$ drives the dynamics of meandering parameters $\mathbf p$. The following Lemma breaks down this projection. At leading order the projection arises from the normal velocity $V_\mbp$ from \eqref{normal-velocity} onto the cut-off Garlekin space $G_1$. To this end we introduce
 \beq\label{def-sAB}
 \begin{aligned}
 \mathscr A_k(\mbp)&:= \int_{\msI_\mbp} \kappa_\mbp \tilde \Theta_k \dd \tilde s_\mbp,\\
 \mathscr B_k(\mbp)&:=\int_{\msI_\mbp} \left(-\Delta_{s_\mbp} \kappa_\mbp -\frac{\kappa_\mbp^3}{2}+\alpha \kappa_\mbp \right)\tilde \Theta_k  \dd \tilde s_\mbp,
 \end{aligned}
 \eeq
 which give the dominant components of this projection.  
\begin{lemma}\label{lem-proj-F} Under the a priori assumptions \eqref{A-0}, there exists a unit vector  $\bm  e=(e_k)_{k\in \Sigma_1}$  such that 
\begin{enumerate}
    \item for $k=0$, 
    \beqs
\begin{aligned}
\left<\Pi_0 \mathrm F(\Phi_{\mathbf p} ),  Z_{\mathbf p, *}^{1k} \right>_{L^2}=&\varep^{5/2}m_1 \left(\mathscr C(\mrp_0)-\frac{c_0}{\Theta_0}\mathrm p_0\right) 
+O\left(\varep^{5/2}|\mrp_0-\mrp_0^*|\|\hat{\mbp}\|_{\mbV_2^2}\right)\\
&+O\left(\varep^{7/2} \|\hat\mbp\|_{\mbV_2^2}\|\hat\mbp\|_{\mbV_3^2}, \varep^{9/2} \|\hat{\mbp}\|_{\mbV_4^2}\right)
\end{aligned}
\eeqs
where $\mathscr C(\mrp_0)$ is a smooth function of $\mrp_0$ and has the form 
\begin{equation*}
\begin{aligned}
\mathscr C(\mrp_0):=&-\frac{2\pi \Theta_0 m_0}{m_1^2}  \left(\sigma_1^*-\sigma_0 \right)+ 2\pi \Theta_0 \varep   \bigg[\frac{1}{2R_0^2(1+\mrp_0)^2}-\alpha (\sigma_1^*)\bigg]\\
&+\varep \mcC_1(\mrp_0) \mrp_0+\varep \mcC_2(\mrp_0)(\mrp_0-\mrp_0^*)+\varep^2\mcC_3(\mrp_0).
\end{aligned}
\end{equation*}
Here $\mcC_k(\mrp_0)$ for $k=1,2,3$ are smooth functions of $\mrp_0$ with uniform bounds independent of $\varep$ whose form is not material to the subsequent analysis.
\item for $k\geq 1$ and  $k\in \Sigma_1$, 
\beqs
\begin{aligned}
\qquad \left<\Pi_0 \mathrm F(\Phi_{\mathbf p} ), Z_{\mathbf p, *}^{1k} \right>_{L^2}=& -\varep^{5/2}m_1 c_k(\mrp_0)\mathrm p_k\bm 1_{\{k\geq 3\}}
+O(\varep^{7/2} |\mrp_0-\mrp_0^*|\|\hat\mbp\|_{\mbV_2^2}, \varep^{5/2}|\mrp_0-\mrp_0^*|\|\hat\mbp\|_{\mbV_2^2}^2 )e_k\\
&+O(\varep^{7/2}\|\hat\mbp\|_{\mbV_2^2} \|\hat\mbp\|_{\mbV_4^2}, \varep^{9/2}\|\hat\mbp\|_{\mbV_4^2})e_k.
\end{aligned}
\eeqs
Here $c_k$ depends on  time only through $\mrp_0$ and has the form 
\begin{equation*}
\begin{aligned}
c_k(\mrp_0):=&\frac{\beta_k^2-1}{R_0^2}\Bigg[c_0 R_0(\mrp_0-\mrp_0^*) +O(\varep |\mrp_0-\mrp_0^*|^2)  +\varep \left(- \frac{m_0}{m_1^2} \sigma_2^*+\frac{2\beta_k^2-3}{2R_0^2(1+\mrp_0 )^2} +\alpha (\sigma_1^*)  \right)  \Bigg].
\end{aligned}
\end{equation*}
\end{enumerate}
\end{lemma}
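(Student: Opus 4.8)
The plan is to compute the inner product $\langle \Pi_0 \mathrm F(\Phi_\mbp), Z_{\mbp,*}^{1k}\rangle_{L^2}$ by inserting the residual expansion \eqref{exp-mF}--\eqref{F-234} and the form \eqref{eq-def-Z*} of the modified basis functions, then carrying out the $z_\mbp$-integration first and the $\tilde s_\mbp$-integration second. Since $\mrF_m = \varep\sigma + \varep^2 \mathrm F_2 + \varep^3 \mathrm F_3 + \varep^4 \mathrm F_{\geq 4}$ and $Z_{\mbp,*}^{1k} = (\tilde\psi_1 + \varep\tilde\varphi_{1,k})\tilde\Theta_k + \varep^2\tilde\varphi_{2,k}\tilde\Theta_k'$, the leading contribution comes from pairing $\tilde\psi_1 = \phi_0'/(m_1\varep^{1/2})$ against $\mrF_m$ and using the projection identities \eqref{est-proj-rF2,3}: the constant term $\varep\sigma$ integrates against the odd $\phi_0'$ to give something exponentially small, the $\varep^2\mathrm F_2$ term yields $\varep^2 m_0(\sigma_1^*-\sigma)\kappa_\mbp/(m_1\varep^{1/2})$, and the $\varep^3\mathrm F_3$ term yields $\varep^3 m_1(-\Delta_{s_\mbp}\kappa_\mbp - \kappa_\mbp^3/2 + \alpha\kappa_\mbp)/\varep^{1/2}$. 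Collecting these reproduces exactly $\varep^{-1/2}m_1^{-1}\int_{\msI_\mbp} V_\mbp\,\tilde\Theta_k\,\mrd\tilde s_\mbp / $ (normalization), i.e. the combination $\varep^{5/2}\frac{m_0}{m_1}(\sigma_1^*-\sigma)\mathscr A_k + \varep^{7/2} m_1 \mathscr B_k$ up to the factor from \eqref{def-PiG}. The $\varep^4\mathrm F_{\geq 4}$ term, together with the $\varepsilon\tilde\varphi_{1,k}$ and $\varepsilon^2\tilde\varphi_{2,k}$ corrections to $Z^{1k}_{\mbp,*}$, the curvature-of-the-measure factor $(1-\varep z_\mbp\kappa_\mbp)$, and the exponential residual $e^{-\ell\nu/\varep}\mrF_e$, all feed into the error terms; each is bounded using Notation \ref{Notation-h}, the localization/decay of the profiles, Lemma \ref{Notation-e_i,j}, and the embeddings of Lemma \ref{lem-est-V}.

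Next I would convert the two leading geometric integrals $\mathscr A_k(\mbp)$ and $\mathscr B_k(\mbp)$ into explicit functions of $\mbp$ using the curvature expansion \eqref{def-kappa-p,0}. For $\mathscr A_k$: the constant part $\kappa_{\mbp,0} = -1/(R_0(1+\mrp_0))$ is orthogonal to $\tilde\Theta_k$ for $k\geq 1$ and contributes $\kappa_{\mbp,0}|\Gamma_\mbp|/\sqrt{2\pi R_0(1+\mrp_0)}$-type terms for $k=0$; the linear part $\mathcal Q_1 = \frac{1}{(1+\mrp_0)R_0^2}\sum_j(1-\beta_j^2)\mrp_j\tilde\Theta_j$ picks out the single mode $\mrp_k$ with coefficient $(1-\beta_k^2)/((1+\mrp_0)R_0^2)$ (using \eqref{ortho-tTheta}); and $\mathcal Q_2$ goes into the error via the $H^s(\msI_\mbp)$ bounds in Lemma \ref{lem-Gamma-p}. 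For $\mathscr B_k$ one uses $-\Delta_{s_\mbp}\tilde\Theta_j = \beta_{\mbp,j}^2\tilde\Theta_j$ from \eqref{tTheta''} so that $-\Delta_{s_\mbp}\mathcal Q_1$ contributes $\beta_{\mbp,k}^2(1-\beta_k^2)\mrp_k/((1+\mrp_0)R_0^2)$ to the $k$-th mode, the $\alpha\kappa_\mbp$ term contributes $\alpha\cdot(1-\beta_k^2)\mrp_k/((1+\mrp_0)R_0^2)$, and the cubic term $\kappa_\mbp^3/2$ expands about $\kappa_{\mbp,0}$, its linearization contributing $\frac{3}{2}\kappa_{\mbp,0}^2\cdot$(coefficient of $\tilde\Theta_k$ in $\kappa_\mbp$). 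Assembling, the $k$-th projection takes the form (common factor) $\cdot (\beta_k^2-1)\mrp_k \cdot [\text{bracket}]$ where the bracket collects $\kappa_{\mbp,0}$-dependent terms plus the $\alpha(\sigma_1^*)$ term plus the $\beta_k^2$-dependent term from $\Delta_{s_\mbp}$, matching the stated form of $c_k(\mrp_0)$; the coefficient of $(\mrp_0-\mrp_0^*)$ comes from substituting the slaving relation $\sigma_1^*-\sigma = \frac{c_0 m_1^2}{m_0}R_0(\mrp_0-\mrp_0^*) + O(\cdots)$ from Corollary \ref{cor-def-hatla} into the $\mathscr A_k$ term, which produces the leading $c_0 R_0(\mrp_0-\mrp_0^*)$ inside the bracket.

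For the $k=0$ component I would proceed analogously but must keep the spatially constant mode, so $\mathscr A_0(\mbp) = \kappa_{\mbp,0}\cdot(1+\mrp_0)/\Theta_0 + O(\|\hat\mbp\|_{\mbV_2^2})$ using $\int_{\msI_\mbp}\tilde\Theta_0\,\mrd\tilde s_\mbp = (1+\mrp_0)/\Theta_0$ and the fact that $\mathcal Q_1$ has no $\tilde\Theta_0$ component; then $\kappa_{\mbp,0} = -1/(R_0(1+\mrp_0))$ gives $\mathscr A_0 = -1/(R_0\Theta_0) + \ldots$. Pairing with the residual and substituting the $\sigma(\mbp)$ expansion from Lemma \ref{lem-sigma} (which produces the $-c_0\mrp_0/\Theta_0$ term and the various $\mcC_j(\mrp_0)\mrp_0$ corrections), plus the $\mathscr B_0$ contribution (which gives the $\frac{1}{2R_0^2(1+\mrp_0)^2} - \alpha(\sigma_1^*)$ piece from the cubic and $\alpha$ terms evaluated on the constant-curvature circle), yields the stated $\mathscr C(\mrp_0)$. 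The error terms are inherited from Lemma \ref{lem-est-L2-F} (the Lipschitz bound on $\Pi_0\mathrm F(\Phi_\mbp)$, which supplies the $\varep^{5/2}|\mrp_0-\mrp_0^*| + \varep^{5/2}\|\hat\mbp\|_{\mbV_2^2} + \varep^{7/2}\|\hat\mbp\|_{\mbV_4^2}$ control on the non-leading part of the residual) combined with the near-orthonormality \eqref{ortho-Z*} and the cross-mode smallness of $\mathcal Q_2$-type terms via Lemma \ref{lem-h}; one has to be careful that when projecting the error $\Pi_0\mathrm F$ (not its leading part) onto the single mode $Z^{1k}_{\mbp,*}$, the use of Lemma \ref{Notation-e_i,j} converts the $L^2$ bounds into the $O(\cdots)e_k$ form with a genuine unit vector $\bm e$.

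The main obstacle I expect is the bookkeeping of which terms land at order $\varep^{5/2}$, $\varep^{7/2}$, $\varep^{9/2}$ versus which are absorbed into the $\mrp_0$-dependent error coefficients $\mcC_j, c_k$, especially tracking the interplay between (i) the $\varepsilon$-expansion of $\sigma(\mbp)$ around $\sigma_1^*$, (ii) the $(1+\mrp_0)$-rescaling of the Laplace-Beltrami eigenvalues $\beta_{\mbp,j} = \beta_j/(R_0(1+\mrp_0))$, and (iii) the quadratic-in-$\hat\mbp$ remainders $\mathcal Q_2$ and $A(\mbp)-1$ which mix modes and must be shown to only affect the stated error terms and never the linear-in-$\mrp_k$ leading coefficient. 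A secondary technical point is verifying that the correction profiles $\tilde\varphi_{1,k},\tilde\varphi_{2,k}$ in $Z^{1k}_{\mbp,*}$, which are designed to be orthogonal to $\psi_1$ via \eqref{ortho-varphi}, indeed contribute only at the claimed subleading orders when paired against $\mrF_2$ and $\mrF_3$ — this requires using that $\mrF_2$ is odd and localized so its pairing with the even corrections vanishes at leading order, and that the $z_\mbp$-derivative structure $\varep\tilde\Theta_k'$ produces an extra $\beta_{\mbp,k}$ that is controlled by the $\mbV_r^k$-weighted norms.
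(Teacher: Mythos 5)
Your proposal follows essentially the same route as the paper's proof: split off the leading basis function $Z^{1k}_{\mbp}$ from the $\varep\tilde\varphi_{1,k},\varep^2\tilde\varphi_{2,k}$ corrections and the $\Pi_0$ mass term, integrate out $z_\mbp$ using parity/localization of $\mrF_2,\mrF_3$ and the projection identities \eqref{est-proj-rF2,3} to reduce the dominant part to $\frac{\varep^{5/2}m_0}{m_1}(\sigma_1^*-\sigma)\msA_k+\varep^{7/2}m_1\msB_k$, expand $\msA_k,\msB_k$ mode-by-mode via the curvature expansion \eqref{def-kappa-p,0} and the orthogonality \eqref{ortho-tTheta}, and close with the slaving relations of Corollary \ref{cor-def-hatla} and Lemma \ref{lem-sigma}, exactly as in the paper (which outsources the $\msA_k,\msB_k$ expansions and the remainders $\msR_{k,1},\msR_{k,2}$ to the appendix Lemmas \ref{lem-kappa-Theta}, \ref{lem-est-h-proj}, \ref{lem-sR12}). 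One caveat: the error terms for $k\geq 1$ cannot simply be ``inherited from Lemma \ref{lem-est-L2-F}'' by Cauchy--Schwarz, since that yields $O(\varep^{5/2}|\mrp_0-\mrp_0^*|)e_k$ rather than the needed product structure $O(\varep^{7/2}|\mrp_0-\mrp_0^*|\|\hat\mbp\|_{\mbV_2^2})e_k$; the finer bounds come from the term-by-term observation (which your second paragraph does carry out, and which is the content of Lemma \ref{lem-h} and Lemma \ref{lem-est-h-proj}) that every $s_\mbp$-constant piece of the residual projects only onto the $k=0$ mode while the $s_\mbp$-varying remainders are $O(\|\hat\mbp\|_{\mbV_2^2})$ --- and similarly the $\varphi_{l,k}$-corrections are subleading because of their explicit $\varep$ prefactor, not because of a parity cancellation against $\mrF_2$ (the paper's Lemma \ref{lem-sR12} shows this pairing survives as a $\varep^{7/2}(\sigma_1^*-\sigma)C_1(\mrp_0)\delta_{k0}$ contribution absorbed into $\mathscr C$).
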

\begin{proof} Adding and subtracting the far-field value of the residual and using the definition of $\Pi_0$ and decomposition of $Z_{\mbp,*}^{1k}$,  we break the projection in dominant and remainder terms 
\begin{equation}\label{decomp-PiF-Z*1}
\left<\Pi_0\mathrm F(\Phi_{\mbp} ), Z_{\mbp, *}^{1k}\right>_{L^2}=\mathcal I_k+\mathscr R_{k}, 
\end{equation}
in which the dominant term $\mathcal I_k$ and remainder $\msR_k:=\mathscr R_{k, 1}+ \mathscr R_{k,2}$ are given by
\begin{equation}\label{def-cI-sR}
\begin{aligned}
\mathcal I_k&:=\int_\Omega \left(\mathrm F(\Phi_{\mbp} )-\mrF_m^\infty \right)Z_{\mbp}^{1k}\dd   x,\\
\mathscr R_{k,1}&:=\varep \int_\Omega \left(\mathrm F(\Phi_{\mbp} )- \mrF_m^\infty \right)(\tilde \varphi_{1,k} \tilde \Theta_k +\tilde \varphi_{2,k} \varep\tilde \Theta_k')\dd   x,\\
\mathscr R_{k,2}&:= \frac{1}{|\Omega|}\int_\Omega \left(\mathrm F(\Phi_{\mbp} )-\mrF_m^\infty\right)\dd x\int_\Omega Z_{\mbp, *}^{1k}\dd x.
\end{aligned}
\end{equation}
To streamline the presentation, the approximation of remainder terms $\mathscr R_{k,0}, \msR_{k,1}$ is given in the appendix.  
To approximate the dominant term $\mcI_k$ we 
replace $Z_{\mbp}^{1k}$ by its definition \eqref{def-Zp},  
and replace $\psi_1(z_\mbp)$ with $\phi_0'(z_\mbp)/m_1$. Rewriting $\mcI_k$ in the local coordinates we obtain
 \beqs
 \mcI_k= \frac{\varep^{1/2}}{m_1}\int_{\mbR_{2\ell}}\int_{\msI_\mbp} (\mrF(\Phi_\mbp) -\mrF_m^\infty) \phi_0'  \tilde \Theta_k  (1-\varep z_\mbp \kappa_\mbp) \dd \tilde s_\mbp \dd z_\mbp.
 \eeqs
Using the expansion of $\mrF(\Phi_\mbp)$ from Lemma \ref{lem-def-Phi-p}, we decompose 
$ \mcI_k=\sum_{j=1}^3\mcI_{k,j}$, where
\beqs
\begin{aligned}
\mcI_{k,1} & =\frac{\varep^{5/2}}{m_1}  \int_{\mbR_{2\ell}} \int_{\msI_\mbp} \left(\mrF_2-\mrF_2^\infty\right) \phi_0' \tilde \Theta_k(1-\varep z_\mbp\kappa_\mbp) \dd \tilde s_\mbp \dd z_\mbp, \\
\mcI_{k,2} & =\frac{\varep^{7/2}}{m_1}  \int_{\mbR_{2\ell}} \int_{\msI_\mbp} \left(\mrF_3-\mrF_3^\infty\right) \phi_0' \tilde \Theta_k(1-\varep z_\mbp\kappa_\mbp) \dd \tilde s_\mbp \dd z_\mbp,\\
\mcI_{k,3} & =\frac{\varep^{9/2}}{m_1}  \int_{\mbR_{2\ell}} \int_{\msI_\mbp} \left(\mrF_{\geq 4}-\mrF_{\geq 4}^\infty +e^{-\ell \nu/\varep}\varep^{-4}\mrF_e\right) \phi_0' \tilde \Theta_k(1-\varep z_\mbp\kappa_\mbp) \dd \tilde s_\mbp \dd z_\mbp.
\end{aligned}
\eeqs
We deal with these terms one by one. First, from Lemma \ref{lem-def-Phi-p} $\mrF_2$ is localized with far-field $\mrF_2^\infty=0$, while both $\phi_0'$  and $\mrF_2$  have 
odd parity in $z_\mbp$, we deduce
\beqs
\mcI_{k,1} = \frac{\varep^{5/2}}{m_1} \int_{\mbR_{2\ell}} \int_{\msI_\mbp}  \mrF_2 \phi_0' \tilde \Theta_k \dd \tilde s_\mbp\dd z_\mbp.
\eeqs
Changing the integrating order and using \eqref{est-proj-rF2,3} yields the reduction
\beq\label{est-cI-k,1}
\mcI_{k,1}= \frac{\varep^{5/2}m_0}{m_1}(\sigma_1^*-\sigma) \msA_k(\mbp),
\eeq
where  $\msA_k$  was introduced in \eqref{def-sAB}. For $\mcI_{k,2}$, $\mrF_3$ is given \eqref{F-234} with projection \eqref{est-proj-rF2,3}, we have 
\beqs
\begin{aligned}
\mcI_{k,2} &= \varep^{7/2} m_1 \msB_k + \frac{\varep^{9/2}}{m_1} \int_{\mbR_{2\ell}}\int_{\msI_\mbp} (\mrF_3-\mrF_3^\infty) \phi_0' \tilde \Theta_k z_\mbp \kappa_\mbp \dd \tilde s_\mbp \dd z_\mbp\\
&= \varep^{7/2} m_1 \msB_k + \frac{\varep^{9/2}}{m_1} \int_{\mbR_{2\ell}}\int_{\msI_\mbp} (\phi_0'\Delta_{s_\mbp} \kappa_\mbp +f_3-f_3^\infty) \phi_0'  \tilde \Theta_k z_\mbp \kappa_\mbp \dd \tilde s_\mbp \dd z_\mbp
\end{aligned}
\eeqs
for $f_3=f_3(z_\mbp, \bm \gamma_\mbp'')$. Since $|\phi_0'|^2 z_\mbp$ has odd parity it does not contribute to the integral, and we deduce
\beq\label{est-cI-k,2-1}
\mcI_{k,2} = \varep^{7/2} m_1 \msB_k + \frac{\varep^{9/2}}{m_1} \int_{\mbR_{2\ell}}\int_{\msI_\mbp} (f_3-f_3^\infty) \phi_0' \tilde \Theta_k z_\mbp \kappa_\mbp \dd \tilde s_\mbp \dd z_\mbp.
\eeq 
Addressing the integral, we change the order of integration order and integrate in $z_\mbp$, leaving  a  function $h=h(\bm \gamma_\mbp'')$ that enjoys the properties of Notation\,\ref{Notation-h}  since both $f_3$ and $\kappa_\mbp$ depend only on second and lower derivatives of $\bm \gamma_\mbp$.
With this notation we deduce
\beqs
 \frac{\varep^{9/2}}{m_1} \int_{\mbR_{2\ell}}\int_{\msI_\mbp} (f_3-f_3^\infty) \phi_0' \tilde \Theta_k z_\mbp \kappa_\mbp \dd \tilde s_\mbp \dd z_\mbp =\frac{\varep^{9/2}}{m_1} \int_{\msI_\mbp} h(\bm \gamma_\mbp'') \tilde \Theta_k \dd \tilde s_\mbp.
\eeqs 
Applying Lemma \ref{lem-est-h-proj} to bound the right-hand side of this identity and returning the estimate to $\mcI_{k,2}$ in \eqref{est-cI-k,2-1} yields the asymptotic form
\beq\label{est-cI-k,2}
\mcI_{k,2}=\varep^{7/2}m_1 \mathscr B_k +\varep^{9/2} C(\mrp_0)\delta_{k0} +O(\varep^{9/2}\|\hat\mbp\|_{\mbV_2^2})e_k.
\eeq  
Similar estimates applied to $\mcI_{k,3}$ yield the expansion 
\beq
\label{est-cI-k,3}
\mcI_{k,3}= \varep^{9/2}C(\mrp_0) \delta_{k0}+O(\varep^{9/2} \|\hat\mbp \|_{\mbV_4^2})e_k.
\eeq
Combining  the estimates \eqref{est-cI-k,1},  \eqref{est-cI-k,2}, and \eqref{est-cI-k,3} we find the dominant expansion
\beq
\mcI_{k}=\frac{\varep^{5/2}m_0}{m_1}(\sigma_1^*-\sigma) \msA_k(\mbp) +\varep^{7/2}m_1 \msB_k(\mbp) +\varep^{9/2}C(\mrp_0) \delta_{k0}+O(\varep^{9/2} \|\hat\mbp \|_{\mbV_4^2})e_k.
\eeq
Rearranging and including the error estimates on  $\msR_{k,1}, \msR_{k,2}$ from Lemma \ref{lem-sR12}, we reduce \eqref{decomp-PiF-Z*1} to
\beq\label{est-proj-F-Z1-1}
\begin{aligned}
\left<\Pi_0 \mrF(\Phi_\mbp), Z_{\mbp, *}^{1k}\right>_{L^2}=&\frac{\varep^{5/2}m_0}{m_1}(\sigma_1^*-\sigma) \left( \msA_k(\mbp) +\varep  C_1(\mrp_0)\delta_{k0} +O(\varep\|\hat\mbp\|_{\mbV_2^2}) \right) \\
&+\varep^{7/2}m_1 \msB_k(\mbp)+\varep^{9/2}C_2(\mrp_0) \delta_{k0}+O(\varep^{9/2} \|\hat\mbp \|_{\mbV_4^2})e_k.
\end{aligned}
\eeq
where $C_1, C_2$ are smooth functions of $\mrp_0$. 
To reduce these expressions to their final form we consider the cases $k=0$ and $k\neq 0$ separately. For $k=0$, using the form of $\msA_0,\msB_0$ presented in Corollary \ref{cor-sAB} of the appendix,  we rewrite the expansion above as
 \beqs
\begin{aligned}
\left<\Pi_0 \mrF(\Phi_\mbp), Z_{\mbp, *}^{10}\right>_{L^2}=& \frac{\varep^{5/2}m_0}{m_1} (2\pi \Theta_0+\varep C_1(\mrp_0))(\sigma -\sigma_1^*)\\
&+\varep^{7/2}m_1 \pi\Theta_0\left(\frac{1}{R_0^2(1+\mrp_0)^2}-\alpha\right)+\varep^{9/2}C(\mrp_0)\\
&+ O(\varep^{9/2}\|\hat\mbp\|_{\mbV_4^2}, \varep^{7/2}\|\hat\mbp\|_{\mbV_2^2}\|\hat\mbp\|_{\mbV_3^2},  \varep^{5/2}|\sigma-\sigma^*|\|\hat\mbp\|_{\mbV_2^2}).
\end{aligned}
\eeqs
Here we expanded $\sigma_1^*=\sigma^*+O(\varep)$ to simplify the error term.  Since the coefficient  $\alpha=\alpha(\sigma)$ it smooth it affords the expansion
\beq\label{est-alpha}
\begin{aligned}
\alpha(\sigma)=&\alpha(\sigma_1^*) + O(\varep, |\sigma-\sigma^*|).
\end{aligned}
\eeq 
Using the first expansion of Corollary \ref{cor-def-hatla} and deducing that $|\sigma-\sigma^*|\lesssim |\mrp_0-\mrp_0^*|+\varep^2\|\hat\mbp\|_{\mbV_2^2}$ from the second expansion,  we expand
\beq\label{est-proj-F-Z1-k=0}
\begin{aligned}
\left<\Pi_0 \mrF(\Phi_\mbp), Z_{\mbp, *}^{10}\right>_{L^2}=\varep^{5/2} m_1\left( \mathscr C (\mrp_0) -\frac{c_0}{\Theta_0}\mrp_0\right)\h{100pt} \\
+ O(\varep^{9/2}\|\hat\mbp\|_{\mbV_4^2}, \varep^{7/2}\|\hat\mbp\|_{\mbV_2^2}\|\hat\mbp\|_{\mbV_3^2}, \varep^{5/2}|\mrp_0-\mrp_0^*|\|\hat\mbp\|_{\mbV_2^2}),
\end{aligned}
\eeq
where $\mathscr C=\mathscr C(\mrp_0)$ is as in part (1) of Lemma\,\ref{lem-proj-F}.
For the case $k\neq 0$,  we replace $\sigma_1^*$ with $(\sigma^*-\varep\sigma_{\geq 2}^*)$, 
and reduce \eqref{est-proj-F-Z1-1} to
\beqs
\begin{aligned}
\left<\Pi_0 \mrF(\Phi_\mbp), Z_{\mbp, *}^{1k}\right>_{L^2}= & \frac{\varep^{5/2} m_0}{m_1} (\sigma^*-\sigma)  \msA_k +\varep^{7/2} \left(\msB_k-\frac{m_0}{m_1^2}\sigma_{\geq 2}^* \msA_k \right)\\
&+ O(\varep^{7/2} |\sigma^*-\sigma|\|\hat\mbp\|_{\mbV_2^2}, \varep^{9/2}\|\hat\mbp\|_{\mbV_4^2})e_k
\end{aligned}
\eeqs
Using this expansion, together with the expansions of $\msA_k, \msB_k$ from Corollary \ref{cor-sAB}, and the bound $|\sigma-\sigma^*|\lesssim |\mrp_0-\mrp_0^*|+\varep^2 \|\hat\mbp\|_{\mbV_2^2}$  and the expansion of $\alpha$ from \eqref{est-alpha} yields the result (2) of Lemma\,\ref{lem-proj-F} with $c_k=c_k(\mrp_0)$ as defined therein. 
\end{proof}
Collecting the results on the meander projection of $\p_t\Phi_\mbp$ and of the residual $\mrF(\Phi_\mbp)$ we can bound the Galerkin projections of the difference between the normal velocity of $\Phi_\mbp$ and the curvature induced velocity given in \eqref{normal-velocity}.
\begin{cor}\label{cor-residual-proj}
There exists a unit vector $\bm e\in l^2(\mbR^{N_1})$ such that
the Galerkin projections of the normal and curvature induced velocity,\eqref{normal-velocity}, satisfy
\beq
\label{Vn-Vp-resid}
\int_{\msI_\mbp} \left(\p_t\bm \gamma_\mbp \cdot \mbn_\mbp - V_\mbp\right) \tilde \Theta_k \dd \tilde s_\mbp = 
-\frac{\varep^{1/2}}{m_1} \left< \Rem[v^\bot], Z_{\mbp,*}^{1k}\right>_{L^2} + O(\varep^{4}|\sigma_1^*-\sigma|,\varep^{5}, \varep^{5}\|\hat\mbp\|_{\mbV^2_4},\varep\|\dot\mbp\|_{l^2})e_k,
\eeq
for $k\in\Sigma_1.$
\end{cor}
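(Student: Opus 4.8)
The proof is algebraic: it combines the decomposed flow \eqref{eq-v-Phi} with two projection identities already in hand, namely \eqref{NV-LB-proj} of Remark \ref{remark-timede-proj} for the time-derivative term and the intermediate expansion \eqref{est-proj-F-Z1-1} established inside the proof of Lemma \ref{lem-proj-F} for the residual term. All estimates are understood under the a priori assumptions \eqref{A-0}, so in particular $\mbp\in\cO_{2,\delta}\subset\cD_\delta$ and hence $\|\hat\mbp\|_{\mbV_2^2}\lesssim 1$.

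First I would pair \eqref{eq-v-Phi} against $Z_{\mbp,*}^{1k}$ in $L^2$, obtaining
\[
\left<\p_t\Phi_\mbp, Z_{\mbp,*}^{1k}\right>_{L^2} = -\left<\Pi_0\mathrm F(\Phi_\mbp), Z_{\mbp,*}^{1k}\right>_{L^2} + \left<\Rem[v^\bot], Z_{\mbp,*}^{1k}\right>_{L^2},
\]
and replace the left-hand side, via \eqref{NV-LB-proj}, by $-\tfrac{m_1}{\varep^{1/2}}\int_{\msI_\mbp}\p_t\bm\gamma_\mbp\cdot\mbn_\mbp\,\tilde\Theta_k\,\dd\tilde s_\mbp+O(\varep^{1/2}\|\dot\mbp\|_{l^2})e_k$.

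Second, directly from the definition \eqref{normal-velocity} of $V_\mbp$ and \eqref{def-sAB} of $\msA_k,\msB_k$ one has the exact identity $\tfrac{m_1}{\varep^{1/2}}\int_{\msI_\mbp}V_\mbp\,\tilde\Theta_k\,\dd\tilde s_\mbp=\tfrac{\varep^{5/2}m_0}{m_1}(\sigma_1^*-\sigma)\msA_k(\mbp)+\varep^{7/2}m_1\msB_k(\mbp)$, which is precisely the leading part of \eqref{est-proj-F-Z1-1}. Subtracting this from $\left<\Pi_0\mathrm F(\Phi_\mbp), Z_{\mbp,*}^{1k}\right>_{L^2}$ and invoking \eqref{est-proj-F-Z1-1} leaves only the subleading contributions $\tfrac{\varep^{5/2}m_0}{m_1}(\sigma_1^*-\sigma)\bigl(\varep C_1(\mrp_0)\delta_{k0}+O(\varep\|\hat\mbp\|_{\mbV_2^2})\bigr)+\varep^{9/2}C_2(\mrp_0)\delta_{k0}+O(\varep^{9/2}\|\hat\mbp\|_{\mbV_4^2})e_k$, which, absorbing the bounded factor $\|\hat\mbp\|_{\mbV_2^2}\lesssim 1$, collapse to $O(\varep^{7/2}|\sigma_1^*-\sigma|,\varep^{9/2},\varep^{9/2}\|\hat\mbp\|_{\mbV_4^2})e_k$.

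Finally I would move $\left<\Rem[v^\bot], Z_{\mbp,*}^{1k}\right>_{L^2}$ across, insert $V_\mbp$ into the integral, and divide through by $m_1/\varep^{1/2}$. The division sends $O(\varep^{1/2}\|\dot\mbp\|_{l^2})e_k$ to $O(\varep\|\dot\mbp\|_{l^2})e_k$, promotes the residual $O(\varep^{7/2}|\sigma_1^*-\sigma|,\varep^{9/2},\varep^{9/2}\|\hat\mbp\|_{\mbV_4^2})e_k$ to $O(\varep^4|\sigma_1^*-\sigma|,\varep^5,\varep^5\|\hat\mbp\|_{\mbV_4^2})e_k$, and turns $-\tfrac{m_1}{\varep^{1/2}}\left<\Rem[v^\bot], Z_{\mbp,*}^{1k}\right>_{L^2}$ into $-\tfrac{\varep^{1/2}}{m_1}\left<\Rem[v^\bot], Z_{\mbp,*}^{1k}\right>_{L^2}$, yielding \eqref{Vn-Vp-resid}. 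No genuine difficulty arises here; the only point requiring care is the $\varep$-power bookkeeping in the cancellation step -- one must check that every term surviving $\left<\Pi_0\mathrm F(\Phi_\mbp), Z_{\mbp,*}^{1k}\right>_{L^2}-\tfrac{m_1}{\varep^{1/2}}\int_{\msI_\mbp}V_\mbp\tilde\Theta_k\,\dd\tilde s_\mbp$ carries at least a factor $\varep^{7/2}|\sigma_1^*-\sigma|$ or $\varep^{9/2}$, so that after the $\varep^{1/2}/m_1$ rescaling it falls inside the advertised error, and that the surviving $\|\hat\mbp\|$-weighted remainder is measured in the $\mbV_4^2$ norm once the bounded $\mbV_2^2$ factors have been absorbed.
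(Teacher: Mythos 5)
Your argument is correct and follows essentially the same route as the paper's proof: both pair \eqref{eq-v-Phi} with $Z_{\mbp,*}^{1k}$, substitute \eqref{NV-LB-proj} for the projection of $\p_t\Phi_\mbp$ and \eqref{est-proj-F-Z1-1} for the projection of the residual, identify the leading block $\frac{\varep^{5/2}m_0}{m_1}(\sigma_1^*-\sigma)\msA_k+\varep^{7/2}m_1\msB_k$ with $\frac{m_1}{\varep^{1/2}}\int_{\msI_\mbp}V_\mbp\,\tilde\Theta_k\,\dd\tilde s_\mbp$, and rescale by $\varep^{1/2}/m_1$, with the same bookkeeping of the subleading terms. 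The one caveat --- inherited from the paper rather than introduced by you, since the paper's own proof makes the identical identification --- is that the identity you call ``exact'' holds only up to the sign of the $\alpha\kappa_\mbp$ term, which appears with opposite signs in \eqref{normal-velocity} and in the integrand of $\msB_k$ in \eqref{def-sAB}; taking the sign consistent with \eqref{est-proj-rF2,3} (and hence with the residual projection) the identity holds exactly as you state it.
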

\begin{proof}
From \eqref{est-proj-F-Z1-1} we have 
\beqs
\begin{aligned}
\left<\Pi_0\mrF(\Phi_\mbp), Z_{\mbp,*}^{1k}\right>_{L^2}=& \frac{\varep^{5/2}m_0}{m_1} (\sigma_1^*-\sigma)\msA_k(\mbp) +\varep^{7/2} m_1\msB_k(\mbp)+O(\varep^{7/2} (\sigma_1^*-\sigma), \varep^{9/2})\delta_{k0} \\
&+O(\varep^{7/2}\|\hat\mbp\|_{\mbV_2^2}, \varep^{9/2}\|\hat\mbp\|_{\mbV_4^2})e_k
\end{aligned}
\eeqs 
with $\msA_k,\msB_k$ defined in \eqref{def-sAB}.
The result follows by combining this with \eqref{NV-LB-proj}, multiplying by $-\varep^{1/2}/m_1$,
and using the formulation \eqref{eq-v-Phi} of the gradient flow to replace $\p_t\Phi_\mbp+\Pi_0\mrF(\Phi_\mbp)$ with $\Rem[v^\bot].$
\end{proof}
The meandering parameter $\mrp_0=\mrp_0(t)$ controls the length of the interface $\Gamma_\mbp$. Its equilibrium value $\mrp_0^*$ is determined by
\beq\label{def-p0*-original}
\left<\Pi_0\mrF(\Phi_\mbp), Z_{\mbp, *}^{1k}\right>_{L^2} \Big|_{\mbp=\mbp^*}=0 
\eeq 
for $\mbp^*=(\mrp_0^*, \mrp_1^*, \mrp_2^*, \bm 0)$. 
\begin{lemma}\label{lem-p0*}
Suppose $|\sigma_0(\Gamma_0, M_0)-\sigma_1^*|\lesssim 1$, then there exists $\varep_0$ small enough such that for each $\varep\in(0,\varep_0)$ $\mrp_0^*$ is well defined through \eqref{def-p0*-original} and admits the approximation  $\mrp_0^*=\mrp_{0,0}^*+\varep \mrp_{0,1}^*+O(\varep^2)$ with
\begin{equation}\label{def-p*0}
\begin{aligned}
\mathrm p_{0,0}^*&=-\frac{m_0 }{c_0 R_0  m_1^2} \left(\sigma_1^*-\sigma_0(\Gamma_0, M_0) \right);\\
\mathrm p_{0,1}^*&=\frac{1}{c_0 R_0} \left(\frac{1}{2R_0^2(1+\mrp_{0,0}^*)^2}-\alpha (\sigma_1^*)
\right) +\frac{\Theta_0}{c_0}\mathcal C_1(\mrp_{0,0}^*)\mrp_{0,0}^*.
\end{aligned}
\end{equation}
\end{lemma}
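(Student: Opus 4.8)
The plan is to evaluate the meander‑projection identity of Lemma~\ref{lem-proj-F} at the candidate equilibrium $\mbp^*=(\mrp_0^*,\mrp_1^*,\mrp_2^*,\bm 0)$, where the shape parameter $\hat\mbp$ vanishes, and then to solve the resulting scalar equation by the implicit function theorem. First I would observe that every remainder in Lemma~\ref{lem-proj-F} carries a positive power of some $\|\hat\mbp\|_{\mbV_r^2}$, hence vanishes at $\mbp^*$; in particular the components of \eqref{def-p0*-original} with $k\ge 1$ hold automatically, since for $k=1,2$ the leading coefficient in part~(2) carries the factor $\bm 1_{\{k\ge 3\}}=0$ and for $k\ge 3$ one has $\mrp_k=0$. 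Thus \eqref{def-p0*-original} collapses to the single equation obtained from the $k=0$ component of part~(1), namely $\varep^{5/2}m_1\bigl(\mathscr C(\mrp_0^*)-\tfrac{c_0}{\Theta_0}\mrp_0^*\bigr)=0$, which after dividing out $\varep^{5/2}m_1\neq0$ I would treat as an implicit equation for $\mrp_0^*$.

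Next I would note that the apparent self-reference inside $\mathscr C$ is harmless: the term $\varep\,\mcC_2(\mrp_0)(\mrp_0-\mrp_0^*)$ in the formula for $\mathscr C(\mrp_0)$ is evaluated at $\mrp_0=\mrp_0^*$ and drops out. The equilibrium condition therefore becomes a closed scalar equation $H(\mrp_0^*,\varep)=0$, where
\[
H(p,\varep):=-\frac{2\pi\Theta_0 m_0}{m_1^2}\bigl(\sigma_1^*-\sigma_0\bigr)+\varep\Bigl[\frac{\pi\Theta_0}{R_0^2(1+p)^2}-2\pi\Theta_0\alpha(\sigma_1^*)+\mcC_1(p)\,p\Bigr]+\varep^2\mcC_3(p)-\frac{c_0}{\Theta_0}\,p ,
\]
which is smooth in $p$ and polynomial in $\varep$. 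At $\varep=0$ this is affine in $p$ with slope $\partial_p H(\cdot,0)=-c_0/\Theta_0\neq0$, since $c_0>0$ by Lemma~\ref{lem-sigma}; its unique root is $\mrp_{0,0}^*=-\tfrac{2\pi\Theta_0^2 m_0}{c_0 m_1^2}(\sigma_1^*-\sigma_0)$, which by $\Theta_0^2=1/(2\pi R_0)$ from \eqref{def-Theta0} equals the stated value $-\tfrac{m_0}{c_0R_0m_1^2}(\sigma_1^*-\sigma_0)$. Here the hypothesis $|\sigma_0-\sigma_1^*|\lesssim 1$ does its work: it keeps $\mrp_{0,0}^*$ bounded by a system constant, so that for $\varep_0$ small the point $\mbp^*$ lies in $\cD_\delta$, the domain on which $\Phi_\mbp$ and the pointwise version of Lemma~\ref{lem-proj-F} are available.

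Since $\partial_p H(\mrp_{0,0}^*,0)\neq0$, the implicit function theorem then yields a unique smooth branch $\mrp_0^*=\mrp_0^*(\varep)$ with $\mrp_0^*(0)=\mrp_{0,0}^*$, valid for all $\varep\in(0,\varep_0)$ with $\varep_0$ sufficiently small; together with the prescribed $\mrp_1^*,\mrp_2^*$ this defines $\mbp^*$. Finally I would Taylor expand, using $\mrp_{0,1}^*=\tfrac{\mrd}{\mrd\varep}\mrp_0^*(0)=-\,\partial_\varep H/\partial_p H$ evaluated at $(\mrp_{0,0}^*,0)$. Reading off $\partial_\varep H(\mrp_{0,0}^*,0)=\tfrac{\pi\Theta_0}{R_0^2(1+\mrp_{0,0}^*)^2}-2\pi\Theta_0\alpha(\sigma_1^*)+\mcC_1(\mrp_{0,0}^*)\mrp_{0,0}^*$ and dividing by $-\partial_p H(\mrp_{0,0}^*,0)=c_0/\Theta_0$, then simplifying with $\pi\Theta_0^2=1/(2R_0)$ and $2\pi\Theta_0^2=1/R_0$, rearranges exactly to the claimed formula \eqref{def-p*0} for $\mrp_{0,1}^*$; smoothness of $H$ controls the $O(\varep^2)$ remainder.

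I do not expect a genuine analytic obstacle: the hard work is already carried out in Lemmas~\ref{lem-proj-F} and~\ref{lem-sigma}, and what remains is bookkeeping — verifying that the $\hat\mbp$‑remainders drop out at $\mbp^*$, that the implicit self‑reference through $\mcC_2(\mrp_0)(\mrp_0-\mrp_0^*)$ is vacuous when $\mrp_0=\mrp_0^*$, and that the normalizations $2\pi\Theta_0^2=1/R_0$, $\pi\Theta_0^2=1/(2R_0)$ reconcile the constants — followed by a routine implicit function theorem argument with the non‑degenerate linearization $-c_0/\Theta_0$. The one point requiring slight care is checking that $\mbp^*$ indeed stays in the admissible parameter range for $\varep_0$ small, which is precisely the content of the hypothesis $|\sigma_0-\sigma_1^*|\lesssim 1$.
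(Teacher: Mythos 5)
Your proposal is correct and follows essentially the same route as the paper: reduce \eqref{def-p0*-original} to the scalar equation $c_0\mrp_0/\Theta_0=\mathscr C(\mrp_0)$ from part (1) of Lemma~\ref{lem-proj-F}, note it is linear in $\mrp_0$ at $\varep=0$, apply the implicit function theorem, and read off the coefficients by regular perturbation using $\Theta_0^2=1/(2\pi R_0)$. Your additional bookkeeping (that the $k\ge1$ components hold automatically at $\hat\mbp=\bm 0$ and that the $\mcC_2(\mrp_0)(\mrp_0-\mrp_0^*)$ term is vacuous at the root) is correct and merely makes explicit what the paper's terser proof leaves implicit.
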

\begin{proof}From statement (1) of lemma \ref{lem-proj-F} and \eqref{def-p0*-original}, $\mrp_0^*$ solves
\begin{equation}\label{def-p0*}
c_0\mathrm  p_0/\Theta_0=\mathscr C(\mrp_0).
\end{equation} 
Here $\mathscr C$ defined in part (1) of Lemma\,\ref{lem-proj-F} can be decomposed as $\mathscr C(\mrp_0)=\mathscr C_0+ \varep\mathscr C_1(\mrp_0)$  with
\begin{equation}
\begin{aligned}
\mathscr C_0&:=-\frac{m_0}{m_1^2} 2\pi \Theta_0  \left(\sigma_1^*-\sigma_0(\Gamma_0,M_0)\right),\\
\mathscr C_1(\mrp_0)&:=2\pi \Theta_0 \left(\frac{1}{2R_0^2(1+\mrp_0)^2}-\alpha (\sigma_1^*) \right)+\mcC_1(\mrp_0)\mrp_0+ \mcC_2(\mrp_0)(\mrp_0-\mrp_0^*)+\varep \mcC_3(\mrp_0),
\end{aligned}
\end{equation}
and the base bulk density $\sigma_0$ defined in \eqref{def-c0}.

 The system is linear in  $\mrp_0$ for $\varep=0$, and hence has a unique solution. The smooth continuation of this unique solution, for $\varep\in (0,\varep_0)$ with $\varep_0$ small enough, is a simple application of the implicit function theorem since $\mathscr C_1$ is smooth in $\mrp_0.$ Noting $\Theta_0=1/\sqrt{2\pi R_0}$. A regular perturbation expansion $\mathrm p_0^*=\mathrm p_{0,0}^*+\varep\mathrm p_{0, 1}^{*}+O(\varep^2)$, shows that
\eqref{def-p*0}.
\end{proof}

From Lemma \ref{lem-sigma},  the equilibrium bulk density $\sigma^*=\sigma(\mbp^*)$ with $\mbp^*=(\mrp_0^*, \mrp_1^*, \mrp_2^*, \bm 0)$ depends only on the radial meander parameter $\mrp_0^*$ through
\begin{equation}
\sigma^*=\sigma_0
-\frac{c_0 R_0 m_1^2}{m_0} \mrp_0^* +\varep \mrp_0^*\mcC(\mrp_0^*),
\end{equation} 
With approximation of $\mrp_0^*$ given in Lemma \ref{lem-p0*} , we derive $\sigma^*=\sigma_1^*+\varep\sigma_2^*+O(\varep^2)$ with
\begin{equation}\label{def-hatla*2}
\sigma_2^*=-\frac{c_0\Theta_0 m_1^2}{m_0} \mathrm p_{0,1}^*=\frac{m_1^2}{m_0}\left(\frac{-1}{2R_0^2(1+\mathrm p_{0,0}^*)^2}+\alpha (\sigma_1^*) + \mrp_{0,0}^*\mathcal C(\mrp_{0,0}^*)\right).
\end{equation}
These relations give the map from the system mass to a unique, up to translation, equilibrium profile with parameters $(\sigma,\mrp_0, \hat\mbp)= (\sigma^*, \mrp_0^*, \bm 0)$. 

Returning to \eqref{def-p0*} we may expanding
$\mathscr C(\mathrm p_0)$ around $\mathrm p_0^*$ to write the residual in the form  
\begin{equation}
\mathscr C (\mathrm p_0)-\frac{c_0}{\Theta_0}\mathrm p_0= \frac{c_0}{\Theta_0}(\mathrm p_0^*-\mathrm p_0)+O(\varep)(\mrp_0^*-\mrp_0).
  \end{equation}
 This allows us to reformulate the projection of the residual $\Pi_0\mathrm F(\Phi_{\mbp} )$ onto $Z_{\mathbf p, *}^{10}$, given in part (1) of  Lemma \ref{lem-proj-F} in terms of $\mrp_0-\mrp_0^*$ and $\|\hat\mbp\|_{\mbV_2^2}$,
\begin{equation}\label{proj-F2-0}
\begin{aligned}
\int_\Omega \Pi_0\mathrm F(\Phi_{\mathbf p})Z_{\mathbf p, *}^{10}\dd   x=& \varep^{5/2} \frac{m_1 c_0}{\Theta_0} ( \mathrm p_0^*-\mathrm p_0) +O(\varep^{7/2}|\mrp_0-\mrp_0^*|)\\
&+
O\left(\varep^{5/2} |\mrp_0-\mrp_0^*|\|\hat\mbp\|_{\mbV_2^2},\varep^{7/2}\|\hat{\mathbf p}\|_{\mbV_2^2}\|\hat\mbp\|_{\mbV_3^2}, \varep^{9/2}\|\hat{\mbp}\|_{\mbV_4^2}\right).
\end{aligned}
\end{equation}
Using \eqref{def-hatla*2} to eliminate $\sigma_2^*$  and using the a priori assumption $|\mrp_0(t)|\ll 1$ for all $t\in \mathbb R^+$, we rewrite  $c_k$  as,  
\begin{equation}\label{def-ck-1}
c_k(\mrp_0)= \frac{c_0   }{R_0} (\mbD_{kk}-1)(\mrp_0-\mrp_0^*)   +\varep \frac{(\mbD_{kk}-1)^2}{R_0^4 }  +O(\varep|\mrp_0|\mbD_{kk}),  
\end{equation}
where $\mbD$ is the diagonal matrix defined in \eqref{def-mbD}
that induces a norm on $\mbV_{2}^2$.

We estimate weighted norms of the projections by absorbing the factors of $\beta_k$ into higher $\mbV_j^2$ norms of $\mbp$, which impacts the statement of the error terms.
\begin{cor}\label{cor-proj-F-w} Let $k\geq 3$, the residual projected to the weighted space  obey 
\beqs
\begin{aligned}
\int_{\Omega}\Pi_0 \mathrm F(\Phi_{\mathbf p} ) \beta_k Z_{\mathbf p, *}^{1k} \dd   x= -\varep^{5/2}m_1 c_k\beta_k \mathrm p_k + O(\varep^{5/2}|\mrp_0-\mrp_0^*|\|\hat\mbp\|_{\mbV_2^2}\|\hat\mbp\|_{\mbV_3^2} )e_k\\
+O(\varep^{7/2} |\mrp_0-\mrp_0^*|\|\hat\mbp\|_{\mbV_3^2}, \varep^{7/2}\|\hat\mbp\|_{\mbV_3^2}\|\hat\mbp\|_{\mbV_4^2},\varep^{7/2}\|\hat\mbp\|_{\mbV_2^2} \|\hat\mbp\|_{\mbV_5^2}, \varep^{9/2}\|\hat\mbp\|_{\mbV_5^2})e_k
\end{aligned}
\eeqs
\end{cor}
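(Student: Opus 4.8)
The strategy is to revisit the proof of Lemma~\ref{lem-proj-F}(2) and track, term by term, the effect of inserting the extra Laplace–Beltrami weight $\beta_k$ into the meander projection $\left<\Pi_0\mathrm F(\Phi_\mbp),\beta_kZ_{\mbp,*}^{1k}\right>_{L^2}$ for $k\ge3$. Since $\beta_k Z_{\mbp,*}^{1k}=\tilde\psi_{I(k)}\,\beta_k\tilde\Theta_k+\varep(\cdots)$ and $\beta_k\tilde\Theta_k$ is essentially $-\varep^{-1}\partial_{\tilde s_\mbp}$ acting on $\tilde\Theta_k$ modulo the scaling in \eqref{tTheta''}, each integral $\int_{\msI_\mbp}(\cdots)\tilde\Theta_k$ appearing in the unweighted estimate gets replaced by $\int_{\msI_\mbp}(\cdots)\beta_k\tilde\Theta_k$. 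I would re-derive the analogue of the decomposition \eqref{decomp-PiF-Z*1}, writing $\left<\Pi_0\mathrm F(\Phi_\mbp),\beta_kZ_{\mbp,*}^{1k}\right>_{L^2}=\beta_k\mcI_k+\beta_k\msR_k$, and treat the dominant term and the remainders separately, exactly mirroring the original argument but with the weighted projection lemmas substituted in.

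For the dominant term $\beta_k\mcI_k=\sum_{j=1}^3\beta_k\mcI_{k,j}$, the first piece reduces via \eqref{est-proj-rF2,3} to $\frac{\varep^{5/2}m_0}{m_1}(\sigma_1^*-\sigma)\,\beta_k\msA_k(\mbp)$, and I would need the weighted version of the $\msA_k$ expansion from Corollary~\ref{cor-sAB}; crucially $\msA_k=\int_{\msI_\mbp}\kappa_\mbp\tilde\Theta_k$, and from the curvature expansion \eqref{def-kappa-p,0} the leading nonconstant contribution is $\mcQ_1\sim\sum(1-\beta_j^2)\mrp_j\tilde\Theta_j$, so $\beta_k\msA_k$ picks up a factor $\beta_k(1-\beta_k^2)\mrp_k=-\beta_k\mbD_{kk}\mrp_k+\beta_k\mrp_k$, together with a $\beta_k$-weighted error from $\mcQ_2$ controlled by $\|\hat\mbp\|_{\mbV_3^2}\|\hat\mbp\|_{\mbV_2^2}$ using Lemma~\ref{lem-Gamma-p}. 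The $\beta_k\msB_k$ piece similarly requires the weighted expansion of $\msB_k=\int_{\msI_\mbp}(-\Delta_{s_\mbp}\kappa_\mbp-\kappa_\mbp^3/2+\alpha\kappa_\mbp)\tilde\Theta_k$; here the worst term is the surface-diffusion contribution, whose weighted projection produces a factor $\beta_k\cdot\beta_{\mbp,k}^2\sim\beta_k^3$ against $\mrp_k$, and for the cubic and quadratic-error terms I would invoke the $H^2$ and $H^3$ bounds on $\mcQ_2$ in Lemma~\ref{lem-Gamma-p} and the embeddings of Lemma~\ref{lem-est-V} to absorb the extra $\beta_k$ into the $\mbV_4^2$ and $\mbV_5^2$ norms — this is precisely the mechanism that converts a $\|\hat\mbp\|_{\mbV_4^2}$ error into a $\|\hat\mbp\|_{\mbV_5^2}$ one and splits $\|\hat\mbp\|_{\mbV_2^2}\|\hat\mbp\|_{\mbV_4^2}$ into the asymmetric products $\|\hat\mbp\|_{\mbV_3^2}\|\hat\mbp\|_{\mbV_4^2}$ and $\|\hat\mbp\|_{\mbV_2^2}\|\hat\mbp\|_{\mbV_5^2}$ stated in the corollary. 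The $\mcI_{k,3}$ term and the exponential residual contribute only to $\delta_{k0}$ and a harmless $\beta_k$-weighted $\mbV_5^2$ error. Finally the remainders $\beta_k\msR_{k,1},\beta_k\msR_{k,2}$ are handled by the weighted analogue of Lemma~\ref{lem-sR12}; since the basis corrections $\tilde\varphi_{l,k}$ carry an extra power of $\varep$ and enjoy the properties of Notation~\ref{Notation-h}, the weighted projection of these terms is controlled by $\varep^{7/2}$ times a $\beta_k$-weighted bound, again absorbed into the higher $\mbV_j^2$ norms. Collecting all contributions, eliminating $\sigma$ in favor of $\mrp_0-\mrp_0^*$ via Corollary~\ref{cor-def-hatla} and the reformulation $\sigma^*-\sigma(\mbp)=\frac{c_0m_1^2}{m_0}R_0(\mrp_0-\mrp_0^*)+\cdots$, and using the rewritten form \eqref{def-ck-1} of $c_k$ yields the stated identity with leading term $-\varep^{5/2}m_1c_k\beta_k\mrp_k$.

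The main obstacle I anticipate is bookkeeping the error terms under the weight so that the extra $\beta_k$ lands on the \emph{largest-index} factor of each quadratic product, since a careless distribution would give a worse norm; concretely, in a term like $\int h(\bm\gamma_\mbp'')\tilde\Theta_k\,\beta_k$ one must integrate by parts in $\tilde s_\mbp$ (moving $\beta_k$ onto $h$ rather than estimating $\beta_k$ crudely by $\beta_{N_1}\sim\varep^{-1}$, which would destroy the $\varep$-gains) and then apply the $H^{k}$ estimates of $\mcQ_2$ from Lemma~\ref{lem-Gamma-p} at the correct order, invoking Lemma~\ref{lem-est-h-proj} in its derivative form. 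A secondary subtlety is ensuring the odd/even parity cancellations in $z_\mbp$ that killed several terms in the unweighted proof (e.g. the $|\phi_0'|^2 z_\mbp$ term) survive verbatim under the weighting — they do, since $\beta_k\tilde\Theta_k$ touches only the $\tilde s_\mbp$ variable and leaves the $z_\mbp$-parity structure intact — so this step is routine once noted. With these two points handled, the remainder of the argument is a direct transcription of the proof of Lemma~\ref{lem-proj-F}(2).
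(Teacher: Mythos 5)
Your proposal is correct and follows essentially the same route as the paper: decompose as $\beta_k\mcI_k+\beta_k\msR_k$, invoke the weighted expansions of $\msA_k,\msB_k$ (Lemma\,\ref{lem-sAB-w}) and the weighted remainder bounds (Lemma\,\ref{lem-sR12-w}), and crucially handle the $h(\bm\gamma_\mbp'')$-type integrals by integrating by parts in $\tilde s_\mbp$ to move the weight onto $h$ (Lemma\,\ref{lem-h-proj-w} with \eqref{est-h-gamma''-de}, $l=1$) rather than bounding $\beta_k$ by $\varep^{-1}$. Your two anticipated obstacles — landing the extra $\beta_k$ on the highest-index factor of each quadratic product, and preserving the $z_\mbp$-parity cancellations — are exactly the points the paper's proof addresses, so no gap.
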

\begin{proof}
Multiplying \eqref{decomp-PiF-Z*1} by $\beta_k$ we have
\beqs
\left<\Pi_0 \mrF(\Phi_\mbp), Z_{\mbp, *}^{1k}\right>_{L^2}=\beta_k \mcI_k +\beta_k \msR_k,
\eeqs
in which $\beta_k \msR_k=\beta_k \msR_{k,1} +\beta_k \msR_{k,2}$ are given in Lemma \ref{lem-sR12-w}. We focus on $\beta_k\mcI_k$ and its expansion $\beta_k\mcI_{k,1}+\beta_k\mcI_{k,2} +\beta_k \mcI_{k,3}$. Combining with \eqref{est-cI-k,1} and \eqref{est-cI-k,2-1}, we have 
\beqs
\begin{aligned}
\beta_k \mcI_k= &\frac{\varep^{5/2} m_0}{m_1}(\sigma_1^*-\sigma)\beta_k \msA_k(\mbp) +\varep^{7/2} m_1\beta_k\msB_k(\mbp) \\
&+ \frac{\varep^{9/2}}{m_1} \int_{\mbR_{2\ell}} \int_{\msI_\mbp} (f_3-f_3^\infty) \phi_0' \beta_k \tilde  \Theta_k z_\mbp \kappa_\mbp \dd\tilde s_\mbp \dd z_\mbp +\beta_k \mcI_{k,3}.
\end{aligned}
\eeqs
Performing the integration in $z_\mbp$ and recalling the form of $f_3=f_3(z_\mbp, \bm \gamma_\mbp'')$, 
Lemma \ref{lem-h-proj-w} and \eqref{est-h-gamma''-de} with $l=1$
allow us to rewrite and estimate the integral on the second line of the equality above as
\beqs
\varep^{9/2} \int_{\msI_\mbp} h(\bm \gamma_\mbp'') \beta_k\tilde \Theta_k \dd \tilde s_\mbp=O(\varep^{9/2}\|\hat\mbp\|_{\mbV_3^2})e_k
\eeqs
Estimation  of $\beta_k \mcI_{k,3}$ can be arrived at by similar arguments that use the form of $\mrF_{\geq 4}$ and $\mrF_e$  which collectively contribute an error of order $O(\varep^{9/2}\|\hat\mbp\|_{\mbV_5^2})$. 
The remainder of the reduction follows the lines of the proof of Lemma \ref{lem-proj-F}, using Lemmas \ref{lem-sAB-w} and \ref{lem-sR12-w} from the appendix. We  omit the details. 
\end{proof}

\subsection{Projection of the orthogonal remainder $\Rem[v^\bot]$}
We estimate the projection and weighted projection of the orthogonal remainder $\Rem[v^\bot]$ in the flow \eqref{eq-v-Phi} induced by the orthogonal perturbation $v^\bot$ that lifts the solution $u$ off of the bilayer manifold $\cM_b$. This requires an estimate of impact of the flow on the meander basis functions. 
By the definition \eqref{eq-def-Z*} of $Z_{\mbp,*}^{ij}$, Lemma \ref{Notation-e_i,j}, we have the bounds
\begin{equation}\label{est-L2-p-tZ*}
\|\p_t Z_{\mathbf p, *}^{I(j)j}\|_{L^2} \lesssim \varep^{-1}\|\dot{\mathbf p}\|_{l^2} 
, \qquad \forall j\in \Sigma.
\end{equation}
A full justification is provided in Lemma $6.5$ of \cite{CP-linear}. 
\begin{lemma}\label{lem-est-Rv} Under the assumption \eqref{A-0}, there exists a unit vector ${\bm e} = (e_k)_{k=0}^{N_1-1}$ such that the projection of the orthogonal remainder $\Rem[v^\bot]$, defined in \eqref{def-Rv}, to the meandering space satisfies the bound
 \beqs
 \left<\Rem[v^\bot], Z_{\mbp, *}^{1k}\right>= O\left(\varep^{-3/2} \|v^\bot\|_{L^2}\|\dot\mbp\|_{l^2}, (\varep^{2} +\varep^2\|\hat\mbp\|_{\mbV_4^2}) \|v^\bot\|_{L^2}, \|\mathrm N(v^\bot)\|_{L^2} \right)e_k,
 \eeqs
 and the weighted estimate
 \beqs
 \left<\Rem[v^\bot], \beta_k Z_{\mbp, *}^{1k}\right>= O\left(\varep^{-5/2} \|v^\bot\|_{L^2} \|\dot\mbp\|, (\varep^{3/2} \|\hat\mbp\|_{\mbV_5^2} +\varep^{1/2})  \|v^\bot\|_{L^2}, \varep^{-1}\|\mathrm N(v^\bot)\|_{L^2} \right)e_k
 \eeqs
 for $k\geq 3$ and $k\in \Sigma_1$.
\end{lemma}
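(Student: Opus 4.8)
The plan is to split $\Rem[v^\bot]=-\p_t v^\bot-\Pi_0\mbL_{\mathbf p} v^\bot-\Pi_0\mathrm N(v^\bot)$ and pair each piece against $Z_{\mathbf p,*}^{1k}$, using that $\Pi_0$ is the $L^2$-orthogonal projection onto mean-free functions (hence self-adjoint), that $\|Z_{\mathbf p,*}^{1k}\|_{L^2}\lesssim 1$ by \eqref{ortho-Z*}, and that $\|\beta_k Z_{\mathbf p,*}^{1k}\|_{L^2}\lesssim\beta_k\lesssim\rho^{1/4}\varepsilon^{-1}$ for $k\in\Sigma_1$. The nonlinear term is then immediate: $|\langle\Pi_0\mathrm N(v^\bot),Z_{\mathbf p,*}^{1k}\rangle|=|\langle\mathrm N(v^\bot),\Pi_0 Z_{\mathbf p,*}^{1k}\rangle|\le\|\mathrm N(v^\bot)\|_{L^2}\,\|\Pi_0 Z_{\mathbf p,*}^{1k}\|_{L^2}\lesssim\|\mathrm N(v^\bot)\|_{L^2}$, and pairing against $\beta_k Z_{\mathbf p,*}^{1k}$ loses only the factor $\beta_k\lesssim\varepsilon^{-1}$, giving the $\varepsilon^{-1}\|\mathrm N(v^\bot)\|_{L^2}$ entry.

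For the $\p_t v^\bot$ term I would use the defining orthogonality: since $v^\bot(t)\in(\mcZ_*^1(\mathbf p(t)))^\bot$, one has $\langle v^\bot,Z_{\mathbf p,*}^{1j}\rangle_{L^2}\equiv 0$ for all $j\in\Sigma_1$, and differentiating in $t$ with $\p_t=\dot{\mathbf p}\cdot\nabla_{\mathbf p}$ gives $\langle\p_t v^\bot,Z_{\mathbf p,*}^{1k}\rangle=-\langle v^\bot,\p_t Z_{\mathbf p,*}^{1k}\rangle$, and similarly $\langle\p_t v^\bot,\beta_k Z_{\mathbf p,*}^{1k}\rangle=-\beta_k\langle v^\bot,\p_t Z_{\mathbf p,*}^{1k}\rangle$. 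The bound \eqref{est-L2-p-tZ*} (Lemma 6.5 of \cite{CP-linear}) then yields $|\langle\p_t v^\bot,Z_{\mathbf p,*}^{1k}\rangle|\lesssim\varepsilon^{-1}\|v^\bot\|_{L^2}\|\dot{\mathbf p}\|_{l^2}$ and its $\beta_k$-weighted analog with one further $\beta_k\lesssim\varepsilon^{-1}$, both within the claimed $\varepsilon^{-3/2}$ and $\varepsilon^{-5/2}$ prefactors. For a self-contained write-up I would instead expand $\p_t z_{\mathbf p}=\varepsilon^{-1}\sum_j\dot{\mrp}_j\xi_j(s_{\mathbf p})$ from Lemma \ref{lem-change-of-coord}, so the worst contribution to $\p_t Z_{\mathbf p,*}^{1k}$ is $\varepsilon^{-3/2}\psi_1'(z_{\mathbf p})\sum_j\dot{\mrp}_j\xi_j\tilde\Theta_k$, pair against $v^\bot$ gaining the $\varepsilon^{1/2}$ of the localized $z_{\mathbf p}$-integration, and control $\|\sum_j\dot{\mrp}_j\xi_j\tilde\Theta_k\|_{L^2(\msI_{\mathbf p})}\lesssim\|\dot{\mathbf p}\|_{l^2}$ from the uniform $L^\infty$ bound on $\tilde\Theta_k$ and $\xi_j\sim\tilde\Theta_j$; the extra $\tilde s_{\mathbf p}$-differentiation produced by the $\beta_k$ weight costs one more $\beta_k$, reproducing $\varepsilon^{-5/2}$.

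The $\Pi_0\mbL_{\mathbf p}v^\bot$ term carries the analysis. By self-adjointness $\langle\Pi_0\mbL_{\mathbf p}v^\bot,Z_{\mathbf p,*}^{1k}\rangle=\langle\mbL_{\mathbf p}v^\bot,Z_{\mathbf p,*}^{1k}\rangle-\langle Z_{\mathbf p,*}^{1k}\rangle_{L^2}\langle\mbL_{\mathbf p}v^\bot,\mathbf 1\rangle_{L^2}$. Since $Z_{\mathbf p,*}^{1k}\in\mcZ_*^1=\mathrm{range}(\Pi_{\mcZ_*^1})$ and $\Pi_{\mcZ_*^1}$ is orthogonal, the first term equals $\langle\Pi_{\mcZ_*^1}\mbL_{\mathbf p}v^\bot,Z_{\mathbf p,*}^{1k}\rangle$, which by the meandering-coupling estimate Lemma \ref{thm-coupling est} is $\lesssim(\varepsilon^2+\varepsilon^2\|\hat{\mathbf p}\|_{\mbV_4^2})\|v^\bot\|_{L^2}$. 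For the second term I would use that $v^\bot$ is mean-free, so $\langle\mbL_{\mathbf p}v^\bot,\mathbf 1\rangle=\langle v^\bot,\mbL_{\mathbf p}\mathbf 1\rangle=\langle v^\bot,\Pi_0\mbL_{\mathbf p}\mathbf 1\rangle$; by the form \eqref{def-bLp} of $\mbL_{\mathbf p}$ and the fact that $\Phi_{\mathbf p}\to b_-+O(\varepsilon)$ away from $\Gamma_{\mathbf p}$, the function $\Pi_0\mbL_{\mathbf p}\mathbf 1$ is an $O(1)$ profile localized near $\Gamma_{\mathbf p}$ plus an $O(\varepsilon)$ bulk remainder, hence $\|\Pi_0\mbL_{\mathbf p}\mathbf 1\|_{L^2}\lesssim\varepsilon^{1/2}$; combined with $|\langle Z_{\mathbf p,*}^{1k}\rangle_{L^2}|\lesssim\varepsilon^{3/2}(|\mathscr A_k|+\varepsilon)$ — the leading profile $\psi_1=\phi_0'/m_1$ being odd in $z_{\mathbf p}$, so only the curvature-weighted $O(\varepsilon)$ correction contributes to its mean — this is a contribution of order $\varepsilon^2\|v^\bot\|_{L^2}$ (and $\varepsilon^2\|\hat{\mathbf p}\|_{\mbV_2^2}\|v^\bot\|_{L^2}$ for $k\ge 3$, absorbed into the $\mbV_4^2$ term via Lemma \ref{lem-est-V}). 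The $\beta_k$-weighted version cannot be obtained by merely multiplying by $\|\beta_k Z_{\mathbf p,*}^{1k}\|_{L^2}\sim\varepsilon^{-1}$, which is too lossy; instead, exactly as in Corollary \ref{cor-proj-F-w}, one re-runs the coupling estimate of Lemma \ref{thm-coupling est}/Theorem 4.11 of \cite{CP-linear} with the $\beta_k$ weight retained, absorbing it partly into a higher $\mbV_5^2$-weighted norm of $\hat{\mathbf p}$ and partly into the $\varepsilon$-power, to produce the stated $(\varepsilon^{3/2}\|\hat{\mathbf p}\|_{\mbV_5^2}+\varepsilon^{1/2})\|v^\bot\|_{L^2}$.

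The main obstacle is this last point: tracking the Laplace--Beltrami weight $\beta_k$ through the coupling mechanism (the pearling--meandering frequency gap \eqref{MP-gap}, the orthogonalities \eqref{ortho-varphi} and \eqref{ortho-Z*}, and the $\varepsilon$-expansion of $\mbL_{\mathbf p}$ behind Theorem 4.11 of \cite{CP-linear}) so that the weighted projection of $\mbL_{\mathbf p}v^\bot$ onto $\mcZ_*^1$ is amplified only by $\varepsilon^{-3/2}$, rather than the $\varepsilon^{-1}$ that $\|\beta_k Z_{\mathbf p,*}^{1k}\|_{L^2}$ alone would suggest, and likewise that the $\tilde s_{\mathbf p}$-differentiation generated when the weight meets $\p_t Z_{\mathbf p,*}^{1k}$ is compensated by the localization gain. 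This is also the step where the hypothesis $\mathbf p\in\cO_{2,\delta}$, which controls $\|\hat{\mathbf p}\|_{\mbV_3^2}$, is used.
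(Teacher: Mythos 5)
Your decomposition of $\Rem[v^\bot]$ and your treatment of the unweighted estimate track the paper's proof: the time-derivative term is handled by differentiating the orthogonality $\left<v^\bot, Z_{\mbp,*}^{1k}\right>_{L^2}=0$ and invoking \eqref{est-L2-p-tZ*}, the linear term by Lemma \ref{thm-coupling est}, and the nonlinear term trivially; your explicit accounting of the $\Pi_0$ in $\Pi_0\mbL_\mbp v^\bot$ (via $\|\Pi_0\mbL_\mbp\mathbf 1\|_{L^2}\lesssim\varep^{1/2}$ and the smallness of the mean of $Z_{\mbp,*}^{1k}$) is more detailed than the paper's but reaches the same $\varep^2(1+\|\hat\mbp\|_{\mbV_4^2})\|v^\bot\|_{L^2}$ bound. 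One point you should make explicit: the per-$k$ bounds you obtain, e.g.\ $\varep^{-1}\|\dot\mbp\|_{l^2}\|v^\bot\|_{L^2}$, only become the stated $O(\cdot)e_k$ form (with $\bm e$ a unit vector in $l^2$) after multiplying by $N_1^{1/2}\sim\varep^{-1/2}$; that is where the extra half power in $\varep^{-3/2}$ and $\varep^{-5/2}$ comes from.

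The genuine gap is in the weighted estimate for the $\mbL_\mbp$ term. You assert that multiplying by $\beta_k\lesssim\varep^{-1}$ is ``too lossy'' and that one must instead re-run the coupling estimate of Theorem 4.11 of \cite{CP-linear} with the weight retained --- and you then leave that re-derivation entirely unexecuted, so the weighted bound is not actually established in your write-up. In fact no weighted coupling estimate is needed: the crude multiplication gives $(\varep+\varep\|\hat\mbp\|_{\mbV_4^2})\|v^\bot\|_{L^2}$, and the stated bound follows from the interpolation
\beqs
\|\hat\mbp\|_{\mbV_4^2}\leq \|\hat\mbp\|_{\mbV_3^2}^{1/2}\|\hat\mbp\|_{\mbV_5^2}^{1/2}\lesssim \varep^{1/2}\|\hat\mbp\|_{\mbV_5^2}+\varep^{-1/2}\|\hat\mbp\|_{\mbV_3^2},
\eeqs
together with the a priori bound $\|\hat\mbp\|_{\mbV_3^2}\lesssim\delta\lesssim 1$ for $\mbp\in\cO_{2,\delta}$ (this, not a weighted coupling argument, is where the hypothesis on $\cO_{2,\delta}$ enters). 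This yields $\varep\|\hat\mbp\|_{\mbV_4^2}\lesssim\varep^{3/2}\|\hat\mbp\|_{\mbV_5^2}+\varep^{1/2}$, i.e.\ exactly the $(\varep^{3/2}\|\hat\mbp\|_{\mbV_5^2}+\varep^{1/2})\|v^\bot\|_{L^2}$ entry, while $\varep\leq\varep^{1/2}$ absorbs the remaining piece. So the step you flag as the main obstacle has a one-line resolution, and as written your proof of the weighted inequality is incomplete.
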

\begin{proof}
We break the orthogonal remainder into its three component terms, and rewrite the projection of $\p_t v^\bot$ as
\beqs
\left<\p_t v^\bot, Z_{\mbp, *}^{1k}\right>_{L^2} = \p_t \left<v^\bot, Z_{\mbp,*}^{1k}\right>_{L^2} + \left<v^\bot, \p_t  Z_{\mbp,*}^{1k}\right>_{L^2}.
\eeqs
The first item on the right-hand side is zero since $v^\bot$ is perpendicular to the meandering slow space $\mcZ_{ *}^1$; and the second term is bounded  through  H\"older's inequality and estimate \eqref{est-L2-p-tZ*}. Combining these, we deduce 
\beq
\left<\p_t v^\bot, Z_{\mbp, *}^{1k}\right>_{L^2} = O(\varep^{-1} \|\dot\mbp\|_{l^2}\|v^\bot\|_{L^2}),
\eeq
which together with the dimension $N_1\leq \varep^{-1}$ of $\mcZ_{*}^1$ and approximate orthogonality \eqref{ortho-Z*} we deduce
\beqs
\|\Pi_{\mcZ_*^1} \p_t v^\bot\|_{L^2}\lesssim \varep^{-1}N_1^{1/2}\|\dot\mbp\|_{l^2}\|v^\bot\|_{L^2}\lesssim \varep^{-3/2}\|\dot\mbp\|_{l^2}\|v^\bot\|_{L^2}.
\eeqs 

For the second term in the orthogonal remainder, $\Pi_0 \mbL_\mbp v^\bot$, we turn to Lemma \ref{thm-coupling est} to bound its projection. In fact, this yields the estimate
\beq
\left\|\Pi_{\mcZ_*^1}\Pi_0 \mbL_\mbp v^\bot\right\| \lesssim (\varep^2 + \varep^2 \|\hat\mbp\|_{\mbV_{4}^2}) \|v^\bot\|_{L^2}.
\eeq
The third term in the orthogonal residual is the nonlinearity $\mathrm N(v^\bot)$ which has the simple bound
\beq
\|\Pi_{Z_*^1} \Pi_0 \mathrm N(v^\bot)\|_{L^2}\lesssim \|\mathrm N(v^\bot)\|_{L^2}.
\eeq
The unweighted estimate of Lemma\,\ref{lem-est-Rv} follows. For the weighted estimate, the term $\beta_k$ is uniformly bounded by $\varep^{-1}$, so that unweighted estimate implies
 \beqs
 \left<\mathscr R[v^\bot], \beta_k Z_{\mbp, *}^{1k}\right>= O\left(\varep^{-2} \|v^\bot\|_{L^2} \|\dot\mbp\|, (\varep +\varep\|\hat\mbp\|_{\mbV_4^2})  \|v^\bot\|_{L^2}, \varep^{-1}\|\mathrm N(v^\bot)\|_{L^2} \right)e_k.
 \eeqs 
 From the definition of $\mbV_k^2$, and Young's inequality we have the embedding estimate
 \beqs
 \|\hat\mbp\|_{\mbV_4^2}\leq \|\hat\mbp\|_{\mbV_3^2}^{1/2}\|\hat\mbp\|_{\mbV_5^2}^{1/2}\lesssim \varep^{1/2}\|\hat\mbp\|_{\mbV_5^2} +\varep^{-1/2}\|\hat\mbp\|_{\mbV_3^2}.
 \eeqs
 The weighted estimate follows by the a priori bound on $\|\hat\mbp\|_{\mbV_3^2}$ for $\mbp\in \mathcal O_{2,\delta}$.
\end{proof}

\subsection{Dynamics of meandering parameters $\mbp$}
By combining the results of Lemmas \ref{lem-def-bT}, \ref{lem-proj-F}  and \ref{lem-est-Rv} we obtain a detailed reduction of the dynamics of the meandering parameters $\mbp$ induced by the gradient flow.  For simplicity of presentation, we introduce two time-dependent functions
\beq\label{def-E12} 
\begin{aligned}
E(t):=\varep + \|\hat\mbp\|_{\mbV_2^2} +|\mrp_0|+\varep^{-3/2}\|v^\bot\|_{L^2}; \qquad E_w(t):=\varep+  \|\hat\mbp\|_{\mbV_3^2} +|\mrp_0|+\varep^{-2}\|v^\bot\|_{L^2}.
\end{aligned}
\eeq 
Obviously, $E(t)\leq E_w(t)$ for any $t\in \mbR^+$. Our assumptions make these two quantities small in $L^\infty(\mbR^+)$, as required in Theorem \ref{thm-est-p} and Corollary \ref{cor-ODE-pw}, respectively.  

\begin{thm}\label{thm-est-p} 
In addition to the a priori assumptions \eqref{A-0} suppose $v^\bot$ satisfies $\|v^\bot\|_{L^2}\ll \varep^{3/2}$ such that $E(t)$ is suitably small uniformly in time, then the meandering parameters $\mathbf p=(\mrp_0, \mrp_{1,2}, \hat\mbp)$ evolve according to
\beq
\label{eq-hatp}
\begin{aligned}
&\dot\mrp_0=- \varep^3\frac{c_0}{R_0}(\mrp_0-\mrp_0^*)+d_0,\\
&\dot\mrp_k=O\left(\varep^3|\mrp_0-\mrp_0^*|\|\hat\mbp\|_{\mbV_3^2}\right)+ d_k \quad \hbox{for} \quad k=1,2,\\
&\dot{\hat\mbp}=-\varep^3  \left[ \frac{c_0 }{R_0}(\mbD + \mbU^T)(\mrp_0-\mrp_0^*)+\frac{\varep}{R_0^4} (\mbD-\mbI)^2 \right]\hat\mbp  +\hat d.
\end{aligned}
\eeq
 Here $\mbI$ is the $(N_1-3)$ by $(N_1-3)$ identity matrix, and $\mbU$ is defined in \eqref{def-bU}.
The vector $ d=(d_0,  d')$ with $d'=(d_1, d_2, \hat d)$ denotes error terms that satisfy
\beqs
\begin{aligned}
|d_0| \lesssim  &\; \varep^3E(t) |\mrp_0 -\mrp_0^*|   + \varep^4E(t)\|\hat\mbp\|_{\mbV_4^2} + \varep^{5/2} \|v^\bot\|_{L^2} +\varep^{1/2}\|\mathrm N(v^\bot)\|_{L^2};\\
\|d'\|_{l^2}\lesssim &\; \varep^3E(t) |\mrp_0-\mrp_0^*|\|\hat\mbp\|_{\mbV_2^2}  + \varep^4(\varep+\varep^{-2}\|v^\bot\|_{L^2})|\mrp_0-\mrp_0^*|+\varep^4E(t)\|\hat\mbp\|_{\mbV_4^2} \\
&+\varep^{5/2}\|v^\bot\|_{L^2} +\varep^{1/2} \|\mathrm N(v^\bot)\|_{L^2}.
\end{aligned}
\eeqs
Moreover the rate of change $\dot{\mathbf p}=(\dot\mrp_0, \dot\mrp_{1,2}, \dot{\hat\mbp})$ admits the upper bounds:
\beq\label{est-dotp-1}
\begin{aligned}
\|\dot\mbp\|_{l^2} 
\lesssim \varep^3 |\mrp_0-\mrp_0^*| +\varep^4 \|\hat\mbp\|_{\mbV_4^2} +|d_0|+\|d'\|_{l^2}.
\end{aligned}
\eeq
\end{thm}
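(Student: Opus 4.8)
The plan is to establish \eqref{eq-hatp} by pairing the flow equation \eqref{eq-v-Phi}, $\p_t\Phi_\mbp=-\Pi_0\mrF(\Phi_\mbp)+\Rem[v^\bot]$, against each meander basis function $Z_{\mbp,*}^{1k}$, $k\in\Sigma_1$, in $L^2$, and then feeding in the asymptotic expansions already available. Using \eqref{def-bTp'-ek} to identify the left-hand side with $\left<\mbT\dot\mbp,\bB_k\right>$ and multiplying by $-\varep^{1/2}\mu_0/m_1$, one obtains for each $k$
\[
-\frac{\varep^{1/2}\mu_0}{m_1}\left<\mbT\dot\mbp,\bB_k\right>
=\frac{\varep^{1/2}\mu_0}{m_1}\left<\Pi_0\mrF(\Phi_\mbp),Z_{\mbp,*}^{1k}\right>_{L^2}
-\frac{\varep^{1/2}\mu_0}{m_1}\left<\Rem[v^\bot],Z_{\mbp,*}^{1k}\right>_{L^2}.
\]
Lemma \ref{lem-def-bT} converts the left-hand side into $\dot\mrp_k$ plus, for $k\geq1$, an explicit $\dot\mrp_0$-coupling through $\mbU$ and the $\bm 1_{\{k\geq 3\}}$ term; \eqref{proj-F2-0} and Lemma \ref{lem-proj-F} expand the residual pairing; and Lemma \ref{lem-est-Rv} controls the remainder pairing, with $\mathrm N(v^\bot)$ kept symbolic and the $\p_tv^\bot$, $\Pi_0\mbL_\mbp v^\bot$ parts producing the $\varep^{-1}\|v^\bot\|_{L^2}\|\dot\mbp\|_{l^2}$ and $(\varep^{5/2}+\varep^{5/2}\|\hat\mbp\|_{\mbV_4^2})\|v^\bot\|_{L^2}$ contributions to $d_0,d'$ after the $\varep^{1/2}$ prefactor.

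For $k=0$ this is directly the scalar equation for $\dot\mrp_0$: by \eqref{proj-F2-0} and the characterisation \eqref{def-p0*-original}--\eqref{def-p*0} of $\mrp_0^*$ from Lemma \ref{lem-p0*}, the leading residual contribution is $\varep^{5/2}\frac{m_1c_0}{\Theta_0}(\mrp_0^*-\mrp_0)$, so after inserting $\mu_0=1+O(\mrp_0,\varep)$ and $\Theta_0=1/\sqrt{2\pi R_0}$ and dividing by $R_0/\Theta_0$ one gets $\dot\mrp_0=-\varep^3\frac{c_0}{R_0}(\mrp_0-\mrp_0^*)+d_0$, with $d_0$ collecting the $(\mu_0-1)$-correction, the subleading residual terms of Lemma \ref{lem-proj-F}(1), and the $Z_{\mbp,*}^{10}$-pairing of $\Rem[v^\bot]$. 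The stated bound on $|d_0|$ follows by dominating each piece using $E(t)\geq\varep$, the smallness of $E(t)$, the a priori $\|v^\bot\|_{L^2}\ll\varep^{3/2}$, and the embedding/interpolation inequalities of Lemma \ref{lem-est-V} among the $\mbV_r^2$ norms; the $\|v^\bot\|_{L^2}\|\dot\mbp\|_{l^2}$-type term is reduced to the claimed form only after $\|\dot\mbp\|_{l^2}$ has itself been bounded by \eqref{est-dotp-1}, which I establish separately and then re-insert.

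For $k\geq1$ I substitute the $\dot\mrp_0$-equation just obtained into the coupling terms. When $k=1,2$ the residual carries no $\mrp_k$ term (the factor $\bm 1_{\{k\geq 3\}}$ in Lemma \ref{lem-proj-F}(2)), so only the $\dot\mrp_0\,\hat\mbp^T\mbU\bB_k$ coupling remains; the bound $\|\mbU^T\hat\mbp\|_{l^2}\lesssim\|\hat\mbp\|_{\mbV_1^2}\leq\|\hat\mbp\|_{\mbV_3^2}$ from Lemma \ref{cor-bU} gives the $O(\varep^3|\mrp_0-\mrp_0^*|\|\hat\mbp\|_{\mbV_3^2})$ term and $d_1,d_2$. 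When $k\geq3$ I combine the $-\varep^3c_k(\mrp_0)\mrp_k$ residual term, with $c_k$ in the reduced form \eqref{def-ck-1}, with the coupling $\varep^3\frac{c_0}{R_0}(\mrp_0-\mrp_0^*)(\mrp_k-\hat\mbp^T\mbU\bB_k)$ coming from $-\dot\mrp_0(\mrp_k-\hat\mbp^T\mbU\bB_k)$, and read off the $\mbD$, $\mbU^T$ and $(\mbD-\mbI)^2$ pieces to recover $-\varep^3\bigl[\frac{c_0}{R_0}(\mbD+\mbU^T)(\mrp_0-\mrp_0^*)+\frac{\varep}{R_0^4}(\mbD-\mbI)^2\bigr]\hat\mbp$; here $\hat d$ is built from the subleading residual terms (using the weighted estimates of Corollary \ref{cor-proj-F-w}), the subleading left-hand-side terms (Corollary \ref{cor-proj-Phi_t}), the $d_0$-induced corrections in the couplings, and the weighted half of Lemma \ref{lem-est-Rv}, with $\|d'\|_{l^2}$ bounded as for $d_0$ and additionally using $\|\mbD\hat\mbp\|_{l^2}=\|\hat\mbp\|_{\mbV_2^2}$, $\|(\mbD-\mbI)^2\hat\mbp\|_{l^2}\lesssim\|\hat\mbp\|_{\mbV_4^2}$ and the operator-norm bound of Lemma \ref{cor-bU}. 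Finally \eqref{est-dotp-1} is derived directly: the Gram estimate \eqref{ortho-Z*} shows $\mbT$ equals $\varep^{-1/2}$ times a boundedly invertible matrix, so $\|\dot\mbp\|_{l^2}\lesssim\varep^{1/2}\bigl(\|\Pi_{\mcZ_*^1}\Pi_0\mrF(\Phi_\mbp)\|_{L^2}+\|\Pi_{\mcZ_*^1}\Rem[v^\bot]\|_{L^2}\bigr)$, into which I insert the $L^2$ residual bound of Lemma \ref{lem-est-L2-F} (sharpened componentwise by Lemma \ref{lem-proj-F}) and the projection bound of Lemma \ref{lem-est-Rv}, absorbing the $\varep^{-1}\|v^\bot\|_{L^2}\|\dot\mbp\|_{l^2}$ term since $\|v^\bot\|_{L^2}\ll\varep$.

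The principal obstacle is the error bookkeeping in the substitution step: each input lemma supplies remainders in a different mixture of weighted norms $\mbV_r^2$, powers of $\varep$, $\|v^\bot\|_{L^2}$ and $\|\mathrm N(v^\bot)\|_{L^2}$, and feeding the $\dot\mrp_0$-equation into the higher-mode couplings multiplies some of these by $|\mrp_0-\mrp_0^*|$ and by $d_0$ itself, so one must repeatedly invoke $E(t)\leq E_w(t)$, the smallness of $E(t)$, the bound $\beta_k^2\lesssim\varep^{-2}\rho^{1/2}$ valid on $\Sigma_1$, and the estimates of Lemmas \ref{lem-est-V} and \ref{cor-bU} to collapse everything into the two error vectors $d_0$ and $d'$ with precisely the stated norms; the simultaneous, bootstrapped treatment of \eqref{est-dotp-1} and the $\|v^\bot\|\|\dot\mbp\|$ terms adds a further layer. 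A secondary delicate point is the algebraic reorganisation for $k\geq3$---keeping track of which part of $c_k$ and which coupling term assemble into the clean operators $\mbD+\mbU^T$ and $(\mbD-\mbI)^2$---but with the leading coefficients of Lemmas \ref{lem-def-bT} and \ref{lem-proj-F} in hand this reduces to matching Fourier coefficients.
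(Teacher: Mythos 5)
Your proposal follows essentially the same route as the paper: project \eqref{eq-v-Phi} onto $Z_{\mbp,*}^{1k}$, invert the near-identity $-\varep^{1/2}\mu_0\mbT/m_1$ via Lemma \ref{lem-def-bT}, expand the residual with Lemma \ref{lem-proj-F} and \eqref{proj-F2-0}, substitute the $\dot\mrp_0$-equation into the couplings using \eqref{def-ck-1}, and close the $\|\dot\mbp\|_{l^2}$-dependence of $d_0,d'$ by bootstrapping \eqref{est-dotp-1}. Two small corrections: the invertibility of $\mbT$ at scale $\varep^{-1/2}$ comes from Lemmas \ref{lem-xi} and \ref{lem-def-bT} (the pairing of $\p_{\mrp_j}\Phi_\mbp$ with $Z_{\mbp,*}^{1k}$), not from the Gram estimate \eqref{ortho-Z*} of the $Z$'s themselves; and the weighted estimates (Corollaries \ref{cor-proj-Phi_t}, \ref{cor-proj-F-w} and the weighted half of Lemma \ref{lem-est-Rv}) belong to the separate weighted evolution of Corollary \ref{cor-ODE-pw} --- if you used them to build $\hat d$ here you would pick up $\varep^{-1/2}\|\mathrm N(v^\bot)\|_{L^2}$ and $\|\hat\mbp\|_{\mbV_5^2}$ contributions that exceed the stated bound on $\|d'\|_{l^2}$, so the unweighted versions must be used throughout this theorem.
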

\begin{proof} 
Projecting equation \eqref{eq-v-Phi} onto $Z_{\mathbf p, *}^{1k}$ in $L^2$, using the definition \eqref{def-bT} of $\mathbb T$ yields
\begin{equation}\label{ODE-p-1}
\begin{aligned}
\left<\mathbb T\dot{\mathbf p}, \bB_k\right>_{l^2}=&-\left<\Pi_0 \mathrm F(\Phi_{\mathbf p} ), Z_{\mathbf p, *}^{1k}\right>_{L^2}+\left<\mathscr R[v^\bot], Z_{\mathbf p, *}^{1k},\right>_{L^2},
\end{aligned}
\end{equation}
where $\Rem[v^\bot]$ defined in \eqref{def-Rv}  is the orthogonal remainder contributed by $v^\bot$. We first address the case $k=0$, multiplying the identity \eqref{ODE-p-1} with $k=0$ by  $-\varep^{1/2} \frac{\mu_0}{m_1} \cdot\frac{ \Theta_0 }{R_0}$ and applying Lemmas \ref{lem-def-bT} and identity \eqref{proj-F2-0} yields the ODE
\beq\label{ODE-p0-1}
\dot \mrp_0=\varep^3 \frac{ c_0 }{R_0} (\mrp_0-\mrp_0^*)+d_0.
\eeq 
The remainder $d_0$ satisfies the bound
\beqs
\begin{aligned}
|d_0|\lesssim & \varep^3\left( \varep + \|\hat\mbp\|_{\mbV_2^2} +|\mu_0-1| \right) |\mrp_0 -\mrp_0^*| + (\varep^2 +\|\hat\mbp\|_{\mbV_2^2})\|\dot\mbp\|_{l^2}  +\varep^4 \|\hat\mbp\|_{\mbV_2^2}\|\hat\mbp\|_{\mbV_3^2} +\varep^5\|\hat\mbp\|_{\mbV_4^2}\\
&+\varep^{1/2}\left|\left<\Rem[v^\bot], Z_{\mathbf p, *}^{1k}\right>_{L^2}\right|.
\end{aligned}
\eeqs
The quantity $\mu_0=1+O(|\mrp_0|,\varep)$ was introduced in \eqref{def-mu0} and  the projection of the orthogonal remainder was bounded in Lemma \ref{lem-est-Rv}. With these estimates and  introduction of $E(t)$ in \eqref{def-E12} we simplify the bound of $d_0$,
\beq\label{est-d0-1}
\begin{aligned}
|d_0| \lesssim  & \varep^3E(t) |\mrp_0 -\mrp_0^*| + E(t)\|\dot\mbp\|_{l^2}  
+\varep^4E(t)\|\hat\mbp\|_{\mbV_4^2}+ \varep^{5/2}  \|v^\bot\|_{L^2} +\varep^{1/2}\|\mathrm N(v^\bot)\|_{L^2}.
\end{aligned}
\eeq
For $k\geq 1$, we multiply identity \eqref{ODE-p-1} by $-\varep^{1/2}\mu_0/m_1$ and apply Lemmas \ref{lem-def-bT} and \ref{lem-proj-F} for $k\geq 1$ to deduce
\beq\label{ODE-pk-1}
\dot\mrp_k   = -\dot\mrp_0 \left( \mrp_k \bm 1_{\{k\geq 3\}} -\hat \mbp^T \mbU \bB_k \right) -\varep^3 \mu_0 c_k \mrp_k +d_k. 
\eeq
The remainder $d'=(d_k)_{k=1}^{N_1-1}$ can be bounded 
in $l^2$ by collecting those remainders in Lemmas \ref{lem-def-bT} and \ref{lem-proj-F} and applying Lemma \ref{lem-est-Rv},
\beq\label{est-d'-1}
\begin{aligned}
\|d'\|_{l^2}\lesssim &\varep^{3}E(t) |\mrp_0 -\mrp_0^*| \|\hat\mbp\|_{\mbV_2^2} +\varep^4E(t)\|\hat\mbp\|_{\mbV_4^2}  +\Big(\varep^2+\|\hat\mbp\|_{\mbV_2^2}^2  +\varep^{-1}\|v^\bot\|_{L^2}\Big)\|\dot{\mbp}\|_{l^2}\\
&+ \|\hat\mbp\|_{\mbV_2^2}\|\dot\mrp_{1,2}\|_{l^2} +\varep^{5/2}\|v^\bot\|_{L^2} +\varep^{1/2} \|\mathrm N(v^\bot)\|_{L^2}.
\end{aligned}
\eeq
Replacing $\dot\mrp_0$ on the right-hand side of \ref{ODE-pk-1} with the right-hand side of \eqref{ODE-p0-1}, using the expansion $\mu_0=1+O(|\mrp_0|,\varep)$, and replacing $c_k$ with \eqref{def-ck-1} yields 
\beq\label{ODE-pk-2}
\begin{aligned}
\dot\mrp_k   =& - \frac{c_0}{R_0}(\mrp_0 -\mrp_0^*) \left( \mrp_k \bm 1_{\{k\geq 3\}} -\hat \mbp^T \mbU \bB_k \right) -\varep^3  c_k \mrp_k +\tilde d_k\\
=& -\varep^3   \left[\frac{c_0}{R_0} (\mrp_0-\mrp_0^*)\mbD_{kk} +\frac{\varep}{R_0^4} (\mbD_{kk}-1)^2 \right]\mrp_k \bm 1_{\{k\geq 3\}}+\frac{ c_0}{R_0}(\mrp_0-\mrp_0^*) \hat\mbp \mbU\bB_k +\tilde d_k.
\end{aligned}
\eeq
We have introduced $\tilde d_k=d_k +O(\|\hat\mbp\|_{\mbV_2^2} d_0, \varep^3 (|\mrp_0|+\varep )|\mrp_0-\mrp_0^*|\|\hat\mbp\|_{\mbV_2^2}, \varep^4(|\mrp_0|+\varep)\|\hat\mbp\|_{\mbV_4^2}) e_k$. In the remainder of the proof we omit the title on $d_k$. From \eqref{est-d'-1} the revised definition of $d'=(d_k)_{k=1}^{N_1-1}$ enjoys the $l^2$-bound 
\beq\label{est-d'-2}
\begin{aligned}
\|d'\|_{l^2}\lesssim &\;\varep^3E(t) |\mrp_0-\mrp_0^*|\|\hat\mbp\|_{\mbV_2^2} +\|\hat\mbp\|_{\mbV_2^2} |d_0| +\varep^4E(t)\|\hat\mbp\|_{\mbV_4^2}  +\Big(\varep^2+\|\hat\mbp\|_{\mbV_2^2}^2  \\
&+\varep^{-1}\|v^\bot\|_{L^2}\Big)\|\dot{\mbp}\|_{l^2} + \|\hat\mbp\|_{\mbV_2^2}\|\dot\mrp_{1,2}\|_{l^2} +\varep^{5/2}\|v^\bot\|_{L^2} +\varep^{1/2} \|\mathrm N(v^\bot)\|_{L^2}.
\end{aligned}
\eeq
The bounds \eqref{est-d0-1} and \eqref{est-d'-2} are not yet closed since they depend on $\dot \mbp$, however  from \eqref{eq-hatp}  we derive
\begin{equation}\label{Lem59-pdot} 
\begin{aligned} |\dot{\mrp}_0|\lesssim  \varep^3|\mrp_0-\mrp_0^*|+|d_0|, \qquad  \|\dot {\hat \mbp} \|_{l^2}\lesssim  \varep^3 |\mrp_0-\mrp_0^*|\|\hat\mbp\|_{\mbV_2^2}+\varep^4\|\hat\mbp\|_{\mbV_4^2} +\|d'\|_{l^2}, \\|\dot\mrp_{1,2}|\lesssim \varep^3 |\mrp_0-\mrp_0^*|\|\hat\mbp\|_{\mbV_2^2}+\|d'\|_{l^2}.\h{1.4in}
\end{aligned}
\end{equation}
Then the estimates \eqref{est-dotp-1} follow directly. 
 Using this bound \eqref{est-dotp-1} and the bound on $\dot\mrp_{1,2}$ from \eqref{Lem59-pdot} on the right-hand side of \eqref{est-d0-1} and \eqref{est-d'-1},  and remarking that $\varep^2+\varep^{-1}\|v^\bot\|_{L^2}+\|\hat\mbp\|_{\mbV_2^2}\ll 1$, then some algebraic rearrangement  leads to the bounds
\beqs
\begin{aligned}
|d_0| \lesssim  &\; \varep^3  E(t)  |\mrp_0 -\mrp_0^*| + E(t)\|d'\|_{l^2}+ \varep^4 E(t)\|\hat\mbp\|_{\mbV_4^2} + \varep^{5/2} \|v^\bot\|_{L^2} +\varep^{1/2}\|\mathrm N(v^\bot)\|_{L^2};\\
\|d'\|_{l^2}\lesssim &\; \varep^3E(t) |\mrp_0-\mrp_0^*|\|\hat\mbp\|_{\mbV_2^2}  + (\varep^5+\varep^{2}\|v^\bot\|_{L^2})|\mrp_0-\mrp_0^*|+\varep^4E(t)\|\hat\mbp\|_{\mbV_4^2}   +  E(t) d_0\\
&+\varep^{5/2}\|v^\bot\|_{L^2} +\varep^{1/2} \|\mathrm N(v^\bot)\|_{L^2}.
\end{aligned}
\eeqs
Using the estimate on $\|d'\|_{l^2}$ to eliminate it from the right-hand side of the estimate on $|d_0|$ yields the final upper bound for $d_0$.  The final $l^2$-estimate for $d'$ follows from the estimate above and the a priori assumptions \eqref{A-0}. \end{proof}

\begin{cor} \label{cor-ODE-pw} If $\|v^\bot\|_{L^2}\ll \varep^2$ and assumption \eqref{A-0} holds so that $E_w(t)$ introduced in \eqref{def-E12} is suitably small uniformly in time, then the weighted meander evolution can be written in the form
\beqs
\mbD^{1/2} \dot{\hat\mbp }   =-\varep^3  \left[ \frac{c_0 }{R_0}(\mbD + \mbU^T)(\mrp_0-\mrp_0^*)+\frac{\varep}{R_0^4} (\mbD-\mbI)^2 \right]\mathbb D^{1/2}\hat\mbp  + \hat d_w, 
\eeqs
where the weighted remainder $\hat d_w=(d_{w,k})_{k=3}^{N_1-1}$ satisfies 
\beqs
\begin{aligned}
\| \hat d_w\|_{l^2} \lesssim & \varep^3E_w(t) |\mrp_0-\mrp_0^*|\|\hat\mbp\|_{\mbV_3^2} +\varep^4|\mrp_0-\mrp_0^*|  +E_w(t)|\hat\mbp\|_{\mbV_5^2}  +\varep \|v^\bot\|_{L^2} +\varep^{-1/2}\|\mathrm N(v^\bot)\|_{L^2}.
\end{aligned}
\eeqs
\end{cor}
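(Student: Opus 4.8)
The plan is to obtain this as the weighted counterpart of the $\hat\mbp$-equation in \eqref{eq-hatp}, reproducing the argument of Theorem \ref{thm-est-p} but testing the flow \eqref{eq-v-Phi} against the weighted meander modes $\beta_k Z_{\mbp,*}^{1k}$ in place of $Z_{\mbp,*}^{1k}$, for $k\geq 3$ with $k\in\Sigma_1$. First I would project \eqref{ODE-p-1} onto $\beta_k\bB_k$, multiply by $-\varep^{1/2}\mu_0/m_1$, and expand the three resulting inner products through their weighted versions: the left-hand side via Corollary \ref{cor-proj-Phi_t}, the residual via Corollary \ref{cor-proj-F-w}, and the orthogonal remainder via the weighted bound of Lemma \ref{lem-est-Rv}. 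Since $\mbD^{1/2}=\diag\{\beta_k\}$, the left-hand side produces $(\mbD^{1/2}\dot{\hat\mbp})_k$ together with the coupling term $\dot\mrp_0(\beta_k\mrp_k-\hat\mbp^T\mbU\beta_k\bB_k)$, in which $\beta_k\mrp_k=(\mbD^{1/2}\hat\mbp)_k$ and $\hat\mbp^T\mbU\beta_k\bB_k=(\mbD^{1/2}\mbU^T\hat\mbp)_k$, while the residual contributes $-\varep^3\mu_0 c_k\beta_k\mrp_k$ with $c_k$ given by \eqref{def-ck-1}.

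Next I would substitute the expression for $\dot\mrp_0$ established in Theorem \ref{thm-est-p}, use the expansion $\mu_0=1+O(|\mrp_0|,\varep)$, and combine the main terms exactly as in the passage leading to \eqref{ODE-pk-2}; these assemble into $-\varep^3\bigl[\tfrac{c_0}{R_0}(\mbD+\mbU^T)(\mrp_0-\mrp_0^*)+\tfrac{\varep}{R_0^4}(\mbD-\mbI)^2\bigr]\mbD^{1/2}\hat\mbp$ up to the one non-commuting piece $\mbD^{1/2}\mbU^T$ versus $\mbU^T\mbD^{1/2}$. The difference $[\mbD^{1/2},\mbU^T]\hat\mbp$ I would control by the same Fourier computation that underlies Lemma \ref{cor-bU}: because $\mbU_{jk}=\tfrac{1}{1+\mrp_0}\int_{\msI_\mbp}\tilde s_\mbp\tilde\Theta_j'\tilde\Theta_k\dd\tilde s_\mbp$ has a banded, rapidly decaying structure for which $(\beta_k-\beta_j)\mbU_{jk}$ is $\beta_j$ times a matrix of bounded $l^2_*$ norm, one gets $\|[\mbD^{1/2},\mbU^T]\hat\mbp\|_{l^2}\lesssim\|\hat\mbp\|_{\mbV_1^2}$ and the contribution $\varep^3|\mrp_0-\mrp_0^*|\|\hat\mbp\|_{\mbV_1^2}$ is absorbed into $\hat d_w$.

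The remaining step is to collect the $O(\cdot)\,e_k$ terms from the three weighted expansions into the stated bound on $\|\hat d_w\|_{l^2}$. This uses the a priori bounds \eqref{A-0}, the bound \eqref{est-dotp-1} on $\|\dot\mbp\|_{l^2}$ to remove $\dot\mbp$ from the right-hand side as in the proof of Theorem \ref{thm-est-p}, the uniform smallness of $E_w(t)$, and the interpolation embeddings of Lemma \ref{lem-est-V}, in particular $\|\hat\mbp\|_{\mbV_4^2}\lesssim\varep^{1/2}\|\hat\mbp\|_{\mbV_5^2}+\varep^{-1/2}\|\hat\mbp\|_{\mbV_3^2}$, to reshuffle the higher $\mbV_5^2$ norms created by the weighting. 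I expect this bookkeeping to be the main obstacle: testing against $\beta_k\sim\varep^{-1}$ costs a power of $\varep$ in every error term, so one must verify that the surviving $\|\hat\mbp\|_{\mbV_5^2}$ terms always carry a prefactor no larger than $E_w(t)$ and the $|\mrp_0-\mrp_0^*|$ terms a prefactor at least $\varep^4$; it is precisely this margin that forces the strengthened hypothesis $\|v^\bot\|_{L^2}\ll\varep^2$ and the use of $E_w(t)$ in place of the $\varep^{3/2}$ and $E(t)$ of Theorem \ref{thm-est-p}.
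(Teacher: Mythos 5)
Your proposal follows essentially the same route as the paper: project \eqref{ODE-p-1} against $\beta_k\bB_k$, expand via Corollaries \ref{cor-proj-Phi_t} and \ref{cor-proj-F-w} and the weighted estimate of Lemma \ref{lem-est-Rv}, substitute $\dot\mrp_0$ from \eqref{eq-hatp}, and close the bookkeeping with the bounds from Theorem \ref{thm-est-p}. Your explicit treatment of the $[\mbD^{1/2},\mbU^T]$ commutator is a point the paper handles more tersely (absorbing it into $\hat d_w$ via Lemma \ref{cor-bU}), and the only detail you omit is that the error term $\|\hat\mbp\|_{\mbV_2^2}^2\|\dot\mbp\|_{\mbV_1^2}$ from Corollary \ref{cor-proj-Phi_t} must be closed self-consistently using the weighted evolution equation itself, which is routine.
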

\begin{proof}
Multiplying equation \eqref{ODE-p-1} by the weight $\beta_k$ we have
\beqs
\left<\mathbb T\dot{\mathbf p}, \beta_k \bB_k\right>_{l^2}=-\left<\Pi_0 \mathrm F(\Phi_{\mathbf p} ), \beta_k Z_{\mathbf p, *}^{1k}\right>_{L^2}-\left<\Rem[v^\bot], \beta_k Z_{\mathbf p, *}^{1k}\right>_{L^2}.
\eeqs 
Multiplying this result by $-\varep^{-1/2}\mu_0/m_1$ and  applying Corollaries \ref{cor-proj-Phi_t} and \ref{cor-proj-F-w}  yields
\beq\label{ODE-pk-w}
\beta_k \dot \mrp_k    =-\dot{\mrp}_0\left(\beta_k  \mrp_k -\beta_k  \hat\mbp^T \mbU \bB_k \right) -\varep^3\mu_0 c_k \beta_k \mrp_k  +  d_{w,k},
\eeq 
where  from Lemma \ref{lem-est-Rv} the weighted remainder $\hat d_w=(d_{w,k})$ can be bounded as
\beqs
\begin{aligned}
\|\hat d_w\|_{l^2} \lesssim  & \varep^3\left(\|\hat\mbp\|_{\mbV_2^2} +\varep \right) |\mrp_0-\mrp_0^*|\|\hat\mbp\|_{\mbV_3^2} +\varep^4 E_w(t)\|\hat\mbp\|_{\mbV_5^2}  + \|\hat\mbp\|_{\mbV_2^2} \|\dot\mrp_{1,2}\|_{l^2} \\
&+(\varep +\|\hat\mbp\|_{\mbV_2^2}\|\hat\mbp\|_{\mbV_3^2}+\varep^{-2}\|v^\bot\|_{L^2})\|\dot\mbp\|_{l^2}  +\|\hat\mbp\|_{\mbV_2^2}^2 \|\dot \mbp\|_{\mbV_1^2} \\
&+\varep  \|v^\bot\|_{L^2} +\varep^{-1/2}\|\mathrm N(v^\bot)\|_{L^2}.
\end{aligned}
\eeqs
As in the unweighted case, we use the first equation of \eqref{eq-hatp} to substitute for $\dot\mrp_0$ in the right-hand side of \eqref{ODE-pk-w}, replace $c_k$ with its definition \eqref{def-ck-1}, and recall that $\mu_0=1+O(\varep, |\mrp_0|)$. These manipulations yield the equation  
\beqs
\beta_k \dot\mrp_k =-\varep^3  \left[\frac{c_0}{R_0} (\mrp_0-\mrp_0^*)\mbD_{kk} +\frac{\varep}{R_0^4} (\mbD_{kk}-1)^2 \right]\beta_k\mrp_k \bm 1_{\{k\geq 3\}}+\frac{ c_0}{R_0}(\mrp_0-\mrp_0^*) \beta_k\hat\mbp \mbU\bB_k +\tilde d_{w,k}, 
\eeqs
where from Lemma \ref{cor-bU} the revised remainder takes the form 
$$\tilde d_{w,k}=d_{w,k} +O(\|\hat\mbp\|_{\mbV_2^2}d_0, \varep^3E_w |\mrp_0-\mrp_0^*|\|\hat\mbp\|_{\mbV_3^2}, \varep^4E_w\|\hat\mbp\|_{\mbV_5^2}).$$ 
We drop the tilde, and the weighted evolution presented in the Lemma follows. The revised from of the remainder $\hat d_w$ satisfies the bound
\beq\label{est-d-w-2}
\begin{aligned}
\|\hat d_w\|_{l^2} \lesssim  & \varep^3E_w(t) |\mrp_0-\mrp_0^*|\|\hat\mbp\|_{\mbV_3^2} +\|\hat\mbp\|_{\mbV_2^2}|d_0|   +\varep^4E_w(t)\|\hat\mbp\|_{\mbV_5^2}+ \|\hat\mbp\|_{\mbV_2^2} \|\dot\mrp_{1,2}\|_{l^2} \\
&+(\varep +\|\hat\mbp\|_{\mbV_2^2}\|\hat\mbp\|_{\mbV_3^2}+\varep^{-2}\|v^\bot\|_{L^2})\|\dot\mbp\|_{l^2} +\|\hat\mbp\|_{\mbV_2^2}^2 \|\dot \mbp\|_{\mbV_1^2} +\varep \|v^\bot\|_{L^2} +\varep^{-1/2}\|\mathrm N(v^\bot)\|_{L^2}.
\end{aligned}
\eeq
Since $\dot\mbp=(\dot\mrp_0, \dot\mrp_{1,2}, \dot\hat\mbp)$, from the evolution system in the Lemma, the definition of $\mbV_1^2$, and the  bounds on $\dot \mrp_0, \dot\mrp_{1,2}$ from Theorem \ref{thm-est-p}  we find
\beqs
\begin{aligned}
\|\dot\mbp\|_{\mbV_1^2} & \lesssim  |\dot\mrp_0| +|\dot\mrp_{1,2}|+ \|\mbD^{1/2}\dot{\hat\mbp}\|_{l^2}\\
&\lesssim  \varep^3|\mrp_0-\mrp_0^*| +|d_0|+\|d'\|_{l^2} +\varep^4\|\mbp\|_{\mbV_5^2}+\|\hat d_w\|_{l^2}.
\end{aligned}
\eeqs
Using this estimate and the bounds on $d_0, \|d'\|_{l^2}, \|\dot\mbp\|_{l^2}$  from Theorem \ref{thm-est-p}  in the right-hand side of \eqref{est-d-w-2}, we obtain the desired bound on $\hat d_w$.

\end{proof}

\begin{lemma}\label{lem-mbD} The diagonal matrices $\mbD$ and $\mbD-\mbI$ are uniformly comparable as maps from $l^2$ to $l^2$, in particular, 
\beqs
\frac{1}{2}\|\mbD\|_{l_2^*} \leq \|\mbD-\mbI\|_{l_2^*}\leq \|\mbD\|_{l_2^*}.
\eeqs
\end{lemma}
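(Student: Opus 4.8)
The plan is to exploit that both $\mbD$ and $\mbD-\mbI$ are \emph{diagonal} and positive, so their $l_2^*$ operator norms are simply the largest diagonal entries, and then to reduce everything to an elementary scalar inequality. First I would recall from \eqref{def-Theta-beta} that the Laplace--Beltrami labels satisfy $\beta_{2k-1}=\beta_{2k}=k$, so that for every index $j\geq 3$ appearing in $\mbD=\diag\{\beta_3^2,\dots,\beta_{N_1-1}^2\}$ one has $\beta_j\geq\beta_3=2$; consequently every diagonal entry of $\mbD$ obeys $\beta_j^2\geq 4$. (The modes $\mrp_1,\mrp_2$, which would carry $\beta_1=\beta_2=1$, are split off as translation parameters in \eqref{def-hatp} and are not part of $\mbD$, which is exactly what makes the uniform gap available.)

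Next I would record the scalar fact that for every real $t\geq 4$ one has $t/2\leq t-1\leq t$, the right inequality being trivial and the left being equivalent to $t\geq 2$. Applying this entrywise with $t=\beta_j^2$ gives, for every admissible $j$,
\beqs
\tfrac12\,\beta_j^2\leq \beta_j^2-1\leq \beta_j^2.
\eeqs
Squaring and summing against an arbitrary $\mbq=(\mathrm q_j)\in l^2$ yields $\tfrac14\|\mbD\mbq\|_{l^2}^2\leq\|(\mbD-\mbI)\mbq\|_{l^2}^2\leq\|\mbD\mbq\|_{l^2}^2$, and taking the supremum over $\|\mbq\|_{l^2}=1$ produces the asserted bound $\tfrac12\|\mbD\|_{l_2^*}\leq\|\mbD-\mbI\|_{l_2^*}\leq\|\mbD\|_{l_2^*}$. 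Alternatively, since both matrices are diagonal one may directly identify $\|\mbD\|_{l_2^*}=\beta_{N_1-1}^2$ and $\|\mbD-\mbI\|_{l_2^*}=\beta_{N_1-1}^2-1$ and invoke the same scalar inequality with $t=\beta_{N_1-1}^2$.

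There is no genuine obstacle here: the proof is a one-line verification once the structural input $\beta_3=2$ is in hand. The only point worth a sentence of care is that the statement concerns the $l^2\to l^2$ operator norm; because $\mbD$ and $\mbD-\mbI$ are diagonal, this norm coincides with the $l^\infty$ norm of the diagonal sequence, which legitimizes the reduction to the scalar inequality. This comparison is exactly what is needed later to absorb the regularizing term $\tfrac{\varep}{R_0^4}(\mbD-\mbI)^2$ in \eqref{eq-hatp} against $\varep\,\mbD^2$-type quantities when closing the energy estimates.
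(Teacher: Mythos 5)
Your proof is correct and follows essentially the same route as the paper: both reduce the operator-norm comparison to the entrywise scalar inequality $\tfrac12\beta_k^2\leq\beta_k^2-1\leq\beta_k^2$, valid because $\beta_k\geq\beta_3=2$ for the indices $k\geq 3$ appearing in $\mbD$. Your additional remark that the entrywise bound in fact yields the stronger vector inequality $\tfrac12\|\mbD\mbq\|_{l^2}\leq\|(\mbD-\mbI)\mbq\|_{l^2}$ is a harmless (and useful) elaboration of the same argument.
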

\begin{proof}
Since $\mbD$ and $\mbD-\mbI$ are both diagonal, we only need compare their diagonal elements. Indeed, their $k$-th diagonal term are $\beta_k^2$ and $\beta_k^2-1$, for $\mbD$ and $\mbD-\mbI$ respectively.   The Lemma follows directly from the relationship
\beqs
\frac{1}{2}\beta_k^2 \leq \beta_k^2-1\leq \beta_k^2
\eeqs
since $\beta_k^2>2$ for $k\geq 3$, see \eqref{def-Theta-beta}. 
\end{proof}

\section{Nonlinear stability and normal velocity}\label{sec-main-thm}
We close the nonlinear estimates which establish the stability of the bilayer manifold defined around the circular equilibrium, and derive the normal velocity of the interfaces $\Gamma_\mbp$ that corresponds to the meander evolution. A key to the analysis is that the 
operator $\Pi_0\mbL_\mbp$ is uniformly coercive on the space $L^2$-orthogonal to the slow space $\mathcal Z_*(\mbp)$. However the slow space includes the pearling space $\mathcal Z_*^0(\mbp)$ on which the operator is weakly coercive.
This dichotomy motivates a further decomposition of the orthogonal perturbation $ v^\bot$ in pearling and fast modes as
\begin{equation}\label{decomp-v}
 v^\bot=Q(  x, t)+w(  x, t), 
 \quad w\in \mathcal Z_*^\bot(\mbp, \rho),
\end{equation}
where $Q=\Pi_{\mcZ_*^0}v^\bot\in \mathcal Z_{*}^0$ admits the form
\beqs
Q=\sum_{j\in \Sigma_0} \mrq_j Z_{\mbp, *}^{0j},
\eeqs 
for some $\mbq=\mbq(t)=(\mrq_i)_{i\in \Sigma_0}$. The decomposition is well posed by Lemma 4.10 of \cite{CP-linear} which follows from the almost-orthogonality \eqref{ortho-Z*}. Moreover, this Lemma establishes the bounds
\beqs
\|Q\|_{\Hin}+\|w\|_{\Hin}\lesssim \|v^\bot\|_{\Hin}; \qquad \|Q\|_{\Hin}\sim \|\mbq\|_{l^2}. 
\eeqs 
\subsection{Control  of $\mbq,w$}
The decomposition \eqref{decomp-v} of $ v^\bot$, allows us to write \eqref{eq-v-Phi} as an evolution for  $Q$ 
\begin{equation}\label{eq-Q}
\p_t Q+\Pi_0\mbL_{\mathbf p}  Q=-\p_t \Phi_{\mathbf p}  -\p_t w-\Pi_0 \mathrm F(\Phi_{\mathbf p} )-\Pi_0 \mbL_{\mathbf p}  w-\Pi_0 \mathrm N( v^\bot).
\end{equation}
Taking the $L^2$ projection onto $Q$ and onto $\p_t Q$  we obtain the $l^2$ estimates on the evolution of the pearling parameters $\mbq$.

\begin{lemma}\label{lem-est-q} Under  assumptions \eqref{A-0}, if the pearling stability condition $(\mathbf{PSC})$ \eqref{cond-P-stab} holds uniformly independent of $\varep,\rho$, then there exists $C>0$ independent of $\varep$ such that the pearling parameters $\mathbf q=(\mathrm q_k(t))_{k\in \Sigma_0}$ obey
\begin{equation*}
\begin{aligned}
&\p_t\|\mathbf q\|_{l^2}^2+C\varep\|\mathbf q\|_{l^2}^2 \lesssim \varep \rho^{-4}\|\mbL_\mbp w\|_{L^2}^2+\varep^{-1}\|\mathrm N(v^\bot)\|_{L^2}^2+\varep^8\|\hat{\mathbf p}\|_{\mbV_4^2}^2.
\end{aligned}
\end{equation*}
Moreover, the $l^2$-norm of the time derivative $\dot\mbq$ can be bounded by
\beqs
\|\dot\mbq\|_{l^2}^2\lesssim \|\mbq\|_{L^2}^2 + \varep^2\|w\|_{L^2}^2+\varep\|\dot\mbp\|_{l^2}^2+\|\mathrm N(v^\bot)\|_{L^2}^2+\varep^{9}\|\hat\mbp\|_{\mbV_4^2}^2.
\eeqs
\end{lemma}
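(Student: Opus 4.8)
The plan is to derive both estimates by testing the evolution equation \eqref{eq-Q} for $Q$ against suitable test functions, exploiting the weak coercivity of $\Pi_0\mbL_\mbp$ on the pearling space $\mcZ_*^0$, which follows from the positive-definiteness of the pearling sub-block $\mbM^*(0,0)$ under $(\mathbf{PSC})$, and controlling every cross term on the right-hand side. First I would take the $L^2$ inner product of \eqref{eq-Q} with $Q$. The left-hand side produces $\frac12\p_t\|Q\|_{L^2}^2 + \langle \Pi_0\mbL_\mbp Q, Q\rangle_{L^2}$; since $Q\in\mcZ_*^0$ the bilinear form $\langle\Pi_0\mbL_\mbp Q,Q\rangle_{L^2}$ equals $\mbq^T\mbM^*(0,0)\mbq$ up to the almost-orthogonality errors \eqref{ortho-Z*}, which by $(\mathbf{PSC})$ is bounded below by $c\varep(1+\mrp_0)\|\mbq\|_{l^2}^2$. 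Using $\|Q\|_{L^2}\sim\|\mbq\|_{l^2}$ this gives the coercive term $C\varep\|\mbq\|_{l^2}^2$ on the left. The terms on the right of \eqref{eq-Q} must each be paired with $Q$ and absorbed: $\langle\p_t\Phi_\mbp,Q\rangle_{L^2}$ and $\langle\Pi_0\mrF(\Phi_\mbp),Q\rangle_{L^2}$ are projections onto the pearling space, which by the analogues of Lemmas \ref{lem-def-bT} and \ref{lem-proj-F} (and the residual estimate Lemma \ref{lem-est-L2-F}) contribute terms of size $\varep^{9/2}\|\hat\mbp\|_{\mbV_4^2}$ plus lower order, giving the $\varep^8\|\hat\mbp\|_{\mbV_4^2}^2$ term after applying Young's inequality against $\varep\|\mbq\|_{l^2}^2$; $\langle\p_t w + \Pi_0\mbL_\mbp w, Q\rangle_{L^2}$ is where the $\mbL_\mbp w$ term enters — here I would integrate $\p_t w$ by parts in time is not available, so instead rewrite $\langle\p_t w,Q\rangle_{L^2}$ using that $w\perp\mcZ_*$ so $\langle w,Q\rangle=0$ forces $\langle\p_t w,Q\rangle_{L^2}=-\langle w,\p_t Q\rangle_{L^2}$...

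actually the cleaner route for the $w$ coupling is to bound $\Pi_{\mcZ_*^0}\p_t w$ and $\Pi_{\mcZ_*^0}\mbL_\mbp w$ directly: the first via $\|\Pi_{\mcZ_*^0}\p_t w\| = \|\Pi_{\mcZ_*^0}(\p_t v^\bot - \p_t Q)\|$ and handled as in Lemma \ref{lem-est-Rv}'s treatment of $\p_t v^\bot$, and the second term $\Pi_{\mcZ_*^0}\Pi_0\mbL_\mbp w$ bounded using the coercivity Lemma \ref{lem-coer} which gives $\|\mbL_\mbp w\|_{L^2}^2\geq C\rho^2\langle\mbL_\mbp w,w\rangle_{L^2}\geq C\rho^4(\varep^4\|w\|_{H^2}^2+\|w\|_{L^2}^2)$, so that $\|w\|_{L^2}\lesssim\rho^{-2}\|\mbL_\mbp w\|_{L^2}$ and the coupling contributes $\varep\rho^{-4}\|\mbL_\mbp w\|_{L^2}^2$ after Young; finally $\langle\Pi_0\mrN(v^\bot),Q\rangle_{L^2}$ gives $\varepsilon^{-1}\|\mrN(v^\bot)\|_{L^2}^2$ after Young against $\varepsilon\|\mbq\|^2$. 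Collecting and using $\|Q\|_{L^2}\sim\|\mbq\|_{l^2}$ yields the first inequality.

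For the second bound, on $\|\dot\mbq\|_{l^2}$, I would instead take the $L^2$ inner product of \eqref{eq-Q} with $\p_t Q$, or equivalently project \eqref{eq-Q} onto $\mcZ_*^0$ and solve for $\dot\mbq$ after accounting for the $\mbp$-dependence of the basis $Z_{\mbp,*}^{0j}$ through \eqref{est-L2-p-tZ*}. Writing $\dot Q = \sum_j \dot\mrq_j Z_{\mbp,*}^{0j} + \sum_j\mrq_j\p_t Z_{\mbp,*}^{0j}$, the second sum is $O(\varep^{-1}\|\dot\mbp\|_{l^2}\|\mbq\|_{l^2})$ in $L^2$ by \eqref{est-L2-p-tZ*}, and the Gram matrix of $\{Z_{\mbp,*}^{0j}\}$ is $(1+\mrp_0)\mbI + O(\varep^2)$ by \eqref{ortho-Z*}, hence invertible; so $\|\dot\mbq\|_{l^2}\lesssim \|\Pi_{\mcZ_*^0}\p_t Q\|_{L^2} + \varep^{-1}\|\dot\mbp\|\|\mbq\|$. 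From \eqref{eq-Q}, $\Pi_{\mcZ_*^0}\p_t Q = -\Pi_{\mcZ_*^0}(\Pi_0\mbL_\mbp Q + \p_t\Phi_\mbp + \p_t w + \Pi_0\mrF(\Phi_\mbp) + \Pi_0\mbL_\mbp w + \Pi_0\mrN(v^\bot))$, and I bound each piece in $L^2$: $\|\Pi_0\mbL_\mbp Q\|\lesssim\varepsilon\|\mbq\|$ (since $\mbM^*(0,0)$ has operator norm $O(\varepsilon)$), $\|\Pi_{\mcZ_*^0}\p_t\Phi_\mbp\|$ and $\|\Pi_{\mcZ_*^0}\p_t w\|$ are $O(\varepsilon^{-1}\|\dot\mbp\|)$ type by the time-derivative estimates, $\|\Pi_{\mcZ_*^0}\Pi_0\mrF(\Phi_\mbp)\|\lesssim\varepsilon^{9/2}\|\hat\mbp\|_{\mbV_4^2}$ plus lower order from Lemma \ref{lem-est-L2-F}, $\|\Pi_{\mcZ_*^0}\Pi_0\mbL_\mbp w\|\lesssim\|\mbL_\mbp w\|$ which is controlled as $\lesssim\varepsilon\|w\|$ up to the coercivity relation, and $\|\mrN(v^\bot)\|$ directly. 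Squaring, using Young's inequality, absorbing the $\|\dot\mbp\|$ contributions (which get the $\varepsilon\|\dot\mbp\|_{l^2}^2$ weight after noting $\|\mbq\|\lesssim1$), and relabeling constants gives the stated $\|\dot\mbq\|_{l^2}^2\lesssim\|\mbq\|_{L^2}^2 + \varepsilon^2\|w\|_{L^2}^2 + \varepsilon\|\dot\mbp\|_{l^2}^2 + \|\mrN(v^\bot)\|_{L^2}^2 + \varepsilon^9\|\hat\mbp\|_{\mbV_4^2}^2$.

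The main obstacle I anticipate is the careful bookkeeping of the coupling term $\Pi_{\mcZ_*^0}\Pi_0\mbL_\mbp w$ with $w\in\mcZ_*^\bot$: because the pearling modes are only $O(\varepsilon)$-coercive, any coupling of size worse than $\varepsilon\|w\|$ would destroy the estimate, and one must use both the frequency gap \eqref{MP-gap} and the almost-invariance of the modified slow space — this is exactly the analogue for the pearling block of what Lemma \ref{thm-coupling est} provides for the meander block, and getting the powers of $\varepsilon$ and $\rho$ to line up (hence the appearance of $\rho^{-4}$) is the delicate point. The secondary difficulty is handling $\p_t w$: one cannot differentiate $w$ freely, so I rely on $\langle w, Z_{\mbp,*}^{0j}\rangle=0$ and differentiate that identity in $t$ to trade $\langle\p_t w, Z_{\mbp,*}^{0j}\rangle$ for $-\langle w,\p_t Z_{\mbp,*}^{0j}\rangle$, which is then controlled by \eqref{est-L2-p-tZ*} and the a priori bound $\|\dot\mbp\|_{l^2}\lesssim\varepsilon^3$.
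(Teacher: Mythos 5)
Your strategy---testing \eqref{eq-Q} against $Q$ to get the decay inequality and projecting \eqref{eq-Q} onto $\mcZ_*^0$ (after inverting the nearly diagonal Gram matrix \eqref{ortho-Z*}) to get the $\dot\mbq$ bound---is the natural one and is, in substance, what the proof must do; note, however, that the paper itself gives no argument here beyond citing Lemma 5.4 of \cite{CP-linear} and observing that the circular base interface annihilates the geometric source terms, so your derivation is doing strictly more work than the text. Two caveats on that work. First, the step everything hinges on is the \emph{pearling} analogue of Lemma \ref{thm-coupling est}, i.e.\ a bound of the form $\|\Pi_{\mcZ_*^0}\mbL_\mbp w\|_{L^2}\lesssim \varep\|w\|_{L^2}$ for $w\in\mcZ_*^\bot$: the trivial bound $\|\Pi_{\mcZ_*^0}\mbL_\mbp w\|_{L^2}\leq\|\mbL_\mbp w\|_{L^2}$ followed by Young against the coercive term $\varep\|\mbq\|_{l^2}^2$ yields $\varep^{-1}\|\mbL_\mbp w\|_{L^2}^2$, which is strictly \emph{worse} than the stated $\varep\rho^{-4}\|\mbL_\mbp w\|_{L^2}^2$ in the operating regime $\varep\ll\rho^2$. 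You correctly flag this as the delicate point, but neither you nor this paper proves it---only the meander version is stated here---so the argument closes only modulo that import from the companion paper. The same remark applies to the $O(\varep^{9/2}\|\hat\mbp\|_{\mbV_4^2})$ size of the pearling projections of $\p_t\Phi_\mbp$ and $\Pi_0\mrF(\Phi_\mbp)$: this rests on the parity cancellation $\int_{\mbR}\phi_0'\,\psi_0\dd z=0$ ($\psi_0$ even, $\phi_0'$ odd), which deserves to be made explicit since Lemmas \ref{lem-def-bT} and \ref{lem-proj-F} only treat projections onto modes built from $\psi_1=\phi_0'/m_1$. Second, two small bookkeeping points: the statement controls $\p_t\|\mbq\|_{l^2}^2$ rather than $\p_t\|Q\|_{L^2}^2$, and passing between them requires differentiating the Gram matrix in time, contributing an extra $O(\|\dot\mbp\|_{l^2}\|\mbq\|_{l^2}^2)=O(\varep^3\|\mbq\|_{l^2}^2)$ that is absorbed into $C\varep\|\mbq\|_{l^2}^2$; and your claim that $\|\mbM^*(0,0)\|_{l^2_*}=O(\varep)$ is optimistic, since the diagonal carries the $\La_{0i}^2\leq\rho$ contributions---only an $O(1)$ operator bound is needed for the $\dot\mbq$ estimate, so nothing breaks, but the constant is not $O(\varep)$.
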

\begin{proof}
The proof is a simplification of that of Lemma 5.4 of \cite{CP-linear} since the base interface $\Gamma_0$ is a circle, so the contribution from the geometric quantities of the base interface $\Gamma_0$ are zero. 
\end{proof}

The equation \eqref{eq-Q} can be rewritten as an equation of the  fast modes $w$, 
\begin{equation}\label{eq-w}
\p_t w+\Pi_0 \mbL_{\mathbf p}  w=-\p_t \Phi_{\mathbf p} -\p_t Q-\Pi_0 \mathrm F(\Phi_{\mathbf p} ) -\Pi_0\mbL_{\mathbf p}  Q-\Pi_0\mathrm N( v^\bot). 
\end{equation}
By taking $L^2$-inner product with $\mbL_\mbp w$, we develop two $H^2$ estimates of $w$ by dealing with the residual differently. These estimates have utility on  different time scales. 
\begin{lemma}\label{lem-est-w} 
Under the a priori assumptions \eqref{A-0}, the function $w\in\mathcal Z_*^\bot$, obeys
\beqs
\frac{\dd}{\dd t}\left<\mbL_\mbp w,w\right>_{L^2}+\frac{1}{2}\|\mbL_\mbp w\|_{L^2}^2 \lesssim \varep^{-1}\|\dot\mbp\|_{l^2}^2 + \varep^2\rho^{-4}\|\mbq\|_{l^2}^2 +\varep^5|\mrp_0-\mrp_0^*|^2 +\varep^7(1+\|\hat\mbp\|_{\mbV_4^2}^2)+\|\mathrm N(v^\bot)\|_{L^2}^2; 
\eeqs
The fast modes also obey the second estimate
\begin{equation}\label{est-w-1}
\begin{aligned}
\frac{\mrd }{\mrd  t}\left<\mbL_{\mathbf p}  w, w\right>_{L^2} +\frac{1}{2}\|\mbL_{\mathbf p}  w\|_{L^2}^2\lesssim& \varep^{-1}\|\dot{\mathbf p}\|_{l^2}^2+\varep^2\rho^{-4} \|\mathbf q\|_{l^2}^2+\varep^5(|\mathrm p_0-\mathrm p_0^*|^2+\|\hat{\mathbf p}\|_{\mbV_2^2}^2)\\
&+\varep^7\|\hat{\mathbf p}\|_{\mbV_4^2}^2+\|\mathrm N( v^\bot)\|_{L^2}^2.
\end{aligned}
\end{equation}
\end{lemma}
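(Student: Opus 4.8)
The plan is to run a coercivity–based energy estimate on the $w$–equation \eqref{eq-w}. Pairing it in $L^2$ with $\mbL_\mbp w$, using that $\mbL_\mbp$ is self–adjoint under periodic boundary conditions and that the range of $\Pi_0$ consists of mean–zero functions, one obtains
\begin{equation*}
\begin{aligned}
\tfrac12\tfrac{\dd}{\dd t}\langle\mbL_\mbp w,w\rangle_{L^2}+\|\Pi_0\mbL_\mbp w\|_{L^2}^2={}&\tfrac12\langle(\p_t\mbL_\mbp)w,w\rangle_{L^2}\\
&-\big\langle \p_t\Phi_\mbp+\p_t Q+\Pi_0\mrF(\Phi_\mbp)+\Pi_0\mbL_\mbp Q+\Pi_0\mathrm N(v^\bot),\,\mbL_\mbp w\big\rangle_{L^2}.
\end{aligned}
\end{equation*}
First I would replace $\|\Pi_0\mbL_\mbp w\|_{L^2}^2$ on the left by $\tfrac12\|\mbL_\mbp w\|_{L^2}^2$: since $v^\bot$ is massless and the $\mcZ_*^0$–modes integrate against the bulk to $O(\varep\|\hat\mbp\|_{\mbV_2^2}\|\mbq\|_{l^2})$ (by parity of $\psi_0$ and orthogonality of the $\tilde\Theta_j$), $w$ is mean–zero up to such a term, and because $\mbL_\mbp 1$ is a nonzero bulk constant plus a localized function, $|\langle\mbL_\mbp w\rangle_{L^2}|\lesssim\varep^{1/2}\|w\|_{L^2}+\varep\|\mbq\|_{l^2}$; hence $\|\Pi_0\mbL_\mbp w\|^2\ge\|\mbL_\mbp w\|^2-C\varep\|w\|^2-C\varep^2\|\mbq\|^2$, where the $\|w\|^2$–loss is re-absorbed using the coercivity $\|\mbL_\mbp w\|^2\gtrsim\rho^2\langle\mbL_\mbp w,w\rangle\gtrsim\rho^2\|w\|^2$ of Lemma \ref{lem-coer} for $\varep_0$ small in $\rho$, and $\varep^2\|\mbq\|^2\lesssim\varep^2\rho^{-4}\|\mbq\|^2$ is admissible. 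The commutator $\tfrac12\langle(\p_t\mbL_\mbp)w,w\rangle$ is next: $\p_t\mbL_\mbp$ is a multiplication operator plus a first-order (in $\varep^2\Delta$) operator whose coefficients are controlled by $\|\p_t\Phi_\mbp\|_{L^\infty}+\|\varep^2\Delta\p_t\Phi_\mbp\|_{L^\infty}\lesssim\varep^{-3/2}\|\dot\mbp\|_{l^2}\lesssim\varep^{3/2}$ (Lemma \ref{lem-Phi_t-p}, the a priori bound \eqref{A-0}, $N_1\lesssim\varep^{-1}$), so this term is $\lesssim\varep^{3/2}\rho^{-2}\langle\mbL_\mbp w,w\rangle$ and is absorbed into a fraction of $\|\mbL_\mbp w\|^2$ once $\varep_0$ is small relative to $\rho$.

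For the source terms I would Cauchy--Schwarz against $\mbL_\mbp w$ and Young, choosing the weights so the $\|\mbL_\mbp w\|^2$ contributions sum to at most $\tfrac12\|\mbL_\mbp w\|^2$. The term $\langle\p_t\Phi_\mbp,\mbL_\mbp w\rangle$ contributes $\lesssim\|\p_t\Phi_\mbp\|_{L^2}^2\lesssim\varep^{-1}\|\dot\mbp\|_{l^2}^2$, the gain of one power of $\varep$ over the naive $\varep^{-2}$ coming from $\p_{\mrp_j}\Phi_\mbp=\varep^{-1}(\phi_0'+\varep\phi_1')\xi_j+\varep\mathrm R_j$ with $\phi_0'$ localized on the interface. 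The term $\langle\Pi_0\mathrm N(v^\bot),\mbL_\mbp w\rangle$ contributes $\lesssim\|\mathrm N(v^\bot)\|_{L^2}^2$. Writing $\p_t Q=\sum_j\dot\mrq_j Z_{\mbp,*}^{0j}+\sum_j\mrq_j\p_t Z_{\mbp,*}^{0j}$, the second sum is estimated by $\|\sum_j\mrq_j\p_t Z_{\mbp,*}^{0j}\|_{L^2}\lesssim\varep^{-3/2}\|\dot\mbp\|_{l^2}\|\mbq\|_{l^2}$ via \eqref{est-L2-p-tZ*} and $N_0\lesssim\varep^{-1}$, so by \eqref{A-0} it contributes $\lesssim\varep^3\|\mbq\|_{l^2}^2\lesssim\varep^2\rho^{-4}\|\mbq\|_{l^2}^2$ after Young.

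The genuinely delicate pieces are $\langle\sum_j\dot\mrq_j Z_{\mbp,*}^{0j},\mbL_\mbp w\rangle$ and $\langle\Pi_0\mbL_\mbp Q,\mbL_\mbp w\rangle$, since a crude Cauchy--Schwarz produces $\|\dot\mbq\|_{l^2}^2$ (carrying an $O(1)$ coefficient on $\|\mbq\|^2$ through Lemma \ref{lem-est-q}) and $\rho^2\|\mbq\|^2$ respectively, both exceeding the claimed $\varep^2\rho^{-4}\|\mbq\|^2$. The resolution is to use self-adjointness to move one $\mbL_\mbp$ onto $w$ and exploit $w\perp\mcZ_*$: writing $\mbL_\mbp Q=g_0+g_\bot$ with $g_0=\Pi_{\mcZ_*}\mbL_\mbp Q$ (size $\lesssim\rho\|\mbq\|$) and $g_\bot=\Pi_{\mcZ_*^\bot}\mbL_\mbp Q$ (size $\lesssim\varep^2\|\mbq\|$ by the $\mcZ_*^0$–analogue of Lemma \ref{thm-coupling est} in \cite{CP-linear}), the $g_\bot$–part is handled by Cauchy--Schwarz and the $g_0$–part by $\langle g_0,\mbL_\mbp w\rangle=\langle\Pi_{\mcZ_*^\bot}\mbL_\mbp g_0,w\rangle$, gaining a second factor $\varep^2$ from near-invariance; a Young inequality with weight $\sim\varep^2\rho^{-3}$ turns this into a fraction of $\|\mbL_\mbp w\|^2$ plus $\lesssim\varep^4\rho^{-2}\|\mbq\|^2\lesssim\varep^2\rho^{-4}\|\mbq\|^2$. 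For $\sum_j\dot\mrq_j Z_{\mbp,*}^{0j}$ the same device with weight $\sim\varep^{3/2}\rho^{-4}$ reduces the contribution to $\lesssim\varep^3\rho^{-4}\|\dot\mbq\|_{l^2}^2$, and Lemma \ref{lem-est-q} then bounds $\|\dot\mbq\|_{l^2}^2$ by $\|\mbq\|^2+\varep^2\|w\|^2+\varep\|\dot\mbp\|^2+\|\mathrm N(v^\bot)\|^2+\varep^9\|\hat\mbp\|_{\mbV_4^2}^2$, each of which, multiplied by $\varep^3\rho^{-4}$, lands inside the claimed right-hand side (the $\|w\|^2$ and $\|\dot\mbp\|^2$ pieces being absorbed using coercivity for $\varep_0$ small); the $\|\hat\mbp\|_{\mbV_4^2}$–dependent parts of the coupling/invariance bounds are carried with a companion small factor and end up in the admissible $\varep^7\|\hat\mbp\|_{\mbV_4^2}^2$ term.

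Only $\langle\Pi_0\mrF(\Phi_\mbp),\mbL_\mbp w\rangle\le\eta\|\mbL_\mbp w\|^2+C_\eta\|\Pi_0\mrF(\Phi_\mbp)\|_{L^2}^2$ is treated in two ways. For \eqref{est-w-1} one inserts the Lipschitz bound of Lemma \ref{lem-est-L2-F}, $\|\Pi_0\mrF(\Phi_\mbp)\|_{L^2}\lesssim\varep^{5/2}|\mrp_0-\mrp_0^*|+\varep^{5/2}\|\hat\mbp\|_{\mbV_2^2}+\varep^{7/2}\|\hat\mbp\|_{\mbV_4^2}$, whose square is exactly $\varep^5(|\mrp_0-\mrp_0^*|^2+\|\hat\mbp\|_{\mbV_2^2}^2)+\varep^7\|\hat\mbp\|_{\mbV_4^2}^2$. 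For the first inequality one instead uses the sharper componentwise bounds $\|\mathrm F_2\|_{L^2}\lesssim\varep^{1/2}|\sigma-\sigma^*|+\varep^{3/2}$ and $\|\mathrm F_3-\mathrm F_3^\infty\|_{L^2}+\|\mathrm F_{\ge4}-\mathrm F_{\ge4}^\infty\|_{L^2}\lesssim\varep^{1/2}(1+\|\hat\mbp\|_{\mbV_4^2})$ from the same lemma, with $|\sigma-\sigma^*|\lesssim|\mrp_0-\mrp_0^*|+\varep^2\|\hat\mbp\|_{\mbV_2^2}$ (Corollary \ref{cor-def-hatla}) and $\|\hat\mbp\|_{\mbV_2^2}\le\|\hat\mbp\|_{\mbV_4^2}$, yielding $\|\Pi_0\mrF(\Phi_\mbp)\|_{L^2}^2\lesssim\varep^5|\mrp_0-\mrp_0^*|^2+\varep^7(1+\|\hat\mbp\|_{\mbV_4^2}^2)$, i.e. the cruder $O(\varep^7)$ term in place of $\varep^5\|\hat\mbp\|_{\mbV_2^2}^2$. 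Collecting all contributions, multiplying through by $2$, and absorbing the $\langle\mbL_\mbp w,w\rangle$– and $\|w\|^2$–residues via Lemma \ref{lem-coer} (for $\varep_0$ small depending on $\rho$), gives both inequalities. I expect the main obstacle to be exactly the bookkeeping of the previous paragraph: making the coefficient of $\|\mbq\|_{l^2}^2$ come out as $\varep^2\rho^{-4}$ rather than the naive $\rho^2$, which forces one to use both $w\perp\mcZ_*$ and the $O(\varep^2)$ near-invariance of the slow space, and to tune the Young weights so that the reappearance of $\|\dot\mbq\|_{l^2}^2$ through Lemma \ref{lem-est-q} is pre-multiplied by a high enough power of $\varep$, all while keeping the $\rho$–powers under control.
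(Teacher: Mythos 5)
Your proposal is correct and follows essentially the same route as the paper: pair the $w$-equation with $\mbL_\mbp w$, estimate the source terms via Cauchy--Schwarz/Young with coercivity (Lemma \ref{lem-coer}) used to absorb the $\|w\|_{L^2}^2$ and $\left<\mbL_\mbp w,w\right>_{L^2}$ residues, bound the residual term by Lemma \ref{lem-est-L2-F} for \eqref{est-w-1} and by the componentwise bounds for the first inequality, and eliminate $\|\dot\mbq\|_{l^2}^2$ via Lemma \ref{lem-est-q}. The only difference is that the paper delegates the term-by-term work (including the near-invariance/self-adjointness device needed to get the $\varep^2\rho^{-4}$ coefficient on $\|\mbq\|_{l^2}^2$) to Lemma 5.3 of \cite{CP-linear}, whereas you reconstruct those details explicitly and correctly.
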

\begin{proof}
The first estimate is derived in Lemma 5.3 of \cite{CP-linear}, we address the second estimate. Taking the $L^2$-inner product of \eqref{eq-w} with 
$\mbL_\mbp w$  we estimate each term as in the first case, except for the residual $\mrF(\Phi_\mbp)$. This yields the bound
\beqs
\begin{aligned}
\frac{\dd}{\dd t} \left<\mbL_\mbp w, w\right>_{L^2} + \|\mbL_\mbp w\|_{L^2}^2\lesssim & \left(\varep^{1/2}\|\dot\mbp\|_{l^2}+\varep\rho^{-2}(\|\mbq\|_{l^2} +\|\dot\mbq\|_{l^2}) +\|\mathrm N\|_{L^2}\right)\|\mbL_\mbp w\|_{L^2}\\
&- 2\left<\Pi_0 \mrF(\Phi_\mbp), \mbL_\mbp w\right>_{L^2}.
\end{aligned}
\eeqs
Applying H\"older's and Young's inequality this reduces to the estimate
\beqs
\begin{aligned}
\frac{\dd}{\dd t} \left<\mbL_\mbp w, w\right>_{L^2} + \|\mbL_\mbp w\|_{L^2}^2\lesssim & \varep \|\dot\mbp\|_{l^2}^2 +\varep^2\rho^{-4}(\|\mbq\|_{l^2}^2 +\|\dot\mbq\|_{l^2}^2) +\|\mathrm N\|_{L^2}^2+\|\Pi_0 \mrF(\Phi_\mbp)\|_{L^2}^2.
\end{aligned}
\eeqs
The  second estimate on $w$ follows from the $L^2$-bound on the residual $\Pi_0\mrF(\Phi_\mbp)$ given in Lemma\, \ref{lem-est-L2-F},  and the $l^2$-bound on $\dot\mbq$ in Lemma \,\ref{lem-est-q}.
\end{proof}


The Lemmas \ref{lem-est-w}, \ref{lem-est-q} and Theorem \ref{thm-est-p}, incorporate $L^2$-bounds of the nonlinear term $\mathrm N(v^\bot)$. This quantity, and the $L^\infty$ norm of the orthogonal perturbation are bounded in terms of the fast and the pearling modes in Lemma 5.9 of \cite{CP-linear}, which we quote below for completeness.

\begin{lemma}\label{lem-est-N} If $\|v^\bot\|_{L^\infty(\Omega)}$ is bounded independent of $\varep$, then
\begin{equation}
\label{eq-Nest}
\|\mathrm N( v^\bot)\|_{L^2}\lesssim\varep^{-1}\Big(\rho^{-2}\left<\mbL_{\mathbf p}  w, w\right>_{L^2} +\|\mathbf q(t)\|_{l^2}^2\Big),
\end{equation}
Moreover, if $v^\bot=w+Q$ as in \eqref{decomp-v} then it admits the upper bound
\begin{equation*}
\| v^\bot\|_{L^\infty}\lesssim \varep^{-1}\left(\rho^{-1}\left<\mbL_{\mathbf p}  w, w\right>_{L^2}^{1/2} +\|\mathbf q(t)\|_{l^2}\right).
\end{equation*}
\end{lemma}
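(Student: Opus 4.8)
The statement to prove is Lemma~\ref{lem-est-N}, which bounds the genuinely nonlinear term $\mathrm N(v^\bot)$ and the $L^\infty$ norm of $v^\bot$ in terms of the fast-mode energy $\langle \mbL_\mbp w, w\rangle_{L^2}$ and the pearling amplitudes $\|\mbq\|_{l^2}$.

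\medskip

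\textbf{Setup and strategy.} My plan is to work directly from the definition \eqref{def-N} of $\mathrm N(v^\bot)$ as the quadratic-and-higher Taylor remainder of $\mrF$ at $\Phi_\mbp$. Writing out $\mrF$ from \eqref{eq-FCH-L2-p}, the terms in $\mathrm N(v^\bot)$ are polynomial in $v^\bot$ of degree $\geq 2$, with coefficients that are smooth functions of $\Phi_\mbp$ (hence bounded, since $\Phi_\mbp$ is a bounded profile) possibly multiplied by $\varep^2\Delta v^\bot$. Schematically $\mathrm N(v^\bot)$ is a sum of terms of the form $\varepsilon^2\Delta v^\bot \cdot (v^\bot)^j \cdot c_j(\Phi_\mbp, \varep)$ and $(v^\bot)^j c_j(\Phi_\mbp,\varepsilon)$ for $j\geq 2$, together with the $\varepsilon^p$ lower-order contributions which are even easier. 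The worst term for scaling is the quadratic one containing $\varepsilon^2\Delta v^\bot$ paired with another $v^\bot$, i.e. something like $W'''(\Phi_\mbp)(\varepsilon^2\Delta v^\bot)v^\bot$ and $W'''(\Phi_\mbp)(v^\bot)^2$; I will bound the generic term and note the rest are subdominant. First I would record the elementary estimate $\|\mathrm N(v^\bot)\|_{L^2}\lesssim \big(\|v^\bot\|_{L^\infty}+\|v^\bot\|_{L^\infty}^2+\cdots\big)\big(\|v^\bot\|_{L^2}+\varepsilon^2\|\Delta v^\bot\|_{L^2}\big) \lesssim \|v^\bot\|_{L^\infty}\|v^\bot\|_{\Hin}$, valid whenever $\|v^\bot\|_{L^\infty}$ is bounded independent of $\varepsilon$ (so higher powers are absorbed). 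Thus everything reduces to controlling $\|v^\bot\|_{L^\infty}$ and $\|v^\bot\|_{\Hin}$ by the stated right-hand side.

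\medskip

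\textbf{The $L^\infty$ bound.} Using the decomposition $v^\bot = Q + w$ with $Q=\sum_{j\in\Sigma_0}\mrq_j Z_{\mbp,*}^{0j}$ and $w\in\mcZ_*^\bot$, I would estimate the two pieces separately. For the fast mode $w$: apply the two-dimensional Gagliardo--Nirenberg / Agmon-type inequality $\|w\|_{L^\infty}\lesssim \|w\|_{L^2}^{1/2}\|w\|_{H^2}^{1/2}$ (valid on $\Omega\subset\mbR^2$), and then use the coercivity Lemma~\ref{lem-coer} which gives $\|w\|_{L^2}^2+\varepsilon^4\|w\|_{H^2}^2\lesssim \rho^{-2}\langle\mbL_\mbp w,w\rangle_{L^2}$. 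Hence $\|w\|_{L^\infty}\lesssim (\rho^{-1}\langle\mbL_\mbp w,w\rangle^{1/2})^{1/2}(\varepsilon^{-2}\rho^{-1}\langle\mbL_\mbp w,w\rangle^{1/2})^{1/2}=\varepsilon^{-1}\rho^{-1}\langle\mbL_\mbp w,w\rangle_{L^2}^{1/2}$, matching the claimed power of $\varepsilon$ and $\rho$. For the pearling part $Q$: each basis function $Z_{\mbp,*}^{0j}$ is built from $\tilde\psi_{I(j)}=\varepsilon^{-1/2}\psi_{I(j)}(z_\mbp)$ times $\tilde\Theta_j$ plus $O(\varepsilon)$ corrections, so $\|Z_{\mbp,*}^{0j}\|_{L^\infty}\lesssim \varepsilon^{-1/2}\cdot R_0^{-1/2}$; since there are $N_0\sim\varepsilon^{-1}\rho^{1/2}$ such modes, summing with Cauchy--Schwarz gives $\|Q\|_{L^\infty}\lesssim \varepsilon^{-1/2}\, N_0^{1/2}\,\|\mbq\|_{l^2}\lesssim \varepsilon^{-1}\rho^{1/4}\|\mbq\|_{l^2}\lesssim\varepsilon^{-1}\|\mbq\|_{l^2}$. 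Adding the two pieces yields the stated $L^\infty$ bound. The same combination gives $\|v^\bot\|_{\Hin}\lesssim \rho^{-1}\langle\mbL_\mbp w,w\rangle_{L^2}^{1/2}+\|\mbq\|_{l^2}$, so feeding this and the $L^\infty$ bound into the elementary estimate $\|\mathrm N\|_{L^2}\lesssim \|v^\bot\|_{L^\infty}\|v^\bot\|_{\Hin}$ produces the claimed $\|\mathrm N(v^\bot)\|_{L^2}\lesssim \varepsilon^{-1}(\rho^{-2}\langle\mbL_\mbp w,w\rangle_{L^2}+\|\mbq\|_{l^2}^2)$, since $\varepsilon^{-1}\langle\mbL_\mbp w,w\rangle^{1/2}\cdot\langle\mbL_\mbp w,w\rangle^{1/2}\rho^{-2} = \varepsilon^{-1}\rho^{-2}\langle\mbL_\mbp w,w\rangle$ and the cross term $\varepsilon^{-1}\langle\mbL_\mbp w,w\rangle^{1/2}\|\mbq\|_{l^2}\rho^{-1}$ is absorbed by Young's inequality into the other two.

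\medskip

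\textbf{Main obstacle.} I expect the delicate point is bookkeeping the $N_0$-dependent factors: the $\varepsilon^{-1/2}$ pointwise size of each scaled pearling eigenfunction combined with the Weyl-law count $N_0\sim\varepsilon^{-1}\rho^{1/2}$ is exactly what forces the $\varepsilon^{-1}$ (and not better) in the final bound, and one must be careful that the $O(\varepsilon)$ corrections $\varepsilon\tilde\varphi_{l,j}$ in $Z_{\mbp,*}^{0j}$, which also carry a $\varepsilon^{-1/2}$ from the scaling $\tilde\varphi_{l,j}=\varepsilon^{-1/2}\varphi_{l,j}$, remain genuinely subdominant. A secondary technical issue is justifying that the higher powers $\|v^\bot\|_{L^\infty}^j$ for $j\geq 2$ are harmless; this is exactly where the hypothesis that $\|v^\bot\|_{L^\infty(\Omega)}$ is bounded independent of $\varepsilon$ is used, letting one replace $\sum_{j\geq2}\|v^\bot\|_{L^\infty}^{j-1}$ by a constant. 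Everything else is routine: Taylor expansion of the smooth nonlinearity $\mrF$, boundedness of $\Phi_\mbp$ and its derivatives, the 2D Agmon inequality, and the already-established coercivity and almost-orthogonality estimates.
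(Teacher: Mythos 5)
The paper does not actually prove this lemma: it is quoted verbatim from Lemma 5.9 of the companion paper \cite{CP-linear}, so there is no in-paper argument to compare against. Judged on its own merits, your reconstruction is correct and is almost certainly the intended argument: the $L^\infty$ bound on the fast part via the 2D Agmon inequality $\|w\|_{L^\infty}\lesssim \|w\|_{L^2}^{1/2}\|w\|_{H^2}^{1/2}$ combined with the coercivity $\|w\|_{L^2}^2+\varep^4\|w\|_{H^2}^2\lesssim \rho^{-2}\left<\mbL_\mbp w,w\right>_{L^2}$ gives exactly the factor $\varep^{-1}\rho^{-1}$; the pearling part is controlled pointwise by the $\varep^{-1/2}$ amplitude of $\tilde\psi_{I(j)}$ together with the Weyl count $N_0\sim\varep^{-1}\rho^{1/2}$ and Cauchy--Schwarz on $\|\mbq\|_{l^1}$; and the product $\|v^\bot\|_{L^\infty}\|v^\bot\|_{\Hin}$ followed by Young's inequality yields \eqref{eq-Nest}. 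Your bookkeeping of the $\varep$ and $\rho$ powers checks out at every step.

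One small point you gloss over: your schematic form of $\mathrm N(v^\bot)$ (terms $c_j(\Phi_\mbp)\,(v^\bot)^j$ and $c_j(\Phi_\mbp)\,\varep^2\Delta v^\bot\,(v^\bot)^j$) omits the contributions coming from expanding $\varep^2\Delta W'(u)$, which produce quadratic terms of the type $\varep^2 W'''|\nabla v^\bot|^2$ and $\varep^2 W''''\,v^\bot\,\nabla\Phi_\mbp\cdot\nabla v^\bot$. These are not of the displayed form, but they do satisfy the same bound: by 2D Gagliardo--Nirenberg, $\varep^2\|\nabla v^\bot\|_{L^4}^2\lesssim \|v^\bot\|_{L^\infty}\,\varep^2\|v^\bot\|_{H^2}\leq \|v^\bot\|_{L^\infty}\|v^\bot\|_{\Hin}$, and since $\varep|\nabla\Phi_\mbp|\lesssim 1$ the cross term is controlled by $\|v^\bot\|_{L^\infty}\,\varep\|v^\bot\|_{H^1}\lesssim\|v^\bot\|_{L^\infty}\|v^\bot\|_{\Hin}$ by interpolation. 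So the elementary estimate $\|\mathrm N(v^\bot)\|_{L^2}\lesssim\|v^\bot\|_{L^\infty}\|v^\bot\|_{\Hin}$ survives, but a complete write-up should list these gradient-quadratic terms explicitly rather than subsume them into the polynomial schematic.
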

The following result, Theorem 5.13 of \cite{CP-linear}, establishes that the bilayer manifold around a general admissible pair $(\Gamma_0, M_0)$, see \eqref{adm-pair},
is stable up to its boundary. Specifically, orbits that start within a thin tubular neighborhood of $\cM_b$ say within a comparable tubular neighborhood until the meander parameters $\mbp$ reach the boundary of the domain $\cO_\delta.$

\begin{thm} {\bf [Theorem 5.13 \cite{CP-linear}]} \label{thm-main-1}
 Consider the  mass preserving flow \eqref{eq-FCH-L2-p}-\eqref{eq-FCH-L2} subject to periodic boundary condition and initial data $u_0\in \mcV_{\varep^{5/2}}(\cM_b(\Gamma_0, M_0), \cO_\delta)$. Let  $(\Gamma_0, M_0)$ be a admissible pair that satisfies  
\beqs
|\sigma_0(|\Gamma_0|, M_0) -\sigma_1^*| \lesssim  \delta, 
\eeqs  
where $\sigma_1^*=\sigma_1^*(\eta_1, \eta_2)$, introduced in \eqref{def-hatla1*}, satisfies  the  pearling stability condition $(\mathbf{PSC}_*)$, \eqref{PSC*}. 
  Then for $\delta$ small enough, there exists $\varep_0$ small enough such that for all $\varep\in(0,\varep_0)$  the solution $u$ lies in the projection valid domain $\mathcal U(\mathcal M_b)$ so long as $\mbp\in \mathcal O_{2,\delta}$.  And the  following statement holds on the residence interval $[0,T]$. 
 \begin{enumerate}
 \item  The solution  of the mass preserving flow \eqref{eq-FCH-L2} can be decomposed as 
 \beq\label{decomp-u-MT}
 u(x)=\Phi_\mbp(\bm x; \sigma) +v^\bot(x), \qquad v^\bot(x)=Q(x)+w(x)
 \eeq
 where $\Phi_\mbp\in \cM_b(\Gamma_0, M_0;\rho)$ 
 and  $Q$ is the projection of $v^\bot$ to the pearling slow space, i.e. $Q=\Pi_{\mcZ_*^0}v^\bot \in \mcZ_*^0$. 
 \item Moreover,  the orthogonal perturbation  $v^\bot$ remains in $\mcV_{C\varep^{5/2}\rho^{-2}}$ for some positive constant $C$, that is 
\beq\label{est-p-w}
\|v^\bot\|_{\Hin}\lesssim \|w\|_{\Hin}(t)+\|\mbq\|_{l^2}(t)\leq C\varep^{5/2}\rho^{-2}, \qquad \forall t\in [0,T].
\eeq
\end{enumerate}
\end{thm}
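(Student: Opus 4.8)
Since this statement is quoted verbatim as Theorem 5.13 of \cite{CP-linear}, the plan is to cite that source; but to indicate how one would establish it, I would run a continuation (bootstrap) argument for a center--stable reduction. First I would fix a constant $C$, to be pinned down at the end, and introduce the putative residence time
\beqs
T:=\sup\bigl\{\,t\ge 0\ :\ u(\tau)\in\mcV_{2C\varep^{5/2}\rho^{-2}}(\cM_b,\cO_{2,\delta})\ \text{and}\ \mbp(\tau)\in\cO_{2,\delta}\ \text{for all }\tau\in[0,t]\,\bigr\}.
\eeqs
On $[0,T)$ the manifold projection of Lemma \ref{lem-Manifold-Projection} applies, so I would decompose $u=\Phi_\mbp+v^\bot$ with $v^\bot=Q+w$ as in \eqref{decomp-v}, $\mbq$ the coordinate vector of $Q$ on $\mcZ_*^0$, and use that the flow splits into the meander ODE of Theorem \ref{thm-est-p}, the pearling ODE of Lemma \ref{lem-est-q}, and the fast-mode equation \eqref{eq-w}. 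The goal is to prove that, for $\delta$ small and then $\varep_0$ small, the inclusion holds on $[0,T)$ with constant $C$ in place of $2C$; by continuity this forces $T$ to be limited only by $\mbp$ exiting $\cO_{2,\delta}$, which is the assertion, and \eqref{est-p-w} is obtained en route.

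The core step is a Gronwall estimate for the composite functional $\mathcal L(t):=\langle\mbL_\mbp w,w\rangle_{L^2}+\mu\|\mbq\|_{l^2}^2$. I would add $\mu$ times the pearling bound of Lemma \ref{lem-est-q} to the second bound of Lemma \ref{lem-est-w}, use the coercivity $\|\mbL_\mbp w\|_{L^2}^2\gtrsim\rho^2\langle\mbL_\mbp w,w\rangle_{L^2}$ of Lemma \ref{lem-coer} and the $C\varep\|\mbq\|_{l^2}^2$ pearling damping (which requires the pearling stability condition $(\mathbf{PSC}_*)$ to make $\mathbb M^*(0,0)$ positive definite), and choose $\mu$ with $\varep/\rho^4\ll\mu\ll\rho^4/\varep$ --- possible once $\varep\ll\rho^4$ --- so that the cross term $\varep^2\rho^{-4}\|\mbq\|_{l^2}^2$ from Lemma \ref{lem-est-w} is absorbed by the pearling damping and $\mu\varep\rho^{-4}\|\mbL_\mbp w\|_{L^2}^2$ from Lemma \ref{lem-est-q} is absorbed by the $w$-dissipation. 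This yields
\beqs
\frac{d}{dt}\mathcal L+c\,\bigl(\rho^2\langle\mbL_\mbp w,w\rangle_{L^2}+\varep\|\mbq\|_{l^2}^2\bigr)\ \lesssim\ \varep^{-1}\|\dot\mbp\|_{l^2}^2+\|\mathrm N(v^\bot)\|_{L^2}^2+\varep^5|\mrp_0-\mrp_0^*|^2+\varep^7\bigl(1+\|\hat\mbp\|_{\mbV_4^2}^2\bigr).
\eeqs

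Next I would dispose of the right-hand side term by term: Theorem \ref{thm-est-p} turns $\varep^{-1}\|\dot\mbp\|_{l^2}^2$ into $O(\varep^5|\mrp_0-\mrp_0^*|^2)+O(\varep^7\|\hat\mbp\|_{\mbV_4^2}^2)$ plus $O(\varep^4\|v^\bot\|_{L^2}^2)\lesssim\varep^4\rho^{-2}\langle\mbL_\mbp w,w\rangle_{L^2}+\varep^4\|\mbq\|_{l^2}^2$, which is absorbed into the dissipation; Lemma \ref{lem-est-N} bounds $\|\mathrm N(v^\bot)\|_{L^2}^2$ by $\varep^{-2}(\rho^{-2}\langle\mbL_\mbp w,w\rangle_{L^2}+\|\mbq\|_{l^2}^2)^2$, which is quadratically small on the residence interval and again absorbed; and the genuine forcing is controlled on $\cO_{2,\delta}$ by $|\mrp_0-\mrp_0^*|\lesssim\delta$ together with the dimension bound $\|\hat\mbp\|_{\mbV_4^2}\lesssim N_1\|\hat\mbp\|_{\mbV_3^2}\lesssim\varep^{-1}\rho^{1/4}\delta$ from Lemma \ref{lem-est-V} and \eqref{est-N01}, leaving a source of size $O(\varep^5\delta^2)$ (recall $\varep_0\ll\delta$). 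Integrating, the fast-mode component of $\mathcal L$ relaxes on the $O(\rho^{-2})$ time scale and the pearling component on the $O(\varep^{-1})$ scale, giving $\langle\mbL_\mbp w,w\rangle_{L^2}(t)\lesssim e^{-c\rho^2 t}\varep^5+\varep^5\delta^2\rho^{-2}$ and $\|\mbq\|_{l^2}^2(t)\lesssim e^{-c\varep t}\varep^5+\varep^5\rho^{-4}$, hence $\|v^\bot\|_{\Hin}\lesssim\|w\|_{\Hin}+\|\mbq\|_{l^2}\lesssim\varep^{5/2}\rho^{-2}$ with constant independent of $C$; choosing $C$ above this constant and $\varep_0$ small closes the bootstrap, and the $L^\infty$-smallness of $v^\bot$ needed for Lemma \ref{lem-est-N} is recovered a posteriori via $\|v^\bot\|_{L^\infty}\lesssim\varep^{-1}(\rho^{-1}\langle\mbL_\mbp w,w\rangle_{L^2}^{1/2}+\|\mbq\|_{l^2})\to 0$.

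The hard part will be the coupling between the only weakly damped pearling modes $\mbq$ (dissipation of order $\varep$) and the fast modes $w$: the $w$-estimate feeds $\mbq$ a term of order $\varep^2\rho^{-4}$ while the $\mbq$-estimate feeds $w$ a term of order $\varep\rho^{-4}\|\mbL_\mbp w\|_{L^2}^2$, so the parameters must be ordered carefully --- $\rho$ fixed small first, small enough that the pearling--meander gap \eqref{MP-gap} renders the coupling estimate of Lemma \ref{thm-coupling est} usable, and only then $\varep_0$ small relative to $\rho$ and $\delta$. A related nuisance is that each passage between the weighted norms $\mbV_k^2$ costs a factor $N_1\sim\varep^{-1}$, so one must verify line by line that every such loss is offset by an explicit power of $\varep$ in the accompanying coefficient; and the $\dot\mbp$-feedback into \eqref{eq-w} is not manifestly small until $v^\bot$ is already controlled, which is handled using the self-improving structure of the bounds in Theorem \ref{thm-est-p}, where $\|\dot\mbp\|_{l^2}$ is estimated in terms of quantities that are themselves $o(1)$ on the residence interval.
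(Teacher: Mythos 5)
You are reconstructing a result that this paper does not prove: Theorem \ref{thm-main-1} is imported verbatim from Theorem 5.13 of \cite{CP-linear}, so the paper's ``proof'' is the citation, and your opening sentence correctly identifies that. Your sketch is nevertheless a faithful reconstruction of the strategy the authors actually use when they run the analogous argument themselves (the proof of Theorem \ref{thm-main-2}): a continuation/bootstrap on a residence interval, the decomposition $u=\Phi_\mbp+Q+w$, the meander ODE of Theorem \ref{thm-est-p}, the pearling damping of Lemma \ref{lem-est-q} under $(\mathbf{PSC})$, the fast-mode dissipation of Lemma \ref{lem-est-w} with the coercivity of Lemma \ref{lem-coer}, the nonlinearity closed via Lemma \ref{lem-est-N}, and the parameter ordering $\varep_0\ll\rho\ll 1$ with $\delta$ fixed first. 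The composite functional $\mathcal L=\langle\mbL_\mbp w,w\rangle_{L^2}+\mu\|\mbq\|_{l^2}^2$ with $\varep\rho^{-4}\ll\mu\ll\varep^{-1}\rho^{4}$ is exactly the right device for absorbing the two cross-coupling terms, and mirrors the weighted energies $\mcE_1,\mcE_2$ of \eqref{E2-def}.

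The one step I would press you on is the final Gronwall. A single differential inequality for $\mathcal L$ forces you to use the \emph{slowest} dissipation rate, which here is the pearling rate $O(\varep)$; integrating the source $O(\varep^5\delta^2)$ against $e^{-c\varep(t-s)}$ then yields only $\mathcal L\lesssim\varep^4\delta^2$, hence $\langle\mbL_\mbp w,w\rangle_{L^2}\lesssim\varep^4\delta^2$ and $\|w\|_{\Hin}\lesssim\varep^2\delta\rho^{-1}$, which does \emph{not} recover the claimed $\varep^{5/2}\rho^{-2}$ unless $\delta\lesssim\varep^{1/2}$. Your parenthetical remark that the two components relax on the $O(\rho^{-2})$ and $O(\varep^{-1})$ time scales respectively shows you see this, but it is not compatible with having merged them into one functional: to get the stated bounds you must decouple the two inequalities again, e.g.\ first close the $w$-estimate at rate $\rho^2$ treating $\sup_t\|\mbq\|_{l^2}^2$ as given, and then control the feedback term $\int_0^t e^{-C\varep(t-s)}\varep\rho^{-4}\|\mbL_\mbp w(s)\|_{L^2}^2\,\dd s$ in the $\mbq$-equation by a time-windowed bound on $\int\|\mbL_\mbp w\|_{L^2}^2$ (windows of length $\varep^{-1}$), closing the circle under $\varep\ll\rho^4$. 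This is a genuine, if repairable, gap in the write-up rather than in the idea; everything else in your proposal, including the a posteriori recovery of the $L^\infty$ smallness needed for Lemma \ref{lem-est-N} and the observation that the meander modes cannot be folded into a decaying energy in this non-circular setting (hence the ``so long as $\mbp\in\cO_{2,\delta}$'' caveat), is consistent with the bounds \eqref{est-w-q-uniform} that the present paper takes as input.
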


\subsection{Nonlinear stability}
For a circular base interface $\Gamma_0$, we extend the result of Theorem\,\ref{thm-main-1} to show that the time of residence $T$ of orbits $u=u(t)$ that start in the thin tubular neighborhood is infinite, and that the orbits converge to a translate of the circular equilibrium. We fix $\varep_0$ and the spectral parameter $\rho$ small enough, and choose $\delta$ be a small independent of $\varep\in(0, \varep_0).$ In this sub-section we impose additional a priori assumptions on the velocity of the meandering parameters $\mbp$ and the perturbation $v^\bot$,
\beq\label{A-basic}
\mbp\in \cO_{2,\delta}, \qquad \|\dot\mbp\|_{l^2}\lesssim \varep^3,   \qquad \varep^{-2} \|v^\bot\|_{L^2} \leq \delta.
\eeq
These assumptions subsume the in \eqref{A-0}, of Theorem \ref{thm-est-p}, and of Corollary \ref{cor-ODE-pw}, for $\delta$ small enough. In particular, the auxiliary functions introduced in \eqref{def-E12} satisfy
\beq\label{est-E12}
E(t)\lesssim \delta, \qquad E_w(t)\lesssim \delta. 
\eeq 

\begin{lemma}\label{lem-est-p-2}
Under the a prior assumptions \eqref{A-basic}  with $\delta$ small enough, for $c_0$ given in \eqref{def-c0} 
the meander parameters evolving according to \eqref{eq-hatp} satisfy
\beqs
\frac{\dd}{\dd t}  |\mrp_0-\mrp_0^*|^2  + \varep^3 \frac{c_0}{2R_0} |\mrp_0-\mrp_0^*|^2 \lesssim   \varep^5\delta^2 \|\hat \mbp\|_{\mbV_4^2}^2 +\varep^2 \|v^\bot\|_{L^2}^2  +\varep^{-2}\|\mathrm N(v^\bot)\|_{L^2}^2;
\eeqs
and
\beqs
\frac{\dd}{\dd t} \|\hat\mbp\|_{\mbV_2^2}^2  + \frac{\varep^4}{32 R_0^2} \|\hat\mbp\|_{\mbV_4^2}^2 \lesssim  \varep^2 |\mrp_0-\mrp_0^*|^2 \|\hat\mbp\|_{\mbV_2^2}^2+\varep \|v^\bot\|_{L^2}^2 +\varep^{-3}\|\mathrm N(v^\bot)\|_{L^2}^2.
\eeqs 
Moreover, the time derivative  of $\mbp$ has the following $l^2$ bound
\beqs
\|\dot\mbp\|_{l^2}\lesssim \varep^3 |\mrp_0-\mrp_0^*|+\varep^4 \|\hat \mbp\|_{\mbV_4^2}+\varep^{5/2}\|v^\bot\|_{L^2}+\varep^{1/2}\|\mathrm N(v^\bot)\|_{L^2}.
\eeqs

\end{lemma}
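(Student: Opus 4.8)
The plan is to treat the three assertions as energy estimates for the finite–dimensional ODE system of Theorem~\ref{thm-est-p} and Corollary~\ref{cor-ODE-pw}, extracting dissipation from the $\mrp_0$–equation and from the $\varep^4(\mbD-\mbI)^2$ block of the $\hat\mbp$–equation, and then absorbing the error terms $d_0,d',\hat d$ by means of the uniform smallness $E(t),E_w(t)\lesssim\delta$ recorded in \eqref{est-E12}, which holds under the a priori bounds \eqref{A-basic}.

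\emph{The radial parameter.} Since $\mrp_0^*$ is independent of time, I would multiply the first line of \eqref{eq-hatp} by $2(\mrp_0-\mrp_0^*)$ to obtain
\beqs
\frac{\dd}{\dd t}|\mrp_0-\mrp_0^*|^2+2\varep^3\frac{c_0}{R_0}|\mrp_0-\mrp_0^*|^2=2 d_0\,(\mrp_0-\mrp_0^*),
\eeqs
and estimate the right–hand side by Young's inequality as $\theta\varep^3|\mrp_0-\mrp_0^*|^2+C_\theta\varep^{-3}|d_0|^2$ with $\theta$ small. Squaring the bound on $|d_0|$ from Theorem~\ref{thm-est-p} and using $E(t)\lesssim\delta$ yields
\beqs
\varep^{-3}|d_0|^2\lesssim\varep^3\delta^2|\mrp_0-\mrp_0^*|^2+\varep^5\delta^2\|\hat\mbp\|_{\mbV_4^2}^2+\varep^2\|v^\bot\|_{L^2}^2+\varep^{-2}\|\mathrm N(v^\bot)\|_{L^2}^2;
\eeqs
the first term is absorbed into the coercive term on the left for $\delta$ sufficiently small, which leaves the first claimed inequality.

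\emph{The shape parameters.} For the second inequality I would differentiate $\|\hat\mbp\|_{\mbV_2^2}^2=\left<\mbD^2\hat\mbp,\hat\mbp\right>_{l^2}$ along the third line of \eqref{eq-hatp}. The dissipative contribution is $-2\varep^4R_0^{-4}\left<\mbD^2\hat\mbp,(\mbD-\mbI)^2\hat\mbp\right>_{l^2}=-2\varep^4R_0^{-4}\|\mbD(\mbD-\mbI)\hat\mbp\|_{l^2}^2$, and since $(\beta_k^2-1)^2\ge\tfrac14\beta_k^4$ for $k\ge3$ (Lemma~\ref{lem-mbD}) this dominates a term $c\varep^4\|\hat\mbp\|_{\mbV_4^2}^2$. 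The coupling term $-2\varep^3\tfrac{c_0}{R_0}(\mrp_0-\mrp_0^*)\left<\mbD^2\hat\mbp,(\mbD+\mbU^T)\hat\mbp\right>_{l^2}$ has no definite sign, so it must be bounded directly: I would use $\left<\mbD^2\hat\mbp,\mbD\hat\mbp\right>_{l^2}=\|\hat\mbp\|_{\mbV_3^2}^2\le\|\hat\mbp\|_{\mbV_2^2}\|\hat\mbp\|_{\mbV_4^2}$ by interpolation on the $\mbV_r^2$ scale, and Lemma~\ref{cor-bU} to get $|\left<\mbD^2\hat\mbp,\mbU^T\hat\mbp\right>_{l^2}|\lesssim\|\hat\mbp\|_{\mbV_4^2}\|\hat\mbp\|_{\mbV_1^2}\le\|\hat\mbp\|_{\mbV_4^2}\|\hat\mbp\|_{\mbV_2^2}$; Young's inequality then splits this term into a piece absorbed into $c\varep^4\|\hat\mbp\|_{\mbV_4^2}^2$ and a piece of order $\varep^2|\mrp_0-\mrp_0^*|^2\|\hat\mbp\|_{\mbV_2^2}^2$. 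Finally $|\left<\mbD^2\hat\mbp,\hat d\right>_{l^2}|\le\|\hat\mbp\|_{\mbV_4^2}\|d'\|_{l^2}$, and bounding each summand of the $\|d'\|_{l^2}$–estimate of Theorem~\ref{thm-est-p} against $\|\hat\mbp\|_{\mbV_4^2}$ by Young's inequality, with $E(t)\lesssim\delta$ and $\|v^\bot\|_{L^2}\le\delta\varep^2$, one finds that every contribution either absorbs into the dissipation or reduces to $\varep^2|\mrp_0-\mrp_0^*|^2\|\hat\mbp\|_{\mbV_2^2}^2+\varep\|v^\bot\|_{L^2}^2+\varep^{-3}\|\mathrm N(v^\bot)\|_{L^2}^2$; keeping track of the fraction of the $\varep^4\|\hat\mbp\|_{\mbV_4^2}^2$ dissipation that is consumed yields the explicit surviving coefficient stated.

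\emph{The rate bound and the main difficulty.} The third assertion is immediate from \eqref{est-dotp-1} of Theorem~\ref{thm-est-p}: inserting the bounds on $|d_0|$ and $\|d'\|_{l^2}$ and discarding the $E(t)\lesssim\delta$ prefactors (which are $\ll1$) collapses every coupling error into the already–present leading terms, leaving $\|\dot\mbp\|_{l^2}\lesssim\varep^3|\mrp_0-\mrp_0^*|+\varep^4\|\hat\mbp\|_{\mbV_4^2}+\varep^{5/2}\|v^\bot\|_{L^2}+\varep^{1/2}\|\mathrm N(v^\bot)\|_{L^2}$. The delicate point is the shape estimate: because the $(\mrp_0-\mrp_0^*)$–proportional coupling has no sign, the $\|\hat\mbp\|_{\mbV_4^2}^2$ dissipation must come entirely from the $\varep^4(\mbD-\mbI)^2$ block, and one has to verify — via Lemma~\ref{lem-mbD} and the $\mbV_r^2$ interpolation inequalities of Lemma~\ref{lem-est-V} — that a definite fraction of it survives after simultaneously absorbing the indefinite coupling, the $\mbU^T$ contribution, and the full collection of $O(\varep^4)$–scale errors from Theorem~\ref{thm-est-p}; this is precisely where the uniform smallness of $\delta$, and hence of $E(t)$, is indispensable.
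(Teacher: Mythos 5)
Your proposal is correct and follows essentially the same route as the paper: multiply the $\mrp_0$-equation by $2(\mrp_0-\mrp_0^*)$ and pair the $\hat\mbp$-equation with $2\mbD^2\hat\mbp$, extract the dissipation from the $\varep^4(\mbD-\mbI)^2$ block via the comparability of $\mbD$ and $\mbD-\mbI$, and absorb the indefinite coupling, the $\mbU^T$ contribution, and the remainders $d_0,\hat d$ by Young's inequality together with $E(t)\lesssim\delta$. The rate bound is, as you say, read off directly from \eqref{est-dotp-1}.
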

\begin{proof}
Multiplying equation for $\mrp_0$ from Theorem \ref{thm-est-p} by $2(\mrp_0-\mrp_0^*)$ and applying Young's inequality we deduce the bound
\beqs
 \frac{\dd}{\dd t} |\mrp_0-\mrp_0^*|^2  +\varep^3 \frac{c_0}{R_0}|\mrp_0-\mrp_0^*|^2   \lesssim \varep^{-3} d_0^2.
\eeqs
Using the bound on the remainder $d_0$ from Theorem \ref{thm-est-p} with $E(t)$ bounded by \eqref{est-E12}, the inequality of $\mrp_0$ follows 
provided with $\delta$ small enough.  

For the first estimate on $\hat\mbp$,  we note that $\|\hat\mbp\|_{\mbV_2^2}=\|\mbD\hat\mbp\|_{l^2}$ and take the inner product of the evolution equation for $\hat\mbp$ in Theorem \ref{thm-est-p} with $2\mbD^2\hat\mbp$. This yields the equality
\beq\label{est-p-V22-1}
\frac{\dd}{\dd t} \|\hat\mbp\|_{\mbV_2^2}^2 =  -\varep^3 \frac{2c_0}{R_0}(\mrp_0-\mrp_0^*) \left<(\mbD+\mbU^T) \hat\mbp, \mbD^2\hat\mbp \right>_{l^2} -\frac{2\varep^4}{R_0^2} \left<(\mbD-\mbI)^2 \hat\mbp, \mbD^2\hat\mbp\right>_{l^2} +2\left<\hat d, \mbD^2 \hat\mbp\right>_{l^2}.
\eeq
By H\"older's inequality and the bound $\|\mbU^T\|_{l_2^*}\lesssim \|\mbD\|_{l_2^*}$ from Corollary\,\ref{cor-bU}, the first term on the right-hand side  of \eqref{est-p-V22-1} can be bounded from above by 
\beqs
\begin{aligned}
-\varep^3  \frac{2 c_0}{R_0} (\mrp_0-\mrp_0^*) \left<(\mbD+\mbU^T)  \hat\mbp, \mbD^{2} \hat\mbp \right>_{l^2} & \lesssim \varep^3|\mrp_0-\mrp_0^*| \|\mbD \hat\mbp\|_{l^2} \|\mbD^{2} \hat\mbp\|_{l^2} \\
&\leq  C \varep^2 \|\hat\mbp\|_{\mbV_3^2}^2 |\mrp_0-\mrp_0^*|^2  +\frac{\varep^4}{32R_0^2} \|\hat\mbp\|_{\mbV_5^2}^2.
\end{aligned}
\eeqs
for a constant $C$ independent of $\varep\in(0,\varep_0)$.
From Lemma \ref{lem-mbD}, the second term on the right-hand side is negative and can be bounded from above by
\beqs
 -\frac{2\varep^4}{R_0^2} \left<(\mbD-\mbI)^2 \hat\mbp, \mbD^2\hat\mbp\right>_{l^2} \leq   -\frac{2\varep^4}{R_0^2} \|(\mbD-\mbI)^{2} \hat\mbp\|_{l^2}^2 \leq  -\frac{\varep^4}{8 R_0^2} \|\hat\mbp\|_{\mbV_4^2}^2. 
\eeqs 
Then employing H\"older and Young's inequality to bound the  third item on the right hand side of \eqref{est-p-V22-1} implies
\beqs
\frac{\dd}{\dd t} \|\hat\mbp\|_{\mbV_2^2}^2  + \frac{\varep^4}{16R_0^2} \|\hat\mbp\|_{\mbV_4^2}^2 \lesssim  \varep^2 |\mrp_0-\mrp_0^*|^2 \|\hat\mbp\|_{\mbV_2^2}^2+ \varep^{-4}\|\hat d\|_{l^2}^2.
\eeqs
It remains  to obtain an bound on $\|\hat d\|_{l^2}$. In fact, $\|\hat d\|_{l^2}\leq \|d'\|_{l^2}$ and the latter is bounded in Theorem\, \ref{thm-est-p}. We note that $E(t)\lesssim \delta$ and $\varep^{-2}\|v^\bot\|_{L^2}\leq \delta$.  Using the bound on $\|d'\|_{l^2}$ from Theorem\,\ref{thm-est-p}, we have
\beqs 
\varep^{-4}\|\hat d\|_{l^2}^2 \lesssim \varep^2 \delta^2 |\mrp_0-\mrp_0^*|^2 \|\hat\mbp\|_{\mbV_2^2}^2+  \varep^4|\mrp_0-\mrp_0^*|^2+\varep^4\delta^2 \|\hat\mbp\|_{\mbV_4^2}^2 +\varep \|v^\bot\|_{L^2} +\varep^{-3}\|\mathrm N(v^\bot)\|_{L^2}^2.
\eeqs 
Absorbing these terms involving   $\|\hat\mbp\|_{\mbV_4^2}$ and
$\|\hat\mbp\|_{\mbV_2^2}$ for $\delta$ suitably small independent of $\varep$, we obtain the first estimate on $\hat\mbp$.

Finally, the $l^2$-bound on $\dot\mbp$ follows from  the bound on $\|\dot\mbp\|_{l^2}$ and the estimates on $d=(d_0, d')$ in Theorem \ref{thm-est-p}.
\end{proof} 

We require bounds on the evolution of weighted norms of $\mbp.$
\begin{lemma}\label{lem-est-p-V32} Under the a priori assumptions of \eqref{A-basic} with $\delta$ small enough, 
there exists a strictly positive constant $c_*$ independent of $\varep\in(0,\varep_0), \rho, \delta$ such that
\beqs
 \frac{\dd}{\dd t}\|\hat\mbp\|_{\mbV_3^2}^2 +
 c_*\varep^4\|\hat\mbp\|_{\mbV_5^2}^2 \lesssim \varep^2|\mrp_0-\mrp_0^*|^2 \|\hat\mbp\|_{\mbV_3^2}^2 +\varep^4 |\mrp_0-\mrp_0^*|^2+ \varep^{-2} \|v^\bot\|_{L^2}^2 +\varep^{-5}\|\mathrm N(v^\bot)\|_{L^2}^2.
\eeqs
\end{lemma}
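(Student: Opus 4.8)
The plan is to mimic almost exactly the proof of the $\mbV_2^2$ estimate in Lemma\,\ref{lem-est-p-2}, but now testing the $\hat\mbp$-evolution equation from Theorem\,\ref{thm-est-p} against $2\mbD^3\hat\mbp$ rather than $2\mbD^2\hat\mbp$. First I would write $\|\hat\mbp\|_{\mbV_3^2}^2 = \langle \mbD^{3/2}\hat\mbp, \mbD^{3/2}\hat\mbp\rangle_{l^2}$ and differentiate in time using the evolution
\beqs
\dot{\hat\mbp} = -\varep^3\left[\tfrac{c_0}{R_0}(\mbD+\mbU^T)(\mrp_0-\mrp_0^*) + \tfrac{\varep}{R_0^4}(\mbD-\mbI)^2\right]\hat\mbp + \hat d,
\eeqs
yielding three terms on the right-hand side: the $(\mbD+\mbU^T)(\mrp_0-\mrp_0^*)$ contribution, the dissipative $(\mbD-\mbI)^2$ contribution, and the remainder $2\langle \hat d, \mbD^3\hat\mbp\rangle_{l^2}$.

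For the first term, H\"older together with the bound $\|\mbU^T\|_{l^2_*}\lesssim\|\mbD^{1/2}\|_{l^2_*}$ from Corollary\,\ref{cor-bU} gives
\beqs
-\varep^3\tfrac{2c_0}{R_0}(\mrp_0-\mrp_0^*)\langle(\mbD+\mbU^T)\hat\mbp, \mbD^3\hat\mbp\rangle_{l^2} \lesssim \varep^3|\mrp_0-\mrp_0^*|\,\|\mbD^{3/2}\hat\mbp\|_{l^2}\,\|\mbD^{5/2}\hat\mbp\|_{l^2},
\eeqs
where I note that $\mbD^{1/2}\cdot\mbD^3 = \mbD^{7/2}$ paired with $\hat\mbp$ can be regrouped as $\|\mbD^2\hat\mbp\|_{l^2}\|\mbD^{3/2}\hat\mbp\|_{l^2}$ or via interpolation bounded by $\|\hat\mbp\|_{\mbV_3^2}\|\hat\mbp\|_{\mbV_5^2}$; Young's inequality then splits this into $C\varep^2|\mrp_0-\mrp_0^*|^2\|\hat\mbp\|_{\mbV_3^2}^2$ plus a small multiple of $\varep^4\|\hat\mbp\|_{\mbV_5^2}^2$ to be absorbed. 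For the dissipative term, Lemma\,\ref{lem-mbD} gives $(\mbD-\mbI)^2 \gtrsim \mbD^2$ as operators, hence
\beqs
-\tfrac{2\varep^4}{R_0^2}\langle(\mbD-\mbI)^2\hat\mbp, \mbD^3\hat\mbp\rangle_{l^2} \leq -\tfrac{\varep^4}{2R_0^2}\|\mbD^{5/2}\hat\mbp\|_{l^2}^2 = -\tfrac{\varep^4}{2R_0^2}\|\hat\mbp\|_{\mbV_5^2}^2,
\eeqs
which supplies the coercive $c_*\varep^4\|\hat\mbp\|_{\mbV_5^2}^2$ term (here one must be slightly careful that $\langle(\mbD-\mbI)^2\hat\mbp,\mbD^3\hat\mbp\rangle$ equals a sum of $(\beta_k^2-1)^2\beta_k^6|\mrp_k|^2\geq \tfrac14\beta_k^{10}|\mrp_k|^2$, consistent with the $\mbV_5^2$ norm). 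For the remainder, H\"older and Young give $2\langle\hat d,\mbD^3\hat\mbp\rangle_{l^2}\lesssim \varep^{-4}\|\hat d\|_{\mbV_1^2}^2 + \tfrac{\varep^4}{\text{large}}\|\hat\mbp\|_{\mbV_5^2}^2$ — and the crucial point is that testing against $\mbD^3\hat\mbp$ rather than $\mbD^2\hat\mbp$ means we need the \emph{weighted} remainder $\|\hat d_w\|_{l^2}$, i.e. $\|\mbD^{1/2}\hat d\|_{l^2}$, whose bound is provided precisely by Corollary\,\ref{cor-ODE-pw}. Substituting the bound on $\|\hat d_w\|_{l^2}$ from that corollary, using $E_w(t)\lesssim\delta$ from \eqref{est-E12}, and absorbing the $\delta^2\varep^4\|\hat\mbp\|_{\mbV_5^2}^2$ and $\varep^2\delta^2|\mrp_0-\mrp_0^*|^2\|\hat\mbp\|_{\mbV_3^2}^2$ terms into the left side for $\delta$ small yields the claimed inequality, with the leftover pieces $\varep^4|\mrp_0-\mrp_0^*|^2$, $\varep^{-2}\|v^\bot\|_{L^2}^2$, $\varep^{-5}\|\mathrm N(v^\bot)\|_{L^2}^2$ on the right.

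The main obstacle I anticipate is \textbf{bookkeeping the weighted remainder correctly}: one must verify that $\|\hat d_w\|_{l^2}$ as estimated in Corollary\,\ref{cor-ODE-pw} is indeed the right object (it is $\mbD^{1/2}$ applied to $\hat d$, matching the extra $\mbD^{1/2}$ that appears when pairing $\dot{\hat\mbp}$ against $\mbD^3\hat\mbp$ instead of $\mbD^2\hat\mbp$), and that its bound $\|\hat d_w\|_{l^2}\lesssim \varep^3 E_w|\mrp_0-\mrp_0^*|\|\hat\mbp\|_{\mbV_3^2} + \varep^4|\mrp_0-\mrp_0^*| + E_w\|\hat\mbp\|_{\mbV_5^2} + \varep\|v^\bot\|_{L^2} + \varep^{-1/2}\|\mathrm N(v^\bot)\|_{L^2}$ produces, after squaring and multiplying by $\varep^{-4}$, exactly terms that either match the desired right-hand side or are absorbable (the $E_w\|\hat\mbp\|_{\mbV_5^2}$ term squared gives $\varepsilon^{-4}\delta^2\|\hat\mbp\|_{\mbV_5^2}^2$, which is \emph{not} obviously absorbable into $\varepsilon^4\|\hat\mbp\|_{\mbV_5^2}^2$ — so one must recheck whether the corollary's $E_w\|\hat\mbp\|_{\mbV_5^2}$ should actually read $\varepsilon^4 E_w\|\hat\mbp\|_{\mbV_5^2}$, as the display in Corollary\,\ref{cor-ODE-pw} seems to intend given the $\varepsilon^4 E_w(t)\|\hat\mbp\|_{\mbV_5^2}$ appearing in its proof). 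Assuming that power of $\varepsilon$ is as the proof of the corollary indicates, the absorption goes through and the rest is routine Young's-inequality arithmetic. The other minor point to be careful about is the interpolation inequality $\|\hat\mbp\|_{\mbV_4^2}\leq \|\hat\mbp\|_{\mbV_3^2}^{1/2}\|\hat\mbp\|_{\mbV_5^2}^{1/2}$ (and analogous ones) used to convert any stray $\mbV_4^2$ norms appearing in $\hat d_w$ into a combination of $\mbV_3^2$ and $\mbV_5^2$ norms compatible with the left-hand side.
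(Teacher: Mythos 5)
Your proposal is correct and follows essentially the same route as the paper: the paper tests the weighted evolution of Corollary\,\ref{cor-ODE-pw} against $2\mbD^{5/2}\hat\mbp$ (equivalent to your pairing of the unweighted equation with $2\mbD^3\hat\mbp$, with the $\mbU^T$–$\mbD^{1/2}$ commutator absorbed into $\hat d_w$), bounds the three terms exactly as you do via H\"older, Young, Lemma\,\ref{lem-mbD} and Lemma\,\ref{cor-bU}, and closes with the $\|\hat d_w\|_{l^2}$ bound and absorption for $\delta$ small. Your observation that the $E_w\|\hat\mbp\|_{\mbV_5^2}$ term in the displayed bound of Corollary\,\ref{cor-ODE-pw} must carry the factor $\varep^4$ appearing in that corollary's proof is correct — without it the absorption would fail — so your resolution is the intended one.
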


\begin{proof}
 Since $\|\hat\mbp\|_{\mbV_3^2}=\|\mbD^{3/2}\hat\mbp\|_{l^2}$ we take the $l^2$-inner product of the weighted evolution equation in Corollary \ref{cor-ODE-pw}  with $2\mbD^{5/2}\hat\mbp$. This yields the equality
\beq\label{est-p-V32-1}
\begin{aligned}
\frac{\dd}{\dd t} \|\hat\mbp\|_{\mbV_3^2}^2 =& -\varep^3  \frac{2c_0}{R_0} (\mrp_0-\mrp_0^*) \left<(\mbD+\mbU^T) \mbD^{1/2} \hat\mbp, \mbD^{5/2} \hat\mbp \right>_{l^2}-\frac{2\varep^4 }{R_0^2}  \left<(\mbD-\mbI)^2 \mbD^{1/2} \hat\mbp, \mbD^{5/2} \hat\mbp\right>_{l^2}\\
& +2\left<\hat d_w, \mbD^{5/2} \hat\mbp\right>_{l^2}.
\end{aligned}
\eeq
By H\"older's inequality and the bound $\|\mbU^T\|_{l_2^*}\lesssim \|\mbD\|_{l_2^*}$ from Corollary\,\ref{cor-bU}, the first term on the right-hand side  of \eqref{est-p-V32-1} can be bounded from above by 
\beqs
\begin{aligned}
-\varep^3  \frac{c_0}{R_0} (\mrp_0-\mrp_0^*) \left<(\mbD+\mbU^T) \mbD^{1/2} \hat\mbp, \mbD^{5/2} \hat\mbp \right>_{l^2} & \lesssim \varep^3|\mrp_0-\mrp_0^*| \|\mbD^{3/2} \hat\mbp\|_{l^2} \|\mbD^{5/2} \hat\mbp\|_{l^2} \\
&\leq  C \varep^2 \|\hat\mbp\|_{\mbV_3^2}^2 |\mrp_0-\mrp_0^*|^2  +\frac{\varep^4}{64R_0^2} \|\hat\mbp\|_{\mbV_5^2}^2.
\end{aligned}
\eeqs
for a constant $C$ independent of $\varep\in(0,\varep_0)$. The second inequality above follows from an application of Young's inequality and the definition of $\mathbb V_3^2, \mathbb V_5^2$ in terms of $\mbD$. 
To address the second term on the right-hand side of \eqref{est-p-V32-1} we recall that the diagonal matrices satisfy $\frac{1}{2}\mbD\leq \mbD-\mbI\leq \mbD$ and Lemma\,\ref{lem-mbD} so that
\beqs
\begin{aligned}
-\frac{2\varep^4 }{R_0^2}  \left<(\mbD-\mbI)^2 \mbD^{1/2} \hat\mbp, \mbD^{5/2} \hat\mbp\right>_{l^2} \leq  -\frac{2\varep^4 }{R_0^2} \|(\mbD-\mbI)^{5/2} \hat\mbp\|_{l^2}^2  \leq -\frac{\varep^4 }{16R_0^2} \|\mbD^{5/2} \hat\mbp\|_{l^2}^2\leq -\frac{\varep^4 }{16 R_0^2} \| \hat\mbp\|_{\mbV_5^2}^2.
\end{aligned}
\eeqs
To bound the third term on the right-hand side of \eqref{est-p-V32-1} we applying H\"older's and Young's inequalities, which yield $C>0$ such that 
\beqs
\left<\hat d_w, \mbD^{5/2} \hat\mbp\right>_{l^2}\leq  \frac{\varep^4}{64R_0^2} \|\hat\mbp\|_{\mbV_5^2}^2 +C\varep^{-4}\|\hat d_w\|_{L^2}^2.
\eeqs
Using the $l^2$-bound of $\hat d_w$ from Corollary \ref{cor-ODE-pw} and  returning the three estimates above to \eqref{est-p-V32-1}, and taking $c_*=\frac{1}{32 R_0^2}$ completes the proof provided with $\varep_0$ and $\delta$ small enough. 
\end{proof}

To close our analysis we must reconcile the a priori bounds with the estimates we have derived. This requires that we restrict the initial energy in the system as measured by the difference between the initial and equilibrium curve length, $|\mrp_0-\mrp_0^*|,$ and the weighted $\mbV_{2}^2$- and $\mbV_3^2$-norms of $\hat\mbp$ that control the deviation of the initial curve from circularity. Correspondingly we introduce the parameter set 
\beq
\label{def:cO-circ}
\cO^\circ_{m, \delta}:=  \left\{ \mbp\in \mbR^{N_1} \; \big| \; \varep^{-1/2} |\mrp_0-\mrp_0^*|+  \|\hat\mbp\|_{\mbV_3^2}<m \delta\right\}. 
\eeq 
Our analysis requires $\cO_{m,\delta}^\circ\subset  \cO_{m,\delta}$, see \eqref{def-cO-p}. This is established under the assumption on the initial data.

\begin{lemma}
Fix $\delta>0$, and let $\varep_0>0$ be sufficiently small. Then for any admissible pair $(\Gamma_0, M_0)$ with base bulk density $\sigma_0$, given in \eqref{def-c0}, that is sufficiently close to the equilibrium bulk density $\sigma_1^*$, defined in \eqref{def-hatla1*},  satisfying
\beq\label{A-sigma0*}
|\sigma_0(\Gamma_0,M_0)-\sigma_1^*|\leq  \frac{c_0m_1^2 R_0}{2m_0} \delta,
\eeq 
we have $\cO_{m,\delta}^\circ\subset  \cO_{m,\delta}$ for $m\geq 1$. 
\end{lemma}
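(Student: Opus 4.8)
The plan is to establish the inclusion by an elementary finite-dimensional norm comparison: unpack what membership in $\cO^\circ_{m,\delta}$ provides, quantify the equilibrium length parameter $\mrp_0^*$ using Lemma~\ref{lem-p0*} and the calibrated constant in \eqref{A-sigma0*}, and then recover the defining inequality of $\cO_{m,\delta}$ from the triangle inequality. No PDE estimates enter.

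First I would record that, for $\mbp\in\cO^\circ_{m,\delta}$ and $\varep\in(0,\varep_0)$ with $\varep_0\le 1$ so that $\varep^{-1/2}\ge 1$, the defining inequality \eqref{def:cO-circ} yields at once $\|\hat\mbp\|_{\mbV_3^2}<m\delta$, the length deviation bound $|\mrp_0-\mrp_0^*|<\varep^{1/2}m\delta$, and the combined bound
\beqs
|\mrp_0-\mrp_0^*|+\|\hat\mbp\|_{\mbV_3^2}\leq \varep^{-1/2}|\mrp_0-\mrp_0^*|+\|\hat\mbp\|_{\mbV_3^2}<m\delta .
\eeqs
The remaining defining conditions of $\cD_\delta$ are then automatic: $|\mrp_0|$ is small, so $\mrp_0>-1/2$, and by the embeddings of Lemma~\ref{lem-est-V} the $\mbV_3^2$ bound controls the weaker norms $\|\hat\mbp\|_{\mbV_2}$ and $\|\hat\mbp\|_{\mbV_1}$; the translational parameters $\mrp_1,\mrp_2$ are unconstrained in both $\cO^\circ_{m,\delta}$ and $\cO_{m,\delta}$. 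Thus only the inequality $|\mrp_0|+\|\hat\mbp\|_{\mbV_3^2}\le m\delta$ remains to be checked.

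Second I would bound $\mrp_0^*$. Lemma~\ref{lem-p0*} applies since \eqref{A-sigma0*} forces $|\sigma_0-\sigma_1^*|\lesssim 1$, and it provides $\mrp_0^*=\mrp_{0,0}^*+\varep\mrp_{0,1}^*+O(\varep^2)$ with $\mrp_{0,0}^*=-\tfrac{m_0}{c_0R_0m_1^2}\bigl(\sigma_1^*-\sigma_0(\Gamma_0,M_0)\bigr)$. Substituting the hypothesis \eqref{A-sigma0*}, whose constant $\tfrac{c_0m_1^2R_0}{2m_0}$ is chosen precisely for this purpose, gives $|\mrp_{0,0}^*|\le \delta/2$; since $\mrp_{0,0}^*$ is bounded and $\alpha,\mcC_1$ in \eqref{def-p*0} are smooth, the corrections satisfy $|\varep\mrp_{0,1}^*+O(\varep^2)|\le C\varep$, hence $|\mrp_0^*|\le \delta/2+C\varep$. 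Feeding this together with the combined bound of the first step into the triangle inequality
\beqs
|\mrp_0|+\|\hat\mbp\|_{\mbV_3^2}\leq |\mrp_0^*|+\bigl(|\mrp_0-\mrp_0^*|+\|\hat\mbp\|_{\mbV_3^2}\bigr),
\eeqs
and using $m\ge 1$ together with $\varep_0$ small relative to $\delta$ to absorb the $\delta/2$ slack and the $O(\varep)$ corrections into the budget $m\delta$, completes the argument.

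The only obstacle is constant bookkeeping, and it is exactly what the hypotheses are engineered to handle. The singular weight $\varep^{-1/2}$ in the definition \eqref{def:cO-circ} of $\cO^\circ_{m,\delta}$ forces the initial length deviation $|\mrp_0-\mrp_0^*|$ to be only $O(\varep^{1/2}\delta)$, so that once $\varep_0$ is chosen small in terms of $\delta$ it occupies a negligible share of the budget; and the explicit constant in \eqref{A-sigma0*} is the one making $|\mrp_{0,0}^*|\le \delta/2$. What must be verified is simply that these two slacks, together with the $O(\varep)$ correction from Lemma~\ref{lem-p0*}, still fit inside $m\delta$ for every $m\ge 1$ after $\varep_0=\varep_0(\delta)$ is taken sufficiently small.
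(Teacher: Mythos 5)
Your proposal is correct and follows essentially the same route as the paper: both reduce the inclusion to bounding $|\mrp_0|$ by the triangle inequality about $\mrp_0^*$, use the calibrated constant in \eqref{A-sigma0*} to get a $\delta/2$ bound on the equilibrium offset (you via the explicit formula for $\mrp_{0,0}^*$ in Lemma~\ref{lem-p0*}, the paper via the equivalent $\sigma$--$\mrp_0$ slaving of Lemma~\ref{lem-sigma} and Corollary~\ref{cor-def-hatla}), and absorb the remaining $O(\varep^{1/2}\delta)+O(\varep)$ terms by shrinking $\varep_0$. Note that your closing arithmetic $|\mrp_0|+\|\hat\mbp\|_{\mbV_3^2}<\delta/2+C\varep+m\delta$ does not literally land inside the budget $m\delta$ --- but the paper's own proof has exactly the same looseness (it declares it "suffices to show $|\mrp_0|\le\delta$"), so this is an imprecision inherited from the statement/definition of $\cO_{m,\delta}$ rather than a defect of your argument.
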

\begin{proof}
Pick any $\mbp\in \cO^\circ_{m,\delta}$, we are going to prove $\mbp\in \cO_{m,\delta}$ for $m\geq 1$. By the definition of $\cO_{m,\delta}$ in \eqref{def-cO-p}, it suffices to show $|\mrp_0|\leq \delta$. From Lemma \ref{lem-sigma}  and the triangle inequality we have
\beqs
\begin{aligned}
|\mrp_0|& 
\leq  \frac{m_0}{c_0m_1^2R_0}|\sigma(\mbp) -\sigma_1^*| +\frac{m_0}{c_0m_1^2R_0}|\sigma_1^*-\sigma_0| +C\varep.
\end{aligned}
\eeqs
Recalling $\sigma^*=\sigma_1^*+O(\varep)$ , the bound above, together with Corollary \ref{cor-def-hatla}  and $|\mrp_0-\mrp_0^*|\leq m\varep^{1/2}$ for $\mbp\in \cO_{m,\delta}$ yields
\beqs
|\mrp_0| 
\leq |\mrp_0-\mrp_0^*| +\frac{\delta}{2} + C\varep< \delta, 
\eeqs 
for $\varep\in(0,\varep_0)$ and $\varep_0$ sufficiently small with respect to $\delta.$
\end{proof}

Our main result establishes the asymptotic stability of the nearly circular bilayer manifold. We assume that  $(\Gamma_0, M_0)$ is a admissible pair, that the base interface $\Gamma_0$ is circle of radius $R_0>0$, and the pair satisfy  \eqref{A-sigma0*} for which  $\sigma_1^*=\sigma_1^*(\eta_1, \eta_2)$, introduced in \eqref{def-hatla1*}, satisfies the  pearling stability condition $(\mathbf{PSC}_*)$, \eqref{PSC*}.

\begin{thm}
\label{thm-main-2}
Consider the mass preserving gradient flow \eqref{eq-FCH-L2-p}-\eqref{eq-FCH-L2} subject to periodic boundary conditions.  Let  $(\Gamma_0, M_0)$ be a admissible pair that satisfies \eqref{A-sigma0*}, and $\sigma_1^*=\sigma_1^*(\eta_1, \eta_2)$ given in \eqref{def-hatla1*} satisfies the  pearling stability condition $(\mathbf{PSC}_*)$, \eqref{PSC*}.
Then there exists $\delta$, $\rho$, and $\varep_0$ small enough such that for all $\varep\in(0,\varep_0)$,  the solution $u$ of the mass preserving gradient flow arising from initial data $u_0\in \mcV_{\varep^{5/2}}(\cM_b(\Gamma_0, M_0;\rho), \cO^\circ_\delta)$ defined in \eqref{def-tubular}, remains in a slightly bigger set for all $t\in [0, \infty)$. Indeed 
the solution  admits the decomposition \eqref{decomp-u-MT}
and there are constants $C,c>0$ independent of $\delta, \rho,$ $\varep\in(0,\varep_0),$ and choice of initial data
for which the orthogonal perturbation $v^\bot$ satisfies
\beq\label{est-vbot}
\|v^\bot\|_{\Hin}\leq C\varep^{5/2}\rho^{2}, \qquad \|v^\bot\|_{\Hin} \leq C e^{-c\varep^4 t},
\eeq
and projected meander parameters relax to an equilibrium value $\mbp^*=(\mrp_0^*, \mrp_{1,2}^*, \bm 0)$ 
according to
\beq
\label{p-decay}
|\mrp_0-\mrp_0^*|^2 +\varep \|\hat\mbp\|_{\mbV_2^2}^2 \leq 4 \varep\delta^2 e^{-c \varep^4 t}. 
\eeq
The equilibrium curve length parameter $\mrp_0^*$ admits the approximation 
\beq\label{app-p0*}
\mrp_0^*=-\frac{m_0}{c_0R_0 m_1^2} (\sigma_1^*-\sigma_0) +O(\varep), \quad \sigma_1^*=-\frac{\eta_1+\eta_2}{2m_0}m_1^2, \quad  \sigma_0= \frac{M_0-2\pi R_0m_0}{B_2^\infty|\Omega|}+O(\varep^2), 
\eeq
where  the positive constants $c_0, m_0, m_1$ are defined in \eqref{def-c0}, \eqref{def-m0}, $\eta_1, \eta_2$ are system parameters. 
For all $k\leq 4$ we have the temporal $L^2$ bound 
\beq\label{p-L2-Vk}
\varep^4 \int_0^\infty e^{c\varep^4 t}\|\hat\mbp\|_{\mbV_k^2}^2 \dd t\leq 4\delta^2.
\eeq
The translation parameters $\mrp_1, \mrp_2$ remain within $O(\delta)$ of their initial value $\mrp_1(0), \mrp_2(0)$ and converge to
$\mrp_1^*, \mrp_2^*$ as $t\to\infty.$
\end{thm}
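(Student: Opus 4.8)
The plan is a continuation (bootstrap) argument. Let $T^*$ be the supremum of times $T$ for which the orbit $u(t)$ of \eqref{eq-FCH-L2} lies, on $[0,T]$, in the tubular neighborhood $\mcV_{2C\varep^{5/2}\rho^{-2}}(\cM_b,\cO_{2,\delta}^\circ)$ with projected meander parameters $\mbp(t)\in\cO_{2,\delta}^\circ$. On $[0,T^*)$ Theorem \ref{thm-main-1} supplies the decomposition $u=\Phi_\mbp+v^\bot$, $v^\bot=Q+w$ of \eqref{decomp-u-MT} together with the a priori bounds \eqref{A-basic}, so the reduced evolutions \eqref{eq-hatp}, \eqref{eq-Q}, \eqref{eq-w} and all estimates of Section \ref{sec-main-thm} are in force. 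The goal is to show that on $[0,T^*)$ the defining bounds hold with room to spare, forcing $T^*=\infty$, and at the same time to read off the decay rates \eqref{est-vbot}, \eqref{p-decay}, \eqref{p-L2-Vk}.

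The engine is a Lyapunov-type estimate obtained by combining, with carefully chosen $\varep$-, $\rho$-, and $\delta$-dependent weights, the differential inequalities of Lemma \ref{lem-est-p-2} (for $|\mrp_0-\mrp_0^*|^2$ and $\|\hat\mbp\|_{\mbV_2^2}^2$), Lemma \ref{lem-est-p-V32} (for $\|\hat\mbp\|_{\mbV_3^2}^2$), Lemma \ref{lem-est-q} (for $\|\mbq\|_{l^2}^2$), and the second estimate of Lemma \ref{lem-est-w} (for $\left<\mbL_\mbp w,w\right>_{L^2}$), together with the coercivity of $\Pi_0\mbL_\mbp$ on $\mcZ_*^\bot$ from Lemma \ref{lem-coer} (which dissipates $w$ at an $O(1)$ rate and $\mbq$ at rate $\varep$) and the nonlinearity bound of Lemma \ref{lem-est-N}, which the a priori smallness of $v^\bot$ renders subordinate to the linear terms. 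The hierarchy of dissipation rates — $O(1)$ for $w$, $\varep$ for $\mbq$, $\varep^3$ for $\mrp_0-\mrp_0^*$, and $\varep^4$ for $\|\hat\mbp\|_{\mbV_2^2}^2,\|\hat\mbp\|_{\mbV_3^2}^2$ (these coming from the higher-weight dissipation $\varep^5\|\hat\mbp\|_{\mbV_4^2}^2,\varep^5\|\hat\mbp\|_{\mbV_5^2}^2$ via $\|\hat\mbp\|_{\mbV_{k+2}^2}^2\ge\beta_3^4\|\hat\mbp\|_{\mbV_k^2}^2$ with $\beta_3=2$) — forces the composite quantity to relax at the slowest rate $\varep^4$. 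Closing the coupled system requires reconciling the $w$--$\mbq$ coupling: the $\varep\rho^{-4}\|\mbL_\mbp w\|_{L^2}^2$ forcing $\mbq$ in Lemma \ref{lem-est-q} against the $\tfrac12\|\mbL_\mbp w\|_{L^2}^2$ dissipation in Lemma \ref{lem-est-w}, and the $\varep^2\rho^{-4}\|\mbq\|_{l^2}^2$ reverse forcing, are absorbed by weighting $\left<\mbL_\mbp w,w\right>$ against $\|\mbq\|_{l^2}^2$ by a factor $\gtrsim\varep\rho^{-4}$ and imposing the separation $\varep^2\ll\rho^8$, i.e. $\varep_0$ small relative to $\rho$.

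The genuine obstacle is the $\mrp_0$--$\hat\mbp$ coupling. The forcing $\varep^2|\mrp_0-\mrp_0^*|^2\,\|\hat\mbp\|_{\mbV_2^2}^2$ appearing in Lemmas \ref{lem-est-p-2}, \ref{lem-est-p-V32} is \emph{not} dominated by the $\varep^4$ dissipation of $\hat\mbp$ on the initial data, where only $|\mrp_0-\mrp_0^*|\lesssim\varep^{1/2}\delta\gg\varep$ is known, so $\|\hat\mbp\|$ may grow on a transient of length $\sim\varep^{-3}|\ln\varep|$ while $|\mrp_0-\mrp_0^*|$ relaxes from $\varep^{1/2}\delta$ down to $O(\varep)$. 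The restriction to $\cO_\delta^\circ$ in \eqref{def:cO-circ}, which measures $\mrp_0-\mrp_0^*$ in the $\varep^{-1/2}$-weighted norm, is precisely what contains this: since $|\mrp_0(t)-\mrp_0^*|$ decays at rate $\varep^3$ from that size, $\int_0^\infty\varep^2|\mrp_0(t)-\mrp_0^*|^2\dd t\lesssim\varep^2\cdot\varep\delta^2\cdot\varep^{-3}=\delta^2$ uniformly, and I would absorb this forcing with the integrating factor $\mu(t):=\exp\!\big(-C\!\int_0^t\varep^2|\mrp_0(s)-\mrp_0^*|^2\dd s\big)\in[\tfrac12,1]$ applied to the $\hat\mbp$ terms, converting their evolution into a net-dissipative one at the price of a harmless $e^{C\delta^2}$ factor. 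Running all the estimates jointly — in the order $|\mrp_0-\mrp_0^*|^2$, then $\|\hat\mbp\|_{\mbV_2^2}^2$, then $\|\hat\mbp\|_{\mbV_3^2}^2$, since the $\mrp_0$ forcing itself involves $\|\hat\mbp\|_{\mbV_4^2}^2$ with the small coefficient $\varep^5\delta^2$ — yields a composite $\mathcal G(t)$ with $\mathcal G(0)\lesssim\varep\delta^2$ (from $u_0\in\mcV_{\varep^{5/2}}$, hence $\|v^\bot(0)\|_{\Hin}\lesssim\varep^{5/2}$ by the manifold projection, and $\mbp(0)\in\cO_\delta^\circ$) and $\tfrac{\dd}{\dd t}\mathcal G+c\varep^4\mathcal G\le0$.

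From $\mathcal G(t)\le\mathcal G(0)e^{-c\varep^4 t}$ one extracts \eqref{p-decay} from $\mathcal G\ge|\mrp_0-\mrp_0^*|^2+\varep\|\hat\mbp\|_{\mbV_2^2}^2$ after tracking the implied constants; \eqref{p-L2-Vk} for $k\le4$ by integrating the $\hat\mbp$-inequality against $e^{c\varep^4 t}$ and using $\|\hat\mbp\|_{\mbV_k^2}\le\|\hat\mbp\|_{\mbV_4^2}$; and the $v^\bot$ bounds in \eqref{est-vbot} from $\|v^\bot\|_{\Hin}^2\lesssim\|\mbq\|_{l^2}^2+\rho^{-2}\left<\mbL_\mbp w,w\right>_{L^2}\lesssim\mathcal G(t)$ together with the sharpened residual $\|\Pi_0\mrF(\Phi_\mbp)\|_{L^2}\lesssim\varep^{5/2}|\mrp_0-\mrp_0^*|+\varep^{5/2}\|\hat\mbp\|_{\mbV_2^2}^2+\varep^{7/2}\|\hat\mbp\|_{\mbV_4^2}$ of Lemma \ref{lem-est-L2-F}, which on $\cO_\delta^\circ$ drives $v^\bot$ down on the fast time scale. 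Since these strictly improve the bounds defining $T^*$, the continuation closes and $T^*=\infty$. For the translation parameters, Theorem \ref{thm-est-p} gives $|\dot\mrp_{1,2}(t)|\lesssim\varep^3|\mrp_0-\mrp_0^*|\,\|\hat\mbp\|_{\mbV_2^2}+\|d'(t)\|_{l^2}$, and every term on the right is exponentially small and $L^1$ in $t$ by \eqref{p-decay}, \eqref{p-L2-Vk} and the $L^2$ bounds on $v^\bot,\mathrm N(v^\bot)$; hence $\mrp_{1,2}$ has finite total variation $O(\delta)$, stays within $O(\delta)$ of $\mrp_{1,2}(0)$, and converges to a limit $\mrp_{1,2}^*$. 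Finally \eqref{app-p0*} is Lemma \ref{lem-p0*} combined with \eqref{def-c0} and \eqref{def-hatla1*}. The crux of the whole argument is the bookkeeping that keeps every off-diagonal term in $\tfrac{\dd}{\dd t}\mathcal G$ subordinate to the diagonal dissipation — above all the transient $\mrp_0$--$\hat\mbp$ coupling that the restriction to $\cO_\delta^\circ$ is tailored to control.
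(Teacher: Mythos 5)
Your proposal is correct and follows the same route as the paper: a continuation argument closed by a weighted composite Lyapunov functional built from Lemmas \ref{lem-est-p-2}, \ref{lem-est-p-V32}, \ref{lem-est-q} and \ref{lem-est-w}, with the coercivity of Lemma \ref{lem-coer} and the nonlinear bound of Lemma \ref{lem-est-N} closing the loop for $\varep_0\ll\rho\ll 1$ and $\delta$ small. Two mechanical differences are worth flagging. First, the paper runs \emph{two} composite energies rather than one: $\mcE_1$ (built on $\|\hat\mbp\|_{\mbV_3^2}^2+\varep^{-1}|\mrp_0-\mrp_0^*|^2$ together with the first $w$-estimate of Lemma \ref{lem-est-w}) only yields a uniform bound with an $O(\varep\rho^{-10})$ floor — enough to close the a priori assumptions and force $T=\infty$ — while the genuine exponential decay \eqref{p-decay} comes from a second energy $\mcE_2$ built on $\|\hat\mbp\|_{\mbV_2^2}^2$ and the second $w$-estimate \eqref{est-w-1}, which is source-free precisely because it uses the Lipschitz residual bound of Lemma \ref{lem-est-L2-F}; your single functional works only if you use that Lipschitz form throughout, which you do indicate. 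Second, for the transient $\mrp_0$--$\hat\mbp$ coupling you invoke an integrating factor based on $\int_0^\infty\varep^2|\mrp_0-\mrp_0^*|^2\dd t\lesssim\delta^2$, whereas the paper absorbs $\varep^2|\mrp_0-\mrp_0^*|^2\|\hat\mbp\|_{\mbV_k^2}^2\lesssim\delta^2\,\varep^2|\mrp_0-\mrp_0^*|^2$ directly, via the a priori bound $\|\hat\mbp\|_{\mbV_3^2}\lesssim\delta$, into the dissipation $\varep^2|\mrp_0-\mrp_0^*|^2$ supplied by the $\varep^{-1}$-weighted length term of the energy — this is exactly what the $\varep^{-1/2}$ weighting in $\cO_\delta^\circ$ buys. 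Both devices close; the paper's is slightly more economical since it does not require establishing the time-integrability of $|\mrp_0-\mrp_0^*|^2$ as a separate input before the Gronwall step.
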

\begin{proof}  
Since $u_0\in \mcV_{\varep^{5/2}}(\cM_b, \cO_\delta^\circ)$, there exists $\mbp_0\in \cO_\delta^\circ$ and $v_0\in \Hin$ satisfying $\|v_0\|_{\Hin}\leq \varep^{5/2}$ such that $u_0=\Phi_{\mbp_0} + v_0$.  Lemma \,\ref{lem-Manifold-Projection} affords the decomposition  $u_0=\Phi_{\mbp(0)}+v_0^\bot$ where $\Phi_{\mbp(0)}=\Pi_{\cM_b}u_0$ and $v_0^\bot$ is the orthogonal perturbation. The distance from  $\mbp(0)$ to $\mbp_0\in \cO_\delta^\circ$ can be bounded by
\beqs
\|\mbp(0)-\mbp_0\|_{l^2}\lesssim \varep^3, \qquad \|\hat\mbp(0)-\hat \mbp_0\|_{\mbV_3^2}\lesssim N_1^{3}\|\hat\mbp(0)-\hat \mbp_0\|_{l^2}\lesssim \rho^{3/4},
\eeqs 
where we applied Lemma \,\ref{lem-est-V} 
and bounded $N_1$ from \eqref{est-N01}. Note that $\mbp_0\in\cO_\delta^\circ$. Hence for $\varep_0, \rho$ small enough depending on $\delta$, triangle inequality implies  initial meander parameters satisfy
\beq \label{MT-init-est}
\varep^{-1/2}|\mrp_0(0)-\mrp_0^*|+
\|\hat\mbp(0)\|_{\mbV_3^2}\leq \frac{3\delta}{2},
\eeq 
and there exists $T>0$ such that   
\begin{equation}\label{assump-A}
({\bf A})\qquad 
\begin{aligned}    
& \varep^{-1/2}|\mathrm p_0(t) -\mrp_0^* |+ \|\hat{\mbp}\|_{\mbV_3^2}(t) <  2 \delta, \quad \|\dot{\mbp}\|_{l^2} (t)< 2 \varep^3\qquad \forall t\in[0,T). 
\end{aligned}
\end{equation}
We show $T=\infty$ in the following. 

The equilibrium pearling stability condition $(\mathbf{PSC}_*)$ holds by assumption. Under the a priori assumption $(\mathbf A)$ the dynamic pearling stability condition $(\mathbf{PSC})$ \eqref{cond-P-stab} holds uniformly.  Indeed, since $\sigma^*=\sigma_1^*+O(\varep)$, $(\mathbf{PSC})$ holds if  $\sigma$ stays close to $\sigma^*$, which follows from Corollary \ref{cor-def-hatla} since 
\beq\label{main-est-sigma}
|\sigma -\sigma^*|\lesssim |\mrp_0-\mrp_0^*| +\varep^2\|\hat\mbp\|_{\mbV_2^2} \lesssim \varep^{1/2}\delta.
\eeq
Choosing $\varep_0$ small enough the dynamic pearling stability condition $(\mathbf{PSC})$ holds uniformly on $[0,T]$ and Theorem \ref{thm-main-1} applies, which in turn affords the following uniform bounds on $w, \mbq$
\beq\label{est-w-q-uniform}
 \left<\mbL_{\mathbf p} w, w\right>_{L^2}\lesssim \varep^5\rho^{- 2}, \qquad \|\mbq\|_{l^2}^2\lesssim\varep^{5}\rho^{-4}\qquad \forall t\in[0, T).
\eeq 
 Since $v^\bot=Q+w$ is an orthogonal decomposition and $\|Q\|_{\Hin}\sim \|\mbq\|_{l^2}$,  we may estimate $v^\bot$  and  the nonlinear term $\mathrm N(v^\bot)$ from Lemma \,\ref{lem-est-N},
\beq\label{est-v-1}
\begin{aligned}
\|v^\bot\|_{\Hin}^2 \lesssim   \rho^{-2}\left<\mbL_\mbp w,w\right>_{L^2} +\|\mbq\|_{l^2}^2, \qquad 
\|\mathrm N(v^\bot)\|_{L^2}^2&\lesssim \varep^{-2} \left(\rho^{-2} \left<\mbL_\mbp w,w\right>_{L^2} +\|\mbq\|_{l^2}\right)^2.
\end{aligned}
\eeq
The $\Hin$ bound of $v^\bot$ in \eqref{est-vbot} follows from the coercivity of $\mbL_\mbp$. 
Furthermore,  we rewrite these bounds in \eqref{est-v-1} as
\beq\label{est-v}
\begin{aligned}
\|v^\bot\|_{\Hin}^2 \lesssim \rho^{-4} \|\mbL_\mbp w\|_{L^2}^2 +\|\mbq\|_{l^2}^2 , \qquad 
\|\mathrm N(v^\bot)\|_{L^2}^2\lesssim \varep^{3}\rho^{-8}  \|\mbL_\mbp w\|_{L^2}^2 +\varep^3\rho^{-4} \|\mbq\|_{l^2}^2.
\end{aligned}
\eeq

Combining the  estimates above  and \eqref{est-w-q-uniform} with Lemma \ref{lem-est-p-2} yields an estimate of $\|\dot\mbp\|_{l^2}$. In particular,  for $\varep_0$ small enough 
\beq\label{est-dotp-l2-1} 
\|\dot\mbp\|_{l^2}\leq \varep^3 \qquad \forall t\in[0,T).
\eeq 
 It remains to bound the length residual $|\mrp_0-\mrp_0^*|$ and $\|\hat\mbp\|_{\mbV_3^2}$ in order to verify the apriori assumptions  $(\mathbf A)$.
From Lemma \ref{lem-est-p-2} and \eqref{est-v} obtain the useful bound of $\|\dot\mbp\|_{l^2}$  
\beq\label{est-dotp-l2} 
\|\dot\mbp\|_{l^2}^2\lesssim \varep^6 |\mrp_0-\mrp_0^*|^2 +\varep^8 \|\hat\mbp\|_{\mbV_4^2}^2+ \varep^4 \rho^{-8} \|\mbL_\mbp w\|_{L^2}^2 +\varep^4 \rho^{-4} \|\mbq\|_{l^2}^2.
\eeq

\medskip
\noindent
{\it Step 1: Uniform estimates of $|\mrp_0-\mrp_0^*|$ and $\hat \mbp$ in $\mbV_3^2$. }  
 We introduce a mixed, weighted energy: 
 \beqs
 \mathcal E_1(t):= \|\hat\mbp\|_{\mbV_3^2}^2 + \varep^{-1}|\mrp_0-\mrp_0^*|^2 +\varep^{-2}\rho^{-10} \left<\mbL_\mbp w, w\right>_{L^2}+\varep^{-3} \rho^{-5}\|\mbq\|_{l^2}^2. 
 \eeqs
 Combining the first estimate on $w$ from Lemma \,\ref{lem-est-w}, $\mbq$ from Lemma \,\ref{lem-est-q}, and the $\mbV_3^2$-estimate of $\hat\mbp$ from Lemma \,\ref{lem-est-p-V32}, and the $\|\dot \mbp\|_{l^2}$ bound \eqref{est-dotp-l2} we find a revised positive constant $c_*$ independent of $\varep, \rho, \delta$ for which the $\mcE_1$-dissipation inequality
 \beqs
 \begin{aligned}
 \frac{\dd }{\dd t} \mcE_1(t) &+c_*\left(\varep^4 \|\hat\mbp\|_{\mbV_5^2}^2 +\varep^2 |\mrp_0-\mrp_0^*|^2  +\varep^{-2} \rho^{-10}\|\mbL_\mbp w\|_{L^2}^2 +\varep^{-2} \rho^{-5}\|\mbq\|_{l^2}^2 \right) \\
 &\lesssim  
 G(t)+\varep^5\rho^{-10}+\varep^{-2}\|v^\bot\|_{L^2}^2 +\varep^{-5} \|\mathrm N(v^\bot)\|_{L^2}^2
 \end{aligned}
 \eeqs 
 holds. Here we have introduced 
 \beqs 
 \begin{aligned}
 G(t):=&\varep^{-3}\rho^{-10}\|\dot \mbp\|_{l^2}^2 +\varep^4(\delta^2+\varep\rho^{-10}+\varep^2\rho^{-2})\|\hat\mbp\|_{\mbV_4^2}+ \varep^2(\delta^2+\varep\rho^{-10})|\mrp_0-\mrp_0^*|^2\\
 &+\varep^{-2}\rho^{-10}(\rho+\varep \rho^{-8})\|\mbL_\mbp w\|_{L^2}^2 + (\rho^{-13}+\varep \rho^{-14})\|\mbq\|_{l^2}^2. 
 \end{aligned}
 \eeqs 
 For $\delta, \rho$ small enough, and $\varep\in(0,\varep_0)$ with $\varep_0 $  small enough, depending on $\rho,\delta$, the function $G(t)$ can be absorbed into the positive term on the left-hand side of the $\mcE_1$-dissipation inequality. The bounds on the orthogonal perturbation $v^\bot$ and nonlinear terms $\mathrm N(v^\bot)$ in \eqref{est-v}  and estimate of $w, \mbq$ in \eqref{est-w-q-uniform} imply
 \beqs
 \frac{\dd }{\dd t} \mcE_1(t) +\frac{c_*}{2}\left(\varep^4 \|\hat\mbp\|_{\mbV_5^2}^2 +\varep^2 |\mrp_0-\mrp_0^*|^2  +\varep^{-2} \rho^{-10}\|\mbL_\mbp w\|_{L^2}^2 +\varep^{-2} \rho^{-5}\|\mbq\|_{l^2}^2 \right) \lesssim  \varep^5\rho^{-10}.
 \eeqs  
Since  $\|\hat\mbp\|_{\mbV_3^2}\leq \|\hat\mbp\|_{\mbV_5^2}$, we deduce that  $\mcE_1$-dissipation inequality reduces to the simple form
\beqs
 \frac{\dd }{\dd t} \mcE_1(t) +\frac{c_*}{2}\varep^4 \mcE_1(t) \lesssim  \varep^5\rho^{-10}.
\eeqs
 Multiplying by $e^{c_*\varep^4 t/2} $, integrating,
 and applying the initial bound \eqref{MT-init-est}, we derive the uniform bound
 \beqs
 \mcE_1(t)\leq e^{-c_*\varep^4 t/2} \mcE_1(0) +C \varep\rho^{-10}.
 \eeqs
The right hand side is strictly less than $ 4\delta^2$ for $\varep_0$ small enough depending on $\rho, \delta$. From the definition of $\mcE_1(t)$, the first assumption in $(\mathbf A)$ holds uniformly on $[0, T)$, which combined with the $l^2$-bound of $\dot\mbp$ in \eqref{est-dotp-l2-1} yields $T=\infty$. 
 
 \medskip
 \noindent
 {\it Step 2. Decay estimates.  } To obtain a decay estimate, we introduce a second mixed weighted energy
\beq
\label{E2-def}
\mcE_2(t)=\|\hat\mbp\|_{\mbV_2^2}^2 +\varep^{-1} |\mrp_0-\mrp_0^*|^2 +\rho^{-10} \left< \mbL_\mbp w, w\right>_{L^2}  + \varep^{-1} \rho^{-5} \|\mbq\|_{l^2}^2.
\eeq
From the definition of $\mcE_2$, the estimates \eqref{MT-init-est} and \eqref{est-w-q-uniform} yield the initial bound
\beq\label{IC-mcE2}
\mcE_2(0)< 4\delta^2.
\eeq
Combining the second estimate on $w$ from Lemma \,\ref{lem-est-w}, the $\mbq$-estimate from Lemma \,\ref{lem-est-q}, and the $\mbV_2^2$-estimate on $\hat\mbp$ from Lemma \,\ref{lem-est-p-2}, we find a revised positive constant $c_*$ independent of $\varep, \rho, \delta$ such that 
 \beq
 \begin{aligned}
 \frac{\dd }{\dd t}\mcE_2(t)&+ c_*  \left(\varep^4 \|\hat\mbp\|_{\mbV_4^2}^2+ \varep^2 |\mrp_0-\mrp_0^*|^2 +\rho^{-10}\|\mbL_\mbp w\|_{L^2}^2 +\rho^{-5}\|\mbq\|_{l^2}^2 \right)\\
 & \lesssim  \varep^{-1}\rho^{-10} \|\dot\mbp\|_{l^2}^2+  \varep \|v^\bot\|_{L^2}^2 +\varep^{-3}\|\mathrm N(v^\bot)\|_{L^2}^2.
 \end{aligned}
 \eeq
 The remainder of {\it Step 2} follows the approach of {\it Step 1}. We employ  the upper bound on $\dot\mbp$ from \eqref{est-dotp-l2} and the estimates on $v^\bot, \mathrm N(v^\bot)$ from  \eqref{est-v} to eliminate these terms from the right-hand side so long as $\varep_0\ll \rho\ll 1$. 
 We deduce that
 \beqs
 \frac{\dd}{\dd t} \mcE_2(t) +\frac{c_*}{2}  \left(\varep^4 \|\hat\mbp\|_{\mbV_4^2}^2+ \varep^2 |\mrp_0-\mrp_0^*|^2 +\rho^{-10}\|\mbL_\mbp w\|_{L^2}^2 +\rho^{-5}\|\mbq\|_{l^2}^2 \right) \leq 0, 
 \eeqs
Since $\|\hat\mbp\|_{\mbV_2^2}\leq \|\hat\mbp\|_{\mbV_4^2}$, the coercivity of $\mbL_\mbp$ from \eqref{coer-bLp} allows us to bound the positive term on the left-hand side from below,
\beqs
\frac{1}{2}\left(\varep^4 \|\hat\mbp\|_{\mbV_4^2}^2+ \varep^2 |\mrp_0-\mrp_0^*|^2 +\rho^{-10}\|\mbL_\mbp w\|_{L^2}^2 +\rho^{-5}\|\mbq\|_{l^2}^2\right)\geq \frac{\varep^4}{2}\mcE_2(t).  
\eeqs
This yields the $\mcE_2$-dissipation inequality
\beqs
 \frac{\dd}{\dd t} \mcE_2(t) +\frac{c_*\varep^4}{4}  \mcE_2(t) +\frac{c_*}{4}  \left(\varep^4 \|\hat\mbp\|_{\mbV_4^2}^2+ \varep^2 |\mrp_0-\mrp_0^*|^2 +\rho^{-10}\|\mbL_\mbp w\|_{L^2}^2 +\rho^{-5}\|\mbq\|_{l^2}^2 \right) \leq 0.
\eeqs
Multiplying both sides by $e^{c_*\varep^4 t/4} $ and integrating with respect to time from $0$ to $t$ yields
\beqs
e^{c_*\varep^4t/4}\mcE_2(t) + \int_0^t  e^{c_*\varep^4\tau/4}\left(\varep^4 \|\hat\mbp\|_{\mbV_4^2}^2+ \varep^2 |\mrp_0-\mrp_0^*|^2 +\rho^{-10}\|\mbL_\mbp w\|_{L^2}^2 +\rho^{-5}\|\mbq\|_{l^2}^2 \right) \dd \tau \leq  \mcE_2(0),
\eeqs
from which we deduce the asymptotic decay of the $\mcE_2$ on the $\varep^{-4}$ time-scale,
\beq\label{decay-est}
\mcE_2(t)  \leq e^{-c_*\varep^4t/2} \mcE_2(0)\qquad \forall t\in [0,\infty).  
\eeq
The decay estimates in \eqref{p-decay} and \eqref{decay-est} follow.  Moreover  the relaxation of the weighted norms is controlled by the initial energy, 
\beq\label{total-growth}
\int_0^\infty e^{c_*\varep^4\tau/4}\left(\varep^4 \|\hat\mbp\|_{\mbV_4^2}^2+ \varep^2 |\mrp_0-\mrp_0^*|^2 +\rho^{-10}\|\mbL_\mbp w\|_{L^2}^2 +\rho^{-5}\|\mbq\|_{l^2}^2 \right) \dd \tau \leq  \mcE_2(0). 
\eeq
\medskip
\noindent
The bound \eqref{IC-mcE2} on $\mcE_2(0)$ yields the temporal  estimate \eqref{p-L2-Vk} for $k=4,$ which implies them for $k<4$. 

\medskip
\noindent 
{\it Step 3. Relaxation of the translation parameters.}
The decay from {\it step 2} shows that the translational parameters $\mrp_{1,2}$ converge an equilibrium point $\mrp_{1,2}^*$ close to the initial value. In fact  for $k=1,2$ and any $t_1\leq t_2$ on $[0, \infty)$
\beqs
\begin{aligned}
|\mrp_{k}(t_2) -\mrp_{k}(t_1)| &\leq \int_{t_1}^{t_2} |\dot\mrp_k(\tau)|\dd \tau\leq \int_{t_1}^{t_2}\|\dot \mbp\|_{l^2} \dd \tau, 
\end{aligned}
\eeqs
which combined with \eqref{est-dotp-l2} implies
\beqs
\begin{aligned}
|\mrp_{k}(t_2) -\mrp_{k}(t_1)| 
&\lesssim \int_{t_1}^{t_2} \left(\varep^3|\mrp_0-\mrp_0^*| +\varep^4 \|\hat\mbp\|_{\mbV_4^2} + \varep^2\rho^{-4}\|\mbL_\mbp w\|_{L^2} +\varep^2 \rho^{-2}\|\mbq\|_{l^2}\right)\dd \tau.
\end{aligned}
\eeqs
We use the weighted norm relaxation estimate \eqref{total-growth} and H\"older's inequality to bound the right-hand side. The integral of the $|\mrp_0-\mrp_0^*|$-term satisfies 
\beqs
\begin{aligned}
\int_{t_1}^{t_2} \varep^3 |\mrp_0-\mrp_0^*|\dd \tau &\leq \varep^3 \left(\int_{t_1}^{t_2} e^{c_*\varep^4\tau/4}|\mrp_0-\mrp_0^*|^2 \dd \tau \right)^{1/2} \left(\int_{t_1}^{t_2} e^{-c_*\varep^4 \tau/4}\dd \tau\right)^{1/2},\\
&\lesssim  \left(\int_0^\infty e^{c_*\varep^4\tau/4}\varep^2 |\mrp_0-\mrp_0^*|^2 \dd \tau \right)^{1/2} e^{-c_*\varep^4t_1/8}\lesssim e^{-c_*\varep^4t_1/8} \mathcal E_2^{1/2}(0).
\end{aligned}
\eeqs
The other terms have similar or better bounds and  from \eqref{IC-mcE2} we have
\beqs 
|\mrp_k(t_2)-\mrp_k(t_1)|\lesssim e^{-c_*\varep^4t_1/8} \delta. 
\eeqs 
We deduce that $\mrp_{1,2}(t)$ converges to some unique equilibrium value $\mrp_{1,2}^*$ as time tends to $\infty$. In particular, taking $t_1=0, t_2=t$ yields
\beqs
|\mrp_k(t)-\mrp_k(0)|\lesssim \delta, \qquad \forall t\in [0,\infty). 
\eeqs
We conclude that  $\mrp_{1,2}(t)$ stays in a $C\delta$-neighborhood of initial data for some positive constant independent of $\varep,\rho,\delta$.  The proof is complete.

\end{proof}

 The projection of $u=u(t)$ onto the bilayer manifold defines the meander parameters $\mbp=\mbp(t)$ and induces a normal velocity on $\Gamma_\mbp$ defined through \eqref{def-Vp}.
 As shown in \eqref{Vn-Vp-resid} of Corollary\,\ref{cor-residual-proj} given an interface $\Gamma_\mbp$, the curvature induced velocity $V_\mbp$, defined in \eqref{normal-velocity},  drives the actual  normal velocity $\p_t\bm\gamma_\mbp\cdot\mbn_\mbp$. The following result quantifies this relation for all $\mbp\in\cO_{2,\delta}^\circ$.
 
 \begin{prop}
\label{cor-Vp} There exists a $C>0$, independent of $\rho$, $\delta$ and $\varep_0$ such that for all  $\mbp\in\cO_{2,\delta}^\circ$ and all $\varep\in(0,\varep_0)$ the 
normal velocities satisfy the error bound
 \beq 
 \label{Vn-Vp-est}
 \|\p_t\bm\gamma_\mbp\cdot\mbn_\mbp - V_\mbp\|_{L^2(\msI_\mbp)} \leq C \left(\varep^{9/2}\rho^{-4}+\varep^4\delta\|\hat\mbp\|_{\mbV_4^2} \right).
 \eeq
  \end{prop}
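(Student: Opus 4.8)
The plan is to deduce \eqref{Vn-Vp-est} from the Galerkin identity \eqref{Vn-Vp-resid} of Corollary \ref{cor-residual-proj} by reconstructing the full $L^2(\msI_\mbp)$ difference from its Laplace-Beltrami coefficients, plus a tail contribution from the modes outside the Galerkin space $G_1$. First I would observe that the normal velocity $\p_t\bm\gamma_\mbp\cdot\mbn_\mbp$ is controlled by $\dot\mbp$ through \eqref{GM-timede-proj}, so that the defining relation \eqref{def-Vp} for $V_\mbp$, namely $\Pi_{G_1}(\p_t\bm\gamma_\mbp\cdot\mbn_\mbp - V_\mbp)=0$, means $\p_t\bm\gamma_\mbp\cdot\mbn_\mbp - V_\mbp = (\mathrm{Id}-\Pi_{G_1})(\p_t\bm\gamma_\mbp\cdot\mbn_\mbp - V_\mbp)$ up to the fact that the flow does not exactly satisfy \eqref{def-Vp}. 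More precisely, the actual projected discrepancy is the left-hand side of \eqref{Vn-Vp-resid}, which is the $k$-th Laplace-Beltrami coefficient of $\p_t\bm\gamma_\mbp\cdot\mbn_\mbp - V_\mbp$ (up to the $1/(1+\mrp_0)$ normalization in $\Pi_{G_1}$). So I would split
\[
\p_t\bm\gamma_\mbp\cdot\mbn_\mbp - V_\mbp = \Pi_{G_1}(\p_t\bm\gamma_\mbp\cdot\mbn_\mbp - V_\mbp) + (\mathrm{Id}-\Pi_{G_1})(\p_t\bm\gamma_\mbp\cdot\mbn_\mbp - V_\mbp),
\]
and bound the two pieces separately.

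For the Galerkin piece, Parseval on $\msI_\mbp$ gives $\|\Pi_{G_1}(\cdot)\|_{L^2(\msI_\mbp)}^2 \sim \sum_{k\in\Sigma_1}\bigl|\int_{\msI_\mbp}(\p_t\bm\gamma_\mbp\cdot\mbn_\mbp - V_\mbp)\tilde\Theta_k\bigr|^2$, and each summand is estimated by \eqref{Vn-Vp-resid}: the $\Rem[v^\bot]$ projection is handled by Lemma \ref{lem-est-Rv}, and the explicit error terms in \eqref{Vn-Vp-resid} are summed over the at most $N_1\sim\varep^{-1}$ modes. I would then insert the stability bounds of Theorem \ref{thm-main-2}: $|\sigma_1^*-\sigma|\lesssim\varep^{1/2}\delta$ from \eqref{main-est-sigma}, $\|\dot\mbp\|_{l^2}\lesssim\varep^3$, $\|v^\bot\|_{L^2}\lesssim\varep^{5/2}\rho^2$ (from $\|v^\bot\|_{\Hin}\lesssim\varep^{5/2}\rho^2$, noting $\|v^\bot\|_{L^2}\le\|v^\bot\|_{\Hin}$), and $\|\mathrm N(v^\bot)\|_{L^2}$ bounded via Lemma \ref{lem-est-N} and \eqref{est-w-q-uniform} by $\varepsilon^{-1}(\rho^{-2}\langle\mbL_\mbp w,w\rangle + \|\mbq\|_{l^2}^2)\lesssim\varepsilon^4\rho^{-4}$. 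Collecting the factor $\varepsilon^{1/2}/m_1$ and the $\Rem[v^\bot]$ bound $O(\varep^2\|v^\bot\|_{L^2}, \|\mathrm N(v^\bot)\|_{L^2})$ gives a Galerkin contribution of order $\varep^{1/2}\cdot\varepsilon^4\rho^{-4}=\varep^{9/2}\rho^{-4}$, with the meander-geometric terms producing the $\varep^4\delta\|\hat\mbp\|_{\mbV_4^2}$ term; one must track that the $\varep\|\dot\mbp\|_{l^2}$ term in \eqref{Vn-Vp-resid} contributes $\varep^4$, which is absorbed, and that summing $N_1^{1/2}\sim\varep^{-1/2}$ copies of the per-mode errors does not destroy the bound --- here I'd lean on the fact that the dominant error terms in Lemma \ref{lem-est-Rv} and \eqref{Vn-Vp-resid} come with $\bm e_k$ factors whose $l^2$ norm is one, so no dimensional loss occurs.

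For the tail piece $(\mathrm{Id}-\Pi_{G_1})(\p_t\bm\gamma_\mbp\cdot\mbn_\mbp - V_\mbp)$, note $\p_t\bm\gamma_\mbp\cdot\mbn_\mbp = -\sum_{j\in\Sigma_1}\dot\mrp_j\xi_j(s_\mbp)$ lies essentially in $G_1$ already --- its high-frequency content is controlled by $\|\dot\mbp\|_{l^2}$ and the regularity of the $\xi_j$ from Lemma \ref{lem-change-of-coord}, so this part contributes at worst $O(\varep^3)$, well inside the claimed bound. The genuinely non-Galerkin part is $(\mathrm{Id}-\Pi_{G_1})V_\mbp$, and since $V_\mbp$ from \eqref{normal-velocity} involves $\Delta_{s_\mbp}\kappa_\mbp$, $\kappa_\mbp^3$ and $\kappa_\mbp$, I would use the curvature expansion \eqref{def-kappa-p,0} of Lemma \ref{lem-Gamma-p} together with $|\mrp_0-\mrp_0^*|\lesssim\varep^{1/2}\delta$ and $\|\hat\mbp\|_{\mbV_k^2}$ control for $k\le 5$; the worst term is the truncation of the surface-diffusion term $\varepsilon^4\Delta_{s_\mbp}\kappa_\mbp$, whose non-$G_1$ tail is $\varepsilon^4\|(\mathrm{Id}-\Pi_{G_1})\Delta_{s_\mbp}\kappa_\mbp\|_{L^2(\msI_\mbp)}$, and since $\kappa_\mbp$'s linear part involves $(1-\beta_j^2)\mrp_j\tilde\Theta_j$ with all modes $j\le N_1-1$ present, the tail actually vanishes in the linear approximation and the residual $\mathcal Q_2$ tail is bounded by $\varepsilon^4\|\mathcal Q_2\|_{H^2(\msI_\mbp)}\lesssim\varepsilon^4\|\hat\mbp\|_{\mbV_4^2}\|\hat\mbp\|_{\mbV_2^2}\lesssim\varepsilon^4\delta\|\hat\mbp\|_{\mbV_4^2}$.

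The main obstacle I anticipate is bookkeeping rather than conceptual: carefully verifying that the constant $C$ can be taken independent of $\rho,\delta,\varep_0$ while every error source --- the $\Rem[v^\bot]$ projection, the explicit $O(\varepsilon^4|\sigma_1^*-\sigma|,\varepsilon^5,\ldots)$ terms in \eqref{Vn-Vp-resid}, the dimensional factors $N_1\sim\varepsilon^{-1}$, and the Galerkin truncation of $\Delta_{s_\mbp}\kappa_\mbp$ --- collapses to exactly $\varepsilon^{9/2}\rho^{-4}+\varepsilon^4\delta\|\hat\mbp\|_{\mbV_4^2}$ and nothing larger; in particular one must confirm that $\|\mathrm N(v^\bot)\|_{L^2}\lesssim\varepsilon^4\rho^{-4}$ is the binding contribution and that it enters multiplied by $\varepsilon^{1/2}$ to yield $\varepsilon^{9/2}\rho^{-4}$, and that the translation-related errors are genuinely absorbed into the two displayed terms.
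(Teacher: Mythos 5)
Your overall architecture matches the paper's: split via the triangle inequality into the Galerkin projection of the difference (handled by Corollary \ref{cor-residual-proj} and Lemma \ref{lem-est-Rv}) plus the two high-frequency tails $\Pi_{G_1}^\bot(\p_t\bm\gamma_\mbp\cdot\mbn_\mbp)$ and $\Pi_{G_1}^\bot V_\mbp$, with the curvature expansion \eqref{def-kappa-p,0} isolating $\mathcal Q_2$ as the only non-$G_1$ content of $\kappa_\mbp$. Your identification of $\varep^{1/2}\|\mathrm N(v^\bot)\|_{L^2}\lesssim\varep^{9/2}\rho^{-4}$ as the binding term and of $\varep^4\|\mathcal Q_2\|_{H^2(\msI_\mbp)}\lesssim\varep^4\delta\|\hat\mbp\|_{\mbV_4^2}$ as the source of the second term is also correct.

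However, there is a genuine gap in how you dispose of the $\dot\mbp$-dependent contributions. You claim that the $\varep\|\dot\mbp\|_{l^2}$ term from \eqref{Vn-Vp-resid} "contributes $\varep^4$, which is absorbed," and that the tail $\Pi_{G_1}^\bot(\sum_j\dot\mrp_j\xi_j)$ "contributes at worst $O(\varep^3)$, well inside the claimed bound." Neither assertion is true: both $\varep^3$ and $\varep^4$ strictly exceed $\varep^{9/2}\rho^{-4}+\varep^4\delta\|\hat\mbp\|_{\mbV_4^2}$, since $\rho$ is fixed independently of $\varep$ (so $\varep^4\gg\varep^{9/2}\rho^{-4}$ as $\varep\to0$) and $\delta\|\hat\mbp\|_{\mbV_4^2}$ can be arbitrarily small. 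The a priori bound $\|\dot\mbp\|_{l^2}\lesssim\varep^3$ is not sharp enough to close the estimate. The paper instead (i) gains two factors of $\varep\rho^{-1/4}$ on the normal-velocity tail via the spectral/Sobolev estimate $\|\Pi_{G_1}^\bot f\|_{L^2(\msI_\mbp)}\lesssim\beta_{N_1}^{-1}\|f\|_{H^1(\msI_\mbp)}$ applied twice, reducing that tail to $\varep\|\dot\mbp\|_{l^2}$ as well, and then (ii) invokes the \emph{refined} bound \eqref{est-dotp-l2}, $\|\dot\mbp\|_{l^2}\lesssim\varep^3|\mrp_0-\mrp_0^*|+\varep^4\|\hat\mbp\|_{\mbV_4^2}+\varep^{5/2}\|v^\bot\|_{L^2}+\varep^{1/2}\|\mathrm N(v^\bot)\|_{L^2}$, together with the defining constraint $|\mrp_0-\mrp_0^*|\lesssim\varep^{1/2}\delta$ of $\cO_{2,\delta}^\circ$, to conclude $\varep\|\dot\mbp\|_{l^2}\lesssim\varep^{9/2}\delta+\varep^5\|\hat\mbp\|_{\mbV_4^2}+\varep^{9/2}\rho^{-4}$, which does fit. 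Without this refinement your argument does not establish the stated bound. (A minor additional point: you quote $\|v^\bot\|_{\Hin}\lesssim\varep^{5/2}\rho^{2}$ where the proof actually uses $\rho^{-2}$; this does not affect the final exponent of $\rho$ but should be stated consistently.)
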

 \begin{proof}
 From the triangle inequality we have the relation
 \beq\label{NV-est-0}
 \begin{aligned}
 \|\p_t\bm\gamma_\mbp\cdot\mbn_\mbp - V_\mbp\|_{L^2(\msI_\mbp)} \leq &
 \left \|\Pi_{G_1}\left(\p_t\bm\gamma_\mbp\cdot\mbn_\mbp - V_\mbp\right)\right\|_{L^2(\msI_\mbp)} + \left \| \Pi^\bot_{G_1} \p_t\bm\gamma_\mbp\cdot\mbn_\mbp\right\|_{L^2(\msI_\mbp)} \\
 &+\left \|\Pi^\bot_{G_1} V_\mbp\right \|_{L^2(\msI_\mbp)},
\end{aligned}
\eeq
where $\Pi_{G_1}^\bot=\mathrm I-\Pi_{G_1}$ is the complement to the Galerkin  projection onto $G_1\subset L^2(\msI_\mbp)$, the space spanned by the first $N_1$ Laplace-Beltrami eigenmodes. 
For the first term, from \eqref{Vn-Vp-resid} of Corollary\,\ref{cor-residual-proj} and Lemma\,\ref{lem-est-Rv} we find that
\beqs
\begin{aligned}
\left \|\Pi_{G_1}\left(\p_t\bm\gamma_\mbp\cdot\mbn_\mbp - V_\mbp\right)\right\|_{L^2(\msI_\mbp)} \lesssim  &\;
\varep^{-1} \|v^\bot\|_{L^2} \|\dot\mbp\|_{l^2}+ (\varep^{5/2} +\varep^{5/2}\|\hat\mbp\|_{\mbV_4^2}) \|v^\bot\|_{L^2}+\varep^{1/2} \|\mathrm N(v^\bot)\|_{L^2}\\
&\quad +  \varep^{4}|\sigma_1^*-\sigma|+\varep^5+\varep^{5}\|\hat\mbp\|_{\mbV^2_4}+\varep\|\dot\mbp\|_{l^2}.
\end{aligned}
\eeqs
From estimates  \eqref{est-w-q-uniform}-\eqref{est-v-1} and $\|v^\bot\|_{L^2}\leq \|v^\bot\|_{\Hin}$ from the definition of $H^2$ inner norm in \eqref{def-Hin}, we bound $L^2$-norm of $v^\bot$ and $\mathrm N(v^\bot)$ as
\beqs
\|v^\bot\|_{L^2}\lesssim \varep^{5/2}\rho^{-2}, \qquad \|\mathrm N(v^\bot)\|_{L^2}\lesssim \varep^{4}\rho^{-4}.
\eeqs 
With these estimates, \eqref{main-est-sigma} with \eqref{p-decay}, and $|\sigma^*-\sigma_1^*|\leq \varep$, for $\varep_0$ small enough depending on $\rho$ we reduce the bound on the first term of the right hand side of \eqref{NV-est-0} to \beq\label{NV-est-1}
\left \|\Pi_{G_1}\left(\p_t\bm\gamma_\mbp\cdot\mbn_\mbp - V_\mbp\right)\right\|_{L^2(\msI_\mbp)} \lesssim \varep^{9/2}\rho^{-4} 
+\varep^5\rho^{-2}\|\hat\mbp\|_{\mbV^2_4}+\varep\|\dot\mbp\|_{l^2}.
\eeq
Bounding the last two terms in \eqref{NV-est-0} requires a standard Sobolev estimate of the $L^2$ projection to the high frequency space $G_1^\bot$ in terms of $H^1$-norm of a function. In fact for any function $f=f(s_\mbp)\in H^1(\msI_\mbp)$, 
\beq\label{Sobolev-Gonebot} 
\|\Pi^\bot_{G_1} f\|_{L^2(\msI_\mbp)} \leq \beta_{N_1}^{-1} \|f\|_{H^1(\msI_\mbp)}\lesssim \varep\rho^{-1/4} \|f\|_{H^1(\msI_\mbp)}.
\eeq
Here we used $\beta_{N_1}\sim N_1$ with $N_1$ bounded from below by  \eqref{est-N01}.
Applying the Sobolev estimate \eqref{Sobolev-Gonebot} twice to $\p_t\bm \gamma_\mbp \cdot\mbn_\mbp$ and utilizing the identity \eqref{GM-timede-proj} yields  the bound
\beqs
\|\Pi_{G_1}^\bot \p_t\bm \gamma_\mbp \cdot \mbn_\mbp\|_{L^2(\msI_\mbp)}\lesssim \varep^2\rho^{-1/2} \Bigl\|\sum_{j\in \Sigma_1}\dot\mrp_j\xi_j\Bigr\|_{H^2(\msI_\mbp),}
\eeqs 
where $(\xi_j)$, with its components given in Lemma \,\ref{lem-change-of-coord}, is  bounded in $H^2(\msI_\mbp)$-norm, independent of $\varep$ for all $\mbp\in \cO_{2,\delta}$. Hence  for $\varep\in(0, \varep_0)$ and $\varep_0\ll \rho$ we deduce that
\beq\label{NV-est-2}
\|\Pi_{G_1}^\bot \p_t\bm \gamma_\mbp \cdot \mbn_\mbp\|_{L^2(\msI_\mbp)} 
\lesssim \varep\|\dot\mbp\|_{l^2}. 
\eeq 
To bound the last term in \eqref{NV-est-0},  we  deduce from  the definition of $V_\mbp$ in \eqref{normal-velocity} and  triangle inequality that
\beqs
\|\Pi_{G_1}^\bot V_\mbp\|_{L^2(\msI_\mbp)}\lesssim \varep^3|\sigma-\sigma_1^*| \|\Pi_{G_1}^\bot\kappa_\mbp\|_{L^2(\msI_\mbp)} + \varep^{4}\|\Pi_{G_1}^\bot\Delta_{s_\mbp}\kappa_\mbp\|_{L^2(\msI_\mbp)} +\varep^{4}\|\Pi_{G_1}^\bot\kappa_\mbp^3\|_{L^2(\msI_\mbp)}.
\eeqs 
The curvature $\kappa_\mbp$  does not lie in the $N_1$-dimensional  linear space $G_1$. In fact, it admits the expansion \eqref{def-kappa-p,0}, for which the first two terms $\kappa_{\mbp,0}, \mathcal Q_1\in G_1$, and hence $\Pi_{G_1}^\bot\kappa_\mbp=\Pi_{G_1}^\bot\mcQ_2$. Applying \eqref{Sobolev-Gonebot}, the estimate \eqref{main-est-sigma}, and the $H^1(\msI_\mbp)$-bound of $\mathcal Q_2$ from Lemma \,\ref{lem-Gamma-p} we find
\beqs
\varep^3|\sigma-\sigma_1^*|\|\Pi_{G_1}^\bot\kappa_\mbp\|_{L^2(\msI_\mbp)} \lesssim \varep^{4}\rho^{-1/4}|\sigma-\sigma_1^*|\|\mcQ_2\|_{H^1(\msI_\mbp)} \lesssim \varep^{9/2}\rho^{-1/4} 
\eeqs 
for all $\mbp\in \cO_{2,\delta}^\circ$. From the Sobolev embedding $L^\infty(\msI_\mbp)\hook H^1(\msI_\mbp)$, \eqref{def-kappa-p,0},  the estimate 
$\|\mcQ_1\|_{H^1(\msI_\mbp)}\lesssim \|\hat\mbp\|_{\mbV_3^2}\lesssim 1$, and the $H^1(\msI_\mbp)$-bound of $\mcQ_2$ from Lemma \,\ref{lem-Gamma-p} we derive
\beqs 
\varep^{4}\|\Pi_{G_1}^\bot\kappa_\mbp^3\|_{L^2(\msI_\mbp)}\lesssim \varep^{5}\rho^{-1/4}. 
\eeqs 
Similarly, since $G_1$ is invariant under $\Delta_{s_\mbp}$ and $\Pi_{G_1}^\bot\Delta_{s_\mbp}\kappa_\mbp=\Pi_{G_1}^\bot\Delta_{s_\mbp}\mcQ_2$, applying the $H^2(\msI_\mbp)$-bound of $\mcQ_2$ from Lemma \,\ref{lem-Gamma-p} implies
\beqs
\varep^4 \|\Pi_{G_1}^\bot\Delta_{s_\mbp}\kappa_\mbp\|_{L^2(\msI_\mbp)}  \lesssim  \varep^4 \|\mathcal Q_2\|_{H^2(\msI_\mbp)}\lesssim  \varep^4 \|\hat\mbp\|_{\mbV_2^2}\|\hat\mbp\|_{\mbV_4^2}. 
\eeqs 
Combining the above three estimates yields
\beq\label{NV-est-3}
\|\Pi_{G_1}^\bot V_\mbp\|_{L^2(\msI_\mbp)}\lesssim \varep^{9/2}\rho^{-1/4}
+ \varep^{4}\|\hat\mbp\|_{\mbV_2^2}\|\hat\mbp\|_{\mbV_4^2}.
\eeq 
Now returning the estimates \eqref{NV-est-1} and \eqref{NV-est-2}-\eqref{NV-est-3} to \eqref{NV-est-0} we obtain
\beqs
\|\p_t\bm\gamma_\mbp\cdot\mbn_\mbp - V_\mbp\|_{L^2(\msI_\mbp)} \lesssim \varep^{9/2}\rho^{-4}
+\varep^4\|\hat\mbp\|_{\mbV_2^2}\|\hat\mbp\|_{\mbV_4^2} +\varep\|\dot\mbp\|_{l^2}. 
\eeqs 
The $l^2$-bound of $\dot\mbp$ with $\mbp\in \cO_{2,\delta}^\circ$ from \eqref{est-dotp-l2} completes the estimation.
\end{proof}
\begin{remark}
The dominant source of error in comparing the exact and formal normal velocity arises from
the truncation errror in the Galerkin projection of the surface diffusion term $\Delta_{s_\mbp} \kappa_\mbp.$  For a general $\mbp\in\cO_{2,\delta}^\circ$,  the largest term in $V_\mbp$ is generically the surface diffusion term which scales like $\varep^4\|\Delta_s \kappa_\mbp\|_{L^2(\msI_\mbp)}\sim \delta \varep^3$, while its Galerkin residual $\varep^4\delta\|\hat\mbp\|_{\mbV^2_4}\lesssim \varep^{3} \delta \|\hat\mbp\|_{\mbV^2_3}\lesssim \delta^2\varep^3$ is smaller. The  $L^2(\msI_\mbp)$ norm of the other terms in $V_\mbp$ typically scale like $\varep^{7/2}$ or $\varep^4.$
\end{remark}

 \section*{Acknowledgement}  The second author thanks the NSF DMS for their support through grant 1813203. Both authors thank Gurgen Hayrapetyan for sharing preliminary results on this problem for the weak FCH that arose out of his thesis.
 
 
\section{Appendix}
We present technical results whose proof was deferred from the main presentation.
\subsection{Results on the projection of the normal velocity}
\begin{lemma}\label{lem-kappa-Theta} Under  the assumption \eqref{A-00},  the curvature of $\Gamma_\mbp$ admits the following projection identities:
\beqs
\begin{aligned}
&\int_{\msI_\mbp}\kappa_{\mbp}\tilde \Theta_k\dd \tilde s_{\mbp}=-2\pi \Theta_0 \delta_{k0} -\frac{\beta_k^2-1}{R_0^2} \mrp_k\bm 1_{\{k\geq 3\}}+O\left(\|\hat{\mbp}\|_{\mbV_2^2}^2\right)e_k,\\[3pt]
&\int_{\msI_\mbp}\kappa_{\mbp}^3\tilde \Theta_k\dd \tilde s_{\mbp}=-\frac{2\pi \Theta_0}{R_0^2(1+\mrp_0)^2}\delta_{k0}-\frac{3(\beta_k^2-1)}{R_0^4(1+\mrp_0)^2}\mrp_k\bm 1_{\{k\geq 3\}} + O\left(\|\hat\mbp\|_{\mbV_2^2}\|\hat\mbp\|_{\mbV_3^2}\right)e_k,\\[3pt]
&\int_{\msI_\mbp}\Delta_{s_{\mbp}}\kappa_{\mbp}\tilde \Theta_k\dd \tilde s_{\mbp}= \frac{(\beta_k^2-1)\beta_k^2}{R_0^4(1+\mrp_0)^2}\mrp_k\bm 1_{\{k\geq 3\}}+O\left(\|\hat{\mbp}\|_{\mbV_2^2}\|\hat\mbp\|_{\mbV_4^2}\right)e_k\bm 1_{\{k\geq 1\}},
\end{aligned}
\eeqs
for $k\in\Sigma_1.$ Here $\bm e=(e_j)_{j=0}^{N_1-1}$ denotes a possibly different unit vector in each line.
\end{lemma}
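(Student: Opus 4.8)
The plan is to compute each of the three projection integrals by inserting the curvature expansion from Lemma~\ref{lem-Gamma-p}, namely $\kappa_{\mbp} = \kappa_{\mbp,0} + \mathcal{Q}_1 + \mathcal{Q}_2$ with $\kappa_{\mbp,0} = -1/(R_0(1+\mrp_0))$ constant, $\mathcal{Q}_1 = \frac{1}{(1+\mrp_0)R_0^2}\sum_{j\geq 3}(1-\beta_j^2)\mrp_j \tilde\Theta_j$ the linear term, and $\mathcal{Q}_2$ the quadratic remainder controlled in various $H^k(\msI_\mbp)$ norms. For the first identity, I would split $\int_{\msI_\mbp}\kappa_{\mbp}\tilde\Theta_k\,\mrd\tilde s_\mbp$ into the three pieces: the constant term contributes $\kappa_{\mbp,0}\int_{\msI_\mbp}\tilde\Theta_k$, which by orthogonality \eqref{ortho-tTheta} with $\tilde\Theta_0 = \Theta_0$ equals $-\frac{1}{R_0(1+\mrp_0)}\cdot(1+\mrp_0)\sqrt{2\pi R_0}\,\Theta_0\,\delta_{k0}$; since $\Theta_0^2 = 1/(2\pi R_0)$ this simplifies (after noting $\sqrt{2\pi R_0}\,\Theta_0 = 1$ and cancelling the $(1+\mrp_0)$) — I'd then Taylor-expand $\frac{1}{1+\mrp_0}$, but actually the cleaner route is to observe the coefficient is exactly $-2\pi\Theta_0 \cdot \frac{1}{2\pi R_0 (1+\mrp_0)}\cdot 2\pi R_0(1+\mrp_0)\ldots$ — in any case a short bookkeeping gives $-2\pi\Theta_0\delta_{k0}$ up to absorbing $\mrp_0$-dependence. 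The $\mathcal{Q}_1$ term contributes $\frac{1}{(1+\mrp_0)R_0^2}(1-\beta_k^2)\mrp_k (1+\mrp_0)\bm 1_{\{k\geq 3\}} = -\frac{\beta_k^2-1}{R_0^2}\mrp_k\bm 1_{\{k\geq 3\}}$ directly via \eqref{ortho-tTheta}, and the $\mathcal{Q}_2$ term is bounded by $\|\mathcal{Q}_2\|_{L^2(\msI_\mbp)} \lesssim \|\hat\mbp\|_{\mbV_2^2}^2$ using Lemma~\ref{Notation-e_i,j}, giving the $O(\|\hat\mbp\|_{\mbV_2^2}^2)e_k$ error.

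For the second identity involving $\kappa_{\mbp}^3$, I would expand the cube: $\kappa_{\mbp}^3 = \kappa_{\mbp,0}^3 + 3\kappa_{\mbp,0}^2(\mathcal{Q}_1+\mathcal{Q}_2) + 3\kappa_{\mbp,0}(\mathcal{Q}_1+\mathcal{Q}_2)^2 + (\mathcal{Q}_1+\mathcal{Q}_2)^3$. The constant $\kappa_{\mbp,0}^3 = -1/(R_0^3(1+\mrp_0)^3)$ integrates against $\tilde\Theta_k$ to give $-\frac{2\pi\Theta_0}{R_0^2(1+\mrp_0)^2}\delta_{k0}$ after the same normalization bookkeeping (one factor $(1+\mrp_0)$ cancels against the measure, leaving $(1+\mrp_0)^{-2}$). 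The cross term $3\kappa_{\mbp,0}^2\mathcal{Q}_1$ gives $3\cdot\frac{1}{R_0^2(1+\mrp_0)^2}\cdot\frac{1-\beta_k^2}{R_0^2}\mrp_k\bm 1_{\{k\geq 3\}} = -\frac{3(\beta_k^2-1)}{R_0^4(1+\mrp_0)^2}\mrp_k\bm 1_{\{k\geq 3\}}$. The term $3\kappa_{\mbp,0}^2\mathcal{Q}_2$ is $O(\|\mathcal{Q}_2\|_{L^2}) = O(\|\hat\mbp\|_{\mbV_2^2}^2)$, which is dominated by the stated error $O(\|\hat\mbp\|_{\mbV_2^2}\|\hat\mbp\|_{\mbV_3^2})$ since $\|\hat\mbp\|_{\mbV_2^2}\leq\|\hat\mbp\|_{\mbV_3^2}$. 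The quadratic-in-$\mathcal{Q}$ terms require care: $\mathcal{Q}_1^2$ integrated against $\tilde\Theta_k$ needs Lemma~\ref{Notation-e_i,j}\eqref{est-e-i,j} with $f$ essentially $\mathcal{Q}_1\in L^\infty$ and the coefficient vector being that of $\mathcal{Q}_1$; using $\|\mathcal{Q}_1\|_{L^\infty}\lesssim\|\hat\mbp\|_{\mbV_1}$ and the $l^2$ norm of its coefficients $\lesssim\|\hat\mbp\|_{\mbV_2^2}$, one gets $O(\|\hat\mbp\|_{\mbV_1}\|\hat\mbp\|_{\mbV_2^2})e_k$, and then the embedding $\|\hat\mbp\|_{\mbV_1}\lesssim\|\hat\mbp\|_{\mbV_2^2}$ from Lemma~\ref{lem-est-V} folds this into $O(\|\hat\mbp\|_{\mbV_2^2}^2)e_k$, again absorbed. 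Cross terms $\mathcal{Q}_1\mathcal{Q}_2$ and higher are handled by Cauchy--Schwarz with the $H^1$ bound $\|\mathcal{Q}_2\|_{H^1(\msI_\mbp)}\lesssim\|\hat\mbp\|_{\mbV_3^2}\|\hat\mbp\|_{\mbV_2^2}$.

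For the third identity, since $G_1$ is invariant under $\Delta_{s_\mbp} = \partial_{\tilde s_\mbp}^2$ and $\tilde\Theta_k'' = -\beta_{\mbp,k}^2\tilde\Theta_k$ with $\beta_{\mbp,k} = \beta_k/(R_0(1+\mrp_0))$ by \eqref{tTheta''}, I would integrate by parts twice: $\int_{\msI_\mbp}\Delta_{s_\mbp}\kappa_{\mbp}\,\tilde\Theta_k\,\mrd\tilde s_\mbp = \int_{\msI_\mbp}\kappa_{\mbp}\,\tilde\Theta_k''\,\mrd\tilde s_\mbp = -\frac{\beta_k^2}{R_0^2(1+\mrp_0)^2}\int_{\msI_\mbp}\kappa_{\mbp}\tilde\Theta_k\,\mrd\tilde s_\mbp$. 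The constant piece of $\kappa_{\mbp}$ dies (it is orthogonal to $\tilde\Theta_k$ for $k\geq 1$, hence the $\bm 1_{\{k\geq 1\}}$ in the error statement and no $\delta_{k0}$ term), the linear piece gives $-\frac{\beta_k^2}{R_0^2(1+\mrp_0)^2}\cdot(-\frac{\beta_k^2-1}{R_0^2}\mrp_k\bm 1_{\{k\geq 3\}}) = \frac{(\beta_k^2-1)\beta_k^2}{R_0^4(1+\mrp_0)^2}\mrp_k\bm 1_{\{k\geq 3\}}$, and the $\mathcal{Q}_2$ contribution is $\frac{\beta_k^2}{R_0^2(1+\mrp_0)^2}\|\mathcal{Q}_2\|_{L^2(\msI_\mbp)}e_k$; since $\beta_k^2 e_k$ corresponds to applying one power of $\mbD$ I would instead write $\beta_k^2\int\mathcal{Q}_2\tilde\Theta_k = \int\mathcal{Q}_2\cdot\beta_k^2\tilde\Theta_k$ and recognize via Lemma~\ref{Notation-e_i,j} (applied to $\Delta_{s_\mbp}\mathcal{Q}_2$, or equivalently using $\|\mathcal{Q}_2\|_{H^2(\msI_\mbp)}\lesssim\|\hat\mbp\|_{\mbV_4^2}\|\hat\mbp\|_{\mbV_2^2}$) that this is $O(\|\hat\mbp\|_{\mbV_2^2}\|\hat\mbp\|_{\mbV_4^2})e_k$, matching the claim. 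The main obstacle I anticipate is the careful bookkeeping of the constant normalization factors ($\Theta_0$, $\sqrt{2\pi R_0}$, the $(1+\mrp_0)$ weight in \eqref{ortho-tTheta} versus the measure $\mrd\tilde s_\mbp$) to land exactly the coefficients $-2\pi\Theta_0$ and the $(1+\mrp_0)^{-2}$ powers stated, and in verifying that every quadratic cross term genuinely embeds into the advertised error norm — in particular ensuring no stray $\delta_{k0}$ or lower-order-in-$\|\hat\mbp\|$ term is dropped when converting between $\mbV_1$, $\mbV_2^2$, $\mbV_3^2$, $\mbV_4^2$ via Lemma~\ref{lem-est-V}.
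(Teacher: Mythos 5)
Your proposal is correct and follows essentially the same route as the paper: the same decomposition $\kappa_\mbp=\kappa_{\mbp,0}+\mathcal{Q}_1+\mathcal{Q}_2$ with term-by-term projection, the same expansion of the cube with the quadratic remainder controlled via $\|\mathcal{Q}_1\|_{L^\infty}\|\mathcal{Q}_1\|_{L^2(\msI_\mbp)}$ and the $H^1$ bound on $\mathcal{Q}_2$, and the same integration by parts for the Laplace--Beltrami term with the $H^2(\msI_\mbp)$ bound on $\mathcal{Q}_2$ to avoid the non-summable $\beta_k^2$ weight. One small correction: since the coefficients of $\mathcal{Q}_1$ carry the factor $1-\beta_j^2$, the sup bound is $\|\mathcal{Q}_1\|_{L^\infty}\lesssim\|\hat\mbp\|_{\mbV_2}$ (not $\mbV_1$), and with the embedding $\|\hat\mbp\|_{\mbV_2}\lesssim\|\hat\mbp\|_{\mbV_3^2}$ the $\mathcal{Q}_1^2$ contribution is $O(\|\hat\mbp\|_{\mbV_2^2}\|\hat\mbp\|_{\mbV_3^2})$ --- this is precisely the source of the $\mbV_3^2$ factor in the stated error for the second identity, rather than a term absorbed into $O(\|\hat\mbp\|_{\mbV_2^2}^2)$.
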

\begin{proof} The curvature admits the expansion as in Lemma \,\ref{lem-Gamma-p},  and the quadratic term $\mathcal Q_2$ contributes 
\begin{equation}\label{cQ-1-Theta-k}
\int_{\msI_\mbp}\mathcal Q_2\tilde \Theta_k\dd \tilde s_{\mbp}=O\left(\|\hat{\mbp}\|_{\mbV_2^2}^2\right)e_k,
\end{equation}
while from the orthogonality \eqref{ortho-tTheta} the projection of the linear term takes the form
\begin{equation}\label{cQ-2-Theta-k}
\int_{\msI_\mbp}\mathcal Q_1\tilde \Theta_k\dd \tilde s_{\mbp}=\frac{(1-\beta_k^2)\mrp_k}{R_0^2}\bm 1_{\{k\geq 3\}}.
\end{equation}
The leading order term $\kappa_{\mbp,0}$ contributes
\begin{equation}\label{kappa-0-Theta-k}
\int_{\msI_\mbp}\kappa_{\mbp,0} \tilde \Theta_k\dd \tilde s_{\mbp}=-2\pi \Theta_0\delta_{k0}.
\end{equation}
Combining \eqref{def-kappa-p,0} with the identities  \eqref{cQ-1-Theta-k}-\eqref{kappa-0-Theta-k} yields the first result of the Lemma. 
For the $\kappa_\mbp^3$ projection, we expand
\begin{equation}
\kappa_{\mbp}^3=\kappa_{\mbp,0}^3+3\kappa_{\mathbf p,0}^2\mathcal Q_1+\tilde {\mathcal Q}_2,
\end{equation}
where $\tilde{\mathcal Q}_2$ denotes quadratic terms in $\mbp$ and satisfies
\beqs
|\tilde{\mathcal Q}_2|\lesssim |\mathcal Q_1|^3+|\mathcal Q_2|^3+|\mathcal Q_1|^2|\mathcal Q_2|+|\mathcal Q_1||\mathcal Q_2|^2
\eeqs
Since $|\mathcal Q_1|+|\mathcal Q_2|\lesssim \|\hat\mbp\|_{\mbV_2}\lesssim 1$, the assumption \eqref{A-00} and the estimates above imply
\beqs
\begin{aligned}
\|\tilde{\mathcal Q}_2\|_{L^2(\msI_\mbp)} &\lesssim \|\mathcal Q_1\|_{L^\infty}\|\mathcal Q_1\|_{L^2(\msI_\mbp)}+\|\mathcal Q_2\|_{L^2(\msI_\mbp)} \lesssim \|\hat\mbp\|_{\mbV_2}\|\hat\mbp\|_{\mbV_2^2}\lesssim \|\hat\mbp\|_{\mbV_3^2}\|\hat\mbp\|_{\mbV_2^2}.
\end{aligned}
\eeqs
Here we also used the embedding estimate in the perturbation space $\mbV$, see Lemma \,\ref{lem-est-V}.
Since $\kappa_{\mbp,0}$ is independent of $s_{\mbp}$, the second identity of Lemma follows from \eqref{cQ-2-Theta-k}, \eqref{kappa-0-Theta-k},
the definition of $\kappa_{\mbp,0}$ in \eqref{def-kappa-p,0}, and H\"older's inequality.  For the Laplace-Beltrami curvature projection, we integrate by parts and use \eqref{tTheta''}, to find
\beq 
\int_{\msI_\mbp} \Delta_{s_\mbp}\kappa_\mbp \tilde \Theta_k\dd \tilde s_\mbp=-\beta_{\mbp, k}^2\int_{\msI_{\mbp}}  \mathcal Q_1 \tilde \Theta_k\dd \tilde s_\mbp+\int_{\msI_\mbp} \Delta_{s_\mbp}\mathcal Q_2\, \tilde \Theta_k\dd \tilde s_\mbp.
\eeq
The first term on the right-hand side is dominant, and can be estimated by \eqref{cQ-2-Theta-k}. The second term on the right-hand side is higher order and
can be bounded by
\beq
\left|\int_{\msI_\mbp} \Delta_{s_\mbp}\mathcal Q_2\, \tilde \Theta_k\dd \tilde s_\mbp\right|\lesssim  \|\hat\mbp\|_{\mbV_2^2}\|\hat\mbp\|_{\mbV_4^2}e_k.
\eeq
The result follows.
\end{proof}

Recombining the identities in Lemma\,\ref{lem-kappa-Theta}, 
yields the following.
\begin{cor}\label{cor-sAB}For $\msA_k=\msA_k(\mbp), \msB_k=\msB_k(\mbp)$ defined in \eqref{def-sAB}, there exists a unit vector $(e_k)$  such that
\beqs
\begin{aligned}
\msA_k(\mbp)=& -2\pi \Theta_0 \delta_{k0} -\frac{\beta_k^2-1}{R_0^2} \mrp_k \bm 1_{\{k\geq 3\}} +O(\|\hat\mbp\|_{\mbV_2^2}^2)e_k,\\
\mathscr B_k(\mbp)=& 2\pi \Theta_0\left(\frac{1}{2R_0^2(1+\mrp_0)^2}- \alpha \right)   \delta_{k0} -\frac{\beta_k^2-1}{R_0^2(1+\mrp_0)^2}\left[ \frac{2\beta_k^2-3 }{2R_0^2} -\alpha (1+\mrp_0)^2\right]\mrp_k \mathbf 1_{k\geq 3}\\
&+O(\|\hat\mbp\|_{\mbV_2^2}\|\hat\mbp\|_{\mbV_4^2}) e_k \bm 1_{\{k\geq 1\}} +O(\|\hat\mbp\|_{\mbV_2^2}\|\hat\mbp\|_{\mbV_3^2}) e_k.
\end{aligned}
\eeqs
\end{cor}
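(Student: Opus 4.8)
The plan is to obtain Corollary~\ref{cor-sAB} as an immediate book-keeping consequence of Lemma~\ref{lem-kappa-Theta}, by substituting the three projection identities established there into the definitions \eqref{def-sAB} of $\msA_k$ and $\msB_k$. First I would note that $\msA_k(\mbp)=\int_{\msI_\mbp}\kappa_\mbp\tilde\Theta_k\dd\tilde s_\mbp$ is literally the first line of Lemma~\ref{lem-kappa-Theta}, so the claimed expansion for $\msA_k$ requires nothing beyond quoting that line. For $\msB_k$, by linearity of the integral I would write
\beqs
\msB_k(\mbp)=-\int_{\msI_\mbp}\Delta_{s_\mbp}\kappa_\mbp\tilde\Theta_k\dd\tilde s_\mbp-\frac12\int_{\msI_\mbp}\kappa_\mbp^3\tilde\Theta_k\dd\tilde s_\mbp+\alpha\int_{\msI_\mbp}\kappa_\mbp\tilde\Theta_k\dd\tilde s_\mbp,
\eeqs
and then insert the second and third identities of Lemma~\ref{lem-kappa-Theta} together with the first.

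The main step is the algebraic recombination of the $\delta_{k0}$ terms and the $\mrp_k\bm 1_{\{k\ge3\}}$ terms. For the constant ($k=0$) coefficient one collects $-(-\tfrac{2\pi\Theta_0}{R_0^2(1+\mrp_0)^2})\cdot(-\tfrac12)$ from the $\kappa_\mbp^3$ contribution, $0$ from $\Delta_{s_\mbp}\kappa_\mbp$ (its identity carries no $\delta_{k0}$ term), and $\alpha\cdot(-2\pi\Theta_0)$ from the $\alpha\kappa_\mbp$ term, giving $2\pi\Theta_0\big(\tfrac{1}{2R_0^2(1+\mrp_0)^2}-\alpha\big)$ as stated. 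For the $k\ge3$ coefficient one combines $-\tfrac{(\beta_k^2-1)\beta_k^2}{R_0^4(1+\mrp_0)^2}$ (from $-\Delta_{s_\mbp}\kappa_\mbp$), $+\tfrac12\cdot\tfrac{3(\beta_k^2-1)}{R_0^4(1+\mrp_0)^2}$ (from $-\tfrac12\kappa_\mbp^3$), and $-\tfrac{(\beta_k^2-1)}{R_0^2}\alpha$ (from $\alpha\kappa_\mbp$), and factors out $-\tfrac{\beta_k^2-1}{R_0^2(1+\mrp_0)^2}$ to land on $\big[\tfrac{2\beta_k^2-3}{2R_0^2}-\alpha(1+\mrp_0)^2\big]\mrp_k$; here one uses $\beta_k^2-\tfrac32=\tfrac{2\beta_k^2-3}{2}$. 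The error terms simply aggregate: the $O(\|\hat\mbp\|_{\mbV_2^2}^2)e_k$ from $\alpha\kappa_\mbp$ (with $\alpha=O(1)$), the $O(\|\hat\mbp\|_{\mbV_2^2}\|\hat\mbp\|_{\mbV_3^2})e_k$ from $\kappa_\mbp^3$, and the $O(\|\hat\mbp\|_{\mbV_2^2}\|\hat\mbp\|_{\mbV_4^2})e_k\bm1_{\{k\ge1\}}$ from $\Delta_{s_\mbp}\kappa_\mbp$; using the embedding $\mbV_2^2\hookrightarrow\mbV_3^2\hookrightarrow\mbV_4^2$ of Lemma~\ref{lem-est-V} to absorb the first into the last two (up to the $\bm1_{\{k\ge1\}}$ versus full-range distinction, which is why two separate error symbols are retained).

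The only genuine subtlety — and the one place I would slow down — is that $\alpha=\alpha(\sigma;\eta_1,\eta_2)$ is a scalar that is $O(1)$ but not constant in $\mbp$ (it depends on $\sigma=\sigma(\mbp)$), so multiplying a $\mbp$-dependent expansion by $\alpha$ is legitimate only because $\alpha$ is bounded uniformly; I would remark that this is exactly the statement after \eqref{est-proj-rF2,3} that $\alpha$ depends smoothly on $\sigma$, and since $\sigma$ is bounded on $\cD_\delta$ by Lemma~\ref{lem-sigma}, $\alpha$ and all of its $\sigma$-derivatives are $O(1)$, so no new $\mbp$-dependence of worse order than those already present is introduced. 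With that observed, the rest is routine arithmetic, and I would simply state ``the result follows by substituting the identities of Lemma~\ref{lem-kappa-Theta} into \eqref{def-sAB}, collecting the $\delta_{k0}$ and $\mrp_k\bm1_{\{k\ge3\}}$ coefficients, and bounding the errors via Lemma~\ref{lem-est-V}.'' I expect no real obstacle here; the corollary is purely a repackaging lemma whose content was already proved in Lemma~\ref{lem-kappa-Theta}.
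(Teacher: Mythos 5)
Your approach is exactly the paper's: the corollary is obtained by substituting the three projection identities of Lemma~\ref{lem-kappa-Theta} into the definition \eqref{def-sAB} and collecting the $\delta_{k0}$ and $\mrp_k\bm 1_{\{k\geq 3\}}$ coefficients, which is all the paper itself does (it introduces the corollary only with ``recombining the identities in Lemma~\ref{lem-kappa-Theta} yields the following''). Your handling of the error terms and of the $\mbp$-dependence of $\alpha$ is fine.

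One arithmetic point deserves attention. In your recombination of the $\mrp_k\bm 1_{\{k\geq 3\}}$ coefficient, the contribution $-\alpha\,\tfrac{\beta_k^2-1}{R_0^2}$ coming from $\alpha\int_{\msI_\mbp}\kappa_\mbp\tilde\Theta_k\dd\tilde s_\mbp$ factors as $-\tfrac{\beta_k^2-1}{R_0^2(1+\mrp_0)^2}\cdot\bigl[+\alpha(1+\mrp_0)^2\bigr]$, not $\bigl[-\alpha(1+\mrp_0)^2\bigr]$ as you wrote; so the three inputs you correctly list actually produce $\tfrac{2\beta_k^2-3}{2R_0^2}+\alpha(1+\mrp_0)^2$ inside the bracket. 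The sign printed in the corollary appears to be a misprint rather than an error on your part: the $\delta_{k0}$ coefficient $2\pi\Theta_0\bigl(\tfrac{1}{2R_0^2(1+\mrp_0)^2}-\alpha\bigr)$, which you compute correctly, carries $\alpha$ with the same relative sign as the leading term of $\msA_k$, and the coefficient $c_k(\mrp_0)$ of Lemma~\ref{lem-proj-F}, which is built from $\msB_k$ with an overall minus sign, has $+\alpha(\sigma_1^*)$ inside its bracket --- both consistent with $+\alpha(1+\mrp_0)^2$ in $\msB_k$. So your derivation is right up to that single factoring slip, which happens to reproduce the paper's typo; you should state the bracket as $\tfrac{2\beta_k^2-3}{2R_0^2}+\alpha(1+\mrp_0)^2$ or note the discrepancy explicitly.
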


\begin{lemma}\label{lem-est-h-proj}Let $h=h(\bm \gamma_\mbp'')$ enjoy the properties of Notation \ref{Notation-h}, then for j=0,1,2 
\beqs
\begin{aligned}
&\int \nabla_{s_\mbp}^jh(\bm \gamma_\mbp'') \tilde \Theta_k \dd\tilde s_\mbp = \mathcal C(\mrp_0)\delta_{k0} \delta_{j0}+ O(\|\hat\mbp\|_{\mbV_{2+j}^2}), \\
&\int \nabla_{s_\mbp}^jh(\bm \gamma_\mbp'') \varep \tilde \Theta_k' \dd\tilde s_\mbp=  O(\|\hat\mbp\|_{\mbV_{2+j}^2}).
\end{aligned}
\eeqs
\end{lemma}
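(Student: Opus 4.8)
\textbf{Proof proposal for Lemma \ref{lem-est-h-proj}.}

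The plan is to reduce the two-dimensional integrals over the reach to one-dimensional integrals over $\msI_\mbp$ by integrating out the transverse variable $z_\mbp$, then to exploit the structure of $h(\bm\gamma_\mbp'')$ through the decomposition \eqref{decomp-h} into a constant leading part plus a $\hat\mbp$-controlled remainder. Concretely, since $h=h(\bm\gamma_\mbp'')$ is (by Notation \ref{Notation-h}) independent of $z_\mbp$ and depends on $s_\mbp$ only through the geometric quantities $|\bm\gamma_\mbp'|,\kappa_\mbp,\mbn_\mbp\cdot\mbn_0$ and their scaled derivatives up to order two, the first step is to write $h(\bm\gamma_\mbp'')=h(\bm\gamma_{\mbp,0}'')+\bigl(h(\bm\gamma_\mbp'')-h(\bm\gamma_{\mbp,0}'')\bigr)$ as in \eqref{decomp-h}. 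By Lemma \ref{lem-h} the leading term $h(\bm\gamma_{\mbp,0}'')$ is a constant $\mathcal C(\mrp_0)$ independent of $\tilde s_\mbp$, and the difference satisfies $\|h(\bm\gamma_\mbp'')-h(\bm\gamma_{\mbp,0}'')\|_{L^2(\msI_\mbp)}\lesssim\|\hat\mbp\|_{\mbV_2^2}$ by \eqref{est-h-p-V-2+2}.

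For the case $j=0$: the constant part contributes $\mathcal C(\mrp_0)\int_{\msI_\mbp}\tilde\Theta_k\,\mrd\tilde s_\mbp$, which by the orthogonality \eqref{ortho-tTheta} (with $\tilde\Theta_0$ the spatially constant mode, cf. \eqref{def-Theta0}) vanishes unless $k=0$, giving $\mathcal C(\mrp_0)\delta_{k0}$ after absorbing the factor $(1+\mrp_0)$ and the normalization $\Theta_0$ into $\mathcal C$. The remainder part is bounded by Cauchy--Schwarz: $\bigl|\int_{\msI_\mbp}(h-h_0)\tilde\Theta_k\,\mrd\tilde s_\mbp\bigr|\le\|h-h_0\|_{L^2(\msI_\mbp)}\|\tilde\Theta_k\|_{L^2(\msI_\mbp)}\lesssim\|\hat\mbp\|_{\mbV_2^2}$, which is $O(\|\hat\mbp\|_{\mbV_{2+j}^2})$ with $j=0$; the resulting error is of the form $O(\|\hat\mbp\|_{\mbV_2^2})e_k$ for a unit vector $\bm e$ by Lemma \ref{Notation-e_i,j}. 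For the second identity with $j=0$, one integrates by parts to move $\varep\p_{\tilde s_\mbp}$ off $\tilde\Theta_k$ onto $h$; since $h(\bm\gamma_{\mbp,0}'')$ is constant its scaled derivative is zero, so only $\varep\p_{\tilde s_\mbp}\bigl(h(\bm\gamma_\mbp'')-h(\bm\gamma_{\mbp,0}'')\bigr)$ survives, and \eqref{est-h-gamma''-de} with $l=1$ bounds it in $L^2(\msI_\mbp)$ by $\|\hat\mbp\|_{\mbV_3^2}\lesssim\|\hat\mbp\|_{\mbV_{2}^2}$-type terms; a further Cauchy--Schwarz against $\tilde\Theta_k$ (or directly against $\varep\tilde\Theta_k'$ noting $\|\varep\tilde\Theta_k'\|_{L^2(\msI_\mbp)}=\varep\beta_{\mbp,k}\lesssim 1$ on $\Sigma_1$) closes it.

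For $j=1,2$: the derivative $\nabla_{s_\mbp}^j h(\bm\gamma_\mbp'')$ annihilates the constant leading part entirely, so $\nabla_{s_\mbp}^j h(\bm\gamma_\mbp'')=\nabla_{s_\mbp}^j\bigl(h(\bm\gamma_\mbp'')-h(\bm\gamma_{\mbp,0}'')\bigr)$, and there is no $\delta_{k0}\delta_{j0}$ term (consistent with the factor $\delta_{j0}$ in the statement). Then \eqref{est-h-gamma''-de} with $l=j$ gives $\|\varep^{j-1}\p_{s_\mbp}^j(h-h_0)\|_{L^2(\msI_\mbp)}\lesssim\|\hat\mbp\|_{\mbV_3^2}$, but to obtain the sharper $\|\hat\mbp\|_{\mbV_{2+j}^2}$ one retraces the proof of Lemma \ref{lem-h}: each of the $j$ tangential derivatives hitting the geometric quantities $\kappa_\mbp,|\bm\gamma_\mbp'|,\mbn_\mbp\cdot\mbn_0$ through the chain rule picks up at most one extra power of $\beta$ on each Fourier mode of $\bar p$ via \eqref{tTheta''} and the expansions \eqref{def-kappa-p,0}, \eqref{est-|gamma'|}, raising $\mbV_2^2$ to $\mbV_{2+j}^2$; the product structure and the embeddings of Lemma \ref{lem-est-V} keep the quadratic remainders in $\kappa_\mbp$ controlled. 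A final Cauchy--Schwarz against $\tilde\Theta_k$ or $\varep\tilde\Theta_k'$, combined with Lemma \ref{Notation-e_i,j}, yields the stated bounds. The main obstacle is precisely this last bookkeeping: tracking how many $\beta$-weights each tangential derivative distributes across the (possibly nonlinear) geometric quantities and the cutoff structure in Notation \ref{Notation-h}, so that the error lands in $\mbV_{2+j}^2$ rather than a larger weighted space; this is handled by differentiating the explicit expansions of $\kappa_\mbp$ and $|\bm\gamma_\mbp'|$ from Lemma \ref{lem-Gamma-p} term by term and invoking \eqref{re-barp-hatp} to pass between $\|\bar p^{(m)}\|$ and $\|\hat\mbp\|_{\mbV_m^2}$.
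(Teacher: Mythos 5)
Your proof follows the paper's argument exactly: decompose $h$ via \eqref{decomp-h}, observe that the leading part $h(\bm\gamma_{\mbp,0}'')$ is an $s_\mbp$-independent constant so that only the $j=0$, $k=0$ term survives the integration against $\tilde\Theta_k$, and bound the remainder using the Lipschitz estimates of Lemma \ref{lem-h} together with Cauchy--Schwarz and Lemma \ref{Notation-e_i,j}. The only blemish is the direction of the inequality ``$\|\hat\mbp\|_{\mbV_3^2}\lesssim\|\hat\mbp\|_{\mbV_2^2}$'' in your treatment of the second identity for $j=0$ (weighted norms increase with the weight, so it goes the other way); the step is rescued either by the factor of $\varep$ produced by the integration by parts combined with $\varep N_1\lesssim 1$, or more simply by a direct Cauchy--Schwarz against $\varep\tilde\Theta_k'$ using $\|\varep\tilde\Theta_k'\|_{L^2(\msI_\mbp)}\lesssim\varep\beta_{\mbp,k}\lesssim 1$.
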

\begin{proof}
From decomposition \eqref{decomp-h} the function $h$ can be rewritten as $h(\bm \gamma_{\mbp}'')= h(\bm \gamma_{\mbp,0}'')+(h(\bm \gamma_{\mbp}'')-h(\bm \gamma_{\mbp,0}''))$, 
and the integral of the leading order term $h(\bm \gamma_{\mbp,0}'')$ reduces to 
\begin{equation}\label{est-h-Theta-k-1}
\int_{{\msI_\mbp}} \nabla_{s_\mbp}^jh(\bm \gamma_{\mbp,0}'')\tilde \Theta_k\dd \tilde s_{\mbp}=\delta_{j0}h(\bm \gamma_{\mbp,0}'')\int_{{\msI_\mbp}}\Theta_k\dd \tilde s_{\mbp}=\mathcal C(\mrp_0)\delta_{k0}\delta_{j0},
\end{equation}
where the constant $\mathcal C(\mrp_0)$ depends only on $\mrp_0$.   Here we note $h(\bm \gamma_{\mbp,0}'')$ is independent of $s_\mbp$ by Lemma \ref{lem-h}.  Moreover,   we have the bound
\begin{equation}\label{est-h-Theta-k-2}
\begin{aligned}
\left|\int_{{\msI_\mbp}} \nabla_{s_\mbp}^j \left(h(\bm \gamma_{\mbp}'')- h(\bm \gamma_{\mbp,0}'')\right)\tilde \Theta_k\dd \tilde s_{\mbp}\right|&\lesssim \|\hat{\mbp}\|_{\mbV_2^2}e_k.
\end{aligned}
\end{equation}
The proof is complete.
\end{proof}

\begin{lemma}\label{lem-mass-F} Imposing assumptions \eqref{A-00}, then there exist smooth functions $C_k=C_k(\mrp_0)$ for $k=1,2$ such that
\begin{equation*}
\int_\Omega \left(\mathrm F(\Phi_{\mbp} ) -\mrF_m^\infty\right)\dd x=C_1(\mrp_0)\varep^4+C_2(\mrp_0)\varep^4(\sigma-\sigma_1^*)+ O\left(\varep^4\|\hat{\mbp}\|_{\mbV_2^2}, \varep^5\|\hat{\mbp}\|_{\mbV_4^2}\right).
\end{equation*}
\end{lemma}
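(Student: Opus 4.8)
The plan is to localize the integral to the reach $\Gamma_\mbp^{2\ell}$, pass to the coordinates $(z_\mbp,s_\mbp)$, insert the expansion of $\mrF(\Phi_\mbp)$ from Lemma\,\ref{lem-def-Phi-p}, and extract the $\mrp_0$-dependent leading terms by combining parity in $z_\mbp$, the exact identity $\int_{\msI_\mbp}\Delta_{s_\mbp}(\cdot)\dd\tilde s_\mbp=0$, Lemma\,\ref{lem-est-h-proj}, and the curvature expansion of Lemma\,\ref{lem-Gamma-p}. First I would split $\Omega=\Gamma_\mbp^{2\ell}\cup(\Omega\setminus\Gamma_\mbp^{2\ell})$; on the complement $\Phi_\mbp$ equals its far-field value up to the exponentially localized term $e^{-\ell\nu/\varep}\phi_{\mbp,e}$, so there $\mrF(\Phi_\mbp)-\mrF_m^\infty=e^{-\ell\nu/\varep}\mrF_e$, and together with the analogous term inside the reach and the bound $\|\mrF_e\|_{L^2}\lesssim1$ of Lemma\,\ref{lem-est-L2-F} this contributes $O(e^{-\ell\nu/\varep})$. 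On the reach I would change variables to obtain
\[
\int_{\Gamma_\mbp^{2\ell}}\!\!\big(\mrF_m-\mrF_m^\infty\big)\dd x=\varep\!\int_{\mbR_{2\ell}}\!\!\int_{\msI_\mbp}\!\big(\mrF_m-\mrF_m^\infty\big)\big(1-\varep z_\mbp\kappa_\mbp\big)\dd\tilde s_\mbp\dd z_\mbp ,
\]
and substitute $\mrF_m-\mrF_m^\infty=\varep^2\mathrm F_2+\varep^3(\mathrm F_3-\mrF_3^\infty)+\varep^4(\mathrm F_{\geq4}-\mrF_{\geq4}^\infty)$.

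Then I would treat the three terms in turn. For $\mathrm F_2=\kappa_\mbp(\sigma-\sigma_1^*)f_2(z_\mbp)$, oddness of $f_2$ in $z_\mbp$ annihilates $\int_{\mbR_{2\ell}}f_2\dd z_\mbp$, so only the Jacobian correction survives, producing $-\varep^4\big(\int z f_2\dd z\big)(\sigma-\sigma_1^*)\int_{\msI_\mbp}\kappa_\mbp^2\dd\tilde s_\mbp$; writing $\kappa_\mbp=\kappa_{\mbp,0}+\mathcal Q_1+\mathcal Q_2$ as in \eqref{def-kappa-p,0}, using $\int_{\msI_\mbp}\mathcal Q_1\dd\tilde s_\mbp=0$ and the $L^2$-bound on $\mathcal Q_2$ from Lemma\,\ref{lem-Gamma-p}, $\int_{\msI_\mbp}\kappa_\mbp^2\dd\tilde s_\mbp$ is a smooth function of $\mrp_0$ plus $O(\|\hat\mbp\|_{\mbV_2^2}^2)$. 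For $\mathrm F_3=-\phi_0'\Delta_{s_\mbp}\kappa_\mbp+f_3(z_\mbp,\bm\gamma_\mbp'')$, both $\int_{\mbR_{2\ell}}\phi_0'\dd z_\mbp=0$ and $\int_{\msI_\mbp}\Delta_{s_\mbp}\kappa_\mbp\dd\tilde s_\mbp=0$, so the first piece contributes only the Jacobian-correction term $\varep^5\big(\int z\phi_0'\dd z\big)\int_{\msI_\mbp}\kappa_\mbp\Delta_{s_\mbp}\kappa_\mbp\dd\tilde s_\mbp=O(\varep^5\|\nabla_{s_\mbp}\kappa_\mbp\|_{L^2}^2)=O(\varep^5\|\hat\mbp\|_{\mbV_3^2}^2)$; integrating $z_\mbp$ out of $(f_3-f_3^\infty)(1-\varep z_\mbp\kappa_\mbp)$ leaves a function $h(\bm\gamma_\mbp'')$ with the properties of Notation\,\ref{Notation-h}, and since $\int_{\msI_\mbp}h\dd\tilde s_\mbp=\Theta_0^{-1}\int_{\msI_\mbp}h\tilde\Theta_0\dd\tilde s_\mbp$, Lemma\,\ref{lem-est-h-proj} with $j=0,k=0$ gives $\varep^4\big(\mathcal C(\mrp_0)+O(\|\hat\mbp\|_{\mbV_2^2})\big)$.

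For $\mathrm F_{\geq4}=\Delta_g f_{4,1}+f_{4,2}$ I would use $\Delta_g=\Delta_{s_\mbp}+\varep z_\mbp D_{s_\mbp,2}$ with $D_{s_\mbp,2}=a\Delta_{s_\mbp}+b\,\p_{s_\mbp}$ and the coefficient bounds \eqref{def-a,b}: integration by parts together with $\int_{\msI_\mbp}\Delta_{s_\mbp}(\cdot)\dd\tilde s_\mbp=0$ leaves, after $z_\mbp$-integration, only terms carrying an extra $\varep$ and bounded by $\|\Delta_{s_\mbp}\kappa_\mbp\|_{L^1}\lesssim\|\hat\mbp\|_{\mbV_4^2}$ (via Lemma\,\ref{lem-Gamma-p}), whereas $f_{4,2}$ reduces by Lemma\,\ref{lem-est-h-proj} to $\varep^5\big(\mathcal C(\mrp_0)+O(\|\hat\mbp\|_{\mbV_2^2})\big)$; altogether the $\mathrm F_{\geq4}$ term contributes $\varep^5\mathcal C(\mrp_0)+O(\varep^5\|\hat\mbp\|_{\mbV_4^2})$. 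Finally I would collect: the $\mrp_0$-only pieces of orders $\varep^4$ and $\varep^5$ assemble into $C_1(\mrp_0)\varep^4$, the $\mathrm F_2$-piece into $C_2(\mrp_0)\varep^4(\sigma-\sigma_1^*)$, and the errors, after using $\|\hat\mbp\|_{\mbV_2^2}^2\lesssim\|\hat\mbp\|_{\mbV_2^2}$, $\|\hat\mbp\|_{\mbV_3^2}^2\lesssim\|\hat\mbp\|_{\mbV_2^2}\|\hat\mbp\|_{\mbV_4^2}$, and $|\sigma-\sigma_1^*|\lesssim1$ (Lemma\,\ref{lem-sigma}), combine into $O(\varep^4\|\hat\mbp\|_{\mbV_2^2},\varep^5\|\hat\mbp\|_{\mbV_4^2})$.

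The hard part will be the bookkeeping in the last step for $\mathrm F_{\geq4}$: one must verify that the $z_\mbp$-integrated geometric factors arising from $\Delta_g f_{4,1}$ land in $\|\hat\mbp\|_{\mbV_4^2}$ and not a higher-index norm, which hinges on the exact cancellation $\int_{\msI_\mbp}\Delta_{s_\mbp}(\cdot)\dd\tilde s_\mbp=0$ so that at most one extra surface derivative of $\kappa_\mbp$ is exposed, and on the convergence of the $z_\mbp$-integrals against the weight $z_\mbp$, which follows because the relevant integrands decay exponentially once the ($s_\mbp$-constant) far-field values are separated off. A secondary point is keeping track of which $\varep^5$ remainders carry a factor of $(\sigma-\sigma_1^*)$ versus which are purely $\mrp_0$-dependent, so that they are routed to $C_2\varep^4(\sigma-\sigma_1^*)$, $C_1\varep^4$, or the stated error term as appropriate.
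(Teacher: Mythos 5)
Your argument is correct and is precisely the approach the paper takes: its proof of this lemma is a one-line appeal to the form of $\mathrm F(\Phi_\mbp)$ from Lemma\,\ref{lem-def-Phi-p} and to Lemma\,\ref{lem-est-h-proj}, with all details omitted, and you have supplied exactly those details (parity of $f_2$ and $\phi_0'$ in $z_\mbp$, the vanishing of $\int_{\msI_\mbp}\Delta_{s_\mbp}(\cdot)\,\mrd\tilde s_\mbp$, and the $h(\bm\gamma_\mbp'')$ reduction). The bookkeeping of the Jacobian corrections and the routing of the quadratic curvature errors into $\|\hat\mbp\|_{\mbV_2^2}$ and $\|\hat\mbp\|_{\mbV_4^2}$ via $\|\hat\mbp\|_{\mbV_3^2}^2\le\|\hat\mbp\|_{\mbV_2^2}\|\hat\mbp\|_{\mbV_4^2}$ are consistent with the stated error terms.
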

\begin{proof}
This is a direct result of the form of $\mrF$ given in Lemma \ref{lem-def-Phi-p} and the  Lemma\,\ref{lem-est-h-proj}, details are omitted. 
\end{proof}

\begin{lemma}\label{lem-sR12} There exists a unit vector $\bm e=(e_k)_{k=0}^{N_1-1}$ such that the remainders defined in \eqref{def-cI-sR} satisfy
\beqs
\begin{aligned}
\msR_{k,1}(\mbp)& =\varep^{7/2}(\sigma_1^*-\sigma) \left(C_1(\mrp_0) \delta_{k0} +O(\|\hat\mbp\|_{\mbV_2^2})e_k\right) +\varep^{9/2} C_2(\mrp_0)+ O(\varep^{9/2}\|\hat\mbp\|_{\mbV_4^2}) e_k\\
\msR_{k,2}(\mbp)& =\varep^{11/2}\Big( C_1(\mrp_0)+  C_2(\mrp_0)(\sigma-\sigma_1^*) \Big)\delta_{k0}+ O\left(\varep^{11/2}\|\hat{\mbp}\|_{\mbV_2^2}\right)e_k
\end{aligned}
\eeqs
where $C_1$ and $C_2$ are smooth functions of $\mrp_0$.
\end{lemma}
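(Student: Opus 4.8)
\textbf{Proof strategy for Lemma \ref{lem-sR12}.} The plan is to unwind the definitions \eqref{def-cI-sR} of the remainders $\msR_{k,1}$ and $\msR_{k,2}$, insert the expansion \eqref{exp-mF}--\eqref{F-234} of the residual $\mrF(\Phi_\mbp)$ from Lemma \ref{lem-def-Phi-p}, and track the orders of $\varep$ term by term, isolating the contributions that depend only on $\mrp_0$ from those that are quadratically small in $\hat\mbp$. For $\msR_{k,2}$ this is nearly immediate: by \eqref{def-cI-sR}, $\msR_{k,2}=|\Omega|^{-1}\left(\int_\Omega(\mrF(\Phi_\mbp)-\mrF_m^\infty)\dd x\right)\left(\int_\Omega Z_{\mbp,*}^{1k}\dd x\right)$, so I would apply Lemma \ref{lem-mass-F} to the first factor, producing $C_1(\mrp_0)\varep^4+C_2(\mrp_0)\varep^4(\sigma-\sigma_1^*)+O(\varep^4\|\hat\mbp\|_{\mbV_2^2},\varep^5\|\hat\mbp\|_{\mbV_4^2})$, and then estimate $\int_\Omega Z_{\mbp,*}^{1k}\dd x$. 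The latter integral, using the form \eqref{eq-def-Z*} of $Z_{\mbp,*}^{1k}=(\tilde\psi_1+\varep\tilde\varphi_{1,k})\tilde\Theta_k+\varep\tilde\varphi_{2,k}\varep\tilde\Theta_k'$, the localization and evenness of $\tilde\psi_1=\varep^{-1/2}\phi_0'/m_1$ (odd in $z_\mbp$, so its leading contribution vanishes after the $z_\mbp$-integration against the even weight $\varep(1-\varep z_\mbp\kappa_\mbp)$ up to the $\varep z_\mbp\kappa_\mbp$ piece), the orthogonality \eqref{ortho-tTheta} and Lemma \ref{Notation-e_i,j}, yields a factor of order $\varep^{3/2}$ carrying a $\delta_{k0}$ at leading order plus an $O(\varep^{3/2}\|\hat\mbp\|_{\mbV_2^2})e_k$ correction from the $\varphi$-terms and the curvature weight. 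Multiplying the two factors reproduces the stated $\varep^{11/2}$ scaling, with the quadratic-in-$\hat\mbp$ errors collected into the $O(\varep^{11/2}\|\hat\mbp\|_{\mbV_2^2})e_k$ bound.

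For $\msR_{k,1}=\varep\int_\Omega(\mrF(\Phi_\mbp)-\mrF_m^\infty)(\tilde\varphi_{1,k}\tilde\Theta_k+\tilde\varphi_{2,k}\varep\tilde\Theta_k')\dd x$ the argument is the direct analogue of the treatment of the dominant term $\mcI_k$ in the proof of Lemma \ref{lem-proj-F}, but with the $z_\mbp$-profile $\psi_1$ replaced by the correctors $\tilde\varphi_{l,k}=\varep^{-1/2}\varphi_{l,k}(z_\mbp,\bm\gamma_\mbp'')$, which satisfy the properties of Notation \ref{Notation-h} and the orthogonality \eqref{ortho-varphi}. Writing $\mrF(\Phi_\mbp)-\mrF_m^\infty=\varep(\mrF_2-\mrF_2^\infty)\varep+\dots$ using \eqref{F-234}: the leading $\varep^2\mrF_2=\varep^2\kappa_\mbp(\sigma-\sigma_1^*)f_2(z_\mbp)$ term, after changing to local coordinates (which supplies the extra $\varep$ from $\dd x=\varep(1-\varep z_\mbp\kappa_\mbp)\dd\tilde s_\mbp\dd z_\mbp$) and integrating out $z_\mbp$, gives $\varep\cdot\varep^2\cdot\varep^{1/2}(\sigma-\sigma_1^*)\int_{\msI_\mbp}h(\bm\gamma_\mbp'')\tilde\Theta_k\dd\tilde s_\mbp$ for a function $h$ of Notation \ref{Notation-h} type (since $f_2$ odd, $\varphi_{1,k}$ even: the relevant paritites must be checked, the surviving piece comes from the $\varepsilon z_\mbp\kappa_\mbp$ weight or from $\varphi_{2,k}\tilde\Theta_k'$); Lemma \ref{lem-est-h-proj} then extracts $C_1(\mrp_0)\delta_{k0}+O(\|\hat\mbp\|_{\mbV_2^2})e_k$, producing the $\varep^{7/2}(\sigma_1^*-\sigma)$ term. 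The $\varep^3\mrF_3$, $\varep^4\mrF_{\geq4}$ and exponential $\mrF_e$ terms contribute at orders $\varep^{9/2}$ and below; their $\bm\gamma_\mbp''$-dependence is handled exactly as $\mcI_{k,2},\mcI_{k,3}$ in Lemma \ref{lem-proj-F}, using Lemma \ref{lem-est-h-proj} to get the $C_2(\mrp_0)$ term and $O(\varep^{9/2}\|\hat\mbp\|_{\mbV_4^2})e_k$ for the $\Delta_g f_{4,1}$-type piece (which carries the $\mbV_4^2$-norm because it involves one extra surface Laplacian acting on $\bm\gamma_\mbp''$).

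The main obstacle I anticipate is the careful parity bookkeeping in $\msR_{k,1}$: the correctors $\varphi_{1,i}$ and $\varphi_{2,i}$ have definite parity (even, since they satisfy \eqref{ortho-varphi} against the even $\psi_1$ and enjoy Notation \ref{Notation-h}), but $f_2$ is odd in $z_\mbp$ while $\mrF_3,\mrF_{\geq4}$ contain both odd and even pieces, and $\varepsilon\tilde\Theta_k'$ introduces an extra relation to the tangential derivative that must be integrated by parts; one must verify which combinations survive the $z_\mbp$-integration at leading order and which only appear multiplied by the $\varep z_\mbp\kappa_\mbp$ curvature weight (hence one order higher). A secondary technical point is confirming that the $z_\mbp$-integrals of products like $f_3\varphi_{1,k}$ still produce bona fide functions of $\bm\gamma_\mbp''$ satisfying Notation \ref{Notation-h} so that Lemmas \ref{lem-h} and \ref{lem-est-h-proj} apply; this follows because $f_3,\varphi_{l,k}$ depend on $s_\mbp$ only through $|\bm\gamma_\mbp'|,\kappa_\mbp,\mbn_\mbp\cdot\mbn_0$ and their scaled derivatives, a property preserved under multiplication and $z_\mbp$-integration. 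Once these parity and structural checks are in place, the order counting is routine and the two displayed formulas follow by collecting terms. Since the explicit forms of $C_1,C_2$ are immaterial to the subsequent analysis, I would not attempt to compute them.
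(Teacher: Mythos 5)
Your proposal is correct and follows essentially the same route as the paper: for $\msR_{k,1}$ it inserts the expansion of $\mrF(\Phi_\mbp)$ from Lemma \ref{lem-def-Phi-p}, integrates out $z_\mbp$ to produce functions of Notation \ref{Notation-h} type, and applies Lemma \ref{lem-est-h-proj}; for $\msR_{k,2}$ it combines Lemma \ref{lem-mass-F} with the odd parity of $\psi_1=\phi_0'/m_1$ to extract the $\varep^{3/2}$ factor from $\int_\Omega Z_{\mbp,*}^{1k}\dd x$. The order counting and the attribution of the $\mbV_2^2$ versus $\mbV_4^2$ error norms match the paper's argument.
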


\begin{proof}
For $\msR_{k,1}$ in \eqref{def-cI-sR},  we expand $\mrF$ as in Lemma \ref{lem-def-Phi-p} that is, $\mrF-\mrF_m^\infty= \varep^2\mrF_2 +\varep^3(\mrF_3-\mrF_3^\infty) +\varep^4(\mrF_4-\mrF_4^\infty) +e^{-\ell\varep/\nu}\mrF_e$ by noting $\mrF_2^\infty=0$. Integrating out $z_\mbp$
and using that the functions $\tilde \varphi_{1,k}=\varep^{-1/2}\varphi_{1,k}(z_\mbp, \bm \gamma_\mbp''), \tilde\varphi_{2,k}=\varep^{-1/2}\varphi_{2,k}(z_\mbp, \bm\gamma_\mbp'')$ from \eqref{eq-def-Z*} enjoy the properties of Notation \ref{Notation-h}, 
the leading order contribution from $\mrF_2$ takes the form
\beqs
\varep^{7/2} (\sigma_1^*-\sigma)\int_{\msI_\mbp}  \left( h_1(\bm \gamma_\mbp'') \tilde \Theta_k +h_2(\bm \gamma_\mbp'') \varep\tilde \Theta_k'\right) \dd\tilde s_\mbp.
\eeqs
Here we mention the dependence of   $\varphi_{1,k},\varphi_{2,k}$ on $s_\mbp$ are uniform in $k$ so that Lemma \ref{Notation-e_i,j}  applies. 
Applying Lemma \ref{lem-est-h-proj} with $j=0$ we see that this term provides the leading order contribution to $\msR_{k,1}$. 
From the form of $\mrF_3, \mrF_4$ in Lemma \ref{lem-def-Phi-p} and Lemma \ref{lem-est-h-proj} we find that the remaining terms can be  bounded by $\varep^{9/2}\|\hat\mbp\|_{\mbV_4^2}$. 
\medskip

To estimate $\msR_{k,2}$ we turn to the definition of $Z_{\mbp, *}^{1k}$ with $k\in \Sigma_1$ and use that $\psi_1=\phi_0'/m_1$ has odd parity to derive 
\begin{equation}\label{est-proj-F-2-1}
\begin{aligned}
\msR_{k,2}&=C\int_\Omega \left(\mathrm F(\Phi_{\mbp} )-\mrF_m^\infty\right)\dd x\left(\varep^{3/2}\int_{\msI_\mbp} \left(h_1(\bm \gamma_{\mbp}'' )\tilde \Theta_k+h_2(\bm \gamma_{\mbp}'' )\varep \tilde \Theta_k'\right)\dd \tilde s_{\mbp}\right).
\end{aligned}
\end{equation}
Applying Lemma \ref{lem-est-h-proj} to $h_1, h_2$,  the identity \eqref{est-proj-F-2-1}  reduces to
\begin{equation}\label{est-proj-I2-2}
\begin{aligned}
\msR_{k,2}& = \varep^{3/2}\left(C(\mrp_0)\delta_{k0}+O\|\hat{\mbp}\|_{\mbV_2^2})e_k\right)\int_\Omega \left(\mathrm F(\Phi_{\mbp} )-\mrF_m^\infty\right)\dd x,
\end{aligned}
\end{equation}
which, combined with Lemma \ref{lem-mass-F}, yields the revised functions $C_1(\mrp_0), C_2(\mrp_0)$  which appear in the statement of the Lemma.
\end{proof}

\subsection{Weighted estimates}
The proof of the weighted estimates in Corollaries\,\ref{cor-proj-Phi_t} and \ref{cor-proj-F-w} are based on the following Lemma which is primarily an integration by parts.

\begin{lemma}\label{lem-h-proj-w} Let $f=f(s_\mbp)$ be a function of $s_\mbp$. Then  if $f\in H^1(\msI_\mbp)$ there exists a unit vector $(e_k)\in l^2$ such that 
\beqs
\int_{\msI_\mbp} f(s_\mbp) \beta_k \tilde \Theta_k \dd\tilde s_\mbp=  O(\|\nabla_{s_\mbp} f\|_{L^2(\msI_\mbp)}) e_k, 
\eeqs
for $k\geq 3$. Moreover  if $f\in W^{1,\infty}(\msI_\mbp)$,  then there exists a matrix $\mbE$ bounded in $l^2_*$ such that
\beqs
\int_{\msI_\mbp} f(s_\mbp)\tilde  \Theta_j \beta_k \tilde \Theta_k \dd\tilde s_\mbp=  O(\|\nabla_{s_\mbp} f\|_{L^\infty}) \mbE_{kj} +O(\|f\|_{L^\infty}) \beta_j \mbE_{kj}.
\eeqs
\end{lemma}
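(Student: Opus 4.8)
The plan is to prove Lemma~\ref{lem-h-proj-w} by exploiting the relation $\beta_k\tilde\Theta_k = \pm R_0(1+\mrp_0)\,\nabla_{s_\mbp}\tilde\Theta_{k'}$, where $k' = k\mp 1$ is the partner index (sine/cosine pair), so that the weight $\beta_k$ can be traded for a derivative landing on $\tilde\Theta$ via \eqref{tTheta''} and \eqref{def-Theta-beta}. Concretely, from \eqref{def-Theta-beta} and \eqref{def-barp}, for $k=2j-1$ we have $\varepsilon$-scaled $\tilde\Theta_{2j-1}' = -\beta_{\mbp,2j}\tilde\Theta_{2j}$ and $\tilde\Theta_{2j}' = \beta_{\mbp,2j-1}\tilde\Theta_{2j-1}$ with $\beta_{\mbp,2j-1}=\beta_{\mbp,2j}=j/(R_0(1+\mrp_0))$. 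Thus $\beta_k\tilde\Theta_k = c(\mbp)\,\tilde\Theta_{\hat k}'$ for the partner index $\hat k$ and a constant $c(\mbp)=R_0(1+\mrp_0)$ bounded above and below on $\cD_\delta$.

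First I would establish the single-projection bound. Writing $\beta_k\tilde\Theta_k = c(\mbp)\tilde\Theta_{\hat k}'$ and integrating by parts on the periodic interval $\msI_\mbp$ (no boundary terms),
\begin{equation*}
\int_{\msI_\mbp} f(s_\mbp)\beta_k\tilde\Theta_k\dd\tilde s_\mbp = -c(\mbp)\int_{\msI_\mbp}\bigl(\nabla_{s_\mbp}f\bigr)\tilde\Theta_{\hat k}\dd\tilde s_\mbp.
\end{equation*}
Now $\nabla_{s_\mbp}f\in L^2(\msI_\mbp)$, and the family $\{\tilde\Theta_{\hat k}\}$ is orthonormal in $L^2(\msI_\mbp)$ up to the factor $(1+\mrp_0)$ by \eqref{ortho-tTheta}, so Lemma~\ref{Notation-e_i,j}, specifically \eqref{def-e-i} applied with the function $\nabla_{s_\mbp}f$, gives $-c(\mbp)\int(\nabla_{s_\mbp}f)\tilde\Theta_{\hat k}\dd\tilde s_\mbp = O(\|\nabla_{s_\mbp}f\|_{L^2(\msI_\mbp)})e_{\hat k}$; since $\hat k$ ranges over $k\geq 3$ together with $k$, this is $O(\|\nabla_{s_\mbp}f\|_{L^2(\msI_\mbp)})e_k$ after reindexing the unit vector. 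This handles the first assertion.

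For the second assertion, I take $f\in W^{1,\infty}(\msI_\mbp)$, again substitute $\beta_k\tilde\Theta_k = c(\mbp)\tilde\Theta_{\hat k}'$, and integrate by parts, picking up a derivative that can fall either on $f$ or on $\tilde\Theta_j$:
\begin{equation*}
\int_{\msI_\mbp} f\,\tilde\Theta_j\,\beta_k\tilde\Theta_k\dd\tilde s_\mbp = -c(\mbp)\int_{\msI_\mbp}(\nabla_{s_\mbp}f)\tilde\Theta_j\tilde\Theta_{\hat k}\dd\tilde s_\mbp - c(\mbp)\int_{\msI_\mbp} f\,(\nabla_{s_\mbp}\tilde\Theta_j)\tilde\Theta_{\hat k}\dd\tilde s_\mbp.
\end{equation*}
For the first integral, $\nabla_{s_\mbp}f\in L^\infty$ and \eqref{def-e-i,j} of Lemma~\ref{Notation-e_i,j} produces $O(\|\nabla_{s_\mbp}f\|_{L^\infty})\mbE_{kj}$. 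For the second, use $\varepsilon$-scaled identity $\tilde\Theta_j' = \pm\beta_{\mbp,j}\tilde\Theta_{\hat j}$ so that $\nabla_{s_\mbp}\tilde\Theta_j = \pm\beta_{\mbp,j}\varepsilon^{-1}\tilde\Theta_{\hat j}$; absorbing $\varepsilon^{-1}\beta_{\mbp,j}\sim \beta_j$ up to constants and applying \eqref{def-e-i,j} again gives $O(\|f\|_{L^\infty})\beta_j\mbE_{kj}$. Combining the two terms and renaming the matrix yields the stated bound. The only mild subtlety — and the step I expect to need the most care — is the bookkeeping of the index-partner map $k\mapsto\hat k$ and ensuring the resulting unit vector $(e_k)$ and matrix $\mbE$ are genuinely indexed consistently with the statement (i.e.\ over $k\geq 3$, $k\in\Sigma_1$) and have the claimed $l^2$ and $l^2_*$ norms; this is a routine relabeling since the pairing is an involution preserving the relevant index ranges, but it must be stated cleanly. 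All constants $c(\mbp)$, $\beta_{\mbp,j}/(\varepsilon\beta_j)$ are uniformly bounded above and below for $\mbp\in\cD_\delta$, so they are harmless.
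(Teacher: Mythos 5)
Your proof is correct and is essentially the paper's own argument: the paper introduces the antiderivative $\theta_k:=-(1+\mrp_0)R_0\beta_{\mbp,k}^{-1}\tilde\Theta_k'$ of $\beta_k\tilde\Theta_k$, which is exactly your $\pm R_0(1+\mrp_0)\tilde\Theta_{\hat k}$, integrates by parts on the periodic interval, and invokes Lemma~\ref{Notation-e_i,j} via the mutual orthogonality of the partner modes, treating the second estimate the same way. The only blemish is the spurious $\varep^{-1}$ you attach to $\nabla_{s_\mbp}\tilde\Theta_j$: since $\p_{\tilde s_\mbp}\tilde\Theta_j=\pm\beta_{\mbp,j}\tilde\Theta_{\hat j}$ with $\beta_{\mbp,j}=\beta_j/(R_0(1+\mrp_0))\sim\beta_j$ uniformly on $\cD_\delta$, no $\varep$ enters and the stated $\beta_j$ factor follows directly.
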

\begin{proof}
We observe from \eqref{tTheta''} that if we denote 
\beq\label{def-stheta-k}
\theta_k(\tilde s_\mbp) := -(1+\mrp_0)R_0\beta_{\mbp, k}^{-1}  \tilde \Theta_k'(\tilde s_\mbp),
\eeq
then 
\beqs
\theta_k'(\tilde s_\mbp) =\beta_k \tilde\Theta_k(\tilde s_\mbp).
\eeqs
Hence  through integration by parts
\beqs
\int_{\msI_\mbp} f(s_\mbp) \beta_k \tilde \Theta_k \dd\tilde s_\mbp= - \int_{\msI_\mbp} \nabla_{s_\mbp} f(s_\mbp) \theta_k(\tilde s_\mbp)\dd \tilde s_\mbp.
\eeqs
In light of Lemma \ref{Notation-e_i,j}, we only need to show that $\{\theta_k, k\geq 3\}$ are orthogonal to each other.   Since
\beqs
\tilde \Theta_k' = \left\{ \begin{aligned}
& - \beta_{\mbp, k}\tilde \Theta_{k+1}  \qquad & \hbox{if $k$ is odd};\\
& \beta_{\mbp, k} \tilde \Theta_{k-1} \qquad &  \hbox{if $k$ is even}.
\end{aligned}\right.
\eeqs
The orthogonality of $\theta_k$ follows from its definition and orthogonality of $\tilde \Theta_k$ in \eqref{ortho-tTheta}. The first estimate follows from the identity $\beta_{\mbp, j}=\beta_j/(R_0(1+\mrp_0))$. The second estimate is derived from  similar arguments using Lemma \ref{Notation-e_i,j}.  
\end{proof}

\begin{lemma}\label{lem-sAB-w} For $k\geq 3$  and $k\in \Sigma_1$, we the quantities $\msA_k, \msB_k$ defined in \eqref{def-sAB} satisfy the weighted approximations,
\beqs
\begin{aligned}
\beta_k \msA_k(\mbp) & = -\frac{\beta_k^2-1}{R_0^2} \beta_k \mrp_k + O(\|\hat\mbp\|_{\mbV_2^2}\|\hat\mbp\|_{\mbV_3^2})e_k , \\
\beta_k \msB_k(\mbp)&= -\frac{\beta_k^2-1}{R_0^2(1+\mrp_0)^2} \left[\frac{2\beta_k^2-3}{2R_0^2} -\alpha (1+\mrp_0)^2\right]\beta_k \mrp_k
 +O(\|\hat\mbp\|_{\mbV_3^2}\|\hat\mbp\|_{\mbV_4^2}, \|\hat\mbp\|_{\mbV_2^2}\|\hat\mbp\|_{\mbV_5^2})e_k. 
\end{aligned}
\eeqs
\end{lemma}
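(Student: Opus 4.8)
\textbf{Proof plan for Lemma \ref{lem-sAB-w}.}
The plan is to obtain the weighted estimates by applying the integration-by-parts device of Lemma \ref{lem-h-proj-w} to the expansions of $\msA_k$ and $\msB_k$ already recorded in Corollary \ref{cor-sAB}, rather than re-deriving the curvature projections from scratch. First I would recall the expansion of the curvature $\kappa_\mbp=\kappa_{\mbp,0}+\mcQ_1+\mcQ_2$ from Lemma \ref{lem-Gamma-p}, and note that for $k\geq 3$ the spatially constant leading term $\kappa_{\mbp,0}$ contributes nothing to any $\tilde\Theta_k$ projection by orthogonality \eqref{ortho-tTheta}, so only $\mcQ_1$ and $\mcQ_2$ survive. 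For $\beta_k\msA_k$, the linear term $\mcQ_1$ is an exact multiple of $\tilde\Theta_k$, giving the explicit dominant contribution $-\frac{\beta_k^2-1}{R_0^2}\beta_k\mrp_k$ directly (no integration by parts needed there); the quadratic remainder $\beta_k\mcQ_2$ is handled by Lemma \ref{lem-h-proj-w}, which converts the factor $\beta_k$ into a derivative landing on $\mcQ_2$, so that $\|\nabla_{s_\mbp}\mcQ_2\|_{L^2(\msI_\mbp)}\lesssim\varep^{-1}\|\mcQ_2\|_{H^1(\msI_\mbp)}\lesssim\|\hat\mbp\|_{\mbV_3^2}\|\hat\mbp\|_{\mbV_2^2}$ by the $H^1$ bound in Lemma \ref{lem-Gamma-p}; this yields the stated $O(\|\hat\mbp\|_{\mbV_2^2}\|\hat\mbp\|_{\mbV_3^2})e_k$ error.

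For $\beta_k\msB_k$, I would split $\msB_k$ according to its defining integrand $-\Delta_{s_\mbp}\kappa_\mbp-\tfrac12\kappa_\mbp^3+\alpha\kappa_\mbp$ and treat the three pieces separately. The term $\alpha\kappa_\mbp$ is exactly as in $\msA_k$. For $\kappa_\mbp^3$, I would use the expansion $\kappa_\mbp^3=\kappa_{\mbp,0}^3+3\kappa_{\mbp,0}^2\mcQ_1+\tilde\mcQ_2$ from the proof of Lemma \ref{lem-kappa-Theta}; the constant $\kappa_{\mbp,0}^3$ drops out, $3\kappa_{\mbp,0}^2\mcQ_1$ contributes the explicit $-\frac{3(\beta_k^2-1)}{R_0^4(1+\mrp_0)^2}\beta_k\mrp_k$ term, and the cubic-order remainder $\tilde\mcQ_2$ is put through Lemma \ref{lem-h-proj-w} using $\|\tilde\mcQ_2\|_{H^1(\msI_\mbp)}\lesssim\varep\|\hat\mbp\|_{\mbV_3^2}\|\hat\mbp\|_{\mbV_4^2}$, which follows from the product-rule bounds on $\mcQ_1,\mcQ_2$ and their $H^1$ estimates, giving an $O(\|\hat\mbp\|_{\mbV_3^2}\|\hat\mbp\|_{\mbV_4^2})e_k$ error. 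For $\Delta_{s_\mbp}\kappa_\mbp$, I integrate by parts using \eqref{tTheta''}: $\int_{\msI_\mbp}\Delta_{s_\mbp}\kappa_\mbp\,\beta_k\tilde\Theta_k=-\beta_{\mbp,k}^2\int_{\msI_\mbp}\mcQ_1\,\beta_k\tilde\Theta_k+\int_{\msI_\mbp}\Delta_{s_\mbp}\mcQ_2\,\beta_k\tilde\Theta_k$; the first term is explicit and furnishes the $\frac{2\beta_k^2-3}{2R_0^2}$ piece after combining with the $\kappa_\mbp^3$ contribution, while the second is again sent through Lemma \ref{lem-h-proj-w}, trading $\beta_k$ for a derivative so the bound becomes $\|\nabla_{s_\mbp}\Delta_{s_\mbp}\mcQ_2\|_{L^2(\msI_\mbp)}\lesssim\varep^{-1}\|\mcQ_2\|_{H^3(\msI_\mbp)}\lesssim\|\hat\mbp\|_{\mbV_5^2}\|\hat\mbp\|_{\mbV_2^2}+\|\hat\mbp\|_{\mbV_3^2}\|\hat\mbp\|_{\mbV_4^2}$ using the $H^3$ bound on $\mcQ_2$ in Lemma \ref{lem-Gamma-p}. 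Collecting the dominant contributions and the error terms (the latter absorbed into the two advertised products $\|\hat\mbp\|_{\mbV_3^2}\|\hat\mbp\|_{\mbV_4^2}$ and $\|\hat\mbp\|_{\mbV_2^2}\|\hat\mbp\|_{\mbV_5^2}$) and factoring $\beta_k$ back out of the dominant linear terms gives the claimed formula.

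The main obstacle I anticipate is bookkeeping the derivative count correctly in the $\Delta_{s_\mbp}\kappa_\mbp$ term: after the integration by parts in $\beta_k$ one has $\Delta_{s_\mbp}\mcQ_2$ differentiated once more, i.e. effectively three $s_\mbp$-derivatives on a quadratic expression, so the $H^3(\msI_\mbp)$ estimate of $\mcQ_2$ — with its characteristic two-term structure $\|\hat\mbp\|_{\mbV_5^2}\|\hat\mbp\|_{\mbV_2^2}+\|\hat\mbp\|_{\mbV_3^2}\|\hat\mbp\|_{\mbV_4^2}$ from Lemma \ref{lem-Gamma-p} — must be invoked exactly, and one must check that the $\varep$ powers (the scaled derivatives $\varep^{l-1}\p_{s_\mbp}^l$ appearing in the Lipschitz estimate \eqref{est-h-gamma''-de}) balance so that no spurious negative power of $\varep$ remains. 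A secondary care point is that the unit vectors $(e_k)$ and matrices $\mbE$ supplied by Lemmas \ref{Notation-e_i,j} and \ref{lem-h-proj-w} differ line to line and must be consolidated into a single unit vector at the end via the triangle inequality in $l^2$, which is routine but should be stated. Everything else — the explicit linear contributions and the $\alpha\kappa_\mbp$ piece — is a direct transcription of the unweighted computation in Lemma \ref{lem-kappa-Theta} and Corollary \ref{cor-sAB} with the extra $\beta_k$ carried along.
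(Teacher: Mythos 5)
Your plan is the paper's proof: decompose $\kappa_\mbp=\kappa_{\mbp,0}+\mathcal Q_1+\mathcal Q_2$, note the constant term has zero projection for $k\geq 3$, read off the explicit contribution from $\mathcal Q_1$, and push the quadratic (and, for $\msB_k$, cubic and twice-differentiated) remainders through Lemma~\ref{lem-h-proj-w} together with the $H^1$ and $H^3$ bounds on $\mathcal Q_2$ from Lemma~\ref{lem-Gamma-p}. One correction: the intermediate factors $\varep^{-1}$ in your chains $\|\nabla_{s_\mbp}\mathcal Q_2\|_{L^2}\lesssim\varep^{-1}\|\mathcal Q_2\|_{H^1}\lesssim\|\hat\mbp\|_{\mbV_3^2}\|\hat\mbp\|_{\mbV_2^2}$ (and the analogous $H^3$ chain) should be deleted — Lemma~\ref{lem-h-proj-w} is stated in terms of the unscaled derivative, so $\|\nabla_{s_\mbp}f\|_{L^2}\leq\|f\|_{H^1(\msI_\mbp)}$ directly, whereas with the $\varep^{-1}$ inserted the second inequality is false since Lemma~\ref{lem-Gamma-p} carries no compensating factor of $\varep$; likewise the extra $\varep$ in your bound for $\tilde{\mathcal Q}_2$ is unjustified but also unneeded.
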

\begin{proof}
For the first approximation, since  $\kappa_\mbp$ admits expansion \eqref{def-kappa-p,0} we can rewrite the definition  \eqref{def-sAB} of  $\msA_k $ as
\beqs
\beta_k\msA_k(\mbp) =  -\kappa_{\mbp,0} \int_{\msI_\mbp} \beta_k \tilde \Theta_k \dd \tilde s_\mbp +\beta_k \int_{\msI_\mbp}  \mathcal Q_1 \tilde \Theta_k \dd \tilde s_\mbp + \int_{\msI_\mbp}  \mathcal Q_2\beta_{k} \tilde \Theta_k \dd \tilde s_\mbp.
\eeqs 
The first term is zero since $\tilde \Theta_k$ no mass for $k\geq 3$; by \eqref{cQ-1-Theta-k} the second term equals
\beqs
\beta_k \int_{\msI_\mbp}  \mathcal Q_1 \tilde \Theta_k \dd \tilde s_\mbp = \frac{1-\beta_k^2}{R_0^2} \beta_k \mrp_k, 
\eeqs 
and with the aid of Lemma \ref{lem-h-proj-w} and the $H^1(\msI_\mbp)$ estimate  of $\mathcal Q_{2}$ from Lemma \ref{lem-Gamma-p} the third term is bounded as 
\beqs
 \int_{\msI_\mbp}   \mathcal Q_2\beta_{k} \tilde \Theta_k \dd \tilde s_\mbp =O(\|\hat\mbp\|_{\mbV_2^2}\|\hat\mbp\|_{\mbV_3^2}) e_k. 
\eeqs
The first estimate follows. The approximation of $\mathscr B_k(\mbp)$ is derived from similar arguments through the use of the higher-order estimates on quadratic term $\mathcal Q_2$ afforded by Lemma \ref{lem-Gamma-p}.
\end{proof}

\begin{lemma}\label{lem-sR12-w} For $k\geq 3$, the reminders defined in \eqref{def-cI-sR} satisfy the weighted estimates
\beqs
\begin{aligned}
\beta_k \msR_{k,1}&= O(\varep^{7/2} |\sigma_1^*-\sigma| \|\hat\mbp\|_{\mbV_3^2}, \varep^{9/2}\|\hat\mbp\|_{\mbV_5^2})e_k;\\
\beta_k \msR_{k,2}&= O(\varep^{11/2}\|\hat\mbp\|_{\mbV_3^2} )e_k.
\end{aligned}
\eeqs
\end{lemma}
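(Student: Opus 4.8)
The plan is to mirror, under the extra weight $\beta_k$, the arguments already used for the unweighted remainders $\msR_{k,1}$ and $\msR_{k,2}$ in Lemma\,\ref{lem-sR12}, replacing each application of Lemma\,\ref{lem-est-h-proj} by the corresponding weighted statement obtained from Lemma\,\ref{lem-h-proj-w}. The only genuinely new input is that multiplying a projection $\int_{\msI_\mbp} h(\bm\gamma_\mbp'')\tilde\Theta_k\,\mrd\tilde s_\mbp$ by $\beta_k$ costs one derivative of $h$ in $s_\mbp$, which by Notation\,\ref{Notation-h} and the bounds of Lemma\,\ref{lem-Gamma-p} translates into passing from a $\mbV_j^2$ norm to a $\mbV_{j+1}^2$ norm of $\hat\mbp$; the constant pieces (the $\delta_{k0}$ terms) disappear because $\tilde\Theta_k$ has no mass for $k\ge 3$, which is exactly why the statement is cleaner than the unweighted one.

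First I would treat $\beta_k\msR_{k,1}$. Recall from \eqref{def-cI-sR} that $\msR_{k,1}=\varep\int_\Omega(\mrF(\Phi_\mbp)-\mrF_m^\infty)(\tilde\varphi_{1,k}\tilde\Theta_k+\tilde\varphi_{2,k}\varep\tilde\Theta_k')\,\mrd x$, and expand $\mrF-\mrF_m^\infty$ using Lemma\,\ref{lem-def-Phi-p} as $\varep^2\mrF_2+\varep^3(\mrF_3-\mrF_3^\infty)+\varep^4(\mrF_{\ge4}-\mrF_{\ge4}^\infty)+e^{-\ell\nu/\varep}\mrF_e$, using $\mrF_2^\infty=0$. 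Integrating out $z_\mbp$ and using that $\tilde\varphi_{1,k},\tilde\varphi_{2,k}$ enjoy the properties of Notation\,\ref{Notation-h} with $s_\mbp$-dependence uniform in $k$, the $\mrF_2$-piece contributes $\varep^{7/2}(\sigma_1^*-\sigma)\int_{\msI_\mbp}(h_1(\bm\gamma_\mbp'')\tilde\Theta_k+h_2(\bm\gamma_\mbp'')\varep\tilde\Theta_k')\,\mrd\tilde s_\mbp$ for functions $h_1,h_2$ of $\bm\gamma_\mbp''$. Multiplying by $\beta_k$ and applying Lemma\,\ref{lem-h-proj-w} to $h_1$ (and the analogue for $\varep\tilde\Theta_k'$), together with \eqref{est-h-gamma''-de} with $l=1$ to bound $\|\nabla_{s_\mbp}h_j\|_{L^2(\msI_\mbp)}\lesssim\|\hat\mbp\|_{\mbV_3^2}$, gives the $\varep^{7/2}|\sigma_1^*-\sigma|\|\hat\mbp\|_{\mbV_3^2}$ term. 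For the $\mrF_3,\mrF_{\ge4},\mrF_e$ pieces one argues identically but with one extra $\varep$ each and with $\|\hat\mbp\|_{\mbV_3^2}$ replaced by $\|\hat\mbp\|_{\mbV_5^2}$, since $\mrF_3$ contains $\Delta_{s_\mbp}\kappa_\mbp$ and $\mrF_{\ge4}$ contains $\Delta_g f_{4,1}$, costing two extra $s_\mbp$-derivatives and hence the $H^3(\msI_\mbp)$ bound of $\mathcal Q_2$ from Lemma\,\ref{lem-Gamma-p}, which is controlled by $\|\hat\mbp\|_{\mbV_5^2}$; collecting these yields the stated $O(\varep^{9/2}\|\hat\mbp\|_{\mbV_5^2})e_k$ error.

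Next, for $\beta_k\msR_{k,2}$ with $k\ge 3$: from \eqref{def-cI-sR}, $\msR_{k,2}=|\Omega|^{-1}\int_\Omega(\mrF(\Phi_\mbp)-\mrF_m^\infty)\,\mrd x\cdot\int_\Omega Z_{\mbp,*}^{1k}\,\mrd x$, and as in the proof of Lemma\,\ref{lem-sR12} the second factor reduces, after integrating out $z_\mbp$ and using the odd parity of $\phi_0'$, to $\varep^{3/2}\int_{\msI_\mbp}(h_1(\bm\gamma_\mbp'')\tilde\Theta_k+h_2(\bm\gamma_\mbp'')\varep\tilde\Theta_k')\,\mrd\tilde s_\mbp$. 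Multiplying by $\beta_k$ and applying Lemma\,\ref{lem-h-proj-w} with \eqref{est-h-gamma''-de} ($l=1$) bounds $\beta_k\int_\Omega Z_{\mbp,*}^{1k}\,\mrd x$ by $O(\varep^{3/2}\|\hat\mbp\|_{\mbV_3^2})e_k$; the first factor $\int_\Omega(\mrF(\Phi_\mbp)-\mrF_m^\infty)\,\mrd x=O(\varep^4)$ by Lemma\,\ref{lem-mass-F}. The product gives $O(\varep^{11/2}\|\hat\mbp\|_{\mbV_3^2})e_k$, as claimed. The main obstacle, such as it is, is purely bookkeeping: one must verify that the $s_\mbp$-dependence of $\varphi_{1,k},\varphi_{2,k}$ and of the residual coefficients $f_3,f_{4,1},f_{4,2}$ is uniform in $k$ so that Lemma\,\ref{Notation-e_i,j} (and hence a single unit vector $\bm e=(e_k)$) applies across the sum over $k\in\Sigma_1$, and that the derivative count in each term is matched to the correct $\mbV_j^2$ norm via Lemma\,\ref{lem-Gamma-p} and \eqref{est-h-gamma''-de}; no new analytic idea beyond the integration-by-parts of Lemma\,\ref{lem-h-proj-w} is required, so the details are omitted exactly as in the companion estimates.
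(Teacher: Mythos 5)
Your proposal is correct and takes essentially the same route as the paper, whose own proof consists only of the remark that the result ``follows from arguments similar to those for Lemma~\ref{lem-sR12}, using the weighted estimates from Lemma~\ref{lem-h-proj-w}''; your bookkeeping of the derivative count (each factor of $\beta_k$ costing one $s_\mbp$-derivative via \eqref{est-h-gamma''-de}, hence one step up in the $\mbV_j^2$ scale, with the $\delta_{k0}$ terms vanishing for $k\geq 3$) is exactly the intended argument. The one point you only gesture at --- the ``analogue'' of Lemma~\ref{lem-h-proj-w} for the $\varep\tilde\Theta_k'$ terms --- does require one further integration by parts (a direct bound via $\tilde\Theta_k'=\pm\beta_{\mbp,k}\tilde\Theta_{k\mp1}$ would produce an unacceptable factor $\varep\beta_k^2$), after which \eqref{est-h-gamma''-de} with $l=2$ returns the same $O(\|\hat\mbp\|_{\mbV_3^2})$ order, so nothing is lost.
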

\begin{proof}
This follows from arguments similar to those for Lemma \ref{lem-sR12}, using the weighted estimates from Lemma \ref{lem-h-proj-w}. The details are omitted.
\end{proof}

Lastly, we give a proof a Lemma \ref{lem-xi}. \begin{proof}[Proof of Lemma \ref{lem-xi}]
 We first deal with the unweighted approximation for the three cases $j=0$, $j=1,2$ and $j\geq 3$. 
 
\medskip
\noindent 
Case 1:  $j=0$. Using Lemma \ref{lem-change-of-coord} to replace $\xi_0(s_\mbp)$ in the integral we obtain
\beqs
-\int_{\msI_\mbp}  \xi_0(s_\mbp) \tilde \Theta_k   \dd \tilde s_\mbp=  \int_{\msI_\mbp} \left[\frac{1-(1+\mrp_0)\p_{\mrp_0}\ln A}{A}(R_0+\bar p) -\frac{\tilde s_\mbp\bar p' }{ A}\right] \mbn_0\cdot \mbn_\mbp \tilde \Theta_k \dd \tilde s_\mbp.
\eeqs
From  \eqref{est-bn-p0} and \eqref{est-A-|Gamma-p|} the normal projection $\mbn_\mbp\cdot \mbn_0$ and length normalization $A$ take the value one up to a quadratic correction.  This yields the leading order approximation
\beqs
\begin{aligned}
-\int_{\msI_\mbp}  \xi_0(s_\mbp) \tilde \Theta_k   \dd \tilde s_\mbp=  &\int_{\msI_\mbp} \left[\Big(1-(1+\mrp_0)\p_{\mrp_0}\ln A\Big) (R_0+\bar p) - \tilde s_\mbp\bar p' \right]  \tilde \Theta_k \dd \tilde s_\mbp 
+O(\|\hat\mbp\|_{\mbV_2^2}^2)\mathbb E_{k0}.
\end{aligned}
\eeqs
For $k\geq 1$ the eigenmode $\tilde \Theta_k$  has zero mass in $L^2(\msI_\mbp)$. Using the definition \eqref{def-barp} of $\bar p$, the orthogonality \eqref{ortho-tTheta}, and introducing $\mathbb U$ from \eqref{def-bU} we arrive at the expansion
\beqs
\begin{aligned}
-\int_{\msI_\mbp}  \xi_0(s_\mbp) \tilde \Theta_k   \dd \tilde s_\mbp&=  (1+\mrp_0) \Big(1-(1+\mrp_0)\p_{\mrp_0}\ln A\Big) 2\pi R_0^2 \tilde \Theta_0 \delta_{k0}- (1+\mrp_0)\hat \mbp^T\mathbb U \mathbf B_k\\
& +\Big(1-(1+\mrp_0)\p_{\mrp_0}\ln A\Big)(1+\mrp_0) \mrp_k \mathbf{1}_{\{k\geq 3\}}+O(\|\hat\mbp\|_{\mbV_2^2}^2)\mathbb E_{k0}.
\end{aligned}
\eeqs
 Since $|\p_{\mrp_0}\ln A|\lesssim \|\hat\mbp\|_{\mbV_2^2}$ from Lemma \ref{lem-Gamma-p},  and $\tilde \Theta_0= \Theta_0=1/\sqrt{2\pi R_0}$, we rewrite this expansion as
\beq
\begin{aligned}
-\int_{\msI_\mbp}  \xi_0(s_\mbp) \tilde \Theta_k   \dd \tilde s_\mbp&=  (1+\mrp_0) \Big(1+O(\|\hat\mbp\|_{\mbV_2^2})\Big) \frac{R_0}{\Theta_0}  \delta_{k0}- (1+\mrp_0) \hat \mbp^T\mathbb U \mathbf B_k\\
& + (1+\mrp_0) \mrp_k \mathbf{1}_{\{k\geq 3\}}+O(\|\hat\mbp\|_{\mbV_2^2}^2)\mathbb E_{k0}.
\end{aligned}
\eeq
The second approximation for $k\geq 1$ follows directly. The first estimate for $k=0$ follows by putting the item involving $\mathbb U$ into the error.

\medskip 
\noindent
Case 2: $j=1,2$.  Using Lemma \ref{lem-change-of-coord} to replace $\xi_j(s_\mbp)$ in \eqref{est-bT-2}, we have
\beqs
-\int \xi_j(s_\mbp) \tilde \Theta_k \dd \tilde s_\mbp= \int_{\msI_\mbp} \Theta_0 \bE_j \cdot \mbn_\mbp \tilde \Theta_k \dd \tilde s_\mbp
\eeqs
Since $\mbn_\mbp=\mbn_0 + O(\|\hat\mbp\|_{\mbV_1})$ by Lemma \ref{lem-Gamma-p} and applying identity  \eqref{proj-n0-E-basis} we find
\beqs
\begin{aligned}
-\int \xi_j(s_\mbp) \tilde \Theta_k \dd \tilde s_\mbp& = \int_{\msI_\mbp} \Theta_j  \tilde \Theta_k \dd \tilde s_\mbp +O(\|\hat\mbp\|_{\mbV_2^2}) \mbE_{kj},\\
&=(1+\mrp_0) \delta_{jk} + O(\|\hat\mbp\|_{\mbV_2^2}) \mbE_{kj}.
\end{aligned}
\eeqs
Here we used $\|\tilde \Theta_j -\Theta_j\|_{L^2(\msI_\mbp)}\lesssim \|\hat\mbp\|_{\mbV_2^2}$ by its definition in \eqref{def-barp}. \

\medskip 
\noindent
Case 3: $j\geq 3$. We follow the case $j=1$. Using Lemmas
 \ref{lem-change-of-coord} and \ref{lem-Phi_t-p}, we write  
\beqs
-\int_{\msI_\mbp}  \xi_j(s_\mbp) \tilde \Theta_k   \dd \tilde s_\mbp= \int_{\msI_\mbp} \left[\tilde \Theta_j -\frac{(1+\mrp_0) \p_{\mrp_j} \ln A}{A}(R_0+\bar p) \right]\mbn_0 \cdot \mbn_\mbp  \tilde \Theta_k \dd\tilde s_\mbp.
\eeqs
Approximating the normal projection $\mbn_0\cdot \mbn_\mbp$ with \eqref{est-bn-p0} and using the orthogonality of $\{\tilde \Theta_k\}$ in \eqref{ortho-tTheta} we derive
\beqs
\begin{aligned}
-\int_{\msI_\mbp}  \xi_j(s_\mbp) \tilde \Theta_k   \dd \tilde s_\mbp &= \int_{\msI_\mbp} \left[\tilde \Theta_j -\frac{(1+\mrp_0) \p_{\mrp_j} \ln A}{A}(R_0+\bar p) \right]  \tilde \Theta_k \dd\tilde s_\mbp +O(\|\hat\mbp\|_{\mbV_2^2}^2)\mbE_{kj}\\
&=(1+\mrp_0) \delta_{jk} +O(\|\hat\mbp\|_{\mbV_2^2}^2) \mbE_{kj}.
\end{aligned}
\eeqs
Here we also used the $l^2$ upper bound of $\nabla_\mbp A$ in Lemma \ref{lem-Gamma-p}. The last identity for the case $j\geq 3$ follows.

Now we deal with the weighted case for $k\geq 3$. In fact, similar derivation shows
\beqs
\begin{aligned}
\int_{\msI_\mbp}  \xi_j(s_\mbp) \beta_k\tilde \Theta_k   \dd \tilde s_\mbp
&=(1+\mrp_0) \beta_k \delta_{kj}-(1+\mrp_0)\hat\mbp^T\mbU \mathbf B_k \delta_{j0}+\int_{\msI_\mbp} \mathrm{R}[\xi_j] \beta_k \tilde \Theta_k \dd \tilde s_\mbp,
\end{aligned}
\eeqs
where the remainder term $\mathrm{R}[\xi_j]$ is given by
\beqs
\mathrm{R}[\xi_j]=\left\{\begin{aligned} 
 & - \frac{(1+\mrp_0) \p_{\mrp_0} \ln A}{A}(R_0+\bar p) \mbn_0 \cdot \mbn_\mbp-R_0-\tilde s_\mbp\bar p'  \left(\frac{\mbn_0\cdot \mbn_\mbp}{A} -1\right) \qquad &j=0; \\
&\Theta_0\left(\bE_j\cdot\mbn_\mbp -\tilde \Theta_j\right) \qquad &j=1,2;\\
&\tilde \Theta_j \mbn_0\cdot  (\mbn_\mbp-\mbn_0) - \frac{(1+\mrp_0) \p_{\mrp_j} \ln A}{A}(R_0+\bar p) \mbn_0 \cdot \mbn_\mbp, \qquad  &j\geq 3.
\end{aligned}\right.
\eeqs
We observe $\mathrm R[\xi_j]$  involves only the zero and first derivatives: $\bar p, \bar p'$. Note that $\mbn_\mbp\cdot\mbn_0$ and $A$ is one up to a quadratic error by Lemma \ref{lem-Gamma-p}. The contribution from the remainder is estimated through Lemma \ref{lem-h-proj-w} from the appendix. 
\end{proof}

\bibliographystyle{amsplain}

\end{document}